\DeclarePairedDelimiter\ceil{\lceil}{\rceil}
\providecommand{\examplename}{Example}
\newtheorem{theorem}{Theorem}[section]
\newtheorem{proposition}[theorem]{Proposition}
\newtheorem{corollary}[theorem]{Corollary}
\newtheorem{lemma}[theorem]{Lemma}
\theoremstyle{remark}
\newenvironment{remark}
    {\pushQED{\qed}\remarkx}
    {\popQED\endremarkx}
\theoremstyle{definition}
\newtheorem*{example*}{\protect\examplename}
\newenvironment{example}
    {\pushQED{\qed}\examplex}
    {\popQED\endexamplex}
\theoremstyle{plain}
\newtheorem*{assumption*}{Assumption}
\newtheorem*{defin}{Definition}
\pgfplotsset{compat=1.16}
\newcommand\N{\mathbb{N}}
\newcommand{\mW}{\mathcal{W}}
\newcommand\R{\mathbb{R}}
\newcommand\Sp{\mathbb{S}}
\newcommand\C{\mathbb{C}}
\newcommand\E{\mathds{E}}
\newcommand\e{\mathord{\mathrm{e}}}
\newcommand\1{\mathds{1}}
\newcommand\Oh{\mathcal{O}}
\newcommand\oh{\mathrm{o}}
\newcommand\ka{{\scalebox{.5}{(}\kappa\scalebox{.5}{)}}}
\newcommand{\Leb}{\text{\normalfont Leb}}
\newcommand\da{\downarrow}
\newcommand\la{\leftarrow}
\newcommand\scrL{\mathscr{L}}
\newcommand{\ve}{\varepsilon}
\renewcommand{\le}{\leqslant}
\renewcommand{\ge}{\geqslant}
\newcommand\eqd{\overset{d}{=}}
\newcommand\cid{\xrightarrow{d}}
\newcommand\SV{\mbox{SV}}
\newcommand\co{\mathsf{c}}
\newcommand\nf[1]{\normalfont{#1}}
\newcommand{\D}{\mathrm{d}}
\newcommand{\wt}[1]{\widetilde{#1}}
\newcommand{\tra}{{\scalebox{0.6}{$\top$}}}
\newcolumntype{C}{>{\centering\arraybackslash}m{3cm}}
\newcolumntype{T}{>{\centering\arraybackslash}m{5.5cm}}
\newcolumntype{L}{>{\centering\arraybackslash}m{8.5cm}}
\newcommand{\genstirlingII}[3]{%
  \genfrac{\{}{\}}{0pt}{#1}{#2}{#3}%
}
\newcommand{\stirII}[2]{\genstirlingII{}{#1}{#2}}
\title[Coupling and Wasserstein rates in the small time stable domain of attraction]{Asymptotically optimal Wasserstein couplings for the small-time stable domain of attraction}
\author{Jorge Gonz\'alez C\'azares$^{*}$, David Kramer-Bang$^\dag$ \& Aleksandar Mijatovi\'c$^{\ddag}$}
\address{$^*$IIMAS--UNAM, Mexico.}
\address{$^\dag$Department of Mathematics, Aarhus University, Denmark.}
\address{$^\ddag$Department of Statistics, University of Warwick \& The Alan Turing Institute, UK.}
\email{$^*$jorge.gonzalez@sigma.iimas.unam.mx}
\email{$^\dag$bang@math.au.dk}
\email{$^\ddag$a.mijatovic@warwick.ac.uk}
\subjclass[2020]{Primary 60F05, 60G51; Secondary 60G52, 60F25.}
\begin{document}

\begin{abstract}
We develop two novel couplings between general pure-jump L\'evy processes in $\R^d$ and apply them to obtain upper bounds on the rate of convergence in a suitable Wasserstein distance on path space for a wide class of L\'evy processes attracted to a multidimensional stable process in the small-time regime. We also establish general lower bounds based on universal properties of slowly varying functions and the relationship between the Wasserstein and Toscani-Fourier distances. Our upper and lower bounds typically exhibit matching rates. In particular, the rate of convergence is polynomial for the domain of normal attraction and slower than a slowly varying function for the domain of non-normal attraction.
\end{abstract}

\maketitle

\section{Introduction}

Stable processes arise naturally as universal scaling limits of a vast class of stochastic processes at either small or large times. In particular, in the small-time regime, stable processes arise as weak limits of discretisation errors of widely used models in theoretical and applied probability~\cite{MR3833470,MR4244191,MR3803961}. Most of these models are based on L\'evy processes in the small-time domain of attraction of  stable processes~\cite{MR3784492,MR4161123,MR2792485}. In contrast with the more classical long-time regime of Lamperti, where the literature is abundant (see, e.g.~\cite{MR629531,MR1106283,MR2172843,MR3806899,manou-abi}), the study of the convergence in the small-time regime, which is the focus of this paper, has been underdeveloped. In the long-time regime, the convergence is a consequence of heavy tails with regularly varying tail probabilities or a finite second moment. On the other hand, in the small-time regime, the convergence depends on the activity of the small jumps of the underlying L\'evy process and does not depend on the behaviour of the tail probabilities~\cite{MR3784492}. However, having a heavy-tailed limit may severely deteriorate the convergence speed as uniform integrability typically fails. Quantifying such an error is a fundamental problem, crucial to numerous disparate application areas, such as controlling the bias of discretised models in mathematical finance and elsewhere (see~\cite{MR3784492} and the references therein), quantifying the model misspecification risk~\cite{MR3959085} or  asserting the convergence properties of estimators for the index of variation, such as Hill's estimator, which is known to require a second-order condition for the convergence to have good properties~\cite[p.~193--195]{MR1458613}. 

The main aim of the present paper is to establish lower and upper bounds in Wasserstein distance on the convergence rate of multivariate L\'evy processes attracted to a stable process in the domains of both normal and non-normal attraction (see definition in Section~\ref{sec:main_results} below). Moreover, we will show that our bounds are often sharp. Our upper bounds are applicable to a large class of L\'evy processes that are attracted to a multivariate $\alpha$-stable process (which is Gaussian if $\alpha=2$, and heavy-tailed if $\alpha\in(0,2)$), while our lower bounds are universal within the small-time regime. To establish the upper bounds on the path supremum norm, we construct two couplings between any two arbitrary L\'evy processes, inspired by the stochastic representations in~\cite{MR2307403}, and bound the $L^p$-norm of the maximum distance between the paths of the resulting processes. The lower bounds  for the domain of normal attraction are obtained by comparing the Wasserstein distance with the Toscani--Fourier distance between the marginals and in the case of  non-normal attraction, using a universal property of slowly varying functions. 

We show that in the domain of normal attraction to heavy-tailed laws, under suitable second-order assumptions, the rates of convergence of the upper and lower bounds are polynomial and agree for the $L^q$-norm in the cases $q<\alpha<1$ and $q=1<\alpha$, making our couplings rate-optimal in this sense. In the domain of non-normal attraction (to either Gaussian or a heavy-tailed stable law), the upper and lower bounds are both `slow' and in particular, the convergence is never faster than $\log^{-1-\ve}(1/t)$ as $t\da 0$ for any $\ve>0$. Moreover, for large subclasses of L\'evy processes, the upper and lower bounds on the convergence rate agree in the case $q=1<\alpha$ (see e.g. Corollary~\ref{cor:upper_lower_general}). In the domain of normal attraction to the Gaussian law, our upper and lower bounds are also polynomial and dependent on the Blumenthal--Getoor index of the attracted process. The bounds on the convergence rates in this case often agree and, when they do not, the gap between them is small (see Figure~\ref{fig:normal_rates} below). A short YouTube presentation~\cite{YouTube_talk} describes our results, including the ideas behind the proofs.

\subsection{Summary of our results in the heavy-tailed stable domain of attraction}

To introduce the summary of our results in Table~\ref{tab:general_results}, we introduce some notation:  $f(t) \lesssim g(t)$ as $t\da 0$ holds for two functions $f,g\ge 0$ if there exists $c,t_0>0$ satisfying $f(t) \le c g(t)$ for all $t \in (0,t_0]$. A function $G$ is slowly varying at $a\in[0,\infty]$, denoted $G \in \SV_a$, if $G$ is positive on a neighbourhood of $a$ and $\lim_{x \to a} G(cx)/G(x)=1$ for all $c\in(0,\infty)$ (typically, $a\in\{0,\infty\}$).

Table~\ref{tab:general_results} summarises our results on the convergence rates established here for processes in both domains of attraction of stable processes. Recall that an $\alpha$-stable process has a finite $q$-moment if and only if $q<\alpha$. Due to this technical constraint, our upper bounds on the $L^q$-Wasserstein distance, defined in~\eqref{eq:def_wasserstein} below, always require $q<\alpha$ for the corresponding domains of attraction. 
We remark that, in this case, both lower and upper bounds are typically asymptotically equivalent up to a multiplicative constant, making our methods and couplings rate-optimal. Indeed, in the domain of normal attraction, this occurs for any admissible $q>0$ if $\alpha\in(0,1)$ and for the $L^1$-Wasserstein distance if $\alpha\in(1,2)$. Our bounds for the domain of non-normal attraction are also seen to be rate optimal when $G$ is sufficiently regular (see discussion following Theorem~\ref{thm:upper_lower_general} and Corollary~\ref{cor:upper_lower_general} below) when $q=1$ (and hence $\alpha>1$).

\begin{table}[ht]
\centering
\begin{tabular}{|c|c|} \hline
 Domain of attraction as $t\downarrow0$& $\alpha \in (0,2)\setminus\{1\}$ and $q \in (0,\alpha)\cap(0,1]$ 
 \\ \hline
normal  
& $t^{1-q/\alpha}\lesssim \mW_q(\bm{X}^t,\bm{Z})\lesssim t^{1-q/\alpha}\1_{\{\alpha<1\}}+t^{q(1-1/\alpha)}\1_{\{\alpha>1\}}$
\\ \hline
& $L(t)\lesssim \max\{\mW_q(\bm{X}^t,\bm{Z}),\mW_q(\bm{X}^{2t},\bm{Z})\}\lesssim L(t)^q$, where \\   
non-normal  & $L(t)\coloneqq t|G'(t)|/G(t)$ \textbf{cannot} be bounded above by any\\ 
& 
non-decreasing integrable function  $\ell>0$: if $\int_0^1 \ell(t)t^{-1}\D t<\infty$\\
& (e.g. $\ell(t)=|\log t|^{-1-\ve}$ for any $\ve>0$),\\
& then $\limsup_{t\da 0}L(t)/\ell(t)=\infty$ 
\\ \hline
\end{tabular}
\caption{Summary of the main results: Theorems~\ref{thm:upper_lower_simple} and~\ref{thm:upper_lower_general}, Corollary~\ref{cor:upper_lower_general} and Proposition~\ref{prop:normal_domain_comonotonic}.}
    \label{tab:general_results}
\end{table}

More precisely, in Table~\ref{tab:general_results} we let $\bm{X}=(\bm{X}_t)_{t \in [0,1]}$ be a L\'evy process in $\R^d$ attracted to an $\alpha$-stable process $\bm{Z}=(\bm{Z}_t)_{t \in [0,1]}$ with normalising function $g$. That is, $\bm{X}^t=(\bm{X}_s^t)_{s\in[0,1]}\coloneqq (\bm{X}_{st}/g(t))_{s\in[0,1]}$, $t\in (0,1]$, satisfies $\bm{X}^t_1=\bm{X}_t/g(t)\cid\bm{Z}_1$ as $t\da 0$. The table provides asymptotic bounds on the distance $\mW_q(\bm{X}^t,\bm{Z})$ as $t \da 0$ in both regimes of attraction. We let the assumptions of either Theorem~\ref{thm:upper_lower_simple} (with $p=1$, for the domain of normal attraction) or Theorem~\ref{thm:upper_lower_general} (for the domain of non-normal attraction) hold for $\alpha \in (0,2)\setminus\{1\}$ and pick $q \in(0,\alpha)\cap(0,1]$ satisfying $\E[|\bm{X}_1|^q]<\infty$. 
We stress that the lower bounds in Table~\ref{tab:general_results} in both domains of normal and non-normal attraction require no assumptions beyond the existence of the scaling limit (see  Theorem~(\nameref{thm:small_time_domain_stable}) below for the precise characterisation of the domain of attraction of a stable process $\bm{Z}$). In particular, as explained in the caption of Table~\ref{tab:general_results}, for \textit{any} $\bf{X}$ in the domain of non-normal attraction and arbitrary $\ve>0$, there exists a positive increasing sequence $(t_k)_{k\in\N}$ tending to infinity, such that the lower bound on the $L^q$-Wasserstein distance satisfies $L(t_k)\ge |\log t_k|^{-1-\ve}$ for all $k\in\N$. In Example~\ref{ex:arbitrarily_slow} below, we show that, even if the slowly varying function $G(t)\coloneqq g(t)t^{-1/\alpha}$ in the scaling limit grows (or decreases) arbitrarily slowly, the function $L$ may be asymptotically equivalent to it and  bounded below by $1/\log^{1+\ve}(1/t)$ \textit{for all} small $t>0$. 

Recall that the slowly varying function $G$, appearing in the scaling limit $\bm{X}_t/g(t)\cid\bm{Z}_1$ as $t\da 0$, is uniquely determined up to asymptotic equivalence only. Interestingly, our results imply that the rate of convergence in the Wasserstein distance can be affected by different choices of $G$, see Remark~\ref{rem:upper_lower_general}(IV)  below for more details. Finally, we note that the couplings yielding the upper bounds in Table~\ref{tab:general_results} require some structural assumptions on the L\'evy measure of $\bf{X}$ discussed in Sections~\ref{sec:main_results} and~\ref{sec:stable_limits_upper} below.

\subsection{A heuristic account of our couplings of L\'evy processes}
\label{subsec:heuristics}
One of the main purposes of this article is to introduce two couplings between two arbitrary multivariate L\'evy processes 
$\bm{X}$ and $\bm{Y}$ and analyse their properties. Both coupling constructions are centred around coupling the respective Poisson jump measures $\Xi_{\bm{X}}$ and $\Xi_{\bm{Y}}$. As the Brownian components of $\bm{X}$ and $\bm{Y}$  are coupled synchronously under both couplings, the couplings are named after the techniques involved in coupling of the Poisson jump measures $\Xi_{\bm{X}}$ and $\Xi_{\bm{Y}}$:  the first is the \emph{thinning coupling}, as it is based on Poisson thinning (see Section~\ref{sec:thinning}), and the second is the \emph{comonotonic coupling}, based on the minimal transport coupling of real-valued random variables and LePage's simulation method (see Section~\ref{sec:comonotonic_coupling}). As illustrated in Figure~\ref{fig:couplings}, the thinning coupling aims to maximise the intersection of the Poisson jump measures, whereas the comonotonic coupling aims to establish an optimal one-to-one correspondence between the atoms of the Poisson jump measures. 

\begin{figure}[ht]
    \centering
\begin{subfigure}[ht]{.49\linewidth}
    \centering\includegraphics[width=1\linewidth]{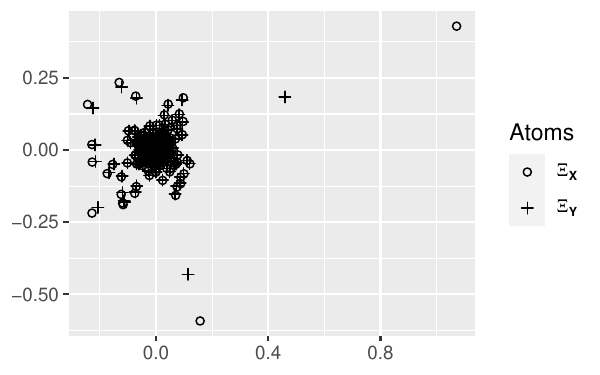}
    \caption{\small Comonotonic coupling}
  \end{subfigure}
  \begin{subfigure}[ht]{.49\linewidth}
    \centering\includegraphics[width=1\linewidth]{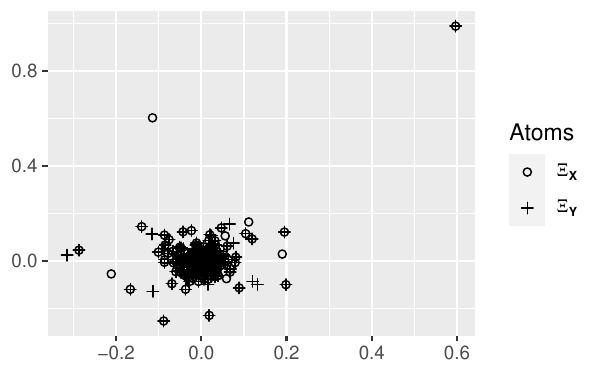}
    \caption{\small Thinning coupling}
  \end{subfigure}
    \caption{\small $\Xi_{\bm{X}}$ and $\Xi_{\bm{Y}}$ are Poisson random measures of jumps of the L\'evy processes $\bm{X}$ and $\bm{Y}$, respectively. Panel (A) depicts atoms of $\Xi_{\bm{X}}$ and $\Xi_{\bm{Y}}$ under the comonotonic coupling, where the angular component of each jump of $\bm{X}$ and $\bm{Y}$ coincides, while there magnitudes are coupled in the comonotonic fashion. In panel (B), the atoms of $\Xi_{\bm{X}}$ and $\Xi_{\bm{Y}}$ from the thinning coupling are sampled: either each jump of $\bm{X}$ coincides with a jump of $\bm{Y}$ or the two jumps are sampled independently.}
    \label{fig:couplings}
\end{figure}

The assumptions and constructions of both couplings are rather different. In the thinning coupling, we consider a common dominating L\'evy measure (say, the sum of both L\'evy measures) such that the L\'evy measures of both processes are absolutely continuous with respect to it with a bounded density. Then, we consider a Poisson measure with mean measure given by the dominating L\'evy measure and then thin the Poisson measure appropriately to produce coupled Poisson measures with mean measures given by the L\'evy measures of the processes. This maximises the common jumps of both processes. 

For the comonotonic coupling, we assume that both processes have a radial decomposition with their own angular measures. We then construct a radial decomposition for both with respect to a common angular measure. We use this measure and LePage's method to construct a one-to-one correspondence between jumps in which both processes jump in the same direction but with different magnitudes. Indeed, both Poisson measures are a transformation of a standard Poisson measure with independent decorations that select the direction of the jump. With our assumption, we construct such a transformation explicitly with the following properties. The decorations of both processes agree. Conditionally given a direction, the jumps of both processes, when ordered by decreasing magnitude, are in a one-to-one correspondence that mimics the relationship of real random variables under the minimal transport (or comonotonic) coupling. More precisely, the magnitudes are expressed as the right-continuous inverse of the radial tail L\'evy measure evaluated on the epochs of a standard Poisson process. 

\subsection{Comparison with the literature}

Few results identifying the small-time convergence rate exist in the multivariate setting even when the limit is Gaussian. Indeed, most results in this regime are restricted to dimension $1$ and often require finite jump activity~\cite{MR1805321,MR1482707,MR2867949,MR2851060}. In those situations, the limit law of the rescaled error can be identified for some functionals, leading to accurate estimates of the resulting bias and the celebrated continuity corrections~\cite{MR1482707,MR2851060}. 

For heavy-tailed stable limits (i.e. non-Gaussian) and infinite activity L\'evy processes attracted to the Gaussian law, again in one dimension, the literature is more scarce and only a fraction of the analogous results exist (see~\cite{MR4161123} for the convergence of certain path statistics to heavy-tailed limits). There are several complications in developing such results for small-time. First, the Berry--Esseen type bounds (see e.g.~\cite{MR1106283,MR2548505}), commonly used to establish convergence rates to the Gaussian law, fail to give convergent upper bounds since the jump intensity is vanishing in the small-time regime. Second, the rescaled variables often either fail to be uniformly integrable or their uniform integrability is difficult to prove (see, e.g.~\cite{MR4161123}). 

In~\cite{Ester2024}, the authors consider estimating the density of a discretely observed L\'evy process satisfying Orey's condition. Under the assumption that sufficiently large jumps are identifiable and removable in the sample, the estimation attains a minimax rate that is optimal up to a logarithmic factor if the Blumenthal--Getoor index is known. This regime is different from our situation, as the authors assume that we may remove all sufficiently large jumps. In fact, under this kind of assumption, the residual small-jump process may not be attracted to a stable process but to a Brownian motion~\cite{MR1834755}. 

In~\cite{MR3833470,Optimal2022}, the authors introduce couplings between L\'evy processes to bound the Wasserstein distance between them. The coupling in~\cite{MR3833470} is generic and pays special attention to the small jumps. However, the bounds fail to converge to zero when applied to a stable process and a L\'evy process in its small-time stable domain of attraction. In contrast to the coupling used in~\cite{MR3833470}, where the authors couple the big-jump components based on the magnitude of the jumps (i.e. based on a common threshold), we couple these components matching their jump intensities. Moreover, in~\cite{MR3833470} the authors couple the small jumps through an artificial Brownian motion, while we instead couple the compensated Poisson measures directly. On the other hand, the coupling in~\cite{Optimal2022}, based on McCann's coupling and Rogers’ results on random walks, is the optimal Markovian coupling. However, the coupling requires such tight control of the infinitesimal dynamics of the processes that the coupling could only be constructed for L\'evy processes with finitely many jumps on compact intervals, excluding all heavy-tailed stable processes and most processes in the small-time domain of attraction of Brownian motion.

Although the slow convergence phenomenon in the presence of a slowly varying function that does not converge to a positive constant has been observed in some specific settings such as in the case of Hill's estimator (see~\cite[p.~193--195]{MR1458613}), to the best of our knowledge it was first documented rigorously in~\cite{MR3806899} in an elementary general setting. The authors in~\cite{MR3806899} lower bound the Prokhorov distance between the marginals of the limit and that of a random walk in its domain of non-normal attraction with a function $b(n)$, satisfying  $\limsup_{n\to\infty}b(n)\log^{1+\ve}(n)=\infty$ for any $\ve>0$. However, as is often the case with lower bounds in the form of upper limits, the sparsity of the sequence of times along which the divergence holds remains unclear. The present paper extends the applicability of such a lower bound and strengthens the conclusions. In particular we show that the function analogous to $b(n)$ is typically slowly varying and provide some explicit asymptotically equivalent lower bounds.

\subsection{Organisation of the remainder of the article}
In Section~\ref{sec:main_results} we introduce our assumptions and the main results of the paper, namely, the upper and lower bounds on the convergence rate for processes in the domains of normal and non-normal attraction. More precisely, Section~\ref{subsec:stable_normal_attraction} deals with the heavy-tailed stable domain of normal attraction. Section~\ref{subsec:stable_non-normal_attraction} is dedicated to the rate of convergence in the stable domain of non-normal attraction. 
Section~\ref{subsec:selecting_the_coupling} discusses which  of the two couplings, simulated in Figure~\ref{fig:couplings} above, 
is used in each proof of the upper bounds in our results for the normal and non-normal stable domains of attraction, and why.
Section~\ref{subsec:Gaussian_domain_of_attraction} presents the results on the Gaussian domain of attraction. 
In Section~\ref{subsec:augment_stable}
we introduce a broad class of L\'evy processes (termed \textit{augmented stable processes}), which includes most of  Rosi\'nski's tempered stable class, a multivariate extension of Kuznetsov's $\beta$-class of L\'evy processes, and many meromorphic L\'evy processes, and we apply our results to augmented stable processes. Section~\ref{subsec:classical} explains the obstacles to directly applying the existing literature  to the problem of obtaining the bounds presented in Section~\ref{sec:main_results}.

The two couplings for general L\'evy processes on $\R^d$ used to establish the upper bounds on the Wasserstein distance in Section~\ref{sec:main_results} are introduced in Section~\ref{sec:couplings_general}. General upper bounds (in $L^p$) for each component in the L\'evy--It\^o decomposition of coupled L\'evy processes are also established in Section~\ref{sec:couplings_general}. The upper bounds for processes in both the normal and non-normal domains of attraction of a stable process (Gaussian and heavy-tailed) are established in Section~\ref{sec:stable_limits_upper}, while the lower bounds are established in Section~\ref{sec:lower_bounds}. The proofs of the results stated in Section~\ref{sec:main_results} are given in Section~\ref{sec:proofs}. Section~\ref{sec:conclusion} concludes the paper. 

\section{Assumptions and main results}
\label{sec:main_results}

The $L^q$-Wasserstein distance $\mW_q(\bm{\xi},\bm{\zeta})$, for any $q\in(0,\infty)$, between the laws of $\R^d$-valued random vectors $\bm{\xi}$ and $\bm{\zeta}$ equals $\inf_{\bm{\xi}'\eqd\bm{\xi}, \bm{\zeta}'\eqd\bm{\zeta}}\E[|\bm{\xi}'-\bm{\zeta}'|^q]^{1/(q\vee 1)}$ where the infimum is taken over all couplings $(\bm{\xi}',\bm{\zeta}')$ with $\bm{\xi}'\eqd\bm{\xi}$ and $\bm{\zeta}'\eqd\bm{\zeta}$ (throughout $|\cdot|$ denotes the Euclidean norm in $\R^d$ and $x\vee y:=\max\{x,y\}$ for $y,x\in\R$).\footnote{To show that $\mW_q$ is a metric for $q\in(0,1)$, note that for any $u>0$ and $y\ge0$ we have $q(u+y)^{q-1}\le qu^{q-1}$. Hence, by integrating in $u\in(0,x)$, we obtain $(x+y)^q\le x^q+y^q$ for all $x,y\ge0$.} For $\R^d$-valued stochastic processes $\bm{\mathcal{X}}=(\bm{\mathcal{X}}_t)_{t\in[0,1]}$ and $\bm{\mathcal{Y}}=(\bm{\mathcal{Y}}_t)_{t\in[0,1]}$, the $L^q$-Wasserstein distance, based on the uniform norm between the  paths, is given by:
\begin{equation}
\label{eq:def_wasserstein}
\mW_q(\bm{\mathcal{X}},\bm{\mathcal{Y}})
\coloneqq\inf_{\bm{\mathcal{X}}'\eqd\bm{\mathcal{X}},\bm{\mathcal{Y}}'\eqd\bm{\mathcal{Y}}}
    \E\bigg[\sup_{t\in[0,1]}
        |\bm{\mathcal{X}}'_t-\bm{\mathcal{Y}}'_t|^q\bigg]^{1/(q\vee 1)},
\qquad q>0,
\end{equation}
where  the infimum is taken over all couplings $(\bm{\mathcal{X}}',\bm{\mathcal{Y}}')$ with $\bm{\mathcal{X}}'\eqd\bm{\mathcal{X}}$ and $\bm{\mathcal{Y}}'\eqd\bm{\mathcal{Y}}$, where $\bm{\mathcal{X}}'\eqd\bm{\mathcal{X}}$ means that $\bm{\mathcal{X}}'$ and $\bm{\mathcal{X}}$ are equal in law as processes. The case $q\in(0,1)$ is important in our setting,  as the stable limit may not have a finite first moment. 

Let $\bm{X}=(\bm{X}_t)_{t \in [0,1]}$ and $\bm{Z}=(\bm{Z}_t)_{t\in [0,1]}$ be L\'evy processes in $\R^d$ (see~\cite[Ch.~1, Def.~1.6]{MR3185174} for definition), where $\bm{Z}$ is $\alpha$-stable (see Definition~\ref{def:stable} below for details). Note that, for any $\alpha\in(0,2]$, the generating triplet (see~\cite[Def.~8.2]{MR3185174} for a definition) of the $\alpha$-stable L\'evy process $\bm{Z}$ is given by $(\bm{\gamma_Z},\bm{\Sigma_Z}\bm{\Sigma_Z}^\tra,\nu_{\bm{Z}})$ (for the cutoff function $\bm{w}\mapsto\1_{B_{\bm{0}}(1)}(\bm{w})$ where $B_{\bm{0}}(r)\coloneqq \{x \in \R^d: |\bm{x}|<r\}$), where the L\'evy measure equals
\begin{equation}
\label{eq:Levy-measure-stable}
\nu_{\bm{Z}}(A)
\coloneqq c_\alpha\int_0^\infty\int_{\Sp^{d-1}}
    \1_{A}(x\bm{v})\sigma(\D\bm{v})x^{-\alpha-1}\D x,
\quad A\in\mathcal{B}(\R^d_{\bm{0}}),
\end{equation}
for a probability measure $\sigma$ on the Borel $\sigma$-algebra $\mathcal{B}(\Sp^{d-1})$ (hereafter, $\R^d_{\bm 0}\coloneqq\R^d\setminus\{\bm 0\}$ and $\Sp^{d-1}$ denotes the unit sphere in $\R^d$)\footnote{Note that $\bm{Z}$ need not be isotropic: $\sigma$ need not be uniform on $\Sp^{d-1}$.} and an ``intensity'' parameter $c_\alpha\in[0,\infty)$, satisfying $c_\alpha=0$ when $\alpha=2$ and $\bm{\Sigma_Z}=\bm{0}$ otherwise.

We say $\bm{X}$ is in the \textit{small-time domain of attraction} of $\bm{Z}$ if $(\bm{X}_{st}/g(t))_{s \in [0,1]}\cid (\bm{Z}_s)_{s\in [0,1]}$ as $t \da 0$ in the Skorokhod space for some normalising positive function $g:(0,1]\to(0,\infty)$. Then, it is well known that $\bm{Z}$ is $\alpha$-stable for some $\alpha\in(0,2]$ and the normalising function admits the representation $g(t)=t^{1/\alpha}G(t)$ where $G\in\SV_0$ (see~\cite[Eq.~(8)]{MR3784492}) is asymptotically unique (see Theorem~(\nameref{thm:small_time_domain_stable}) below for the description of all L\'evy processes $\bm{X}$ attracted to $\bm{Z}$). We say $\bm{X}$ is in the \textit{domain of normal attraction} if the slowly varying function $G$ has a positive finite limit at $0$ (see~\cite[p.~181]{MR0233400}). Otherwise, we say $\bm{X}$ is in the \textit{domain of non-normal attraction}. Throughout the paper we denote $\bm{X}^t=(\bm{X}_s^t)_{s\in[0,1]}\coloneqq (\bm{X}_{st}/g(t))_{s\in[0,1]}$ for $t\in (0,1]$. 

\subsection{Heavy-tailed stable domain of normal attraction}
\label{subsec:stable_normal_attraction}
For a L\'evy process $\bm{X}$ to be in the domain of attraction of an $\alpha$-stable process $\bm{Z}$, the necessary condition in~\eqref{eq:jump-stable-limit} of Theorem~(\nameref{thm:small_time_domain_stable}) suggests the L\'evy measure of $\bm{X}$ around the origin should be ``asymptotically absolutely continuous'' with respect to the $\alpha$-stable L\'evy measure of  $\bm{Z}$ (see also Remark~\ref{rem:finite-moments-under-T}(b) below). An effective use of the thinning coupling requires the generating triplet $(\bm{\gamma_X},\bm{\Sigma_X}\bm{\Sigma_X}^\tra,\nu_{\bm{X}})$ of $\bm{X}$ to satisfy the following.

\begin{assumption*}[T]
\label{asm:T}
Assume that $\bm{X}$ has no Gaussian component (i.e., $\bm{\Sigma_X}=\bm{0}$) and its L\'evy measure has a decomposition 
$\nu_{\bm{X}}=\nu_{\bm{X}}^{\co}+\nu_{\bm{X}}^{\mathrm{d}}$
satisfying the following: $\nu_{\bm{X}}^{\mathrm{d}}$ is arbitrary with finite mass $\nu_{\bm{X}}^{\mathrm{d}}(\R^d_{\bm{0}})<\infty$ and
\[
\nu_{\bm{X}}^{\co}(\D \bm{w})=c^{-1}f_{\bm{S}}(\bm{w})\nu_{\bm{Z}}(\D \bm{w})\quad\&\quad
|f_{\bm{S}}(\bm{w})-c|\le K_T(1\wedge |\bm{w}|^p),\quad\text{for all}\quad\bm{w} \in \R^d_{\bm{0}},
\]
a measurable function 
$f_{\bm{S}}:\R^d_{\bm{0}}\to[0,1]$  and constants
$K_T\in[0,\infty)$, $p\in(0,\infty)$ and $c\in(0,1]$.   
\end{assumption*}

\begin{remark}
\label{rem:finite-moments-under-T}
\nf{(a)} Assumption~(\nameref{asm:T}) quantifies the regularity of the corresponding density at the origin $\bm{0}$ via the parameter $p>0$ (the larger $p$ is, the more asymptotic regularity there is). 
Condition~\eqref{eq:jump-stable-limit} in Theorem~(\nameref{thm:small_time_domain_stable}) suggests that the L\'evy measure of the process $\bm{X}$, which is in the domain of attraction of a stable process $\bm{Z}$, possesses a decomposition of the type $\nu_{\bm{X}}=\nu_{\bm{X}}^{\co}+\nu_{\bm{X}}^{\mathrm{d}}$. Moreover, Assumption~(\nameref{asm:T}), widely satisfied in practice (e.g. the class of augmented stable processes introduced in Section~\ref{subsec:augment_stable} below, which includes the exponentially tempered stable processes~\cite{MR2327834} with $p=1$), can be seen as quantifying the speed of convergence in the necessary condition~\eqref{eq:jump-stable-limit} for $\bm{X}$ to be in the stable domain of attraction. In this sense, Assumption~(\nameref{asm:T}) is a second-order condition.

\noindent\nf{(b)} Assumption~(\nameref{asm:T}) implies $\int_{\R^d\setminus B_{\bm{0}}(1)}|\bm{w}|^q\nu_{\bm{X}}(\D\bm{w})\le (1+K_Tc^{-1})\int_{\R^d\setminus B_{\bm{0}}(1)}|\bm{w}|^q\nu_{\bm{Z}}(\D\bm{w})<\infty$ for all $q\in(0,\alpha)$. Hence, by~\cite[Thm~25.3]{MR3185174}, the component $\bm{S}$ of $\bm{X}$ with L\'evy measure $\nu^\co_{\bm{X}}$ has as many moments as the limit $\bm{Z}$. Note that this is not a restriction on $\bm{X}$ since $\nu_{\bm{X}}^{\mathrm{d}}$ may contain all the mass of $\nu_{\bm{X}}$ outside of some neighborhood of $\bm{0}$.
\end{remark}

Throughout the paper, for positive functions $f_1$ and $f_2$, we use the notation $f_1(x)=\Oh(f_2(x))$ (resp. $f_1(x)=\oh(f_2(x))$; $f_1(x)\sim f_2(x)$) as $x \da 0$ if $\limsup_{x \da 0}f_1(x)/f_2(x)<\infty$ (resp. $\lim_{x \da 0}f_1(x)/f_2(x)=0$; $\lim_{x \da 0}f_1(x)/f_2(x)=1$).

\begin{theorem}
\label{thm:upper_lower_simple}
Let $\alpha \in (0,2)\setminus\{1\}$, and suppose $\bm{Z}$ is $\alpha$-stable and $\bm{X}$ is in the domain of normal attraction of $\bm{Z}$.\\
{\nf{(a)}} Let Assumption~(\nameref{asm:T}) hold for $p\ge 1\vee 2(\alpha-1)$. Then, for any $q\in(0,1]\cap(0,\alpha)$ with $\E[|\bm{X}_1|^q]<\infty$, as $t \da 0$, 
\[
\mW_q\big(\bm{X}^t,\bm{Z}\big)
=\begin{dcases}
    \Oh\big(t^{1-q/\alpha}\big), 
   & \alpha<1,\\
   \Oh\big(t^{q(1-1/\alpha)}\big), 
   & \alpha>1.
\end{dcases}
\]
{\nf{(b)}} If $\bm{X}$ does not have the law of $\bm{Z}$, then for any $q\in(0,1]\cap(0,\alpha)$ there exists some $C_q>0$ satisfying 
\begin{align*}
\mW_q(\bm{X}^t,\bm{Z})
    \ge\mW_q(\bm{X}_1^t,\bm{Z}_1) 
    \ge C_q t^{1-q/\alpha},
\quad \text{for all sufficiently small }t>0. 
\end{align*}
\end{theorem}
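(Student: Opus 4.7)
The plan is to apply the thinning coupling from Section~\ref{sec:couplings_general} to $(\bm{X},\bm{Z})$, exploiting Assumption~(\nameref{asm:T}). The control $|f_{\bm{S}}/c-1|\le K_Tc^{-1}(1\wedge|\bm{w}|^p)$ allows the coupled Poisson jump measures to be constructed so as to overlap maximally, in which case the coupled difference $\bm{D}_t\coloneqq \bm{X}_t-\bm{Z}_t$ is itself a L\'evy process whose L\'evy measure $\nu_{\bm{D}}$, supported on the mismatched jumps, is controlled by $(1\wedge|\bm{w}|^p)\nu_{\bm{Z}}(\D\bm{w}) + \nu_{\bm{X}}^{\mathrm{d}}(\D\bm{w})$ (modulo reflection of the $\bm{Z}$-only jumps). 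The hypothesis $p\ge 1\vee 2(\alpha-1)$ yields $1+p>\alpha$ in both regimes, so that $\int_{B_{\bm{0}}(1)}|\bm{w}|\nu_{\bm{D}}(\D\bm{w})<\infty$ and $\bm{D}$ has finite variation. Applying the general $L^p$-bounds for the L\'evy--It\^o components of coupled L\'evy processes from Section~\ref{sec:couplings_general}: when $\alpha>1$ and $q=1$, the finite-variation representation gives $\E[\sup_{s\le t}|\bm{D}_s|]\le |\bm{\gamma}_{\bm{D}}|t + t\int|\bm{w}|\nu_{\bm{D}}(\D\bm{w})=\Oh(t)$ (the large-jump integral being finite by $\E[|\bm{X}_1|]<\infty$), so rescaling by $t^{-1/\alpha}$ produces $\mW_1(\bm{X}^t,\bm{Z})=\Oh(t^{1-1/\alpha})$; Jensen's inequality applied to the $\mW_1$-optimal path coupling then yields $\mW_q\le\mW_1^q=\Oh(t^{q(1-1/\alpha)})$ for $q\in(0,1]$. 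When $\alpha<1$, the scaling limit $\bm{X}_t/t^{1/\alpha}\cid\bm{Z}_1$ combined with $1-1/\alpha<0$ forces the drift of $\bm{D}$ to vanish, giving $\E[\sup_{s\le t}|\bm{D}_s|^q]\le t\int|\bm{w}|^q\nu_{\bm{D}}(\D\bm{w})=\Oh(t)$ using $\E[|\bm{X}_1|^q]<\infty$, and rescaling by $t^{-q/\alpha}$ produces $\mW_q(\bm{X}^t,\bm{Z})=\Oh(t^{1-q/\alpha})$.

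\textbf{Lower bound~(b).} Since the uniform norm dominates the endpoint value, $\mW_q(\bm{X}^t,\bm{Z})\ge\mW_q(\bm{X}_1^t,\bm{Z}_1)$, and it suffices to bound the marginal distance from below. The elementary estimate $|e^{ia}-e^{ib}|\le 2^{1-q}|a-b|^q$ for $q\in(0,1]$ yields the Wasserstein--Fourier inequality $\mW_q(\mu,\nu)\ge 2^{q-1}|\bm{\xi}|^{-q}|\phi_\mu(\bm{\xi})-\phi_\nu(\bm{\xi})|$ valid at any $\bm{\xi}\in\R^d$. Since $\bm{X}$ does not have the law of $\bm{Z}$, there exists $\bm{\eta}_0\in\R^d$ with $\Psi_{\bm{X}}(\bm{\eta}_0)\ne\Psi_{\bm{Z}}(\bm{\eta}_0)$. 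Evaluating the characteristic functions at $\bm{\xi}=t^{1/\alpha}\bm{\eta}_0$ and invoking the strict $\alpha$-self-similarity $\Psi_{\bm{Z}}(t^{1/\alpha}\bm{\eta}_0)=t\Psi_{\bm{Z}}(\bm{\eta}_0)$, one obtains
\[
\bigl|\phi_{\bm{X}_1^t}(\bm{\xi})-\phi_{\bm{Z}_1}(\bm{\xi})\bigr| = \bigl|e^{t\Psi_{\bm{X}}(\bm{\eta}_0)}-e^{t\Psi_{\bm{Z}}(\bm{\eta}_0)}\bigr| \ge c\,t \quad\text{for all small }t>0,
\]
where $c>0$ depends only on $\bm{\eta}_0$ (by a first-order expansion of $e^z$ and the fact that $\mathrm{Re}\,\Psi_\cdot(\bm{\eta}_0)\le 0$). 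Substituting into the Wasserstein--Fourier bound produces $\mW_q(\bm{X}_1^t,\bm{Z}_1)\ge C_q\,t^{1-q/\alpha}$ with $C_q\coloneqq 2^{q-1}c|\bm{\eta}_0|^{-q}$ for small $t>0$.

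\textbf{Main obstacle.} The main technical point in~(a) is verifying that the thinning coupling genuinely produces the stated structural bound on $\nu_{\bm{D}}$ and that the drift of $\bm{D}$ vanishes when $\alpha<1$ (an identity enforced by compatibility with the scaling limit described in Theorem~(\nameref{thm:small_time_domain_stable})). Once these are in place, the moment estimates are routine. Part~(b) reduces to a short characteristic-function computation that is largely self-contained given the Wasserstein--Fourier comparison.
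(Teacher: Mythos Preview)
Your Part~(b) is correct and coincides with the paper's argument: Lemma~\ref{lem:ddim_lowbound_wass} gives precisely your Wasserstein--Fourier inequality, and Lemma~\ref{lem:lower_bound_alpha_stab} carries out exactly the characteristic-function expansion at $\bm{\xi}=t^{1/\alpha}\bm{\eta}_0$.

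Your Part~(a) takes a genuinely different and more direct route than the paper. The paper works with the \emph{rescaled} process $\bm{S}^t$, applies the thinning coupling there, and then bounds the pieces of the L\'evy--It\^o decomposition at a time-dependent cutoff $\kappa(t)=t^r$: small jumps in $L^2$ via Proposition~\ref{prop:L2-thin}, large jumps in $L^q$ via Proposition~\ref{prop:Lq-J-thin}, drift directly, and finally optimises over $r$ (see Theorem~\ref{thm:d_thin_dom_attract} and the case analysis in the proof in Section~\ref{sec:proofs}). You instead couple at the original time scale, observe that under thinning the difference $\bm{D}=\bm{X}-\bm{Z}$ is a finite-variation L\'evy process (this is where $p>\alpha-1$ enters, implied by $p\ge 1\vee 2(\alpha-1)$), bound $\E[\sup_{s\le t}|\bm{D}_s|^q]=\Oh(t)$ elementarily, and rescale via the self-similarity $(\bm{Z}_{st}/t^{1/\alpha})_s\eqd(\bm{Z}_s)_s$. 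This bypasses the cutoff optimisation entirely. The paper's machinery (Theorem~\ref{thm:d_thin_dom_attract}) is built for arbitrary $p>0$, where $\bm{D}$ need not have finite variation; your argument trades that generality for simplicity. In fact your route yields the stated rate already under the weaker condition $p>\alpha-1$, which the paper obtains only via the comonotonic coupling (cf.\ the remark after Proposition~\ref{prop:normal_domain_comonotonic}).

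There is, however, a genuine gap in your $\alpha>1$, $q<1$ case. Your Jensen step $\mW_q\le\mW_1^q$ presupposes $\mW_1(\bm{X}^t,\bm{Z})<\infty$, which in turn requires $\E[|\bm{X}_1|]<\infty$. The theorem only assumes $\E[|\bm{X}_1|^q]<\infty$, and since $\nu_{\bm{X}}^{\mathrm{d}}$ in Assumption~(\nameref{asm:T}) is an arbitrary finite measure, $\bm{X}$ may fail to have a first moment. The fix is to split $\bm{X}=\bm{S}+\bm{R}$ as in Remark~\ref{rem:construct_s_R_thin}: then $\bm{S}$ automatically has finite first moment (Remark~\ref{rem:finite-moments-under-T}(b)), so your argument gives $\mW_q(\bm{S}^t,\bm{Z})\le\mW_1(\bm{S}^t,\bm{Z})^q=\Oh(t^{q(1-1/\alpha)})$, while the compound-Poisson remainder satisfies $\E[\sup_{s\le 1}|\bm{R}^t_s|^q]=t^{-q/\alpha}\E[\sup_{s\le t}|\bm{R}_s|^q]\le t^{1-q/\alpha}\int|\bm{w}|^q\nu_{\bm{X}}^{\mathrm{d}}(\D\bm{w})=\Oh(t^{1-q/\alpha})$, which is dominated since $q(1-1/\alpha)\le 1-q/\alpha$ for $q\le 1$.
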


\begin{remark}
\nf{(a)} The upper bounds in Theorem~\ref{thm:upper_lower_simple}(a) are based on the thinning coupling in Section~\ref{sec:thinning} below. The upper bounds are asymptotically proportional to the lower bounds of Theorem~\ref{thm:upper_lower_simple}(b) when either $\alpha<1$ or $q=1$ when $\alpha>1$,  making the thinning coupling rate-optimal with respect to these Wasserstein distances. Coincidentally, the upper bounds decay the fastest for small values of $q$ when $\alpha<1$ and for $q=1$ when $\alpha>1$.
Moreover, the multiplicative constants in $\Oh$ can be made explicit and depend on the dimension $d$ only through the characteristics of $\bm{X}$ and $\bm{Z}$. The lower bounds are based on the lower bound on the Toscani--Fourier distance, see Section~\ref{subsec:lower_bounds_toscani_fourier} below for details.\\
\nf{(b)} Observe that $\E[|\bm{Z}_1|^q]<\infty$ for $q<\alpha$ and that most models in practice satisfy Assumption~(\nameref{asm:T}) with $p=1$. Theorem~\ref{thm:upper_lower_simple} thus focuses on the case $p\ge 1\vee 2(\alpha-1)$ in order to simplify the exposition, while retaining the key message of the paper. Our technical result Theorem~\ref{thm:d_thin_dom_attract} in Section~\ref{sec:stable_limits_upper} (resp. Lemma~\ref{lem:lower_bound_alpha_stab} in Section~\ref{subsec:lower_bounds_toscani_fourier}), used to prove part~(a) (resp. part~(b)) of Theorem~\ref{thm:upper_lower_simple}, covers all parameters $p>0$ and $\mW_q$-distances with $q\in(0,1]\cap (0,\alpha)$. The statement of the corresponding general version of Theorem~\ref{thm:upper_lower_simple} is omitted for brevity.\\
\nf{(c)} It is unsurprising to see the convergence of the $\mW_q$-distance deteriorate as $\alpha\da 1$ (for fixed $q=1$) since there is a regime change for the limiting stable processes, switching from infinite variation processes with finite mean to finite variation processes with infinite mean.\\
\nf{(d)} If we remove the normalisation in $\bm{X}^t$ by multiplying it with the normalising function $g(t)= t^{1/\alpha}$ and applying the self-similarity of $\bm{Z}$, then Theorem~\ref{thm:upper_lower_simple} implies that the Wasserstein distance  $\mW_q(\bm{X}_t,\bm{Z}_t)$ between the marginals of $\bm{X}$ and $\bm{Z}$ at time $t$ is of order $\Oh(t)$ (resp. $\Oh(t^q)$) when $\alpha<1$ (resp. $\alpha>1$). 
Moreover, $\mW_q(\bm{X}_t,\bm{Z}_t)$ is also lower bounded by a multiple of $t$. In this interpretation of Theorem~\ref{thm:upper_lower_simple}, the rates depend on $\alpha$ only through whether $\alpha<1$ or $\alpha>1$ and not on its actual value. 
\end{remark}

\subsection{Stable domain of non-normal attraction}
\label{subsec:stable_non-normal_attraction}
Consider the case where the slowly varying function $G$ in the scaling limit $\bm{X}_{t}/(t^{1/\alpha}G(t))\cid \bm{Z}_1$, as $t \da 0$, is not asymptotically equivalent to a positive constant. In this section, we show that the lower bound on the $\mW_q$-distance cannot be upper bounded by a positive non-decreasing function $\phi$ satisfying $\int_0^1\phi(t)t^{-1}\D t<\infty$. The lower bound requires no assumptions (beyond $\bm{X}$ belonging to a domain of attraction), while the upper bounds require two sets of assumptions. The first gives us multiplicative non-asymptotic control over the distance from $G(xt)/G(t)$ to $1$ for small $t>0$ and any $x>0$ (recall that $G(xt)/G(t)\to 1$ as $t\da 0$ by the slow variation of $G$).
\begin{assumption*}[S]
\label{asm:S}
There exist $G_1,\,G_2:[0,\infty) \to [0,\infty)$, such that $G_2$ is bounded and satisfies $G_2(t)\to 0$ as $t \da 0$, $G_1$ is a slowly varying function both at zero and at infinity and   
\begin{equation*}
\left|G(t/x)/G(t)-1 \right|
\le G_1(x)G_2(t), \quad \text{ for all }x>0\text{ and all sufficiently small }t>0.
\end{equation*} 
\end{assumption*}

Define the directional measure $\rho_{\bm{Z}}(\D x,\bm{v})\coloneqq c_\alpha x^{-\alpha-1}\D x$ on $\mathcal{B}((0,\infty))$ and note that the right inverse of its tail
$x \mapsto \rho_{\bm{Z}}([x,\infty),\bm{v})$ is given by 
$\rho_{\bm{Z}}^{\la}(x,\bm{v})=(c_\alpha/\alpha)^{1/\alpha}x^{-1/\alpha}$ for all $x>0$ and $\bm{v}\in \Sp^{d-1}$.
The comonotonic coupling of $\bm{Z}$ and $\bm{X}$, used to establish the upper bound on the Wasserstein distance in the weak limit, requires the following assumption on the generating triplet $(\bm{\gamma_X},\bm{\Sigma_X}\bm{\Sigma_X}^\tra,\nu_{\bm{X}})$ of $\bm{X}$.
 
\begin{assumption*}[C]
\label{asm:C} 
Assume $\bm{X}$ has no Gaussian component $\bm{\Sigma_X}=\bm{0}$ and $\nu_{\bm{X}}=\nu_{\bm{X}}^{\co}+\nu_{\bm{X}}^{\mathrm{d}}$, where the measure $\nu_{\bm{X}}^{\mathrm{d}}$ is arbitrary with finite mass  $\nu_{\bm{X}}^{\mathrm{d}}(\R^d_{\bm{0}})<\infty$ and  the L\'evy measure $\nu_{\bm{X}}^{\co}$ 
can be expressed as 
\begin{equation}
\label{eq:radial_tail_decomp}
    \nu_{\bm{X}}^{\co}(B)
=\int_{\Sp^{d-1}}\int_0^\infty \1_{B}(x\bm{v})\rho_{\bm{X}}^{\co}(\D x,\bm{v})\sigma(\D\bm{v}) \quad\&\quad
\rho_{\bm{X}}^{\co}([x,\infty),\bm{v})
=\frac{c_\alpha}{\alpha}
    (1+h(x,\bm{v}))H(x)^\alpha x^{-\alpha}
\end{equation}
for all $B\in\mathcal{B}(\R^d_{\bm{0}})$,
$x>0$, $\bm{v}\in \Sp^{d-1}$ and some
monotonic function $H:(0,\infty)\to (0,\infty)$ (either non-increasing or non-decreasing), slowly varying at $0$, and a measurable function $h:(0,\infty)\times \Sp^{d-1}\to [-1,\infty)$. Define
\begin{equation}
\label{eq:defn_G}
G(x)\coloneqq\int_{\Sp^{d-1}}H(\rho_{\bm{X}}^{\co\la}(1/x,\bm{u}))\sigma(\D \bm{u}),\qquad x>0,
\end{equation}
where  
$\rho_{\bm{X}}^{\co\la}(\cdot,\bm{v})$ is the right-continuous inverse of $x\mapsto \rho_{\bm{X}}^\co([x,\infty),\bm{v})$, and assume that $H$, $h$ and $G$ satisfy
\begin{equation}\label{eq:old_assump_(H)}
|h(x,\bm{v})|
    \le K_h(1\wedge x^{p}) 
\quad \& \quad 
\left|H(\rho_{\bm{X}}^{\co\la}(1/x,\bm{v}))/G(x)-1\right|
    \le K_Q(1\wedge x^{\delta}) 
\quad \text{for all }x>0, \bm{v} \in \Sp^{d-1},
\end{equation}
and some constants $p,\delta>0$ and $K_h,K_Q\ge 0$. 
\end{assumption*}

\begin{remark}
\label{rem:asm-C-vs-minimal}
Condition~\eqref{eq:jump-stable-limit} in Theorem~(\nameref{thm:small_time_domain_stable}) below states that the L\'evy measure of the process $\bm{X}$ in the domain of attraction of a stable process $\bm{Z}$ behaves as the L\'evy measure of $\bm{Z}$ in every half-space of the form $\scrL_{\bm v}(c)$ for every $\bm{v}\in\Sp^{d-1}$ and small $c>0$. Assumptions~(\nameref{asm:S}) and (\nameref{asm:C}) may thus be interpreted as a refinement of this condition, requiring the L\'evy measure of $\bm{X}$ to satisfy an analogue of~\eqref{eq:jump-stable-limit} but on every ray directed by $\bm{v}\in\Sp^{d-1}$ and quantifying how fast such a limit holds. In other words, Assumption~(\nameref{asm:C}) essentially requires the regular variation of the $\nu^\co_{\bm X}$ at the origin to be sufficiently uniform in every direction $\bm{v}\in\Sp^{d-1}$ via the second-order conditions on $h$ and $H/G$ in~\eqref{eq:old_assump_(H)} (which are similar to the second-order condition of Assumption~(\nameref{asm:T}) in spirit).

Under Assumptions~(\nameref{asm:S}) and (\nameref{asm:C}) and for $G$ as in \eqref{eq:defn_G}, let $g(t):=t^{1/\alpha}G(t)$ with $t\in(0,1]$. For such a $g$ it follows that $(\bm{X}_{st}/g(t))_{s\in [0,1]} \cid (\bm{Z}_s)_{s \in [0,1]}$, i.e. $\bm{X}$ is in the small-time domain of non-normal attraction of $\bm{Z}$ (by Theorem~(\nameref{thm:small_time_domain_stable}) below). Thus, under Assumption~(\nameref{asm:C}), the function $G$ is explicit, monotonic and slowly varying at infinity. In fact, for any $\bm{v}\in\Sp^{d-1}$, we have\footnote{Throughout, we use the notation $f(x) \sim g(x)$ as $x \to a$, if $\lim_{x \to a} f(x)/g(x)=1$.} $G(x)\sim H(\rho_{\bm{X}}^{\co\la}(1/x,\bm{v}))$ as $x\da 0$ by virtue of~\eqref{eq:old_assump_(H)}, which is slowly varying by~\cite[Prop.~1.5.7(ii)]{MR1015093} (since $x\mapsto\rho_{\bm{X}}^{\co\la}([x,\infty),\bm{v})$ is regularly varying at infinity and $H$ is slowly varying).
\end{remark}

Intuitively, these assumptions require non-parametric structural properties of the L\'evy measure of $\bm{X}$ that allows us to compare it to the  L\'evy measure of the stable limit $\bm{Z}$. Indeed, the necessary condition in~\eqref{eq:jump-stable-limit} of Theorem~(\nameref{thm:small_time_domain_stable}) suggests the L\'evy measure of $\bm{X}$ around the origin should ``asymptotically admit a radial decomposition that is close to that of the stable process''. Assumption~(\nameref{asm:C}) states precisely this and specifies the proximity of the corresponding radial decomposition to that of the stable process via the parameters $p,\delta>0$ (as before, the larger $p$ and $\delta$ are, the closer the radial decompositions are). Moreover, both conditions are commonly satisfied (for example, within the class of augmented stable processes, see Section~\ref{subsec:augment_stable} below) and, with $p=1$, for the class of exponentially tempered $\alpha$-stable processes (see~\cite{MR2327834} and Example~\ref{ex:exp_temp_stable} below). 

\begin{theorem}
\label{thm:upper_lower_general}
Let $\bm{X}$ belong to the domain of non-normal attraction of an $\alpha$-stable process $\bm{Z}$.\\
{\normalfont(a)} Let $\alpha\in(0,2)\setminus\{1\}$ and Assumptions~(\nameref{asm:C}) and (\nameref{asm:S}) hold for some $p\in(0,\infty)\setminus\{ \alpha-1\}$, $\delta>0$ and $G_2\in\SV_0$. Then, 
\[
\mW_q(\bm{X}^t,\bm{Z})
=\Oh ( G_2(t)^q)
\qquad\text{as }t \da 0,
\]
for any $q\in(0,1]\cap(0,\alpha)\setminus\{\alpha/(p+1),\alpha/(\alpha\delta+1)\}$ with $\E[|\bm{X}_1|^q]<\infty$.\\
{\normalfont(b)} Let $\alpha \in (0,2]$ and define $a(t) \coloneqq G(2t)/G(t)$ for $t>0$. Then for any $q\in(0,1]\cap(0,\alpha)$,  
\begin{align*}
\max\big\{\mW_q(\bm{X}^t_1,\bm{Z}_1)
,\mW_q(\bm{X}^{2t}_1,\bm{Z}_1)\big\}
\ge |1-a(t)^q|\E\big[|\bm{Z}_1|^q\big]/3,
\quad \text{for all sufficiently small }t >0.
\end{align*} 
Moreover, $|1-a(t)^q|$ cannot be upper bounded by a non-decreasing function $\phi$ with $\int_0^1\phi(t)t^{-1}\D t<\infty$. 
\end{theorem}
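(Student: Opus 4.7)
For part~(a), the plan is to apply the comonotonic coupling of Section~\ref{sec:comonotonic_coupling} between $\bm{X}$ and $\bm{Z}$ and then quantify the resulting $L^q$-error on path space using the second-order structure encoded in Assumptions~(\nameref{asm:C}) and~(\nameref{asm:S}). Decompose $\bm{X}=\bm{X}^{\co}+\bm{X}^{\mathrm{d}}$ as in~(\nameref{asm:C}) and comonotonically couple $\bm{X}^{\co}$ with $\bm{Z}$, so that they share Poisson epochs and angular samples $(\tau_k,\bm{v}_k)$ drawn from $\sigma$, while jump magnitudes are given by the right-continuous inverses of the respective radial tails. After rescaling space by $g(t)=t^{1/\alpha}G(t)$ and time by $t$, the identity $t\,\nu^{\co}_{\bm{X}}(g(t)\,\cdot)$ combined with~\eqref{eq:radial_tail_decomp} gives
\begin{equation*}
\rho^{\co,t}_{\bm{X}}([x,\infty),\bm{v})
=\tfrac{c_\alpha}{\alpha}\,x^{-\alpha}(1+h(g(t)x,\bm{v}))\bigl(H(g(t)x)/G(t)\bigr)^{\alpha},
\end{equation*}
whose right-continuous inverse at $\tau$ differs multiplicatively from $\rho^{\la}_{\bm{Z}}(\tau,\bm{v})=(c_\alpha/\alpha)^{1/\alpha}\tau^{-1/\alpha}$ by a factor of the form $1+\Oh(h)+\Oh(H/G-1)$.

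The sup-norm $L^q$-error ($q\in(0,1]$) is then bounded by sub-additivity of $|\cdot|^q$ through the expected sum of pointwise jump discrepancies, which rewrites as an integral against $\nu^{\co}_{\bm{X}}$. Estimating the integrand using $|h(y,\bm{v})|\le K_h(1\wedge y^p)$, $|H(\rho^{\co\la}_{\bm{X}}(1/y,\bm{v}))/G(y)-1|\le K_Q(1\wedge y^{\delta})$ and the control $|G(t/x)/G(t)-1|\le G_1(x)G_2(t)$ from~(\nameref{asm:S}) yields, after the radial integration, the target rate $\Oh(G_2(t)^q)$. The drift compensation and the finite-activity residual $\bm{X}^{\mathrm{d}}$ contribute at most $\Oh(t^q)$, which is absorbed into $\Oh(G_2(t)^q)$ by slow variation. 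The principal obstacle is the regime analysis in the radial integral: it splits according to $\alpha\lessgtr 1$ and $p\lessgtr\alpha-1$, and logarithmic singularities appear precisely at the excluded exponents $q\in\{\alpha/(p+1),\alpha/(\alpha\delta+1)\}$; I would expect to encapsulate this analysis in a general comonotonic-coupling error theorem in Section~\ref{sec:stable_limits_upper} and then deduce part~(a) from it.

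For the inequality in part~(b), the approach combines self-similarity with a sum-of-increments trick. Writing $\bm{X}_{2t}=\bm{X}_t+(\bm{X}_{2t}-\bm{X}_t)$ as a sum of two independent copies of $\bm{X}_t$, sub-additivity of $|\cdot|^q$ on an independent product of optimal couplings, together with $\bm{Z}_2\eqd\bm{Z}_1+\tilde{\bm{Z}}_1$, yields $\mW_q(\bm{X}_{2t}/g(t),\bm{Z}_2)\le 2\mW_q(\bm{X}^t_1,\bm{Z}_1)$. Using $\bm{X}_{2t}/g(t)=2^{1/\alpha}a(t)\bm{X}^{2t}_1$ and $\bm{Z}_2\eqd 2^{1/\alpha}\bm{Z}_1$ and factoring out $2^{q/\alpha}$ gives $\mW_q(a(t)\bm{X}^{2t}_1,\bm{Z}_1)\le 2^{1-q/\alpha}\mW_q(\bm{X}^t_1,\bm{Z}_1)$, and a further triangle inequality with the scaling identity $\mW_q(a(t)\bm{Z}_1,a(t)\bm{X}^{2t}_1)=a(t)^q\mW_q(\bm{Z}_1,\bm{X}^{2t}_1)$ produces
\begin{equation*}
\mW_q(a(t)\bm{Z}_1,\bm{Z}_1)\le a(t)^q\mW_q(\bm{X}^{2t}_1,\bm{Z}_1)+2^{1-q/\alpha}\mW_q(\bm{X}^t_1,\bm{Z}_1).
\end{equation*}
Since $||x|^q-|y|^q|\le|x-y|^q$ for $q\in(0,1]$, under any coupling $\E[|a(t)\bm{Z}_1-\bm{Z}_1|^q]\ge|1-a(t)^q|\E[|\bm{Z}_1|^q]$, lower-bounding the left-hand side. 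By slow variation of $G$ we have $a(t)\to 1$, so for all sufficiently small $t$, $a(t)^q+2^{1-q/\alpha}\le 3$, which yields the stated bound.

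For the final sentence of~(b), I would argue by contradiction. Suppose $|1-a(t)^q|\le\phi(t)$ for a non-decreasing $\phi$ with $\int_0^1\phi(t)t^{-1}\D t<\infty$. Since $a(t)\to 1$ and $|\log a(t)|\asymp|1-a(t)|\asymp|1-a(t)^q|$ as $a(t)\to 1$, there exists $C>0$ with $|\log a(2^{-j})|\le C\phi(2^{-j})$ for all large $j$. Monotonicity of $\phi$ gives $\sum_{j\ge 1}\phi(2^{-j})\le(\log 2)^{-1}\int_0^1\phi(t)t^{-1}\D t<\infty$, so the telescoping series $\log G(2^{-k})-\log G(1)=-\sum_{j=0}^{k-1}\log a(2^{-(j+1)})$ is Cauchy and $G(2^{-k})$ converges to a positive finite limit. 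Slow variation of $G$ then extends this to $\lim_{t\da 0}G(t)\in(0,\infty)$, placing $\bm{X}$ in the domain of normal attraction and contradicting the hypothesis; repeating the argument along any geometric sequence $(c^{-j})_{j\in\N}$ for $c>1$ shows the obstruction is intrinsic.
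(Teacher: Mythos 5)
Your part~(b) is correct and is essentially the paper's own argument: the doubling identity $\bm{X}_{2t}=\bm{X}_t+(\bm{X}_{2t}-\bm{X}_t)$, the scaling $\bm{Z}_2\eqd 2^{1/\alpha}\bm{Z}_1$, subadditivity on an independent product of optimal couplings and the moment inequality $\E[|\bm\xi-\bm\zeta|^q]\ge|\E[|\bm\xi|^q]-\E[|\bm\zeta|^q]|$ reproduce Proposition~\ref{prop:W_lower} and Lemma~\ref{lem:d_P_bound}, and the constant $3$ arises exactly as you say from $2^{1-q/\alpha}+a(t)^q\le 3$ for small $t$. Your dyadic telescoping of $\log G$ along $2^{-j}$, with $\sum_j\phi(2^{-j})\le(\log 2)^{-1}\int_0^1\phi(t)t^{-1}\D t$ and the uniform convergence theorem to pass from the subsequence to all $t\da0$, is a correct and slightly more streamlined version of Lemma~\ref{lem:1-a_subpoly}(a), which the paper proves along general $\lambda^k$-scales.

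In part~(a), however, there is a genuine gap in the step ``the sup-norm $L^q$-error is bounded by sub-additivity of $|\cdot|^q$ through the expected sum of pointwise jump discrepancies, which rewrites as an integral against $\nu^{\co}_{\bm X}$.'' If you apply this to \emph{all} jumps under the comonotonic coupling, the resulting bound is infinite: in the LePage parametrisation the discrepancy of the $n$-th jump in direction $\bm v$ is of order $\Gamma_n^{-1/\alpha}$ times a bounded factor, so the Campbell integral behaves like $\int^\infty x^{-q/\alpha}\,\D x$, which diverges for every $q<\alpha$; moreover the small-jump components are \emph{compensated} integrals, whose compensator cannot be controlled by summing $q$-th powers of jumps at all. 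This is precisely why the paper's proof (Theorem~\ref{thm:upper_bound_comono_tech} via Lemmas~\ref{lem:large_jump_comono_coup}--\ref{lem_drift_como_conv}) splits the coupled processes into a big-jump part $(\bm L^{\bm S^t},\bm L^{\bm Z})$, treated by $q$-subadditivity as you propose (Proposition~\ref{prop:Lq-J-comonotonic}), a compensated small-jump martingale part $(\bm M^{\bm S^t},\bm M^{\bm Z})$, treated in $L^2$ via Doob's maximal inequality (Proposition~\ref{prop:small_jump_bound_comonotonic}) and then converted by $\mW_q\le\mW_2^{\,q}$, and an explicit drift comparison $|\bm\varpi_{\bm S^t}-\bm\varpi_{\bm Z}|$. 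Your sketch omits the $L^2$/Doob ingredient, without which the proposed estimate fails. A secondary inaccuracy: the drift discrepancy and the residual $\bm R^t$ do not contribute ``at most $\Oh(t^q)$'' — the drift comparison itself carries a $G_2(t)$ term (see~\eqref{eq:Up_bound_como_varpi}), and the residual contributes $\Oh(t^{1-q/\alpha}G(t)^{-q})$ by Lemma~\ref{lem:moment_bound}, which need not be $\Oh(t^q)$ — although both are indeed absorbed into $\Oh(G_2(t)^q)$ by Potter's bounds, so this misstatement does not affect the final rate, unlike the missing martingale argument.
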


Since $G_2$ bounds $|G(2t)/G(t)-1|\le G_1(1/2)G_2(t)$ for all small $t>0$ and $G$ is not asymptotically equivalent to a positive finite constant, Lemma~\ref{lem:1-a_subpoly} below (which extends~\cite[Prop.,~p.~683]{MR3806899}) implies $G_2$
cannot be upper bounded by any non-decreasing function $\phi$ satisfying $\int_0^1\phi(t)t^{-1}\D t<\infty$. The assumption on the slow variation of 
$G_2$ in Theorem~\ref{thm:upper_lower_general}(a) is not essential and may be replaced by assuming that
$G_2$ dominates any (positive) power at zero. However, by Lemma~\ref{lem:1-a_SV} below, in most cases of interest such a function $G_2$ will be slowly varying. 

Given a slowly varying function $G$, the construction of functions $G_1$ and $G_2$ satisfying Assumption~(\nameref{asm:S}) is not immediately clear. However, in most cases and for a sufficiently regular $G$, by virtue of Lemma~\ref{lem:1-a_SV}, Assumption~(\nameref{asm:S}) will be satisfied by choosing $G_2(t)\sim t|G'(t)|/G(t)$ as $t\downarrow0$ and a slowly varying $G_1$ (at $0$ and $\infty$) with $G_1(x)\ge |\log x|\cdot\sup_{t>0, y\in[x\wedge 1,x\vee1]}yG'(yt)/G'(t)$ (for $a,b\in\R$, we denote $a\wedge b\coloneqq \min\{a,b\}$). In such cases, the lower bound in Theorem~\ref{thm:upper_lower_general} is (by Lemma~\ref{lem:1-a_SV})  proportional to $G_2$, i.e. $G_2(t)\sim |1-a(t)|/\log2$ as $t\downarrow0$, making the comonotonic coupling rate-optimal with respect to the $\mW_1$-distance when $\alpha>1$. The following corollary makes this precise and shows that this is the case for a large class of processes in the domain of non-normal attraction.

\begin{corollary}
\label{cor:upper_lower_general}
Let $\bm{X}$ be in the domain of non-normal attraction of an $\alpha$-stable process $\bm{Z}$.\\ 
{\normalfont(a)} Let $\alpha\in(0,2)\setminus\{1\}$ and Assumption~(\nameref{asm:C}) hold for some $\delta>0$ and $p\ne\alpha-1$. Suppose $G$ is $C^1$ with derivative $\wt{G}(t)/t$, where $|\wt{G}|\in\SV_0$. Further suppose there exists some $\phi\in\SV_0\cap\SV_\infty$ satisfying $\phi(x)\ge\sup_{t>0,y\in[x\wedge 1,x\vee1]}\wt{G}(yt)/\wt{G}(t)$ for $x>0$. Define $L(t)\coloneqq |\wt{G}(t)|/G(t)$, then, for any $q\in(0,1]\cap(0,\alpha)\setminus\{\alpha/(p+1),\alpha/(\alpha\delta+1)\}$ with $\E[|\bm{X}_1|^q]<\infty$, we have $\mW_q(\bm{X}^t,\bm{Z})=\Oh(L(t)^q)$ as $t \da 0$ and
\begin{align*}
    \max\{\mW_q(\bm{X}^t_1,\bm{Z}_1)
    ,\mW_q(\bm{X}^{2t}_1,\bm{Z}_1)\}
    \ge \frac{q\log 2}{3}\E\big[|\bm{Z}_1|^q\big]\cdot L(t),
    \quad \text{for all sufficiently small }t >0.
\end{align*}
{\normalfont(b)} Define iteratively the functions $\ell_1(t)=\log(e+t)$ and $\ell_{n+1}(t)=\log(e+\ell_n(t))$ for $t\ge 0$ and $n\in\N$. Suppose $G(t)$ is eventually equal to $\ell_n(1/t)^{q_n}\cdots\ell_m(1/t)^{q_m}$ where $1\le n\le m$ in $\N$ and either $q_n,\ldots,q_m\ge 0$ with $q_n,q_m>0$ or $q_n,\ldots,q_m\le 0$ with $q_n,q_m<0$. Then $G$ satisfies the assumptions of Part (a).
\end{corollary}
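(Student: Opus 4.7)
The strategy is to reduce part~(a) to Theorem~\ref{thm:upper_lower_general} by exhibiting explicit witnesses for Assumption~(\nameref{asm:S}) and computing the asymptotic constant in the lower bound; part~(b) then amounts to checking the hypotheses of~(a) for the iterated-logarithmic $G$ (with a suitable extension off its asymptotic regime).

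For part~(a), the key identity comes from the fundamental theorem of calculus followed by the substitution $s=tu$:
\[
\frac{G(t/x)}{G(t)}-1
 = \frac{1}{G(t)}\int_t^{t/x}\frac{\wt{G}(s)}{s}\,\D s
 = L(t)\int_1^{1/x}\frac{\wt{G}(tu)/\wt{G}(t)}{u}\,\D u.
\]
Since $|\wt{G}|\in\SV_0$ is positive near $0$, $\wt{G}$ has constant sign there, so the hypothesis on $\phi$ also bounds the absolute value of the integrand; the triangle inequality together with $\left|\int_1^{1/x}u^{-1}\D u\right|=|\log x|$ then verifies Assumption~(\nameref{asm:S}) with $G_2(t)=L(t)$ and $G_1(x)=\phi(1/x)|\log x|$. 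Both factors of $G_1$ lie in $\SV_0\cap\SV_\infty$, and $L\in\SV_0$ vanishes at $0$ by standard properties of the logarithmic derivative of a slowly varying function; Theorem~\ref{thm:upper_lower_general}(a) then delivers the upper bound. For the lower bound, specialising the representation above to $x=1/2$ gives $1-a(t)=-L(t)\int_1^2(\wt{G}(tu)/\wt{G}(t))u^{-1}\,\D u$; the Uniform Convergence Theorem for slowly varying functions (applied to $|\wt{G}|\in\SV_0$) shows the integral converges to $\log 2$, so $|1-a(t)|\sim L(t)\log 2$. Combined with $a(t)\to 1$ and the expansion $1-a^q=q(1-a)+O((1-a)^2)$, this yields $|1-a(t)^q|\sim q\log 2\cdot L(t)$, and Theorem~\ref{thm:upper_lower_general}(b) delivers the claimed lower bound for all sufficiently small $t$.

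For part~(b), a direct differentiation of $G(t)=\prod_{k=n}^m\ell_k(1/t)^{q_k}$ using the identity $\ell_k'(u)=\prod_{j=0}^{k-1}(e+\ell_j(u))^{-1}$ (with $\ell_0(u)=u$) shows that, as $t\da 0$,
\[
\wt{G}(t)\;\sim\;-\frac{q_n\,G(t)}{\ell_1(1/t)\cdots\ell_n(1/t)},
\]
because the $k=n$ summand dominates (each subsequent $k$ contributes an extra logarithmic denominator). As a product of slowly varying functions, $|\wt{G}|\in\SV_0$. For the majorant $\phi$, one chooses a smooth extension of $G$ outside the asymptotic regime so that $\wt{G}$ remains of slowly varying type at $\infty$; the explicit form of $G$ then allows one to control $\wt{G}(yt)/\wt{G}(t)$ by products of ratios $\ell_k(1/(yt))/\ell_k(1/t)$, each dominated uniformly in $t$ by an iterated-logarithmic function of $1/y$ belonging to $\SV_0\cap\SV_\infty$.

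The \textbf{main obstacle} is the construction of $\phi$ in~(b): because the supremum in the hypothesis runs over all $t>0$, one must choose the extension of $G$ off its prescribed regime carefully so that the resulting $\phi$ is simultaneously slowly varying at $0$ and at $\infty$. Once a suitable extension is fixed, the rest of~(b) is a direct consequence of the already-verified part~(a) and Theorem~\ref{thm:upper_lower_general}.
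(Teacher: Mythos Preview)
Your treatment of part~(a) is correct and is exactly the paper's route: the integral representation for $G(t/x)/G(t)-1$, the choice $G_2=L$ and $G_1(x)=\phi(1/x)\,|\log x|$, and the UCT-based asymptotic $|1-a(t)|\sim L(t)\log 2$ are precisely the content of Lemma~\ref{lem:1-a_SV}, which the paper invokes before applying Theorem~\ref{thm:upper_lower_general}.

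For part~(b) your outline points in the right direction but the emphasis is off. No artificial extension is needed: each $\ell_k$ is smooth on $[0,\infty)$ with $\ell_k(0)>0$, so $\prod_{k=n}^m\ell_k(1/t)^{q_k}$ is already $C^1$ on all of $(0,\infty)$ and one simply takes this as $G$ globally. The actual work is the explicit, uniform-in-$t$ bound on $\wt G(yt)/\wt G(t)$, and this is \emph{not} merely a product of ratios $\ell_k(1/(yt))/\ell_k(1/t)$: writing $\ell(s)=\prod_k\ell_k(s)^{q_k}$ one has $(e+s)\ell'(s)=\ell(s)\sum_k q_k(e+s)\ell_k'(s)/\ell_k(s)$, a \emph{sum} involving both the $\ell_i^{q_i}$-factors from $\ell$ and the $(e+\ell_j)^{-1}$-factors coming from each $\ell_k'$. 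The paper handles this in Lemma~\ref{lem:iterated-log}: part~(a) supplies the basic estimate $(c+\ell_k(xt))/(c+\ell_k(t))\le 1+\1_{\{x>1\}}\log x$, and part~(c) combines it with the elementary inequality $\big(\sum_j a_j\big)/\big(\sum_j b_j\big)\le\max_j a_j/b_j$ to obtain $\phi(x)=(1+|\log x|)^N\in\SV_0\cap\SV_\infty$ for an explicit power $N$ depending on $m$ and the $q_k$. This non-asymptotic control, rather than any extension issue, is what your sketch leaves open.
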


\begin{remark}
\label{rem:upper_lower_general}
\nf{(I)} The upper bound in Theorem~\ref{thm:upper_lower_general}(a) is based on the comonotonic coupling in Section~\ref{sec:comonotonic_coupling} below. Since the bound is independent of both $p$ and $\delta$, the restrictions $p\ne \alpha-1$ and $q\notin\{\alpha/(p+1),\alpha/(\alpha\delta+1)\}$ are nonessential. Indeed, if $p$ and $\delta$ satisfy Assumption~(\nameref{asm:C}), then any $p'\in(0,p)$ and $\delta'\in(0,\delta)$ also satisfy it. 
Moreover, the multiplicative constants in $\Oh$ can be made explicit and depend on the dimension $d$ only through the characteristics of $\bm{X}$ and $\bm{Z}$.\\
\nf{(II)} The lower bounds are based on elementary estimates and a universal property of slowly varying functions, see Section~\ref{subsec:prokhorov_lower_bound} below for details. When $\alpha=2$, despite the absence of an upper bound in Theorem~\ref{thm:upper_lower_general}(a) for this case, the lower bound of Theorem~\ref{thm:upper_lower_general}(b) ensures the nonexistence of a coupling for which the $L^q$-norm decays polynomially.\\
\nf{(III)} Corollary~\ref{cor:upper_lower_general} is a consequence of Theorem~\ref{thm:upper_lower_general} and Lemmas~\ref{lem:1-a_SV} \&~\ref{lem:iterated-log} below. Furthermore, we stress that the resulting upper and lower bounds may converge slowly and at a rate that is, in some sense, ``bounded away from polynomials'' even for a very slow function $G$ such as $t\mapsto\ell_n(1/t)$ or $t\mapsto1/\ell_n(1/t)$, $n\in\N$, see Example~\ref{ex:arbitrarily_slow} below. Furthermore, given any $\ell\in\SV_0$ with $\ell(t)\to 0$ as $t\da 0$ and $\int_0^1\ell(t)t^{-1}\D t=\infty$, the functions $G_\pm(t)\coloneqq\exp(\pm\int_t^1\ell(s)s^{-1}\D s)$ are slowly varying, $G_+(t)\to\infty$, $G_-(t)\to 0$ and the corresponding $G_2(t)$ functions are proportional to $\ell(t)$ as $t\downarrow0$. Thus, we may construct processes $\bm{X}$ such that $\max\{\mW_q(\bm{X}^t,\bm{Z}),\mW_q(\bm{X}^{2t},\bm{Z})\}$ is asymptotically bounded above and below by multiples of $\ell(t)$ as $t\da 0$, see Lemma~\ref{lem:1-a_subpoly} and Example~\ref{ex:arbitrarily_slow} below.\\
\nf{(IV)} For a given process $\bm{X}$, we may choose two asymptotically equivalent slowly varying functions $G$ and $\hat{G}$ that have different convergence properties. Indeed, if $G'$ is not asymptotically equivalent to $\hat{G'}$, then the resulting bounds will change (recall that $G$ is only unique up to asymptotic equivalence and that $\bm{X}^t_s=\bm{X}_{st}/(t^{1/\alpha}G(t))$). For instance, fix $r_1,r_2\in(0,1)$, denote $\ell(t)=|\log t|^{r_1}$ and $r_3=(r_1-1)(r_2+1)+1$ and let 
\begin{align*}
&G'(t)
\coloneqq \ell'(t)[1+(1-\ell'(t)^{r_2})\cos\ell(t)],
&&G(t)\coloneqq
\int_t^1G'(s)\D s
=\ell(t)+\sin\ell(t)+\Oh(|\log t|^{r_3}),\\
&\hat{G}'(t)
\coloneqq \ell'(t)[1+(1-\ell'(t)^{r_2})\sin\ell(t)],
&&\hat{G}(t)\coloneqq
\int_t^1G'(s)\D s=\ell(t)-\cos\ell(t)+\Oh(|\log t|^{r_3}).
\end{align*}
Then $tG'(t)$ and $t\hat{G}'(t)$ are slowly varying and $G(t)/\hat{G}(t)\to1$ as $t\to0$ but $\limsup_{t\to0}G'(t)/\hat{G}'(t)=\infty$ and $\liminf_{t\to0}G'(t)/\hat{G}'(t)=0$. Optimising the convergence rate within this class appears to be a very difficult task; however, the limitations imposed by Lemma~\ref{lem:1-a_subpoly} would apply to any choice of $G$. A similar phenomenon was also observed recently in the standard central limit theorem for L\'evy processes in~\cite{BANG2021109187}, where the Kolmogorov distance is shown to satisfy (resp. fail) an integral condition for a non-standard (resp. standard) normalisation.
\\
\nf{(V)} 
Theorem~\ref{thm:upper_lower_general} makes full use of Assumption~(\nameref{asm:S}), however, a more detailed analysis that does not require $G_2$ to be slowly varying can be found in our technical result Theorem~\ref{thm:upper_bound_comono_tech} in Section~\ref{sec:stable_limits_upper} below.
\\
\nf{(VI)} We note that a lower bound via the Toscani--Fourier distance is plausible but appears suboptimal since the rate has a polynomial factor. Moreover, we believe the slow lower bound in part (b) to hold for $\mW_q(\bm{X}^t_1,\bm{Z}_1)$ alone (i.e. without taking the maximum value between times $t$ and $2t$). However, this remains a conjecture.
\end{remark}

\subsection{Selecting the coupling}
\label{subsec:selecting_the_coupling}

The main idea behind the proof of Theorems~\ref{thm:upper_lower_simple} \&~\ref{thm:upper_lower_general} is a good coupling between $\bm{X}$ and $\bm{Z}$. The two couplings we apply in this article, are the thinning coupling and the comonotonic coupling introduced in Sections~\ref{sec:thinning} \&~\ref{sec:comonotonic_coupling} below. In Theorem~\ref{thm:upper_lower_general} we solely apply the comonotonic coupling, since this yields clear and concise results. Note that one could apply the thinning coupling to get a similar result in the domain of non-normal attraction. However, since this would require a lengthy argument, and would distort the main story and result, this has been left out of the paper. In comparison, it is easier to use the comonotonic coupling to give bounds for processes in the domain of normal attraction.

\begin{proposition}
\label{prop:normal_domain_comonotonic}
Let $\bm{Z}$ be $\alpha$-stable with $\alpha \in (0,2)$ and $\bm{X}$ be in its domain of normal attraction. Let Assumption~(\nameref{asm:C}) (with constant $H\equiv G$ and $Q\equiv 1$) hold for some $p\in(0,\infty)\setminus\{\alpha-1\}$. Then, for any $q\in(0,1]\cap(0,\alpha)$ with $\E[|\bm{X}_1|^q]<\infty$, we have, as $t \da 0$, 
\[
\mW_q\big(\bm{X}^t,\bm{Z}\big)
=\begin{dcases}
\Oh\big(t^{\min\{pq/\alpha,1-q/\alpha\}}\big),&\alpha\in(0,1),\\
\Oh\big(t^{q\min\{p/\alpha,1-1/\alpha\}}\big),&\alpha\in(1,2).
\end{dcases}
\]
\end{proposition}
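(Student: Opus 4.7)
The plan is to apply the comonotonic coupling of Section~\ref{sec:comonotonic_coupling} between $\bm{X}^t$ and $\bm{Z}$ and specialise the general technical result Theorem~\ref{thm:upper_bound_comono_tech} to the normal-attraction regime. Under the hypotheses $H\equiv G$ and $Q\equiv 1$, and since $G$ is asymptotically a positive constant $G_0$ (normal attraction), Assumption~(\nameref{asm:S}) becomes vacuous (with $G_2\equiv 0$), so the only second-order input is the scalar parameter $p$ governing $|h(x,\bm v)|\le K_h(1\wedge x^p)$. The key quantitative ingredient, derived from the LePage-type representation underlying the comonotonic coupling, is that the $n$-th paired jumps $x_n$ of $\bm{X}^t$ and $y_n$ of $\bm{Z}$ in some common direction $\bm v_n\in\Sp^{d-1}$ satisfy the identity $x_n=y_n(1+h(g(t)x_n,\bm v_n))^{1/\alpha}$, and hence
\[
|x_n-y_n|\le C\min\{y_n,\,g(t)^p y_n^{p+1}\}
\]
for some constant $C=C(\alpha,K_h)$, with the first (resp.\ second) bound being sharp when $g(t)y_n>1$ (resp.\ $g(t)y_n\le1$).

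Since $q\le 1$, one has $\mW_q(\bm{X}^t,\bm{Z})\le\E[\sup_{s\in[0,1]}|\bm{X}^t_s-\bm{Z}_s|^q]$ under this coupling, and the latter is estimated by partitioning paired jumps at the thresholds $y\sim 1$ and $y\sim g(t)^{-1}$ and combining two elementary tools valid for $q\le 1$: the subadditivity $\sup_s|\sum_{n:\tau_n\le s}A_n|^q\le\sum_n|A_n|^q$, and the concavity bound $\E[Y^q]\le\E[Y]^q$ for $Y\ge 0$. For $\alpha\in(0,1)$ both $\bm{X}$ and $\bm{Z}$ have finite variation, so these bounds apply directly to the summable paired increments: the small-jump part ($y\le 1$) yields $\Oh(g(t)^{pq})=\Oh(t^{pq/\alpha})$ via concavity applied to $\sum_n|x_n-y_n|$ (using $p\ne\alpha-1$ so that $\int_0^1 y^{p-\alpha}\D y<\infty$), while the large-jump part yields $\Oh(g(t)^{\alpha-q})=\Oh(t^{1-q/\alpha})$ via subadditivity applied to $\sum_n|x_n-y_n|^q$ (using $q<\alpha$); summing produces $\Oh(t^{\min\{pq/\alpha,1-q/\alpha\}})$.

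For $\alpha\in(1,2)$ the small-jump component must be compensated; the resulting martingale has $L^2$-norm at most $c(\alpha)g(t)^p$ by directly integrating $(g(t)^p y^{p+1})^2$ against $\nu_{\bm Z}$ (finite since $p>(\alpha-2)/2$ automatically), whence $\E[\sup_s|M_s|^q]\lesssim g(t)^{pq}=t^{pq/\alpha}$ by Doob's and Jensen's inequalities. The large-jump component, together with the drift mismatch of the two compensators, is controlled via the concavity bound applied to the absolute sum of paired differences, producing $\Oh(t^{q\min\{p/\alpha,1-1/\alpha\}})$ through a $p\lessgtr\alpha-1$ dichotomy in the relevant absolute-moment integral $\int_1^{g(t)^{-1}}y^{p-\alpha}\D y$; the two contributions combine into the claimed bound. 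The principal technical obstacle, and the reason for excluding $p=\alpha-1$, is precisely this integrability dichotomy at the critical threshold (where $\int_0^1 y^{p-\alpha}\D y$ transitions from finite to infinite); carefully tracking the drift corrections from compensation in the $\alpha>1$ case is also delicate, but all of this is a direct specialisation of the general machinery developed for Theorem~\ref{thm:upper_bound_comono_tech}.
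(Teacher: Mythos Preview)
Your proposal is correct and follows essentially the same approach as the paper: specialise Theorem~\ref{thm:upper_bound_comono_tech} to the normal-attraction setting with $H\equiv G$ constant, $G_1\equiv 1$, $G_2\equiv 0$, and $\delta$ arbitrarily large, exactly as recorded in Remark~\ref{rem:thm_upper_lower_comno_tech}. Your additional narrative about the pairwise jump estimate and the small/large-jump split is an accurate informal recapitulation of Lemmas~\ref{lem:bounds_h_2_multidim}--\ref{lem_drift_como_conv} in this special case; the paper simply cites the theorem rather than re-deriving these pieces.
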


\begin{remark}
Proposition~\ref{prop:normal_domain_comonotonic} follows from Theorem~\ref{thm:upper_bound_comono_tech} (see Remark~\ref{rem:thm_upper_lower_comno_tech}) below. The assumptions in Theorem~\ref{thm:upper_lower_simple}(a) and Proposition~\ref{prop:normal_domain_comonotonic} differ significantly, making it necessary to split the upper bounds in two statements. Indeed, as seen in Example~\ref{ex:fulfill_assT_NOT_ass(C+H)} below, Assumption~(\nameref{asm:C}) is somewhat stricter than Assumption~(\nameref{asm:T}), since we can show that there exist processes for which Assumption~(\nameref{asm:T}) is true, where Assumption~(\nameref{asm:C}) is no longer valid. In the case where Assumptions~(\nameref{asm:T}) and~(\nameref{asm:C}) are valid simultaneously with the same parameter $p\ge 1\vee 2(\alpha-1)$, Theorem~\ref{thm:upper_lower_simple} yields an upper bound that is never worse than that of Proposition~\ref{prop:normal_domain_comonotonic}. However, if we only have $p>\alpha-1>0$ (say, $p=1$ and $\alpha>3/2$), then Proposition~\ref{prop:normal_domain_comonotonic} leads to the optimal bound while Theorem~\ref{thm:upper_lower_simple} (or rather, Theorem~\ref{thm:d_thin_dom_attract}) does not.
\end{remark}

\begin{example}\label{ex:fulfill_assT_NOT_ass(C+H)}
We now show that there exist processes satisfying Assumption~(\nameref{asm:T}) but not Assumption~(\nameref{asm:C}). Let $\alpha \in (1,2)$ and $\alpha ' \in (1,\alpha)$. Next, let $X$ be a one-dimensional $\alpha$-stable process and $Y$ be an $\alpha'$-stable process that is spectrally negative, with L\'evy measures $\nu_X(\D x)=c_1|x|^{-1-\alpha}\D x$ and $\nu_Y(\D x)=c_2\1_{(-\infty,0)}(x)|x|^{-1-\alpha'}\D x$ for some constants $c_1,c_2>0$. We note that $X+Y$ has L\'evy measure $\nu_{X+Y}(\D x)=[c_2\1_{(-\infty,0)}(x)|x|^{-1-\alpha'}+c_1|x|^{-1-\alpha}]\D x$, demonstrating that Assumption~(\nameref{asm:T}) is satisfied. We can, however, note that Assumption~(\nameref{asm:C}) cannot be fulfilled, since there does not exist the necessary radial decomposition of $\nu_{X+Y}$.
\end{example}

\subsection{Gaussian domain of attraction}
\label{subsec:Gaussian_domain_of_attraction}
The domain of attraction to Brownian motion is substantially different, as the previously described couplings are inapplicable. Obtaining a coupling between Brownian motion and other L\'evy processes that reduced the $L^p$-distance in uniform norm has been the work of a large body of literature (which we review in Section~\ref{subsec:classical} below). In this paper, we use a simple independent coupling, which, heuristically, compares the pure-jump component of $\bm{X}$ with the null process $\bm{0}$. Let $\langle\cdot,\cdot\rangle$ denote the Euclidean inner product on $\R^{d \times d}$, $\bm{0}$ denote the zero-vector in $\R^d$ as well as the zero-matrix in $\R^{d \times d}$ and let $\R^d_{\bm{0}}\coloneqq\R^d\setminus\{\bm{0}\}$. Let $\varphi_{\bm{X}}(\bm{u})\coloneqq \E[e^{i\langle \bm{u},\bm{X}\rangle}]$ for $\bm{u} \in \R^d$ denote the characteristic function of $\bm{X}$. Furthermore, let $\psi_{\bm{S}}$ be the L\'evy-Khintchine exponent of $\bm{S}$, given by $\psi_{\bm{S}}(\bm{u})\coloneqq t^{-1}\log \varphi_{\bm{S}_t}(\bm{u})$ for $\bm{u} \in \R^d$ and $t>0$. 

\begin{theorem}
\label{thm:conv_BM_limit}
Let $\bm{\Sigma}$ be a symmetric non-negative definite matrix on $\R^{d \times d}$ and define $\bm{X}^t=((\bm{\Sigma}\bm{B}_{st}+\bm{S}_{st})/\sqrt{t})_{s\in[0,1]}$ for $t\in(0,1]$ where $(\bm{B}_t)_{t \in [0,1]}$ is a standard Brownian motion on $\R^d$ independent of the pure-jump L\'evy process $\bm{S}$ with Blumenthal--Getoor index~$\beta$ (defined in~\eqref{eq:BG}).\\
{\nf(a)} Suppose $\beta\in[0,2)$ and fix any $\beta_* \in (\beta,2]$ when $\bm{S}$ is of infinite variation and $\beta_*=1$ otherwise. Then for any $q>0$ with $\E[|\bm{X}_1|^q]<\infty$, we have
\begin{align*}
\mW_q\big(\bm{X}^t,\bm{\Sigma}\bm{B}\big)
=\Oh\big(t^{(q\wedge 1)
    (\min\{1/q,1/\beta_*\}-1/2)}\big), 
    \qquad \text{ as }t \da 0.
\end{align*}
{\normalfont(b)} Pick any $\bm{u}_*\in\R^d_{\bm{0}}$ and define $C_*\coloneqq |\bm{u}_*|^{-1}|\psi_{\bm{S}}(\bm{u}_*)|>0$. Then for all $q \ge 1$, we have 
\begin{align*}
\mW_q(\bm{X}^t_1,\bm{\Sigma}\bm{B}_1)
    \ge\frac{C_*}{\sqrt{2}} \sqrt{t}+ \Oh(t^{3/2}),
\quad\text{as }t \da 0.
\end{align*}
{\normalfont(c)}  Let $\lambda$ be the largest eigenvalue of $\bm{\Sigma}^2$. Suppose there exist $\delta\in[1,2)$ and vectors $(\bm{u}_r)_{r\in(0,\infty)}$ with $|\bm{u}_r|=r$ satisfying $c\coloneqq\inf_{r>1}r^{-\delta}|\psi_{\bm{S}}(\bm{u}_r)|>0$. Then for any $C_*\in(0,ce^{-\lambda/2})$ we have
$\mW_q(\bm{X}^t_1,\bm{\Sigma}\bm{B}_1) \ge(C_*/\sqrt{2})t^{1-\delta/2}$ for all sufficiently small $t>0$.
\end{theorem}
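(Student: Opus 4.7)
The plan is to couple the Brownian components synchronously in part~(a) and to use Fourier-analytic lower bounds in parts~(b) and~(c).

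For part~(a), set $\widetilde{\bm{B}}_s\coloneqq \bm{B}_{st}/\sqrt{t}$, which is a standard Brownian motion by Brownian scaling, and use the synchronous coupling $(\bm{X}^t,\bm{\Sigma}\widetilde{\bm{B}})$. The path distance then simplifies to $\sup_{s\in[0,1]}|\bm{X}^t_s-\bm{\Sigma}\widetilde{\bm{B}}_s| = t^{-1/2}\sup_{u\in[0,t]}|\bm{S}_u|$, so the task is to control $\E[\sup_{u\le t}|\bm{S}_u|^q]$ as $t\da 0$. I would truncate the jumps of $\bm{S}$ at a level $\epsilon=\epsilon(t)$: Doob's and BDG-type inequalities applied to the (compensated) small-jump martingale yield bounds in terms of $\int_{|x|\le\epsilon}|x|^2\nu_{\bm{S}}(\D x)\lesssim \epsilon^{2-\beta_*}$, while the big-jump compound Poisson part contributes only on an event of probability $\Oh(t\nu_{\bm{S}}(\{|x|>\epsilon\}))=\Oh(t\epsilon^{-\beta_*})$. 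Optimising $\epsilon\asymp t^{1/\beta_*}$ produces the exponent $\min\{1/q,1/\beta_*\}-1/2$, and the outer factor $q\wedge 1$ reflects the subadditivity of $x\mapsto x^q$ for $q\le 1$ versus the triangle inequality for $q\ge 1$ in the definition~\eqref{eq:def_wasserstein} of $\mW_q$. The finite-variation case $\beta_*=1$ is handled analogously, bounding $\sup|\bm{S}^{\le\epsilon}|$ directly by the total variation $t\int_{|x|\le\epsilon}|x|\nu_{\bm{S}}(\D x)$ without compensation.

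For parts~(b) and~(c), my starting point is the elementary lower bound $\mW_q(\bm{\xi},\bm{\eta})\ge\mW_1(\bm{\xi},\bm{\eta})\ge(\sqrt{2}|\bm{u}|)^{-1}|\varphi_{\bm{\xi}}(\bm{u})-\varphi_{\bm{\eta}}(\bm{u})|$, valid for any $q\ge 1$ and $\bm{u}\in\R^d_{\bm{0}}$, which follows from Jensen's inequality and the $|\bm{u}|$-Lipschitz property of the real and imaginary parts of $x\mapsto e^{i\langle\bm{u},x\rangle}$. By independence of $\bm{B}$ and $\bm{S}$ we have $\varphi_{\bm{X}^t_1}(\bm{u})=\varphi_{\bm{\Sigma}\bm{B}_1}(\bm{u})\cdot e^{t\psi_{\bm{S}}(\bm{u}/\sqrt{t})}$, and hence
\[
|\varphi_{\bm{X}^t_1}(\bm{u})-\varphi_{\bm{\Sigma}\bm{B}_1}(\bm{u})|
= \varphi_{\bm{\Sigma}\bm{B}_1}(\bm{u})\cdot\big|e^{t\psi_{\bm{S}}(\bm{u}/\sqrt{t})}-1\big|.
\]
For~(b) I take $\bm{u}=\sqrt{t}\,\bm{u}_*$: the Gaussian factor tends to~$1$, the Taylor expansion gives $|e^{t\psi_{\bm{S}}(\bm{u}_*)}-1|=t|\psi_{\bm{S}}(\bm{u}_*)|+\Oh(t^2)$, and dividing by $\sqrt{2}|\bm{u}|=\sqrt{2t}\,|\bm{u}_*|$ yields exactly $(C_*/\sqrt{2})\sqrt{t}+\Oh(t^{3/2})$. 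For~(c) I take $\bm{u}=\sqrt{t}\,\bm{u}_r$ with $r=1/\sqrt{t}$, so $|\bm{u}|=1$ and the Gaussian factor is bounded below by $e^{-\lambda/2}$ (since $|\bm{\Sigma}^{\top}\bm{u}|^2\le\lambda|\bm{u}|^2$). The hypothesis gives $|\psi_{\bm{S}}(\bm{u}_r)|\ge c r^{\delta}=ct^{-\delta/2}$, so $t|\psi_{\bm{S}}(\bm{u}_r)|\ge ct^{1-\delta/2}\to 0$ for $\delta\in[1,2)$, making the Taylor expansion valid and yielding $\mW_q\ge(ce^{-\lambda/2}/\sqrt{2})\,t^{1-\delta/2}(1-o(1))$, which may then be absorbed into any chosen $C_*<ce^{-\lambda/2}$.

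The principal technical obstacle lies in part~(a): the interplay between $q$, $\beta_*$ and the truncation threshold must be tracked carefully across the regimes $q\le\beta_*$ vs $q>\beta_*$ and $q\le 1$ vs $q>1$, and the finite-variation case must be handled separately without compensation. By contrast, parts~(b) and~(c) are essentially a matter of identifying the correct test frequency $\bm{u}$ and controlling the Taylor remainder of $e^{t\psi_{\bm{S}}}-1$; the restriction $\delta\in[1,2)$ in~(c) is precisely what guarantees $t|\psi_{\bm{S}}(\bm{u}_r)|\to 0$ and thus keeps the linear expansion valid.
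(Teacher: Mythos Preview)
Your overall strategy matches the paper's: Proposition~\ref{prop:conv_BM_limit} (for part~(a)) reduces $\mW_q(\bm{X}^t,\bm{\Sigma}\bm{B})$ to $t^{-q/2}\E[\sup_{s\le t}|\bm{S}_s|^q]$ via the synchronous Brownian coupling you describe, and then controls this moment by a truncation at level $\kappa=t^{1/\beta_+}$ (the paper's Lemma~\ref{lem:moment_bound}); parts~(b) and~(c) go through the Toscani--Fourier lower bound (Lemma~\ref{lem:ddim_lowbound_wass}) with the same frequency choices you propose. Two minor remarks on~(a): your big-jump handling (``contributes only on an event of probability $\Oh(t\epsilon^{-\beta_*})$'') needs a moment bound on that event to become a $q$-th moment estimate, and your finite-variation sentence conflates the random total variation with its mean $t\int_{|x|\le\epsilon}|x|\nu_{\bm{S}}(\D x)$; both are easily repaired and the paper's Lemma~\ref{lem:moment_bound} does so.

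There is, however, a genuine gap in part~(c). You write that ``$t|\psi_{\bm{S}}(\bm{u}_r)|\ge ct^{1-\delta/2}\to 0$ for $\delta\in[1,2)$, making the Taylor expansion valid'', and later that ``the restriction $\delta\in[1,2)$ is precisely what guarantees $t|\psi_{\bm{S}}(\bm{u}_r)|\to 0$''. But a \emph{lower} bound tending to zero does not force the quantity itself to tend to zero. The hypothesis $c=\inf_{r>1}r^{-\delta}|\psi_{\bm{S}}(\bm{u}_r)|>0$ gives no upper control on $|\psi_{\bm{S}}(\bm{u}_r)|$ whatsoever, so from your argument alone $a_t\coloneqq t\psi_{\bm{S}}(\bm{u}_{t^{-1/2}})$ could remain bounded away from zero (or diverge), and then $|e^{a_t}-1|$ need not be asymptotic to $|a_t|$. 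The missing ingredient, which the paper supplies explicitly in the proof of Lemma~\ref{lem:lower_bound_gaussian_lim}(b), is the general fact that a pure-jump L\'evy process satisfies $|\psi_{\bm{S}}(\bm{u})|=o(|\bm{u}|^2)$ as $|\bm{u}|\to\infty$ (see~\cite[Lem.~43.11]{MR3185174}); with $|\bm{u}_r|=r=t^{-1/2}$ this yields $|a_t|=t\cdot o(t^{-1})=o(1)$, and only then is the linearisation $|e^{a_t}-1|\sim|a_t|$ legitimate. Note also that $\delta<2$ plays no role in this step (the $o(|\bm{u}|^2)$ bound holds for any pure-jump $\bm{S}$); its purpose is to make the \emph{lower} bound $ct^{1-\delta/2}$ nontrivial.
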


Parts~(a) and (c) of Theorem~\ref{thm:conv_BM_limit}, with $q=1$, imply that for processes whose pure jump part is in the domain of attraction of a $\beta$-stable process, 
the upper and lower bounds are essentially proportional to $t^{1/\max\{1,\beta\}-1/2}$ and $t^{1-\max\{1,\beta\}/2}$, respectively. These agree in the finite variation case with rate $\sqrt{t}$ and also as $\beta\to 2$ with an arbitrarily deteriorating convergence rate. As shown in Figure~\ref{fig:normal_rates} these bounds are not far from each other for fixed $\beta$, and the powers of $t$ from the rates are also not far. In the `limiting case' where $\bm{S}$ is itself attracted to a Brownian motion and $\beta=2$, the rescaled process $(\bm{S}_{st}/\sqrt{t})_{s\in[0,1]}$ is distributionally close to $\ell(t)\bm{B}$ (see e.g.~\cite[Thm~2]{MR3784492}) for a slowly varying function $\ell$ satisfying $\lim_{t\da 0}\ell(t)=0$. It is thus natural to expect that the convergence, in this case, is slow as in Theorem~\ref{thm:upper_lower_general} above, see~Example~\ref{ex:arbitrarily_slow_BM_conv}. 

\begin{example}\label{ex:Gaussian_perturbation}
Let $\bm{\Sigma}$ be a positive definite matrix on $\R^{d \times d}$ and set $\bm{X}^t\coloneqq((\bm{\Sigma}\bm{B}_{st}+\bm{S}_{st})/\sqrt{t})_{s\in[0,1]}$ for $t\in(0,1]$ where $(\bm{B}_t)_{t \in [0,1]}$ is a standard Brownian motion on $\R^d$ independent of the pure-jump tempered $\alpha$-stable L\'evy process $\bm{S}$. Assume $\alpha\in[1,2)$, that $\bm{S}$ has zero-mean, and fix any $\beta_* \in (\alpha,2]$. 
Then, by Theorem~\ref{thm:conv_BM_limit}(a), we have the upper bound  $\mW_1\big(\bm{X}^t,\bm{\Sigma}\bm{B}\big)
=\Oh\big(t^{
    1/\beta_*-1/2}\big)$ as $t \da 0$. To find the lower bound, we let $\lambda$ be the largest eigenvalue of $\bm{\Sigma}^2$, and define $c:=\inf_{r>1}r^{-\alpha}|\psi_{\bm{S}}(r\bm{u})|>0$, for some $\bm{u}\in \R^d$ with $|\bm{u}|=1$. Then, for any $C_*\in(0,ce^{-\lambda/2})$, Theorem~\ref{thm:conv_BM_limit}(c) implies that
$\mW_1(\bm{X}^t_1,\bm{\Sigma}\bm{B}_1) \ge C_*t^{1-\alpha/2}$ for all sufficiently small $t>0$.

Note that, as $\alpha$ approaches $1$, the gap between the lower and upper bound decreases. Indeed, for $\alpha=1$, we have $\beta_*=1+\ve$ for some small $\ve>0$, so the upper bound is of the rate $t^{1/(1+\ve)-1/2}$, while the lower bound has the rate $\sqrt{t}$, making the quotient of the two bounds proportional to $t^{\ve/(1+\ve)}$.
\end{example}

\begin{figure}[ht]
\centering
\includegraphics[width=.49\linewidth]{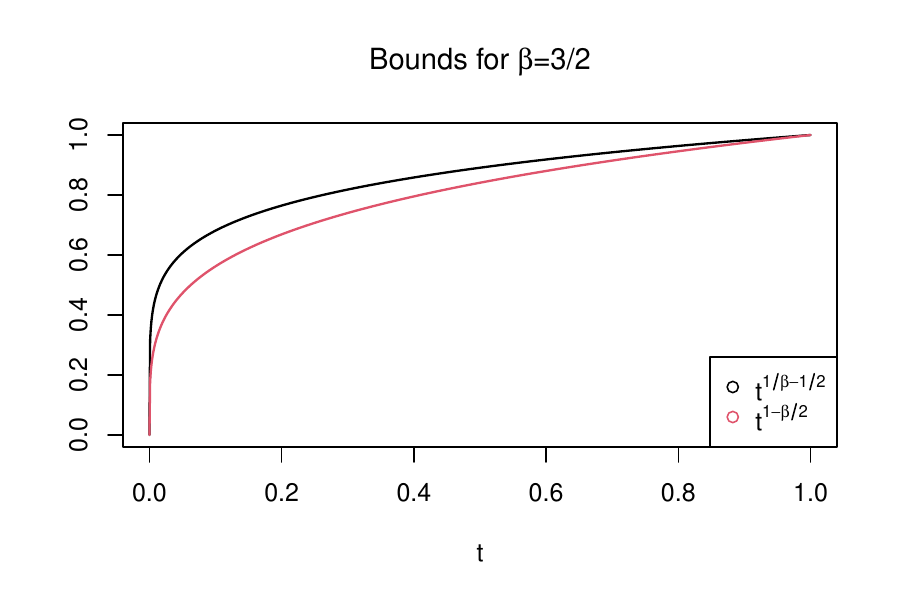}
\includegraphics[width=.49\linewidth]{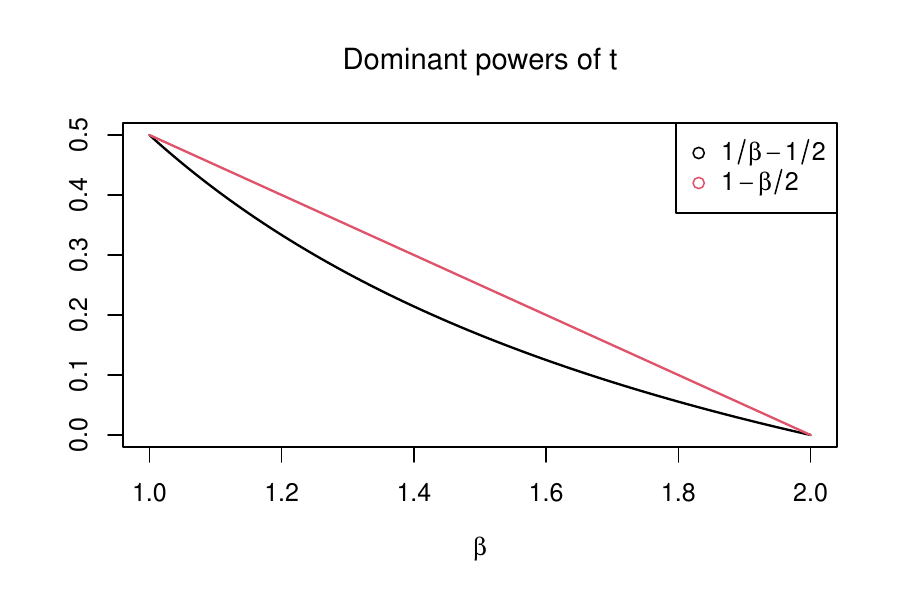}
\caption{\small The left picture shows the polynomials with the dominant powers from the upper and lower bounds for $\mW_1(\bm{X}_1^t,\bm{\Sigma}\bm{B}_1)$ from Theorem~\ref{thm:conv_BM_limit} with $\beta=3/2$. The right picture shows the dominant powers of $t$ in the upper and lower bounds as a function of $\beta\in [1,2]$.}
\label{fig:normal_rates}
\end{figure}

\subsection{Augmented \texorpdfstring{$\alpha$}{alpha}-stable processes}\label{subsec:augment_stable}

In this section we introduce the general class of \emph{augmented $\alpha$-stable processes} and apply  our main results from Sections~\ref{subsec:stable_normal_attraction} and~\ref{subsec:stable_non-normal_attraction}  to the augmented $\alpha$-stable processes in the stable domains of normal and non-normal attraction. We also show that the class of tempered $\alpha$-stable processes contains many other L\'evy processes, beyond the classical exponentially tempered stable processes.

\begin{defin}
\label{def:augmented-stable}    
A L\'evy process $\bm{X}=(\bm{X}_t)_{t \in [0,1]}$ is an augmented $\alpha$-stable process if it has no Gaussian component, and, for some $\alpha\in(0,2]$, its L\'evy measure $\nu_{\bm{X}}$ has the form
\begin{equation} \label{eq:temp_stab_alpha}
\nu_{\bm{X}}(A)
= \int_0^\infty\int_{\Sp^{d-1}}  \1_{A}(x\bm{v}) \mathcal{Q}(x,\bm{v})  \sigma(\D \bm{v})x^{-\alpha-1}\D x, \quad \text{ for } A \in \mathcal{B}(\R^d_{\bm{0}}),
\end{equation}
where $\sigma$ is a probability measure on $\mathcal{B}(\Sp^{d-1})$ and $\mathcal{Q}:(0,\infty)\times \Sp^{d-1} \mapsto [0,\infty)$ is a measurable satisfying $\mathcal{Q}(\cdot,\bm{v})\in\SV_0$ for $\bm{v}\in\Sp^{d-1}$. 
\end{defin}

Note that $\int_0^\infty (x^2\wedge 1)\mathcal{Q}(x,\bm{v})x^{-\alpha-1}\D x$ must be finite for $\sigma$-a.e. $\bm{v}\in\Sp^{d-1}$ for $\nu_{\bm{X}}$ to be a proper L\'evy measure. While the class of augmented stable processes could be extended beyond the requirement that $\mathcal{Q}(\cdot,\bm{v})\in\SV_0$ for all $\bm{v}\in\Sp^{d-1}$, our definition is motivated by the characterisation in Theorem~(\nameref{thm:small_time_domain_stable}) of the small-time domain of attraction of stable processes. Indeed, for any value of $\alpha\in(0,2)$, the key condition in~\eqref{eq:jump-stable-limit} of  Theorem~(\nameref{thm:small_time_domain_stable}) requires the limit $t\nu_{\bm{X}}(\scrL_{\bm{v}}(g(t)))\to \nu_{\bm{Z}}(\scrL_{\bm{v}}(1))$ (as $t\da 0$) to hold for all $\bm{v}\in\Sp^{d-1}$, where $G:t\mapsto t^{-1/\alpha}g(t)\in\SV_0$. In the case of augmented $\alpha$-stable processes, this is equivalent to 
\[
t\int_0^\infty\int_{\Sp^{d-1}}  \1_{\{\langle\bm{v},\bm{w}\rangle\ge g(t)/x\}}\mathcal{Q}(x,\bm{w}) \sigma(\D \bm{w})x^{-\alpha-1}\D x
\to \nu_{\bm{Z}}(\scrL_{\bm{v}}(1)), \quad \text{ as }t \da 0.
\]
Change of variables $y=x/g(t)$ 
yields 
$G(t)^{-\alpha} \int_{\Sp^{d-1}}\int_0^\infty  \1_{\{\langle\bm{v},\bm{w}\rangle\ge y\}}\mathcal{Q}(g(t) y,\bm{w}) y^{-\alpha-1}\D y \sigma(\D \bm{w}),$
suggesting a ``homogeneous'' slow variation of the function $x\mapsto\mathcal{Q}(x,\bm{v})$ in any direction (measured in terms of integrals), clearly linked to the slow variation of $G$. In Definition~\ref{def:augmented-stable} we strengthen this sense of slow variation by requiring $\mathcal{Q}$ to be slowly varying in every direction (but without imposing the slow variation to be ``the same'' in every direction).

\begin{corollary}\label{cor:augment_stable}
Assume that $\bm{X}$ is an augmented $\alpha$-stable process and $\bm{Z}$ is an $\alpha$-stable process with $\alpha\in(0,2)\setminus\{1\}$ and fix any $q\in(0,1]\cap(0,\alpha)$ with $\E[|\bm{X}_1|^q]<\infty$.
\begin{description}[leftmargin=0em, nosep]
\item[(a)]
Assume $|\mathcal{Q}(x,\bm{v})-c_\alpha| \le K(1\wedge x)$ for  some $K>0$ and all $x>0$, $\bm{v}\in \Sp^{d-1}$. Then, for $\bm{X}^t\coloneqq(\bm{X}_{st}/t^{1/\alpha})_{s\in[0,1]}$,  
\begin{equation*}
c_1 t^{1-q/\alpha}
\le\mW_q\big(\bm{X}^t_1,\bm{Z}_1\big)
\le\mW_q\big(\bm{X}^t,\bm{Z}\big)\le
c_2\big(t^{1-q/\alpha}\1_{\{\alpha<1\}} + t^{q(1-1/\alpha)}\1_{\{\alpha>1\}}\big),
\end{equation*}
for some $c_1,c_2>0$ and all sufficiently small $t>0$.
\item[(b)] Assume $\mathcal{Q}(x,\bm{v})\!=\! c_\alpha H(x)^\alpha$ for $x>0$, $\bm{v}\in \Sp^{d-1}$ and some monotone $H\in\SV_0\cap C^1$. Set $\varrho(x)\!\coloneqq \!\int_x^\infty H(y)^\alpha y^{-\alpha-1}\D y$ and let $\varrho^{\la}$ be its right-continuous inverse. Suppose $G(x)\coloneqq H(\varrho^{\la}(1/x))$ satisfies Assumption~(\nameref{asm:S}) for some function $G_2\in\SV_0$. Then, for $g(t)\coloneqq t^{1/\alpha}G(t)$, $a(t)\coloneqq G(2t)/G(t)$ and $\bm{X}^t\coloneqq(\bm{X}_{st}/g(t))_{s\in[0,1]}$, we have
\[
\mW_q(\bm{X}^t,\bm{Z})=\Oh ( G_2(t)^q),
\quad\text{as }t \da 0,\quad\text{and}\quad
\max\big\{\mW_q(\bm{X}^t_1,\bm{Z}_1)
,\mW_q(\bm{X}^{2t}_1,\bm{Z}_1)\big\}
\ge |1-a(t)^q|\E\big[|\bm{Z}_1|^q\big]/3,
\]
for sufficiently small $t >0$. The function $t\mapsto|1-a(t)^q|$ is not upper bounded by any non-decreasing function $\phi$ with $\int_0^1\phi(t)t^{-1}\D t<\infty$, $G$ is $C^1$ with $t\mapsto t|G'(t)|\in\SV_0$ and there exists some $\phi\in\SV_0\cap\SV_\infty$ satisfying the inequality $\phi(x)\ge\sup_{t>0,y\in[x\wedge 1,x\vee1]}yG'(yt)/G'(t)$ for all $x>0$. Moreover, defining  $L(t)\coloneqq t|G'(t)|/G(t)$, there exist constants $c_1,c_2>0$ such that for all sufficiently small $t>0$, we have
\[
c_2L(t)
\le\max\{\mW_q(\bm{X}^t_1,\bm{Z}_1)
,\mW_q(\bm{X}^{2t}_1,\bm{Z}_1)\}
\quad\text{and}\quad
\mW_q(\bm{X}^t,\bm{Z})
\le c_1L(t)^q.
\]

\end{description}
\end{corollary}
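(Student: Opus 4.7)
The plan is to derive Corollary~\ref{cor:augment_stable} by verifying that an augmented $\alpha$-stable process satisfies the structural hypotheses of Theorems~\ref{thm:upper_lower_simple} and~\ref{thm:upper_lower_general} and of Corollary~\ref{cor:upper_lower_general}. For part (a), since $\nu_{\bm{X}}(\D\bm{w})=c_\alpha^{-1}\mathcal{Q}(|\bm w|,\bm w/|\bm w|)\,\nu_{\bm Z}(\D\bm{w})$ and the assumption $|\mathcal Q(x,\bm v)-c_\alpha|\le K(1\wedge x)$ gives $M\coloneqq \sup\mathcal Q\le c_\alpha+K$, I take $\nu_{\bm X}^{\mathrm d}\equiv 0$, $c\coloneqq c_\alpha/M\in(0,1]$ and $f_{\bm S}(\bm w)\coloneqq\mathcal Q(|\bm w|,\bm w/|\bm w|)/M\in[0,1]$; then $\nu_{\bm X}^{\co}=c^{-1}f_{\bm S}\nu_{\bm Z}$ and $|f_{\bm S}(\bm w)-c|\le(K/M)(1\wedge|\bm w|)$, verifying Assumption~(\nameref{asm:T}) with $p=1$. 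Because $p=1$ may fall short of $1\vee 2(\alpha-1)$ for $\alpha\in(3/2,2)$, I invoke the general technical variant Theorem~\ref{thm:d_thin_dom_attract} (rather than the simplified Theorem~\ref{thm:upper_lower_simple}(a)) to obtain the stated upper bound for all $q\in(0,1]\cap(0,\alpha)$, while the matching lower bound follows from Theorem~\ref{thm:upper_lower_simple}(b), the degenerate case $\bm X\eqd\bm Z$ being trivial.

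For part (b), the ansatz $\mathcal Q(x,\bm v)=c_\alpha H(x)^\alpha$ yields the radial decomposition~\eqref{eq:radial_tail_decomp} with $\rho_{\bm X}^{\co}([x,\infty),\bm v)=c_\alpha\varrho(x)$ independent of $\bm v$. Setting $1+h(x)\coloneqq \alpha x^\alpha H(x)^{-\alpha}\varrho(x)$, the quantitative estimate $|h(x)|\le K_h(1\wedge x^p)$ for some $p>0$ is obtained by integration by parts in $\varrho$ combined with the Potter-type bound on $|H'|/H$ extracted from the Assumption~(\nameref{asm:S}) hypothesis on $G$. The $\bm v$-independence of $\rho_{\bm X}^{\co\leftarrow}$ renders the second estimate in~\eqref{eq:old_assump_(H)} trivial with $K_Q=0$, and the $G$ of~\eqref{eq:defn_G} coincides with $H(\varrho^\leftarrow(1/x))$ up to the multiplicative rescaling $x\mapsto c_\alpha x$ inside the argument of $\varrho^\leftarrow$, which is absorbed into the scaling function. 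Consequently, Assumption~(\nameref{asm:C}) holds, Theorem~\ref{thm:upper_lower_general}(a) supplies the upper bound $\mW_q(\bm X^t,\bm Z)=\Oh(G_2(t)^q)$, and Theorem~\ref{thm:upper_lower_general}(b) supplies the universal lower bound and the non-integrability claim for $|1-a(t)^q|$.

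For the final bounds involving $L(t)=t|G'(t)|/G(t)$, I verify the hypotheses of Corollary~\ref{cor:upper_lower_general}(a). Since $H\in C^1\cap\SV_0$ is monotone and $\varrho'(x)=-H(x)^\alpha x^{-\alpha-1}$, a chain-rule computation gives $G\in C^1$ and, using the Karamata equivalence $1/x\sim H(y)^\alpha y^{-\alpha}/\alpha$ at $y=\varrho^\leftarrow(1/x)$, the asymptotic identity $tG'(t)\sim yH'(y)/\alpha$. Lemma~\ref{lem:1-a_SV} applied with the Assumption~(\nameref{asm:S}) hypothesis ensures that $\wt G(t)\coloneqq tG'(t)$ is slowly varying at $0$ and produces a function $\phi\in\SV_0\cap\SV_\infty$ dominating the oscillation ratio $\wt G(yt)/\wt G(t)$; Corollary~\ref{cor:upper_lower_general}(a) then delivers the two-sided bounds on $\mW_q$ in terms of $L(t)$. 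The main technical obstacle is the polynomial estimate on $h$: Karamata's theorem alone gives $h(x)\to 0$ without any rate, so a Potter-type bound obtainable from Assumption~(\nameref{asm:S}) must be leveraged explicitly through the integration-by-parts argument in order to recover a genuine power of $x$.
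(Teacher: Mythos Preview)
Your verification of Assumption~(\nameref{asm:T}) with $p=1$ in part~(a) is fine, and the lower bound via Theorem~\ref{thm:upper_lower_simple}(b) matches the paper. The gap is in the upper bound for $\alpha\in(3/2,2)$: appealing to Theorem~\ref{thm:d_thin_dom_attract} does \emph{not} recover the rate $t^{q(1-1/\alpha)}$ when $p=1$. By Remark~\ref{rem:best_r_in_general_thinning}, the best the thinning coupling yields for $\alpha>1$ is $\Oh\big(t^{\min\{p/2,\alpha-1\}q/\alpha}\big)$, and with $p=1$ and $\alpha>3/2$ this is $\Oh(t^{q/(2\alpha)})$, strictly slower than the claimed $t^{q(1-1/\alpha)}$. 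The small-jump martingale bound~\eqref{eq:Up_bound_thin_D} is the bottleneck: with $r=0$ it gives $\mW_2(\bm D^{\bm S^t},\bm D^{\bm Z})^q=\Oh(t^{q/(2\alpha)})$, and pushing $r>0$ to improve it spoils the big-jump term~\eqref{eq:Up_bound_thin_J} when $q<\alpha-1$. This is exactly the phenomenon flagged in the remark after Proposition~\ref{prop:normal_domain_comonotonic}. The paper's fix is to switch couplings for $\alpha>1$: verify Assumption~(\nameref{asm:C}) with $H\equiv 1$ (so $G$ constant, $K_Q=0$, $\delta$ arbitrary), where $1+h(x,\bm v)=\alpha c_\alpha^{-1}x^\alpha\int_x^\infty\mathcal Q(y,\bm v)y^{-\alpha-1}\D y$ and the hypothesis $|\mathcal Q-c_\alpha|\le K(1\wedge y)$ integrates to $|h(x,\bm v)|\lesssim 1\wedge x$, and then apply Proposition~\ref{prop:normal_domain_comonotonic} with $p=1$ to get $\Oh(t^{q\min\{1/\alpha,1-1/\alpha\}})=\Oh(t^{q(1-1/\alpha)})$.

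For part~(b) your route coincides with the paper's: verify Assumptions~(\nameref{asm:C}) and~(\nameref{asm:S}) and invoke Theorem~\ref{thm:upper_lower_general} and Corollary~\ref{cor:upper_lower_general}. You are right to flag the polynomial decay of $h$ as the delicate step; the paper's proof is terse on this point as well, and your integration-by-parts idea is the natural approach. One caution: you claim Lemma~\ref{lem:1-a_SV} shows $\wt G(t)=tG'(t)$ is slowly varying, but that lemma \emph{assumes} $|\wt\ell|\in\SV$ rather than proving it, so the slow variation of $t|G'(t)|$ must come from the chain-rule expression $tG'(t)\sim yH'(y)/\alpha$ at $y=\varrho^\leftarrow(1/t)$ together with properties of $H$, not from Lemma~\ref{lem:1-a_SV} itself.
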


\begin{proof}
Part~(a). Assumption~(\nameref{asm:T}) is satisfied, since the radial decomposition of $\nu_{\bm{X}}^\co(\D \bm{w})$ is $$
c_\alpha^{-1}\mathcal{Q}(x,\bm{v})\nu_{\bm{Z}}(\D \bm{v},\D x)\text{ and $|\mathcal{Q}(x,\bm{v})-c_\alpha|\le K(1\wedge x^p)$ for all $x \in (0,\infty), \bm{v}\in \Sp^{d-1}$.}$$ Thus, the claim for $\alpha<1$ follows from Theorem~\ref{thm:upper_lower_simple}. When $\alpha>1$, we can also establish Assumption~(\nameref{asm:C}) easily, so Proposition~\ref{prop:normal_domain_comonotonic} gives the claim in this case.

Part~(b). The claims follow along similar lines from Theorem~\ref{thm:upper_lower_general} and Corollary~\ref{cor:upper_lower_general} via a simple verification of assumptions. In the notation of Theorem~\ref{thm:upper_lower_general}, $\rho^{\co\la}_{\bm{X}}(x,\bm{v})=c_\alpha\varrho^\la(x)$ does not depend on $\bm{v}\in \Sp^{d-1}$ and satisfies $x\mapsto x^{1/\alpha}\varrho^\la(x)\in\SV_\infty$. Indeed, $\varrho^\la(x) \sim (\alpha x)^{-1/\alpha} H(\varrho^\la(x))$ as $x\to\infty$ by Karamata's theorem~\cite[Thm~1.5.11]{MR1015093}, since $\varrho(x)\sim H(x)^\alpha x^{-\alpha}/\alpha$ as $x\da 0$  and $x\sim\varrho(\varrho^\la(x))$ as $x\to\infty$. Since $H\in\SV_0$, we have
\begin{align*}
G(x)
=H(\varrho^\la(1/x))
\sim H(c_\alpha \varrho^{\la}(1/x))
=\int_{\Sp^{d-1}}H(\rho^{\co\la}_{\bm{X}}(1/x,\bm{v})) \sigma(\D \bm{v}),
\qquad \text{as } x \da 0.
\end{align*}
Theorem~\ref{thm:upper_lower_general} then gives the main claims of Part (b). The remaining claims follow similarly from Corollary~\ref{cor:upper_lower_general}.
\end{proof}

\begin{remark}
\nf{(I)}
If for some $\lambda>1$ we have $(1-H(\lambda x)/H(x))\log H(x)\to 0$ as $x\da 0$, then~\cite[Thm~2.3.3~\&~Cor.~2.3.4]{MR1015093} imply the following simpler form of the slowly varying function $G$ in Corollary~\ref{cor:augment_stable}(b):  $G(x)\sim H(x^{-1/\alpha})$ as $x\da 0$.

\noindent \nf{(II)} We may extend Corollary~\ref{cor:augment_stable}(a) to cover any positive and measurable $\mathcal{Q}$ for which $|\mathcal{Q}(x,\bm{v})-c_\alpha|\le K(1\wedge x^p)$ for some $K,p>0$ and all $x>0$, $\bm{v}\in\Sp^{d-1}$ via Theorem~\ref{thm:d_thin_dom_attract} below. However, as with Theorem~\ref{thm:upper_lower_simple} above, here we only present the case $p=1$ for simplicity. Similarly, the strict conditions on $\mathcal{Q}$ in Corollary~\ref{cor:augment_stable}(b) could be relaxed, at the expense of a more complicated set of assumptions. Further note that, if $q=1$ in Corollary~\ref{cor:augment_stable}(a) or (b), then the lower and upper bounds are asymptotically proportional, so both are sharp up to a multiplicative constant.
\end{remark}

The relevance of the class of augmented $\alpha$-stable processes in our context is due to the readily  applicable  Corollary~\ref{cor:augment_stable}, together with the breadth of this class, demonstrated by the following example. 

\begin{example}[Classes of L\'evy processes that are augmented stable]
\label{ex:subclass_augmented_stable}
We describe the relationship between augmented stable processes and other classes of L\'evy processes used in the literature.
\begin{itemize}[leftmargin=1em, nosep]
\item \textbf{Rosi\'nski's tempered stable processes}~\cite[p.~680]{MR2327834} (which include the well-known exponentially tempered stable processes) admit the decomposition in~\eqref{eq:temp_stab_alpha} but require $\mathcal{Q}(\cdot,\bm{v})$ to be completely monotone (hence, strictly decreasing and convex) and $\mathcal{Q}(\infty,\bm{v})=0$ for all $\bm{v}\in\Sp^{d-1}$. For instance, a $\beta$-stable process is also a Rosi\'nski tempered $\alpha$-stable process if $\beta>\alpha$, making the index $\alpha$ non-unique in the definition~\cite[p.~680]{MR2327834}. Further, the index can actually change depending on the direction $\bm{v}$. On the other hand, if we require every direction to have the same index, by virtue of~\cite[Thm~1.5.4]{MR1015093}, most of these processes are also augmented stable processes. For instance, exponentially tempered stable processes are augmented stable with $\mathcal{Q}(x,\bm{v})=\exp(-x\lambda(\bm{v}))$ for some measurable $\lambda:\Sp^{d-1}\to[0,\infty)$. In fact, in defining augmented stable processes, we take Rosi\'nski's definition of a tempered stable process and replace the complete monotonicity and vanishing at infinity of $\mathcal{Q}(\cdot,\bm{v})$ with its slow variation at $0$. We note that the remaining classes of L\'evy processes in  Example~\ref{ex:subclass_augmented_stable} are typically not Rosi\'nski's tempered stable processes.
\item \textbf{Truncated stable processes} are augmented stable with $\mathcal{Q}(x,\bm{v})=\1_{\{x\le c(\bm{v})\}}$ for some measurable $c:\Sp^{d-1}\to (0,\infty)$ (note, for instance, that these are \emph{not} tempered stable under Rosi\'nski's definition).
\item The most straightforward extension of \textbf{Kuznetsov's $\bm{\beta}$-class of L\'evy processes}~\cite{MR2724421} to $\R^d$ with a common index in every direction is also augmented stable with $\mathcal{Q}(\cdot,\bm{v})=c(\bm{v})\e^{-\lambda(\bm{v})\beta(\bm{v})x}x^{-\alpha-1}(1-\e^{-\beta(\bm{v})x})^{-\alpha-1}$, for some positive measurable functions $c,\beta,\lambda:\Sp^{d-1}\to (0,\infty)$. 
\item Many \textbf{meromorphic L\'evy processes} on the real line~\cite{MR2977987} can be shown to be augmented stable. Indeed, by~\cite[Cor.~3]{MR2977987}, these processes admit a L\'evy density $\pi$ satisfying
\[
\pi(x)
=\1_{\{x>0\}}\sum_{n\in\N}a_n\rho_n\e^{-\rho_n x}
+\1_{\{x<0\}}\sum_{n\in\N}\hat a_n\hat\rho_n\e^{\hat\rho_n x},
\quad x\in\R\setminus\{0\},
\]
where the coefficients $\{a_n, \rho_n, \hat a_n, \hat\rho_n\}_{n\in\N}$ satisfy certain conditions (see details in~\cite[Cor.~3]{MR2977987}). If $\rho_n=\hat\rho_n=n$, by Karamata's Tauberian theorem~\cite[Cor.~1.7.3]{MR1015093}, the process will be augmented $\alpha$-stable if and only if $\sum_{n=1}^{\lfloor x\rfloor} na_n$ and $\sum_{n=1}^{\lfloor x\rfloor} n\hat a_n$ are both regularly varying at infinity with index $\alpha+1$. More generally, if 
\[
\lim_{x\searrow0}\frac{x\sum_{n\in\N}a_n\rho_n\e^{-\rho_n x}}{\sum_{n\in\N}a_n\e^{-\rho_n x}}
=\lim_{x\searrow0}\frac{x\sum_{n\in\N}\hat a_n\hat\rho_n\e^{-\hat\rho_n x}}{\sum_{n\in\N}\hat a_n\e^{-\hat\rho_n x}}
=\alpha,
\]
\end{itemize}
then the process is augmented $\alpha$-stable by Karamata's theorem~\cite[Thm~1.6.1]{MR1015093}.
\end{example}

\begin{example}[Application of Corollary~\ref{cor:augment_stable} to the (classical) exponentially tempered stable processes]\label{ex:exp_temp_stable}
Pick a bounded function $\lambda:\Sp^{d-1}\to[0,\infty)$ and define $\mathcal{Q}(x,\bm{v})\coloneqq e^{-\lambda(\bm{v})x}$ for all $x>0$ and $\bm{v}\in \Sp^{d-1}$. Then $\mathcal{Q}(x,\bm{v})$ satisfies the assumption in Corollary~\ref{cor:augment_stable}(a):
\begin{align*}
|e^{-\lambda(\bm{v})x}-1|
\le (1\wedge|x||\lambda(\bm{v})|) 
\le (1 \wedge |x|)\sup_{\bm{w}\in\Sp^{d-1}}|\lambda(\bm{w})|, 
\quad \text{for all } (x,\bm{v})\in (0,\infty)\times \Sp^{d-1}.
\end{align*} 
The augmented $\alpha$-stable process
$\bm{X}$, with L\'evy measure in~\eqref{eq:temp_stab_alpha} given by $\mathcal{Q}$, is exponentially tempered stable.
As usual, denote by $\bm{Z}$ the corresponding $\alpha$-stable process.

If $\alpha>1$, then Corollary~\ref{cor:augment_stable}(a) implies that $\mW_1(\bm{X}_t/t^{1/\alpha},\bm{Z}_1)=\Oh(t^{1-1/\alpha})$, with lower bound $\mW_1(\bm{X}_t/t^{1/\alpha},\bm{Z}_1) \ge c t^{1-1/\alpha}$ for some $c>0$ and all sufficiently small $t>0$. Thus, the upper and lower bounds have the same rate in this case, yielding a rate-optimal bound. 
\end{example}

Next, we give an example where the function $G$ is non-constant, and see how the rates deteriorate in these cases. 

\begin{example}[Augmented stable processes with arbitrarily slow convergence rate]
\label{ex:arbitrarily_slow}
Let $\bm{X}$ and $\bm{Z}$ be as in Corollary~\ref{cor:augment_stable}(b). Let $\ell_n$ be recursively defined as in Lemma~\ref{lem:iterated-log} below: $\ell_1(t)\coloneqq \log(e+t)$ and $\ell_{n+1}(t)=\log(e+\ell_n(t))$ for $t>0$. If either $H(x)=\ell_n(1/x)$ or $H(x)=\ell_n(1/x)^{-1}$ for some $n\in\N$ (i.e. $G(x)\sim\ell_n(x^{-1/\alpha})$ or $G(x)\sim\ell_n(x^{-1/\alpha})^{-1}$ as $x\to\infty$), then Lemma~\ref{lem:iterated-log} shows that for small $t>0$ we have $$G_2(t)\coloneqq \prod_{k=1}^n(e+\ell_k(1/t))^{-1}\ge |1-G(2t)/G(t)|/\log 2.$$ Moreover, by Lemma~\ref{lem:1-a_SV},  Assumption~(\nameref{asm:S}) holds with this $G_2$. Thus, by Corollary~\ref{cor:augment_stable}(b), there exist constants $0<C_1<C_2$ such that, for all small enough $t>0$, we have
\[
\frac{C_1}{\prod_{k=1}^n(e+\ell_k(1/t))}
\le\max\{\mW_1(\bm{X}_1^t,\bm{Z}),
    \mW_1(\bm{X}_1^{2t},\bm{Z})\}
\le \frac{C_2}{\prod_{k=1}^n(e+\ell_k(1/t))}.
\]
Thus, despite the function $\ell_n$ being ``nearly constant'' for large $n\in\N$, the convergence rates of the upper and lower bounds match and are slower than $\log(1/t)^{-1-\ve}$ for any $\ve>0$. 

Now consider any $\ell\in\SV_\infty$ with $\ell(t)\da 0$ as $t\to\infty$ and $\int_1^\infty \ell(t)t^{-1}\D t=\infty$. Then $G_\pm(t)\coloneqq \exp(\pm\int_1^t \ell(s)s^{-1}\D s)$ are slowly varying, $G_+(t)\to\infty$, $G_-(t)\to 0$ and $|1-G_\pm(t/2)/G_\pm(t)|\sim\ell(t)\log 2$ as $t\to\infty$ by Lemma~\ref{lem:1-a_SV}.
 Thus, by Corollary~\ref{cor:augment_stable}(b), for any $q\in(0,\alpha)\cap(0,1]$, we have $\mW_q(\bm{X}^t,\bm Z)\vee \mW_q(\bm{X}^{2t},\bm Z)\ge C^*\ell(1/t)$ for some $C^*>0$ and all sufficiently small $t>0$. In particular, by taking an appropriate $\ell$, e.g., $\ell(t)=1/\ell_n(t)$ where $\ell_n$ is as in the previous paragraph and $n\in\N$ is large, the convergence in Wasserstein distance may be arbitrarily slow.
\end{example}

\subsection{Classical bounds are hard to apply!}
\label{subsec:classical}
The couplings and methods used to achieve these bounds are crucial, and they differ significantly from the classical methods used to find rates of convergence. Indeed, if we tried to use standard methods (namely, the Berry--Esseen theorem or~\cite{MR3833470}) to construct bounds for small-time domain of attraction, the bounds would not converge as $t\da 0$. 

The Berry--Essen theorem exploits an increase in the activity of the process to obtain bounds on the distance between a random walk and the limit law. In the small-time regime, the activity is instead decreasing, explaining the unsuitability of this tool in this context (see details below). In fact, the bound would converge to infinity as $t \da 0$ (and in particular does not go to $0$) unlike the bounds introduced in this paper. The coupling in~\cite{MR3833470} couples corresponding components of the L\'evy--It\^o decompositions for a common small-jump cutoff level. When the time horizon is fixed and the L\'evy measure is supported on $[-\ve,\ve]$, the bounds of~\cite{MR3833470} are asymptotically sharp as $\ve\da 0$. However, for general L\'evy measures and as time tends to $0$, no time-dependent cutoff level $\ve_t$ can be used to obtain convergent bounds. The lack of convergent bounds in the small-time domain of attraction of stable processes is mainly caused by a difference in the jump intensities of the large-jump components (see details below).

We first explain why the Berry--Esseen theorem does not yield suitable bounds even when the limit is Gaussian (see~\cite{MR4321329,2021simulation}). For the explanation, it is enough to consider the one-dimensional case. Let $(X_t)_{t \in [0,1]}$ be a zero-mean L\'evy process on $\R$ with characteristic triplet $(\gamma,\sigma^2,\nu_X)$ (see~\cite[Def.~8.2]{MR3185174}) and finite fourth moment. The variance of $X$ is given by $\E[|X_t|^2]=(\sigma^2+\mu_2)t$, where $\mu_2 \coloneqq\int_{\R\setminus\{0\}} x^2 \nu_X(\D x)$. Then, $X_1^t=X_t/\sqrt{(\sigma^2+\mu_2)t}$ is attracted to a standard Gaussian random variable $Z$ as $t\da 0$. Denote by $\nu_t$ the L\'evy measure of $X_1^t$, the Berry--Esseen theorem thus implies that there exists some universal constant $C>0$, such that 
\[
\mW_2(X_1^t,Z)
\le C\int_{\R\setminus\{0\}} x^4\nu_t(\D x)
= C\int_{\R\setminus\{0\}} x^4t\nu_X(\D (\sqrt{(\sigma^2+\mu_2)t}x))
= \frac{C}{t(\sigma^2+\mu_2^2)^2}\int_{\R\setminus\{0\}} x^4\nu_X(\D x),
\] 
for all $t\in(0,1]$. As we can see above, this upper bound will tend to $\infty$ as $t \da 0$ and is therefore not an informative bound in the small-time regime. 

For L\'evy processes in the domain of attraction of an $\alpha$-stable law, an application of the bounds in~\cite{MR3833470} does not yield convergent bounds. The proofs of the bounds in~\cite{MR3833470} rely on the coupling of small jumps to a Gaussian law. Again, it is enough to consider the one-dimensional case. Let $X$ be symmetric and in the domain of attraction of the symmetric $\alpha$-stable random variable $Z$ with $\alpha>1$. Suppose their L\'evy measures satisfy $\nu_X(\R\setminus(-x,x))=x^{-\alpha}+x^{-(\alpha+1)/2}$ and $\nu_Z(\R\setminus(-x,x))=x^{-\alpha}$ for $x>0$. In this case we have $X^t_1=X_t/t^{1/\alpha}\cid Z$ as $t\da 0$. 

Let $\eta=(\alpha-1)/2>0$ and apply~\cite[Thm~11]{MR3833470} (at time $1$ and cutoff $\varepsilon_t$) to obtain: 
\begin{align*}
\mW_1(X^t_1,Z) 
&\le C\varepsilon_t 
    +\Big(\big(\tfrac{1}{2-\alpha}\varepsilon_t^{2-\alpha}
        +\tfrac{1}{3/2-\alpha/2}t^{\eta/\alpha}\varepsilon_t^{3\eta}\big)^{1/2}
        -\big(\tfrac{1}{2-\alpha}\varepsilon_t^{2-\alpha}\big)^{1/2}\Big)\\
&\qquad+ 2\big(\varepsilon_t^{-\alpha}+t^{\eta/\alpha}\varepsilon_t^{-\eta-1}\big)
    \int_{\varepsilon_t}^\infty \bigg|\frac{x^{-\alpha}}{\varepsilon_t^{-\alpha}}
        -\frac{x^{-\alpha}+x^{-(\alpha+1)/2}}{\varepsilon_t^{-\alpha}+t^{\eta/\alpha}\varepsilon_t^{-\eta-1}} \bigg| \D x\\
&\qquad+ 2 t^{\eta/\alpha}\varepsilon_t^{-\eta-1}\int_{\varepsilon_t}^\infty x\frac{\alpha x^{-\alpha-1}\D x}{\varepsilon_t^{-\alpha}}, \quad \text{ for all }t>0,
\end{align*}
where we used the formula for the $L^1$-Wasserstein distance in~\cite[p.~8]{Gibbs02onchoosing}. For the first line in the display above to vanish as $t\da 0$, we require $\varepsilon_t=\oh(1)$. The term in the middle line of the display above equals
\begin{align*}
2\int_{\varepsilon_t}^\infty \big|x^{-\alpha}t^{\eta/\alpha}\varepsilon_t^{-\eta}-x^{-\eta-1} \big| \D x 
&\ge 2\bigg|t^{\eta/\alpha}\varepsilon_t^{-\eta}\int_{\varepsilon_t}^c x^{-\alpha}\D x-\int_{\varepsilon_t}^c x^{-\eta-1} \D x \bigg|\\
&=2(\alpha-1)^{-1}\big|t^{\eta/\alpha}\varepsilon_t^{-3\eta}-t^{\eta/\alpha}\varepsilon_t^{-\eta}c^{-2\eta}
    -2(\varepsilon_t^{-\eta}-c^{-\eta}) \big|,
\end{align*}
where $c\in(0,\infty]$ is an arbitrary number and the inequality holds for all $t>0$ for which $\varepsilon_t<c$. For the right-hand side of the display to vanish at $t\da0$ with $c=\infty$ we must have $\varepsilon_t^{-\eta}(t^{\eta/\alpha}\varepsilon_t^{-2\eta}-2)=\oh(1)$. Then, for $c=1$, the display above will converge to $4/(\alpha-1)$. In particular, the bound implied by~\cite[Thm~11]{MR3833470} cannot vanish for any choice of $\varepsilon_t$.


\section{Two couplings of L\'evy processes}
\label{sec:couplings_general}
Let $\bm{X}=(\bm{X}_t)_{t \ge 0}$ be a L\'evy process on $\R^d$ with generating triplet $(\bm{\gamma_X},\bm{\Sigma_X}\bm{\Sigma_X}^\tra,\nu_{\bm{X}})$ (also called characteristic triplet, see~\cite[Def.~8.2]{MR3185174}) with respect to the cutoff function $\bm{w}\mapsto\1_{B_{\bm{0}}(1)}(\bm{w})$, where $\bm{\gamma_X} \in \R^d$, $\bm{\Sigma_X}\in\R^{d\times d}$ (with transpose $\bm{\Sigma_X}^\tra\in\R^{d\times d}$) and 
$\bm{\Sigma_X}\bm{\Sigma_X}^\tra$ a symmetric non-negative definite matrix
and $\nu_{\bm{X}}$ a L\'evy measure on $\R^d$. 
Throughout, we denote by $|\cdot|$ the Euclidean 
norm of appropriate dimension and recall that
$B_{\bm{0}}(r)=\{\bm{x}\in\R^d:|\bm{x}|<r\}$ is the open ball in $\R^d$ of radius $r>0$, centred at the origin $\bm{0}\in\R^d$. Fix any $\kappa\in(0,1]$ and consider the L\'evy--It\^o decomposition of $\bm{X}$ given by 
\begin{equation}\label{eq:levy_ito_decomp}
    \bm{X}_t=\bm{\gamma}_{\bm{X},\kappa} t+\bm{\Sigma_X}\bm{B^X}_t+\bm{D}^{\bm{X},\kappa}_t+\bm{J}^{\bm{X},\kappa}_t,\quad t\ge 0,
\end{equation}
where $\bm{\gamma}_{\bm{X},\kappa}\coloneqq\bm{\gamma}_{\bm{X}}-\int_{\R^d}\bm{w}\1_{B_{\bm{0}}(1)\setminus B_{\bm{0}}(\kappa)}(\bm{w})\nu_{\bm{X}}(\D\bm{w})$, $\bm{B^X}$ is a standard Brownian motion on $\R^d$, $\bm{D}^{\bm{X},\kappa}$ is the small-jump martingale containing all the jumps of $\bm{X}$ of magnitude less that $\kappa$, $\bm{J}^{\bm{X},\kappa}$ is the driftless compound Poisson process containing all the jumps of $\bm{X}$ of magnitude at least $\kappa$ and all three processes $\bm{B^X}$, $\bm{D}^{\bm{X},\kappa}$ and $\bm{J}^{\bm{X},\kappa}$ are independent. Moreover,
the pure-jump component $\bm{D}^{\bm{X},\kappa}+\bm{J}^{\bm{X},\kappa}$ of $\bm{X}$
is a L\'evy process with 
paths of finite variation (i.e. the jumps are summable on any compact time interval) if and only if $\int_{\R^d_{\bm{0}}}|\bm{w}| \1_{B_{\bm{0}}(1)}(\bm{w})\nu_{\bm{X}}(\D \bm{w})<\infty$~\cite[Thm~21.9]{MR3185174}. In particular, $(\bm{\gamma_X},\bm{0},\nu_{\bm{X}})$ is a characteristic triplet of a L\'evy process $\bm{X}$ without a Gaussian component. Thus, if $\bm{X}$ has finite variation, then $\bm{X}$ has \emph{zero natural drift} (i.e. the process equals the sum of its jumps) if and only if $\gamma_{\bm{X}}=\int_{\R^d_{\bm{0}}}\bm{w} \1_{B_{\bm{0}}(1)}(\bm{w})\nu_{\bm{X}}(\D \bm{w})$.

Similarly, we let $\bm{Y}=(\bm{Y}_t)_{t \ge 0}$ be a L\'evy process on $\R^d$ with characteristic triplet $(\bm{\gamma}_{\bm{Y}},\bm{\Sigma_Y}\bm{\Sigma_Y}^\tra,\nu_{\bm{Y}})$ with respect to the cutoff function $\bm{w} \mapsto \1_{B_{\bm{0}}(1)}(\bm{w})$ and whose corresponding L\'evy--It\^o decomposition is given by $\bm{Y}_t=\bm{\gamma}_{\bm{Y},\kappa} t+\bm{\Sigma_Y}\bm{B^Y}_t+\bm{D}^{\bm{Y},\kappa}_t+\bm{J}^{\bm{Y},\kappa}_t$, defined as above. 
The following elementary inequality will be used throughout:  for any $q\in (0,2]$,
\begin{equation}
\label{eq:Wq-XY}
\mW_q(\bm{X},\bm{Y})
\le |\bm{\gamma}_{\bm{X},\kappa}-\bm{\gamma}_{\bm{Y},\kappa}|^{q\wedge 1}  
    + (2\sqrt{d})^{q \wedge 1} |\bm{\Sigma_X}-\bm{\Sigma_Y}|^{q \wedge 1}
    + 
\mW_q\big(\bm{D}^{\bm{X},\kappa},\bm{D}^{\bm{Y},\kappa}\big) + \mW_q\big(\bm{J}^{\bm{X},\kappa},\bm{J}^{\bm{Y},\kappa}\big),
\end{equation}
where $|\cdot|$ in the last term denotes the Frobenius norm on $\R^{d\times d}$ (i.e.  $|\bm{\Sigma}|^2=\sum_{i,j=1}^n\Sigma_{i,j}^2$ for $\bm{\Sigma}\in\R^{d\times d}$). For completeness, we give a proof of~\eqref{eq:Wq-XY} in Appendix~\ref{app:A} below.

Let $\Xi_{\bm{X}}$ and $\Xi_{\bm{Y}}$ be the Poisson random measures on $[0,\infty)\times \R^d_{\bm{0}}$ of the jumps of $\bm{X}$ and $\bm{Y}$, respectively, with corresponding compensated measures $\wt\Xi_{\bm{X}}=\Xi_{\bm{X}}-\Leb\otimes\nu_{\bm{X}}$ and $\wt\Xi_{\bm{Y}}=\Xi_{\bm{Y}}-\Leb\otimes\nu_{\bm{Y}}$, where $\Leb$ denotes the Lebesgue measure on $[0,\infty)$. Since,  for every $t\ge0$, we have
\begin{align}\label{eq:comp_Poisson_measures}
\begin{aligned}
      \bm{D}^{\bm{X},\kappa}_t&=\int_{[0,t]\times \R^d_{\bm{0}}}\1_{B_{\bm{0}}(\kappa)}(\bm{w})\bm{w}\wt \Xi_{\bm{X}}(\D s, \D \bm{w}), \quad\bm{J}^{\bm{X},\kappa}_t=\int_{[0,t]\times \R^d_{\bm{0}}}\1_{\R^d \setminus B_{\bm{0}}(\kappa)}(\bm{w})\bm{w} \Xi_{\bm{X}}(\D s, \D \bm{w}),\\
    \bm{D}^{\bm{Y},\kappa}_t&=\int_{[0,t]\times \R^d_{\bm{0}}}\1_{B_{\bm{0}}(\kappa)}(\bm{w})\bm{w}\wt \Xi_{\bm{Y}}(\D s, \D \bm{w}), \quad\bm{J}^{\bm{Y},\kappa}_t=\int_{[0,t]\times \R^d_{\bm{0}}}\1_{\R^d \setminus B_{\bm{0}}(\kappa)}(\bm{w})\bm{w} \Xi_{\bm{Y}}(\D s, \D \bm{w}),
\end{aligned}
\end{align} 
the problem of coupling the jump components of $\bm{X}$ and $\bm{Y}$ is reduced to coupling the Poisson random measures $\Xi_{\bm{X}}$ and $\Xi_{\bm{Y}}$. Sections~\ref{sec:thinning} and~\ref{sec:comonotonic_coupling} below each describe such a coupling. 

\subsection{Thinning}
\label{sec:thinning}
Choose any L\'evy measure $\mu$ on $\R^d_{\bm{0}}$ that dominates both $\nu_{\bm{X}}$ and $\nu_{\bm{Y}}$ with Radon-Nikodym derivatives bounded by $1$ $\mu$-a.e., i.e. $f_{\bm{X}}=\D \nu_{\bm{X}}/\D\mu\le 1$ and $f_{\bm{Y}}=\D \nu_{\bm{Y}}/\D\mu\le 1$ $\mu$-a.e. For instance, a possible choice of $\mu$ is $\nu_{\bm{X}}+\nu_{\bm{Y}}$. 
Let $\Xi=\sum_{n\in\N} \delta_{(U_n,\bm{V}_n)}$ be a Poisson random measure on $(0,1]\times\R^d$, with mean 
measure $\Leb\otimes\mu$ and the corresponding compensated Poisson random measure $\wt\Xi(\D s, \D \bm{w})=\Xi(\D s, \D \bm{w})-\D s\otimes\mu(\D\bm{w})$. 
Assume the sequence $(\vartheta_n)_{n\in\N}$
of iid uniform random variables on $[0,1]$ is independent of 
$\Xi$. 
The Marking and Mapping Theorems~\cite{MR1207584} imply that the following Poisson random measures 
\begin{equation}\label{eq:thining_coupling_defn}
    \Xi_{\bm{X}}
=\sum_{n\in\N} \1_{\{\vartheta_n\le f_{\bm{X}}(\bm{V}_n)\}}\delta_{(U_n,\bm{V}_n)},
\qquad\text{and}\qquad
\Xi_{\bm{Y}}
=\sum_{n\in\N} \1_{\{\vartheta_n\le f_{\bm{Y}}(\bm{V}_n)\}}\delta_{(U_n,\bm{V}_n)},
\end{equation}
have mean measures $\Leb\otimes\nu_{\bm{X}}$ and $\Leb\otimes\nu_{\bm{Y}}$, respectively. We couple $\bm{X}$ and $\bm{Y}$ by choosing $\bm{B^X}=\bm{B^Y}$ in their L\'evy--It\^o decompositions and couple their jump parts from~\eqref{eq:comp_Poisson_measures} via the coupling of the Poisson random measures  in~\eqref{eq:thining_coupling_defn}.

\begin{proposition}
\label{prop:L2-thin}
The coupling $(\bm{D}^{\bm{X},\kappa},\bm{D}^{\bm{Y},\kappa},\bm{J}^{\bm{X},\kappa},\bm{J}^{\bm{Y},\kappa})$ defined in~\eqref{eq:comp_Poisson_measures} and~\eqref{eq:thining_coupling_defn} satisfies
\begin{equation}
\label{eq:L2-D-thin}
\E\bigg[\sup_{t\in[0,1]}\big|\bm{D}^{\bm{X},\kappa}_t-\bm{D}^{\bm{Y},\kappa}_t\big|^2\bigg]
\le 4\int_{\R^d_{\bm{0}}}\1_{B_{\bm{0}}(\kappa)}(\bm{w})|\bm{w}|^2|f_{\bm{X}}(\bm{w})-f_{\bm{Y}}(\bm{w})|
    \mu(\D\bm{w}).
\end{equation}
Moreover, if $\nu_{\bm{X}}(\D\bm{w})\1_{\R^d \setminus B_{\bm{0}}(\kappa)}(\bm{w})$ and $\nu_{\bm{Y}}(\D\bm{w})\1_{\R^d \setminus B_{\bm{0}}(\kappa)}(\bm{w})$ have a finite second moment, then 
\begin{gather*}
\E\bigg[\sup_{t\in[0,1]}\big|\bm{D}^{\bm{X},\kappa}_t+\bm{J}^{\bm{X},\kappa}_t-(\bm{D}^{\bm{Y},\kappa}_t+\bm{J}^{\bm{Y},\kappa}_t)-\bm{m}_\kappa t\big|^2\bigg]
\le 4\int_{\R^d_{\bm{0}}}|\bm{w}|^2|f_{\bm{X}}(\bm{w})-f_{\bm{Y}}(\bm{w})|
    \mu(\D\bm{w})\quad\text{and}\\
\mW_2(\bm{X},\bm{Y})
\le |\bm{\gamma}_{\bm{X},\kappa}-\bm{\gamma}_{\bm{Y},\kappa}+\bm{m}_\kappa|
    + 2 d^{1/2} |\bm{\Sigma_X}-\bm{\Sigma_Y}|
    + 2\bigg(\int_{\R^d_{\bm{0}}}|\bm{w}|^2|f_{\bm{X}}(\bm{w})-f_{\bm{Y}}(\bm{w})|
    \mu(\D\bm{w})\bigg)^{1/2},
\end{gather*}
where the mean $\bm{m}_\kappa:=\E[\bm{J}^{\bm{X},\kappa}_1-\bm{J}^{\bm{Y},\kappa}_1]
=\int_{\R^d_{\bm{0}}}\bm{w}\1_{\R^d \setminus B_{\bm{0}}(\kappa)}(\bm{w})
(f_{\bm{X}}(\bm{w})-f_{\bm{Y}}(\bm{w}))\mu(\D\bm{w})$
is finite.
\end{proposition}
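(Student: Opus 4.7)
The plan is to realise $\Xi_{\bm{X}}$ and $\Xi_{\bm{Y}}$ as deterministic functionals of a single underlying enriched Poisson random measure, so that the difference of the jump parts of $\bm{X}$ and $\bm{Y}$ becomes one stochastic integral against one compensated Poisson measure. Doob's $L^2$ maximal inequality and the It\^o isometry for Poisson integrals then reduce everything to a deterministic computation.

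Concretely, I would encode the construction in~\eqref{eq:thining_coupling_defn} via the Poisson random measure
\[
\Xi'\coloneqq\sum_{n\in\N}\delta_{(U_n,\bm{V}_n,\vartheta_n)}
\]
on $[0,\infty)\times\R^d_{\bm{0}}\times[0,1]$ with intensity $\Leb\otimes\mu\otimes\Leb|_{[0,1]}$. Writing $\wt\Xi'$ for its compensation, one verifies by comparing intensities (or by a direct calculation on the atoms) the identity
\[
\int_{[0,t]\times\R^d_{\bm{0}}}g(s,\bm{w})\wt\Xi_{\bm{X}}(\D s,\D\bm{w})
=\int_{[0,t]\times\R^d_{\bm{0}}\times[0,1]}g(s,\bm{w})\1_{\{\vartheta\le f_{\bm{X}}(\bm{w})\}}\wt\Xi'(\D s,\D\bm{w},\D\vartheta)
\]
for admissible $g$ (and similarly for $\bm{Y}$). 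Plugging in $g(s,\bm{w})=\1_{B_{\bm{0}}(\kappa)}(\bm{w})\bm{w}$ expresses $\bm{D}^{\bm{X},\kappa}-\bm{D}^{\bm{Y},\kappa}$ as an $L^2$-martingale against $\wt\Xi'$; Doob's $L^2$ maximal inequality loses a factor $4$, and the It\^o isometry equates the terminal second moment to $\int_{\R^d_{\bm{0}}\times[0,1]}\1_{B_{\bm{0}}(\kappa)}(\bm{w})|\bm{w}|^2(\1_{\{\vartheta\le f_{\bm{X}}(\bm{w})\}}-\1_{\{\vartheta\le f_{\bm{Y}}(\bm{w})\}})^2\mu(\D\bm{w})\D\vartheta$. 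The inner $\vartheta$-integral collapses to $|f_{\bm{X}}(\bm{w})-f_{\bm{Y}}(\bm{w})|$, since the two indicators disagree on an interval of exactly that length, yielding~\eqref{eq:L2-D-thin}.

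For the second bound, the large-jump pieces $\bm{J}^{\bm{X},\kappa}$ and $\bm{J}^{\bm{Y},\kappa}$ are not compensated in~\eqref{eq:comp_Poisson_measures}, so I would compensate them by hand. The finite-second-moment assumption together with Cauchy--Schwarz and the finiteness of the restricted measure $\1_{\R^d\setminus B_{\bm{0}}(\kappa)}\mu$ make $\bm{m}_\kappa$ well defined; subtracting $\bm{m}_\kappa t$ turns $\bm{D}^{\bm{X},\kappa}+\bm{J}^{\bm{X},\kappa}-\bm{D}^{\bm{Y},\kappa}-\bm{J}^{\bm{Y},\kappa}-\bm{m}_\kappa t$ into a stochastic integral of the same form with the cutoff $\1_{B_{\bm{0}}(\kappa)}$ removed, and the Doob+isometry argument repeats verbatim. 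For the Wasserstein bound I would use the decomposition
\[
\bm{X}_t-\bm{Y}_t=(\bm{\gamma}_{\bm{X},\kappa}-\bm{\gamma}_{\bm{Y},\kappa}+\bm{m}_\kappa)t+(\bm{\Sigma_X}-\bm{\Sigma_Y})\bm{B^X}_t+\bm{M}_t,
\]
where $\bm{M}$ is the martingale just bounded, and apply Minkowski's inequality to $\E[\sup_{t\in[0,1]}|\bm{X}_t-\bm{Y}_t|^2]^{1/2}$, combined with $|(\bm{\Sigma_X}-\bm{\Sigma_Y})\bm{B^X}_t|\le|\bm{\Sigma_X}-\bm{\Sigma_Y}||\bm{B^X}_t|$ (operator norm $\le$ Frobenius) and $\E[\sup_{t\in[0,1]}|\bm{B^X}_t|^2]\le 4d$ (Doob applied to the non-negative submartingale $|\bm{B^X}|^2$). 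The real content is the first step — packaging the thinning coupling into a single Poisson stochastic integral — after which everything is routine; the only genuine bookkeeping is checking the measurability/integrability needed for the Poisson It\^o isometry.
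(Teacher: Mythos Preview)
Your proof is correct and follows the same overall strategy as the paper: express the difference of the small-jump martingales as a compensated Poisson integral, apply Doob's $L^2$ maximal inequality, then use the It\^o/Campbell isometry. The packaging differs slightly. You work with a single enriched Poisson measure $\Xi'$ on $[0,\infty)\times\R^d_{\bm{0}}\times[0,1]$ and let the $\vartheta$-integral of $(\1_{\{\vartheta\le f_{\bm{X}}\}}-\1_{\{\vartheta\le f_{\bm{Y}}\}})^2$ produce $|f_{\bm{X}}-f_{\bm{Y}}|$ directly. The paper instead splits $\Xi_{\bm{X}}-\Xi_{\bm{Y}}=\Lambda_+-\Lambda_-$ into two \emph{independent} Poisson measures (supported on the disjoint events $\{f_{\bm{Y}}<\vartheta\le f_{\bm{X}}\}$ and $\{f_{\bm{X}}<\vartheta\le f_{\bm{Y}}\}$), defines $\bm{D}^\pm$ as the corresponding integrals, and uses $\E[|\bm{D}^+_1-\bm{D}^-_1|^2]=\E[|\bm{D}^+_1|^2]+\E[|\bm{D}^-_1|^2]$ via independence before applying Campbell's formula to each piece. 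Your route is a bit more streamlined (one measure, one isometry, no independence argument needed); the paper's decomposition makes the structure of the coupling more visible and is reused later for the $L^q$ bounds on the big-jump components. The Wasserstein bound and the second moment inequality for $\bm{D}+\bm{J}$ are handled identically in both approaches.
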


\begin{proof} Denote  $f^+:=\max\{0,f\}$ for any function mapping into $\R$.
Define the Poisson random measures 
\begin{equation}\label{eq:lambda_pm}
    \Lambda_+\coloneqq\sum_{n\in\N} \1_{\{f_{\bm{Y}}(\bm{V}_n)<\vartheta_n\le f_{\bm{X}}(\bm{V}_n)\}}\delta_{(U_n,\bm{V}_n)}
\qquad\text{and}\qquad
\Lambda_-\coloneqq\sum_{n\in\N} \1_{\{f_{\bm{X}}(\bm{V}_n)<\vartheta_n\le f_{\bm{Y}}(\bm{V}_n)\}}\delta_{(U_n,\bm{V}_n)},
\end{equation}
with mean measures $\Leb\otimes(f_{\bm{X}}-f_{\bm{Y}})^+\mu$ and $\Leb\otimes(f_{\bm{Y}}-f_{\bm{X}})^+\mu$, respectively. Thus  $\Xi_{\bm{X}}-\Xi_{\bm{Y}}=\Lambda_+-\Lambda_-$. Note that $\Lambda_+$ is independent of $\Lambda_-$ since they are both thinnings of the same Poisson random measure and have disjoint supports. Let $\wt\Lambda_+$ and $\wt\Lambda_-$ denote their respective compensated Poisson random measures and define the L\'evy processes $\bm{D^\pm}=(\bm{D}^\pm_t)_{t\ge0}$ by $\bm{D}^\pm_t:=\int_{(0,t]\times\R^d_{\bm{0}}}\bm{w}\1_{B_{\bm{0}}(\kappa)}(\bm{w})\wt\Lambda_\pm,(\D s, \D \bm{w})$, where $\pm\in\{+,-\}$. By construction, $\bm{D}^+$ and $\bm{D}^-$ are independent square-integrable martingales, satisfying
$\E\big[\bm{D}^+_t\big]=\E\big[\bm{D}^-_t\big]=0$ and
$\bm{D}^{\bm{X},\kappa}_t-\bm{D}^{\bm{Y},\kappa}_t=\bm{D}^+_t-\bm{D}^-_t$ for all $t\in\R_+$. In particular, we have $\E\big[\langle \bm{D}^+_t, \bm{D}^-_t\rangle \big]=0$ and, by Campbell's formula~\cite[p.~28]{MR1207584}, 
\begin{align*}  
\E\left[|\bm{D}^+_t|^2\right]&=t\int_{\R^d_{\bm{0}}}\1_{B_{\bm{0}}(\kappa)}(\bm{w})
    |\bm{w}|^2(f_{\bm{X}}(\bm{w})-f_{\bm{Y}}(\bm{w}))^+
    \mu(\D\bm{w}),\\ 
    \E\left[|\bm{D}^-_t|^2\right]&=t \int_{\R^d_{\bm{0}}}\1_{B_{\bm{0}}(\kappa)}(\bm{w})
    |\bm{w}|^2(f_{\bm{Y}}(\bm{w})-f_{\bm{X}}(\bm{w}))^+
    \mu(\D\bm{w}).
\end{align*}
Doob's maximal inequality~\cite[Prop.~7.16]{MR1876169}, applied to the submartingale 
$|\bm{D}^+-\bm{D}^-|$,
and the independence of martingales $\bm{D}^+$ and $\bm{D}^-$
yield
\begin{align*}   
\E\bigg[\sup_{t\in[0,1]}\big|\bm{D}^{\bm{X},\kappa}_t-\bm{D}^{\bm{Y},\kappa}_t\big|^2\bigg] & = \E\bigg[\sup_{t\in[0,1]}\big|\bm{D}^+_t-\bm{D}^-_t\big|^2\bigg]\le 4\E\big[\big|\bm{D}^+_1-\bm{D}^-_1\big|^2\big]
=4\E\big[\big|\bm{D}^+_1|^2\big]+4\E\big[\big|\bm{D}^-_1|^2\big] \\
& =4\int_{\R^d_{\bm{0}}}\1_{B_{\bm{0}}(\kappa)}(\bm{w})
    |\bm{w}|^2|f_{\bm{X}}(\bm{w})-f_{\bm{Y}}(\bm{w})|
    \mu(\D\bm{w}).
\end{align*}

Assume, that $\int_{\R^d_{\bm{0}}}|\bm{w}|^2\1_{\R^d \setminus B_{\bm{0}}(\kappa)}(\bm{w})\nu_{\bm{X}}(\D \bm{w})<\infty$ and $\int_{\R^d_{\bm{0}}}|\bm{w}|^2\1_{\R^d \setminus B_{\bm{0}}(\kappa)}(\bm{w})\nu_{\bm{Y}}(\D \bm{w})<\infty$,
and define the L\'evy processes $\bm{J}^\pm=(\bm{J}^\pm_t)_{t\ge0}$ by
$\bm{J}^\pm_t:=\int_{(0,t]\times\R^d_{\bm{0}}}\bm{w}\1_{\R^d \setminus B_{\bm{0}}(\kappa)}(\bm{w})\Lambda_\pm,(\D s, \D \bm{w})$.
By the integrability assumption and construction, $\bm{J}^+$ and $\bm{J}^-$ are independent square-integrable processes with
$\E\big[\bm{J}^+_t-\bm{J}^-_t\big]=t\bm{m}_\kappa$ and
$\bm{J}^{\bm{X},\kappa}_t-\bm{J}^{\bm{Y},\kappa}_t=\bm{J}^+_t-\bm{J}^-_t$ for all $t\in\R_+$. Thus, Campbell's formula~\cite[p.~28]{MR1207584} 
yields
\begin{align*}
    \E\big[|\bm{D}^+_t+\bm{J}^+_t-\E[\bm{J}^+_t]|^2\big]&=t\int_{\R^d_{\bm{0}}}|\bm{w}|^2(f_{\bm{X}}(\bm{w})-f_{\bm{Y}}(\bm{w}))^+\mu(\D\bm{w}),\\
    \E\big[|\bm{D}^-_t+\bm{J}^-_t-\E[\bm{J}^-_t]|^2\big]&=t\int_{\R^d_{\bm{0}}}|\bm{w}|^2(f_{\bm{Y}}(\bm{w})-f_{\bm{X}}(\bm{w}))^+
    \mu(\D\bm{w}).
\end{align*} Next, Doob's maximal inequality applied to the submartingale $|\bm{D}^+_t+\bm{J}^+_t-(\bm{D}^-_t+\bm{J}^-_t)-t\bm{m}_\kappa|$, and the independence between $\bm{D}^+_t+\bm{J}^+_t-\E[\bm{J}^+_t]$ and $\bm{D}^-_t+\bm{J}^-_t-\E[\bm{J}^-_t]$, yield
\begin{equation*}
\begin{aligned}
\E\bigg[\sup_{t\in[0,1]}\big|\bm{D}^{\bm{X},\kappa}_t+\bm{J}^{\bm{X},\kappa}_t-\big(\bm{D}^{\bm{Y},\kappa}_t+\bm{J}^{\bm{Y},\kappa}_t\big)-\bm{m}_\kappa t\big|^2\bigg]&=\E\bigg[\sup_{t\in[0,1]}\big|\bm{D}^+_t+\bm{J}^+_t-\big(\bm{D}^-_t+\bm{J}^-_t\big)-\bm{m}_\kappa t\big|^2\bigg]\\
&\le 4\E\big[\big|\bm{D}^+_1+\bm{J}^+_1-(\bm{D}^-_1+\bm{J}^-_1)-\bm{m}_\kappa\big|^2\big]\\
&=4\int_{\R^d_{\bm{0}}}|\bm{w}|^2|f_{\bm{X}}(\bm{w})-f_{\bm{Y}}(\bm{w})|
    \mu(\D\bm{w}),
\end{aligned}
\end{equation*}
completing the proof.
\end{proof}

The following bound is required when the big jump components have infinite variance. 

\begin{proposition}
\label{prop:Lq-J-thin}
Consider the coupling $(\bm{D}^{\bm{X},\kappa},\bm{D}^{\bm{Y},\kappa},\bm{J}^{\bm{X},\kappa},\bm{J}^{\bm{Y},\kappa})$ defined in~\eqref{eq:comp_Poisson_measures} and~\eqref{eq:thining_coupling_defn}. Then
\begin{equation}
\label{eq:Lq-J}
\E\bigg[\sup_{t\in[0,1]}\big|\bm{J}^{\bm{X},\kappa}_t-\bm{J}^{\bm{Y},\kappa}_t\big|^q\bigg]
\le \int_{\R^d_{\bm{0}}}\1_{\R^d \setminus B_{\bm{0}}(\kappa)}(\bm{w})
    |\bm{w}|^q|f_{\bm{X}}(\bm{w})-f_{\bm{Y}}(\bm{w})|\mu(\D\bm{w}),
\,\text{for any }q\in(0,1].
\end{equation}
In particular, the following inequality holds for any $q\in(0,1]$:
\begin{equation}
\label{eq:Lq-X}
\begin{split}
\mW_q(\bm{X},\bm{Y})
&\le 
|\bm{\gamma}_{\bm{X},\kappa}-\bm{\gamma}_{\bm{Y},\kappa}|^q
+\bigg(4\int_{\R^d_{\bm{0}}}\1_{B_{\bm{0}}(\kappa)}(\bm{w})
    |\bm{w}|^2|f_{\bm{X}}(\bm{w})-f_{\bm{Y}}(\bm{w})|\mu(\D\bm{w})\bigg)^{q/2}\\
&\qquad + 2^qd^{q/2}|\bm{\Sigma_X}-\bm{\Sigma_Y}|^q
+\int_{\R^d_{\bm{0}}}\1_{\R^d \setminus B_{\bm{0}}(\kappa)}(\bm{w})
    |\bm{w}|^q|f_{\bm{X}}(\bm{w})-f_{\bm{Y}}(\bm{w})|\mu(\D\bm{w}).
    \end{split}
\end{equation}
\end{proposition}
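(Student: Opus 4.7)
\medskip
\noindent\textbf{Proof plan for Proposition~\ref{prop:Lq-J-thin}.}

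The plan is to exploit the fact that under the thinning coupling the difference $\bm{J}^{\bm{X},\kappa}-\bm{J}^{\bm{Y},\kappa}$ is again a pure‑jump compound Poisson process, and then use the subadditivity of $x\mapsto x^q$ for $q\in(0,1]$ to replace the supremum by a sum over jumps that can be evaluated via Campbell's formula. More precisely, recall from the proof of Proposition~\ref{prop:L2-thin} the thinned Poisson random measures $\Lambda_\pm$ defined in~\eqref{eq:lambda_pm}; define
\[
\bm{J}^\pm_t\coloneqq\int_{(0,t]\times\R^d_{\bm 0}}\bm{w}\,\1_{\R^d\setminus B_{\bm 0}(\kappa)}(\bm{w})\,\Lambda_\pm(\D s,\D\bm{w}),
\]
so that $\bm{J}^{\bm{X},\kappa}_t-\bm{J}^{\bm{Y},\kappa}_t=\bm{J}^+_t-\bm{J}^-_t$ for every $t\ge0$. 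Crucially, $\Lambda_+$ and $\Lambda_-$ have disjoint supports by construction (since the events $\{f_{\bm{Y}}(\bm{V}_n)<\vartheta_n\le f_{\bm{X}}(\bm{V}_n)\}$ and $\{f_{\bm{X}}(\bm{V}_n)<\vartheta_n\le f_{\bm{Y}}(\bm{V}_n)\}$ are disjoint), so $\bm{J}^+$ and $\bm{J}^-$ do not share jump times.

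First I would establish~\eqref{eq:Lq-J}. Since $\bm{J}^+-\bm{J}^-$ is piecewise constant with a.s.\ finitely many jumps on $[0,1]$ (the big‑jump components are compound Poisson), the triangle inequality gives $\sup_{t\in[0,1]}|\bm{J}^+_t-\bm{J}^-_t|\le \sum_{s\in[0,1]}|\Delta(\bm{J}^+-\bm{J}^-)_s|$, and then the subadditivity inequality $(\sum_i a_i)^q\le\sum_i a_i^q$ (valid for $a_i\ge0$ and $q\in(0,1]$) yields
\[
\sup_{t\in[0,1]}|\bm{J}^{\bm{X},\kappa}_t-\bm{J}^{\bm{Y},\kappa}_t|^q
\le \sum_{s\in[0,1]}|\Delta(\bm{J}^+-\bm{J}^-)_s|^q
= \int_{(0,1]\times\R^d_{\bm 0}}\1_{\R^d\setminus B_{\bm 0}(\kappa)}(\bm{w})|\bm{w}|^q\,(\Lambda_++\Lambda_-)(\D s,\D\bm{w}),
\]
where the equality uses the disjointness of the supports of $\Lambda_+$ and $\Lambda_-$. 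Taking expectations and applying Campbell's formula~\cite[p.~28]{MR1207584} with mean measure $\Leb\otimes|f_{\bm{X}}-f_{\bm{Y}}|\mu$ of $\Lambda_++\Lambda_-$ yields~\eqref{eq:Lq-J}.

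For the bound~\eqref{eq:Lq-X} on $\mW_q(\bm{X},\bm{Y})$, I would invoke the general inequality~\eqref{eq:Wq-XY} (with $q\wedge1=q$) and control each of the four summands under the thinning coupling. The deterministic drift and Gaussian‑component terms appear directly. For the small‑jump term, since $q/(q\vee1)=q$ for $q\in(0,1]$, Jensen's inequality applied to the concave map $x\mapsto x^{q/2}$ gives
\[
\mW_q\bigl(\bm{D}^{\bm{X},\kappa},\bm{D}^{\bm{Y},\kappa}\bigr)
\le \E\Bigl[\sup_{t\in[0,1]}|\bm{D}^{\bm{X},\kappa}_t-\bm{D}^{\bm{Y},\kappa}_t|^q\Bigr]
\le \E\Bigl[\sup_{t\in[0,1]}|\bm{D}^{\bm{X},\kappa}_t-\bm{D}^{\bm{Y},\kappa}_t|^2\Bigr]^{q/2},
\]
and the right‑hand side is controlled by~\eqref{eq:L2-D-thin} from Proposition~\ref{prop:L2-thin}. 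For the big‑jump term, $\mW_q(\bm{J}^{\bm{X},\kappa},\bm{J}^{\bm{Y},\kappa})\le \E[\sup_{t\in[0,1]}|\bm{J}^{\bm{X},\kappa}_t-\bm{J}^{\bm{Y},\kappa}_t|^q]$ is bounded by~\eqref{eq:Lq-J}. Summing the four contributions produces~\eqref{eq:Lq-X}.

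The only substantive step is the passage from the supremum to a sum of $q$-th powers of jumps, which would fail for $q>1$; the argument genuinely uses both the piecewise‑constant nature of the big‑jump component and the subadditivity of $x\mapsto x^q$ for $q\in(0,1]$. Everything else is a routine combination of Campbell's formula, Jensen's inequality, and the already established Proposition~\ref{prop:L2-thin} together with the master bound~\eqref{eq:Wq-XY}.
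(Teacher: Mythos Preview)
Your proposal is correct and follows essentially the same approach as the paper: decompose via the thinned measures $\Lambda_\pm$, use the subadditivity $(x+y)^q\le x^q+y^q$ to bound the supremum by a sum over jumps, and apply Campbell's formula; the derivation of~\eqref{eq:Lq-X} from~\eqref{eq:Wq-XY},~\eqref{eq:L2-D-thin} and~\eqref{eq:Lq-J} is identical. The only difference is that the paper first truncates the big jumps at an upper level $\kappa'>\kappa$, proves the bound for the truncated processes, and then passes to the limit $\kappa'\to\infty$ via monotone convergence and Fatou's lemma, whereas you work directly with $\bm{J}^\pm$; your shortcut is legitimate because $\nu_{\bm X}(\R^d\setminus B_{\bm 0}(\kappa))$ and $\nu_{\bm Y}(\R^d\setminus B_{\bm 0}(\kappa))$ are finite, so $\bm{J}^\pm$ are compound Poisson with a.s.\ finitely many jumps on $[0,1]$ and Campbell's formula for nonnegative integrands applies without any moment hypothesis.
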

Note that~\eqref{eq:Lq-J} \&~\eqref{eq:Lq-X} hold  without assuming $\nu_{\bm{X}}(\D\bm{w})\1_{\R^d \setminus B_{\bm{0}}(\kappa)}(\bm{w})$ and $\nu_{\bm{Y}}(\D\bm{w})\1_{\R^d \setminus B_{\bm{0}}(\kappa)}(\bm{w})$ have a finite $q$-moment. If this holds, however, the bound is non-trivial because the big jumps in Proposition~\ref{prop:Lq-J-thin} are then 
 controlled by  $\int_{\R^d_{\bm{0}}}\1_{\R^d \setminus B_{\bm{0}}(\kappa)}(\bm{w})|\bm{w}|^q\nu_{\bm{X}}(\D\bm{w})+\int_{\R^d_{\bm{0}}}\1_{\R^d \setminus B_{\bm{0}}(\kappa)}(\bm{w})|\bm{w}|^q\nu_{\bm{Y}}(\D\bm{w})<\infty$. 
 
\begin{proof}
Recall that $\kappa \in (0,1]$ and let $\kappa' \in (\kappa,\infty)$. For $t \ge 0$,
let
$$\text{$\bm{J}^{\bm{X},\kappa}_{t,\kappa'}\coloneqq\int_{[0,t]\times \R^d_{\bm{0}}}\1_{B_{\bm{0}}(\kappa')\setminus B_{\bm{0}}(\kappa)}(\bm{w})\bm{w} \Xi_{\bm{X}}(\D s, \D \bm{w})$ and  $\bm{J}^{\bm{Y},\kappa}_{t,\kappa'}\coloneqq\int_{[0,t]\times \R^d_{\bm{0}}}\1_{B_{\bm{0}}(\kappa')\setminus B_{\bm{0}}(\kappa)}(\bm{w})\bm{w} \Xi_{\bm{Y}}(\D s, \D \bm{w})$.}$$ Let $\Lambda_\pm$ be as in \eqref{eq:lambda_pm}, and define $\bm{J}^\pm_{\cdot,\kappa'}=(\bm{J}^\pm_{t,\kappa'})_{t \ge 0}$ as $\bm{J}^\pm_{t,\kappa'}
\coloneqq\int_{(0,t]\times\R^d_{\bm{0}}}\bm{w}\1_{B_{\bm{0}}(\kappa')\setminus B_{\bm{0}}(\kappa)}(\bm{w})\Lambda_\pm(\D s, \D \bm{w})$ (both being L\'evy processes), and note that $\bm{J}^+_{t,\kappa'}-\bm{J}^-_{t,\kappa'}=\bm{J}^{\bm{X},\kappa}_{t,\kappa'}-\bm{J}^{\bm{Y},\kappa}_{t,\kappa'}$ for $t \in \R_+$ and $\kappa' \in (\kappa,\infty)$. Note that $\nu_{\bm{X}}(\D\bm{w})\1_{B_{\bm{0}}(\kappa')\setminus B_{\bm{0}}(\kappa)}(\bm{w})$ and $\nu_{\bm{Y}}(\D\bm{w})\1_{B_{\bm{0}}(\kappa')\setminus B_{\bm{0}}(\kappa)}(\bm{w})$ have a finite $q$-moment for all $\kappa' \in (\kappa,\infty)$. Moreover, by the triangle inequality and the 
fact $(x+y)^q\le x^q+y^q$ for all $x,y\ge0$, we have
\begin{align}\label{eq:bound_campbells_large_jumps}
\begin{split}
\sup_{t\in[0,1]}\big|\bm{J}^+_{t,\kappa'}-\bm{J}^-_{t,\kappa'}\big|^q 
&\le \int_{(0,1]\times\R^d_{\bm{0}}}\1_{B_{\bm{0}}(\kappa')\setminus B_{\bm{0}}(\kappa)}(\bm{w})|\bm{w}|^q(\Lambda_+(\D s, \D \bm{w})+\Lambda_-(\D s, \D \bm{w})).
\end{split}
\end{align}
Recall that $\Leb\otimes(f_{\bm{X}}-f_{\bm{Y}})^+\mu$ and $\Leb\otimes(f_{\bm{Y}}-f_{\bm{X}})^+\mu$ are the mean measures of $\Lambda_+$ and $\Lambda_-$, respectively. Thus, by taking expectations in~\eqref{eq:bound_campbells_large_jumps} and applying Campbell's formula~\cite[p.~28]{MR1207584}, we get
\begin{align*}
    \E\bigg[\sup_{t\in[0,1]}\big|\bm{J}^{\bm{X},\kappa}_{t,\kappa'}-\bm{J}^{\bm{Y},\kappa}_{t,\kappa'}\big|^q\bigg]&=\E\bigg[\sup_{t\in[0,1]}\big|\bm{J}^+_{t,\kappa'}-\bm{J}^-_{t,\kappa}\big|^q\bigg]\\
&\le \int_{\R^d_{\bm{0}}}\1_{B_{\bm{0}}(\kappa')\setminus B_{\bm{0}}(\kappa)}(\bm{w})
    |\bm{w}|^q|f_{\bm{X}}(\bm{w})-f_{\bm{Y}}(\bm{w})|\mu(\D\bm{w}).
\end{align*}
Due to the monotone convergence theorem, it follows that, as $\kappa' \to \infty$, 
\begin{equation*}
    \int_{B_{\bm{0}}(\kappa')\setminus B_{\bm{0}}(\kappa)}
    |\bm{w}|^q|f_{\bm{X}}(\bm{w})-f_{\bm{Y}}(\bm{w})|\mu(\D\bm{w}) \to \int_{\R^d_{\bm{0}}}\1_{\R^d \setminus B_{\bm{0}}(\kappa)}(\bm{w})
    |\bm{w}|^q|f_{\bm{X}}(\bm{w})-f_{\bm{Y}}(\bm{w})|\mu(\D\bm{w}).
\end{equation*} 
Furthermore, Fatou's lemma together with the above observations imply that
\begin{align*}
    \E\bigg[\liminf_{\kappa'\to\infty}\sup_{t\in[0,1]}\big|\bm{J}^{\bm{X},\kappa}_{t,\kappa'}-\bm{J}^{\bm{Y},\kappa}_{t,\kappa'}\big|^q\bigg]&\le \liminf_{\kappa' \to \infty}\E\bigg[\sup_{t\in[0,1]}\big|\bm{J}^{\bm{X},\kappa}_{t,\kappa'}-\bm{J}^{\bm{Y},\kappa}_{t,\kappa'}\big|^q\bigg] \\
    &\le \int_{\R^d_{\bm{0}}}\1_{\R^d \setminus B_{\bm{0}}(\kappa)}(\bm{w})
    |\bm{w}|^q|f_{\bm{X}}(\bm{w})-f_{\bm{Y}}(\bm{w})|\mu(\D\bm{w}).
\end{align*}
We have $\liminf_{\kappa'\to\infty}\sup_{t\in[0,1]}\big|\bm{J}^{\bm{X},\kappa}_{t,\kappa'}-\bm{J}^{\bm{Y},\kappa}_{t,\kappa'}\big|^q=\sup_{t\in[0,1]}\big|\bm{J}^{\bm{X},\kappa}_{t}-\bm{J}^{\bm{Y},\kappa}_{t}\big|^q$ a.s., 
since the largest jump of $\bm{J}^{\bm{X},\kappa}$ and $\bm{J}^{\bm{Y},\kappa}$ are finite on the time interval $[0,1]$. 
This implies~\eqref{eq:Lq-J}.

Since 
$\mW_q(\bm{D}^{\bm{X}},\bm{D}^{\bm{Y}})\le\mW_2(\bm{D}^{\bm{X}},\bm{D}^{\bm{Y}})^{q}$, the inequality in~\eqref{eq:Lq-X} follows from~\eqref{eq:Wq-XY}, \eqref{eq:L2-D-thin} and~\eqref{eq:Lq-J}. 
\end{proof}

\subsection{Comonotonic coupling}
\label{sec:comonotonic_coupling}

In this section, we introduce the $d$-dimensional comonotonic coupling of jumps for any $d \ge 1$. 
We use two ingredients to construct this coupling of the L\'evy processes $\bm{X}$ and $\bm{Y}$: \textbf{(I)} the comonotonic coupling of real-valued random variables $\xi$ and $\zeta$, given by  $(\xi,\zeta)=(F_\xi^{\la}(U),F_\zeta^{\la}(U))$, where 
 $U$ is uniform on $(0,1)$ and the functions $F_\xi^{\la}$ and $F_\zeta^\la$ are the right inverses of the functions $F_\xi$ and $F_\zeta$;
 \textbf{(II)} LaPage's representation of the Poisson random measures of a L\'evy process (see~\cite[p.~4]{MR1833707}).

The comonotonic coupling of the real-valued variables in \textbf{(I)} is optimal for the $L^p$-Wasserstein distance (see~\cite[Ex.~3.2.14]{MR1619170}),
$\mW_p(\xi,\zeta)^p
=\int_0^1|F_\xi^{\la}(u)-F_\zeta^{\la}(u)|^p\D u
=\E\big[|F_\xi^{\la}(U)-F_\zeta^{\la}(U)|^p\big]$ for $p\ge 1$.
The representation in~\textbf{(II)} decomposes the jumps of a L\'evy process into its magnitude (i.e. norm) and angular component. The main idea behind our coupling of the L\'evy processes $\bm{X}$ and $\bm{Y}$
is to couple their respective Poisson random measures of jumps via a comonotonic coupling of the magnitudes of jumps, while simultaneously aligning their angular components. We now describe this construction.

Recall that the L\'evy processes $\bm{X}$ and $\bm{Y}$ in $\R^d$ have characteristic triplets $(\bm{\gamma_X},\bm{\Sigma_X}\bm{\Sigma_X}^\tra,\nu_{\bm{X}})$ and $(\bm{\gamma_Y},\bm{\Sigma_Y}\bm{\Sigma_Y}^\tra,\nu_{\bm{Y}})$, respectively. Suppose the L\'evy measure $\nu_{\bm{X}}$ (resp. $\nu_{\bm{Y}}$) of $\bm{X}$ (resp. $\bm{Y}$) admits a radial decomposition (see~\cite[p.~282]{MR647969}): there exists a probability measure $\sigma_{\bm{X}}$ (resp. $\sigma_{\bm{Y}}$) on the unit sphere $\Sp^{d-1}$ (with convention $\Sp^{0}\coloneqq\{-1,1\}$) such that:
\[
\nu_{\bm{X}}(B)= \int_{\Sp^{d-1}}\int_0^\infty \1_{B}(x\bm{v})\rho_{\bm{X}}^0(\D x,\bm{v})\sigma_{\bm{X}}(\D \bm{v}),\,\,\bigg(\text{resp. }
\nu_{\bm{Y}}(B)= \int_{\Sp^{d-1}}\int_0^\infty \1_{B}(x\bm{v})\rho_{\bm{Y}}^0(\D x,\bm{v})\sigma_{\bm{Y}}(\D \bm{v})\bigg),
\]
for any $B\in \mathcal{B}(\R^d \setminus \{\bm{0}\})$, where $\{\rho^0_{\bm{X}}(\cdot,\bm{v})\}_{\bm{v}\in \Sp^{d-1}}$ (resp. $\{\rho^0_{\bm{Y}}(\cdot,\bm{v})\}_{\bm{v}\in \Sp^{d-1}}$) is a measurable family of L\'evy measures on $(0,\infty)$. Define the probability measure $\sigma \coloneqq (\sigma_{\bm{X}}+\sigma_{\bm{Y}})/2$ on $\Sp^{d-1}$ and the Radon-Nikodym derivatives $f^\sigma_{\bm{X}}(\bm{v})\coloneqq \sigma_{\bm{X}}(\D\bm{v})/\sigma(\D\bm{v})\le 2$ and $f^\sigma_{\bm{Y}}(\bm{v})\coloneqq \sigma_{\bm{Y}}(\D\bm{v})/\sigma(\D\bm{v})\le 2$ for $\bm{v} \in \Sp^{d-1}$. Consider the following radial decompositions of $\nu_{\bm{X}}$ and $\nu_{\bm{Y}}$: 
\begin{equation}\label{eq:radial_decomp}
\nu_{\bm{X}}(B)= \int_{\Sp^{d-1}}\int_0^\infty \1_{B}(x\bm{v})\rho_{\bm{X}}(\D x,\bm{v})\sigma(\D \bm{v}), \quad
\nu_{\bm{Y}}(B)= \int_{\Sp^{d-1}}\int_0^\infty \1_{B}(x\bm{v})\rho_{\bm{Y}}(\D x,\bm{v})\sigma(\D \bm{v}),
\end{equation} 
for $B\in \mathcal{B}(\R^d \setminus \{\bm{0}\})$, where $\rho_{\bm{X}}(\cdot,\bm{v})\coloneqq f^\sigma_{\bm{X}}(\bm{v})\rho_{\bm{X}}^0(\cdot,\bm{v})$ and $\rho_{\bm{Y}}(\cdot,\bm{v})\coloneqq f^\sigma_{\bm{Y}}(\bm{v})\rho_{\bm{Y}}^0(\cdot,\bm{v})$ for $\bm{v} \in \Sp^{d-1}$. The advantage of the decomposition in~\eqref{eq:radial_decomp}, compared to the one in the display above, is that the angular components of jumps are sampled from the same measure $\sigma$ on $\Sp^{d-1}$, making it possible to couple the jumps of $\bm{X}$ and $\bm{Y}$
by coupling their magnitudes. 

For every $\bm{v}\in \Sp^{d-1}$, let $u\mapsto\rho_{\bm{X}}^{\la}(u,\bm{v})$ (resp. $u\mapsto\rho_{\bm{Y}}^{\la}(u,\bm{v})$) be the right inverse of $x\mapsto\rho_{\bm{X}}([x,\infty),\bm{v})$ (resp. $x\mapsto\rho_{\bm{Y}}([x,\infty),\bm{v})$). Let $(U_n)_{n \in \N}$ be a sequence of iid uniform random variables on $[0,1]$, and let $(\Gamma_n)_{n\in \N}$ be a sequence of partial sums of iid standard exponentially distributed random variables that is independent of $(U_n)_{n \in \N}$. Next, independent of $(U_n,\Gamma_n)_{n \in \N}$, we denote by $(\bm{V}_n)_{n \in \N}$ a sequence of iid random vectors on $\Sp^{d-1}$ with common distribution $\sigma$. Define the Poisson point process $\Xi$ on $[0,1]\times (0,\infty)\times \Sp^{d-1}$ with measure $\Leb\otimes\Leb\otimes\sigma$ and the compensated Poisson random measure $\wt\Xi(\D s,\D x, \D \bm{v})$ as follows:
\begin{equation}\label{eq:marked_PPP_Comonotonic}
    \Xi\coloneqq \sum_{n \in \N}\delta_{(U_n,\Gamma_n,\bm{V}_n)}, \qquad \wt\Xi(\D s,\D x,\D \bm{v})=\Xi(\D s, \D x, \D \bm{v})-\D s \otimes \D x \otimes \sigma(\D \bm{v}).
\end{equation} 

Next, we note that (by Proposition~\ref{prop:small_jump_bound_comonotonic} below) for any $\ve\in(0,\infty)$ (and even $\ve=\infty$ when $\bm{X}$ and $\bm{Y}$ both have jumps of finite variation), the small-jump components of $\bm{X}$ and $\bm{Y}$ take the form
\begin{equation}\label{eq:comono_coupling_1}    \bm{M^X}_t\!\coloneqq\!\!\int_{[0,t]\times [\ve,\infty)\times \Sp^{d-1}}
\!\!\bm{v}\rho_{\bm{X}}^{\la}(x,\bm{v})\wt \Xi(\D s, \D x, \D \bm{v}), \enskip 
\bm{M^Y}_t\!\coloneqq\!\!\int_{[0,t]\times [\ve,\infty)\times \Sp^{d-1}}
\!\!\bm{v}\rho_{\bm{Y}}^{\la}(x,\bm{v})\wt \Xi(\D s, \D x, \D \bm{v}).
\end{equation}
The big-jump components of $\bm{X}$ and $\bm{Y}$ can similarly be expressed as
\begin{equation}\label{eq:comono_coupling_2}
    \bm{L^X}_t\coloneqq \int_{[0,t]\times (0,\ve)\times \Sp^{d-1}}\bm{v}\rho_{\bm{X}}^{\la}(x,\bm{v}) \Xi(\D s, \D x, \D \bm{v}), \enskip \bm{L^Y}_t\coloneqq \int_{[0,t]\times (0,\ve)\times \Sp^{d-1}}\bm{v}\rho_{\bm{Y}}^{\la}(x,\bm{v}) \Xi(\D s, \D x, \D \bm{v}).
\end{equation}

\begin{proposition}\label{prop:small_jump_bound_comonotonic}
Let L\'evy processes 
$\bm{X}$ and $\bm{Y}$
have characteristic triplets denoted by $(\bm{\gamma_X},\bm{\Sigma_X}\bm{\Sigma_X}^\tra,\nu_{\bm{X}})$ and $(\bm{\gamma_Y},\bm{\Sigma_Y}\bm{\Sigma_Y}^\tra,\nu_{\bm{Y}})$, respectively.
Assume that 
 the L\'evy measures of $\nu_{\bm{X}}$ and $\nu_{\bm{Y}}$ admit the radial decomposition in~\eqref{eq:radial_decomp}
and construct the processes $(\bm{M^X},\bm{M^Y},\bm{L^X},\bm{L^Y})$ 
by~\eqref{eq:comono_coupling_1} and~\eqref{eq:comono_coupling_2}, independent of standard Brownian motions $\bm{B^X}$ and $\bm{B^Y}$ on $\R^d$. Then there exists constants $\bm{\varpi_X},\bm{\varpi_Y}\in\R^d$, such that $\bm{X}_t\eqd \bm{\varpi_X}t+\bm{\Sigma_X}\bm{B}^{\bm{X}}_t+\bm{M}^{\bm{X}}_t+\bm{L}^{\bm{X}}_t$ and $\bm{Y}_t\eqd \bm{\varpi_Y}t+\bm{\Sigma_Y}\bm{B}^{\bm{Y}}_t+\bm{M}^{\bm{Y}}_t+\bm{L}^{\bm{Y}}_t$ for all $t\in[0,1]$. Moreover, this  coupling of $\bm{X}$ and $\bm{Y}$ satisfies   
\begin{align}\label{eq:small_jump_comono_coup}
    \E\bigg[\sup_{t \in [0,1]}\big|\bm{M^X}_t-\bm{M^Y}_t\big|^2\bigg]&\le 4\int_{[\ve,\infty)\times \Sp^{d-1}} (\rho_{\bm{X}}^{\la}(x,\bm{v})-\rho_{\bm{Y}}^{\la}(x,\bm{v}))^2 \D x \otimes \sigma(\D \bm{v}).
\end{align}
Furthermore, if $\int_{\R^d_{\bm{0}}}|\bm{w}|^2\1_{\R^d \setminus B_{\bm{0}}(1)}(\bm{w})\nu_{\bm{X}}(\D\bm{w})<\infty$ and $\int_{\R^d_{\bm{0}}}|\bm{w}|^2\1_{\R^d \setminus B_{\bm{0}}(1)}(\bm{w})\nu_{\bm{Y}}(\D\bm{w})<\infty$, then 
\begin{equation}
\label{eq:L2-J-comono}
\E\bigg[\sup_{t\in[0,1]}\big|\bm{M^X}_t+\bm{L^X}_t-(\bm{M^Y}_t+\bm{L^Y}_t)-\bm{m}t\big|^2\bigg]
\le 4\int_{(0,\infty)\times \Sp^{d-1}}(\rho_{\bm{X}}^{\la}(x,\bm{v})-\rho_{\bm{Y}}^{\la}(x,\bm{v}))^2 \D x \otimes \sigma(\D \bm{v}),
\end{equation}
were we define $\bm{m}\coloneqq\E[\bm{L^X}_1-\bm{L^Y}_1]
=\int_{(0,\ve)\times \Sp^{d-1}}\bm{v}
(\rho_{\bm{X}}^{\la}(x,\bm{v})-\rho_{\bm{Y}}^{\la}(x,\bm{v}))\D s \otimes\sigma(\D\bm{v})\in\R^d$. In particular, 
\begin{equation}
\label{eq:L2-X-comono}
\begin{split}
\mW_2(\bm{X},\bm{Y})
&\le |\bm{\varpi_X}-\bm{\varpi_Y}+\bm{m}|
    + 2 d^{1/2}|\bm{\Sigma_X}-\bm{\Sigma_Y}|\\
&\qquad   + 2\bigg(\int_{(0,\infty)\times \Sp^{d-1}}(\rho_{\bm{X}}^{\la}(x,\bm{v})-\rho_{\bm{Y}}^{\la}(x,\bm{v}))^2 \D x \otimes \sigma(\D \bm{v})\bigg)^{1/2}.
\end{split}
\end{equation}
\end{proposition}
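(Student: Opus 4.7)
The plan is to establish the proposition in two stages: first the distributional identity for each of $\bm{X}$ and $\bm{Y}$ separately (this is essentially LePage's representation), and then the $L^2$ estimates for the coupled difference via It\^o's isometry and Doob's $L^2$ maximal inequality.

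For the distributional identity, I would invoke the Mapping Theorem for Poisson point processes (see~\cite{MR1207584}) applied to $\Xi$ in~\eqref{eq:marked_PPP_Comonotonic} under the map $T_{\bm{X}}:[0,\infty)\times(0,\infty)\times\Sp^{d-1}\to[0,\infty)\times\R^d_{\bm{0}}$ defined by $T_{\bm{X}}(s,x,\bm{v})\coloneqq(s,\bm{v}\rho_{\bm{X}}^{\la}(x,\bm{v}))$. Since $u\mapsto\rho_{\bm{X}}^\la(u,\bm{v})$ is the right-continuous inverse of the tail $x\mapsto\rho_{\bm{X}}([x,\infty),\bm{v})$, a standard change-of-variables argument pushes the Lebesgue measure on $(0,\infty)$ forward to $\rho_{\bm{X}}(\cdot,\bm{v})$ for each fixed $\bm{v}\in\Sp^{d-1}$; combined with the radial decomposition~\eqref{eq:radial_decomp}, the pushforward of $\Leb\otimes\Leb\otimes\sigma$ under $T_{\bm{X}}$ equals $\Leb\otimes\nu_{\bm{X}}$. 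Splitting this image Poisson random measure at the ``rank'' level $\ve$ thus realises $\bm{L^X}$ as a compound Poisson process with a finite L\'evy measure and $\bm{M^X}$ as the compensated integral over the complementary region, so $\bm{M^X}+\bm{L^X}$ coincides in law with the pure-jump component of a L\'evy process with L\'evy measure $\nu_{\bm{X}}$. Adding an independent Brownian component $\bm{\Sigma_X}\bm{B^X}$ and choosing the deterministic drift $\bm{\varpi_X}$ to correct the difference between the cutoff used here and $\bm{\gamma_X}$ produces a L\'evy process with generating triplet $(\bm{\gamma_X},\bm{\Sigma_X}\bm{\Sigma_X}^\tra,\nu_{\bm{X}})$, hence equal in law to $\bm{X}$; the argument for $\bm{Y}$ is identical.

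The $L^2$ bounds are then immediate. Since $\bm{M^X}$ and $\bm{M^Y}$ are both compensated Poisson integrals against the \emph{same} $\wt\Xi$, linearity gives
\[
\bm{M^X}_t-\bm{M^Y}_t=\int_{[0,t]\times[\ve,\infty)\times\Sp^{d-1}}\bm{v}\bigl(\rho_{\bm{X}}^{\la}(x,\bm{v})-\rho_{\bm{Y}}^{\la}(x,\bm{v})\bigr)\wt\Xi(\D s,\D x,\D\bm{v}),
\]
which is an $L^2$ martingale whenever the right-hand side of~\eqref{eq:small_jump_comono_coup} is finite (the general case follows by monotone approximation in the integrand). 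Doob's $L^2$ maximal inequality~\cite[Prop.~7.16]{MR1876169} and It\^o's isometry (Campbell's formula for compensated Poisson integrals, using $|\bm{v}|=1$) then yield~\eqref{eq:small_jump_comono_coup}. Under the second-moment assumption on the big jumps, $\bm{L^X}-\bm{L^Y}-\bm{m}t$ is likewise a centered $L^2$ martingale obtained by compensating the Poisson integrals in~\eqref{eq:comono_coupling_2} so that the full difference $\bm{M^X}+\bm{L^X}-\bm{M^Y}-\bm{L^Y}-\bm{m}t$ becomes a single compensated integral over $(0,\infty)\times\Sp^{d-1}$, and the same two ingredients give~\eqref{eq:L2-J-comono}.

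Finally, to obtain~\eqref{eq:L2-X-comono} I would take $\bm{B^X}=\bm{B^Y}=\bm{B}$ (independent of $\Xi$), write
\[
\bm{X}_t-\bm{Y}_t=(\bm{\varpi_X}-\bm{\varpi_Y}+\bm{m})t+(\bm{\Sigma_X}-\bm{\Sigma_Y})\bm{B}_t+\mathcal{N}_t,
\]
where $\mathcal{N}_t\coloneqq\bm{M^X}_t+\bm{L^X}_t-\bm{M^Y}_t-\bm{L^Y}_t-\bm{m}t$ is the martingale handled by~\eqref{eq:L2-J-comono}, and then combine the $L^2$ triangle inequality with Doob's inequality applied to the Gaussian term $(\bm{\Sigma_X}-\bm{\Sigma_Y})\bm{B}$ (whose supremum contributes the $2d^{1/2}|\bm{\Sigma_X}-\bm{\Sigma_Y}|$ factor via $\E[|(\bm{\Sigma_X}-\bm{\Sigma_Y})\bm{B}_1|^2]\le d\,|\bm{\Sigma_X}-\bm{\Sigma_Y}|^2$). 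The main obstacle is the bookkeeping in the first step: one must verify carefully that the measure-theoretic identifications from the Mapping Theorem give exactly the desired L\'evy measure and that the drift shift between the ``rank-cutoff'' used in the coupling and the Euclidean cutoff in $\bm{\gamma_X}$ is absorbed by the constants $\bm{\varpi_X},\bm{\varpi_Y}$. Everything after that is routine martingale analysis.
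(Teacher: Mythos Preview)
Your proposal is correct and follows essentially the same route as the paper: the distributional identity is obtained via the Mapping Theorem applied to $\Xi$ under $(s,x,\bm{v})\mapsto(s,\rho_{\bm{X}}^{\la}(x,\bm{v})\bm{v})$, and the $L^2$ estimates follow from writing the differences as compensated Poisson integrals against the common $\wt\Xi$, then invoking Doob's maximal inequality together with Campbell's formula. The paper's proof is slightly less explicit about absorbing the cutoff discrepancy into $\bm{\varpi_X}$ (it simply cites \cite{MR1833707}), but your observation that this is the only bookkeeping obstacle is accurate.
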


Coupling the jumps of $\bm{X}$ and $\bm{Y}$ via~\eqref{eq:comono_coupling_1} and \eqref{eq:comono_coupling_2} is based on the idea behind the one-dimensional comonotonic coupling, applied to the magnitudes of the jumps of $\bm{X}$ and $\bm{Y}$. Indeed, in the coupling of Proposition~\ref{prop:small_jump_bound_comonotonic}, we align the angular components of the jumps and then couple the magnitudes via the right inverses $\rho_{\bm{X}}^\la(\cdot,\bm{v})$ and $\rho_{\bm{Y}}^\la(\cdot,\bm{v})$ (of possibly unbounded functions $x\mapsto\rho_{\bm{X}}([x,\infty),\bm{v})$ and $x\mapsto\rho_{\bm{Y}}([x,\infty),\bm{v})$) evaluated along the sequence $(\Gamma_n)_{n\in \N}$ of partial sums of iid standard exponentially distributed random variables. Note that this construction is analogous to the one-dimensional comonotonic coupling of real random variables described above, but allows for the functions $x\mapsto\rho_{\bm{X}}([x,\infty),\bm{v})$ and $x\mapsto\rho_{\bm{Y}}([x,\infty),\bm{v})$ to be unbounded.

\begin{proof}
We start by showing that there exist $\bm{\varpi_X},\bm{\varpi_Y}\in\R^d$, such that $\bm{X}_t\eqd\bm{\varpi_X}t+\bm{\Sigma_X}\bm{B}^{\bm{X}}_t+\bm{M}^{\bm{X}}_t+\bm{L}^{\bm{X}}_t$ and $\bm{Y}_t\eqd\bm{\varpi_Y}t+\bm{\Sigma_Y}\bm{B}^{\bm{Y}}_t+\bm{M}^{\bm{Y}}_t+\bm{L}^{\bm{Y}}_t$ for all $t\in[0,1]$. The proof of this fact is essentially given in~\cite[p.~4]{MR1833707}, we outline it here for completeness. By the symmetry of the construction, it is sufficient to prove the first equality in law only. Since $\bm{X}$ is a L\'evy process,  $\Xi_{\bm{X}}=\sum_{\{t:\Delta \bm{X}_t \ne 0\}}\delta_{(t,\Delta \bm{X}_t)}$ is a Poisson random measure on $[0,1]\times \R^d_{\bm{0}}$ of the jumps of $\bm{X}$ with mean measure $\Leb\otimes \nu_{\bm{X}}$~\cite[Thm~19.2]{MR3185174}. By~\eqref{eq:comono_coupling_1} and \eqref{eq:comono_coupling_2}, the equality in law $\bm{X}_t\eqd\bm{\varpi_X}t+\bm{\Sigma_X}\bm{B}^{\bm{X}}_t+\bm{M}^{\bm{X}}_t+\bm{L}^{\bm{X}}_t$ holds for some $\bm{\varpi_X}\in\R^d$ if 
\begin{equation}
\label{eq:equality_PPP}
    \Xi_{\bm{X}}\eqd \sum_{n=1}^\infty \delta_{(U_n,\rho_{\bm{X}}^{\la}(\Gamma_n,\bm{V}_n)\bm{V}_n)}.
\end{equation}
To prove this, consider the Poisson random measure $\Xi$ on $[0,1]\times(0,\infty)\times \Sp^{d-1}$, with mean measure $\Leb \otimes \Leb \otimes \sigma$, defined in~\eqref{eq:marked_PPP_Comonotonic}. Define $h:[0,1]\times(0,\infty)\times \Sp^{d-1} \to [0,1]\times \R^d$ by $h(t,x,\bm{v})\coloneqq(t,\rho_{\bm{X}}^\la(x,\bm{v})\bm{v})$. Crucially, by construction, we have $(\Leb \otimes \Leb \otimes \sigma)\circ h^{-1}=\Leb \otimes \nu_{\bm{X}}$ on $\mathcal{B}(\R^d_{\bm{0}})$. Thus, by the Mapping Theorem~\cite[Sec.~2.3]{MR1207584}, we get $\Xi \circ h^{-1}\eqd\Xi_{\bm{X}}$. Moreover, since $\sum_{n=1}^\infty \delta_{(U_n,\rho_{\bm{X}}^{\la}(\Gamma_n,\bm{V}_n)\bm{V}_n)} =\Xi \circ h^{-1} $ by construction, the equality in law in~\eqref{eq:equality_PPP} follows.

Next, we prove that 
\begin{align*}
    \bm{M^X}_t-\bm{M^Y}_t=\int_{[0,t]\times [\ve,\infty)\times \Sp^{d-1}}\bm{v}(\rho_{\bm{X}}^{\la}(x,\bm{v})-\rho_{\bm{X}}^{\la}(x,\bm{v})) \wt \Xi(\D s, \D x, \D \bm{v})
\end{align*} is a square-integrable martingale.  Let $\mathcal{F}_t$ to be the $\sigma$-field generated by $\Xi((0,s]\times A)$ for $0 \le s \le t$ and $A \in \mathcal{B}([\ve,\infty)\times \Sp^{d-1})$, then $\bm{M^X}-\bm{M^Y}$ is adapted w.r.t. $(\mathcal{F}_t)_{t\ge 0}$ and fulfills the martingale property by virtue of being an integral with respect to a compensated Poisson random measure. Furthermore, by the triangle inequality, $\bm{M^X}_t-\bm{M^Y}_t$ is square integrable since both $\bm{M^X}_t$ and $\bm{M^Y}_t$ are square integrable. Since the process $|\bm{M^X}_t-\bm{M^Y}_t|$ is a submartingale, Doob's maximal inequality~\cite[Prop.~7.16]{MR1876169} and Campbell's formula~\cite[p.~28]{MR1207584} imply
\[
\E\bigg[\sup_{t \in [0,1]}
    \big|\bm{M^X}_t-\bm{M^Y}_t\big|^2\bigg]
\le 4\E\big[\big|\bm{M^X}_1-\bm{M^X}_1\big|^2\big]
=4\int_{[\ve,\infty)\times \Sp^{d-1}} (\rho_{\bm{X}}^{\la}(x,\bm{v})-\rho_{\bm{Y}}^{\la}(x,\bm{v}))^2 \D x \otimes \sigma(\D \bm{v}).
\]

If $\1_{\R^d \setminus B_{\bm{0}}(1)}(\bm{w})\nu_{\bm{X}}(\D \bm{w})$ and $\1_{\R^d \setminus B_{\bm{0}}(1)}(\bm{w})\nu_{\bm{Y}}(\D \bm{w})$ have finite second moment, a similar bound can be established for the big-jump components using Doob's maximal inequality and Campbell's formula:
\begin{align*}
\E\bigg[\sup_{t\in[0,1]}\big|\bm{M^X}_t+\bm{L^X}_t-(\bm{M^Y}_t+\bm{L^Y}_t)-\bm{m}t\big|^2\bigg]
&\le 4\E\big[\big|\bm{M^X}_1+\bm{L^X}_1-(\bm{M^Y}_1+\bm{L^Y}_1)-\bm{m}\big|^2\big]\\
&=4\int_{(0,\infty)\times \Sp^{d-1}}(\rho_{\bm{X}}^{\la}(x,\bm{v})-\rho_{\bm{Y}}^{\la}(x,\bm{v}))^2 \D x \otimes \sigma(\D \bm{v}).
\end{align*}

Finally,~\eqref{eq:L2-X-comono} follows directly from~\eqref{eq:L2-J-comono} and the standard arguments given in Appendix~\ref{app:A}.
\end{proof}

\begin{proposition}
\label{prop:Lq-J-comonotonic}
Pick $q \in (0,1]$. Assume that 
 the L\'evy measures of $\nu_{\bm{X}}$ and $\nu_{\bm{Y}}$ admit the radial decomposition in~\eqref{eq:radial_decomp}
and construct the processes $(\bm{L^X},\bm{L^Y})$ 
by~\eqref{eq:comono_coupling_2}.
For any $\ve\in(0,\infty)$ (we may have $\ve=\infty$ when $\bm{X}$ and $\bm{Y}$ are of finite variation), the coupling $(\bm{L^X},\bm{L^Y})$ satisfies 
\begin{equation}
\label{eq:Lq-J_comonotonic}
\E\bigg[\sup_{t \in [0,1]}\big|\bm{L^X}_t-\bm{L^Y}_t\big|^q\bigg]\le  \int_{(0,\ve)\times \Sp^{d-1}} |\rho_{\bm{X}}^{\la}(x,\bm{v})-\rho_{\bm{Y}}^{\la}(x,\bm{v})|^q \D x \otimes \sigma(\D \bm{v}).
\end{equation}
In particular, the following inequality holds
\begin{equation}
\label{eq:Lq-X_comonotonic}
\begin{split}
\mW_q(\bm{X},\bm{Y})
&\le |\bm{\varpi_X}-\bm{\varpi_Y}|^q
+\bigg(4\int_{[\ve,\infty)\times \Sp^{d-1}}(\rho_{\bm{X}}^{\la}(x,\bm{v})-\rho_{\bm{Y}}^{\la}(x,\bm{v}))^2 \D x \otimes \sigma(\D \bm{v})\bigg)^{q/2}\\
&\qquad+2^qd^{q/2}|\bm{\Sigma_X}-\bm{\Sigma_Y}|^q
+\int_{(0,\ve)\times \Sp^{d-1}} |\rho_{\bm{X}}^{\la}(x,\bm{v})-\rho_{\bm{Y}}^{\la}(x,\bm{v})|^q \D x \otimes \sigma(\D \bm{v}).
\end{split}
\end{equation}
\end{proposition}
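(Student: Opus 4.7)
\medskip
\noindent\textbf{Proof proposal.}
The plan is to mirror the structure of the proof of Proposition~\ref{prop:Lq-J-thin}, exploiting the piecewise-constant (finite-variation) nature of the coupled big-jump processes and the subadditivity of $x\mapsto x^q$ for $q\in(0,1]$, rather than using Doob's inequality as in Proposition~\ref{prop:small_jump_bound_comonotonic}. First I observe that, by construction~\eqref{eq:comono_coupling_2}, the difference $\bm{L^X}_t-\bm{L^Y}_t$ is a pure-jump process starting at $\bm{0}$ whose jump at time $U_n$ (for those $n$ with $\Gamma_n\in(0,\ve)$) equals $\bm{V}_n\bigl(\rho_{\bm{X}}^{\la}(\Gamma_n,\bm{V}_n)-\rho_{\bm{Y}}^{\la}(\Gamma_n,\bm{V}_n)\bigr)$. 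Since $|\bm{V}_n|=1$, the running supremum is dominated by the total variation, yielding the pathwise bound
\[
\sup_{t\in[0,1]}\bigl|\bm{L^X}_t-\bm{L^Y}_t\bigr|
\le \sum_{n\in\N}\bigl|\rho_{\bm{X}}^{\la}(\Gamma_n,\bm{V}_n)-\rho_{\bm{Y}}^{\la}(\Gamma_n,\bm{V}_n)\bigr|\,
    \mathbf{1}_{\{U_n\in[0,1],\,\Gamma_n\in(0,\ve)\}}.
\]

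For $q\in(0,1]$ the function $x\mapsto x^q$ is subadditive on $[0,\infty)$, so raising to the $q$-th power and summing inside gives
\[
\sup_{t\in[0,1]}\bigl|\bm{L^X}_t-\bm{L^Y}_t\bigr|^q
\le \sum_{n\in\N}\bigl|\rho_{\bm{X}}^{\la}(\Gamma_n,\bm{V}_n)-\rho_{\bm{Y}}^{\la}(\Gamma_n,\bm{V}_n)\bigr|^q\,
    \mathbf{1}_{\{U_n\in[0,1],\,\Gamma_n\in(0,\ve)\}}.
\]
Taking expectations and applying Campbell's formula~\cite[p.~28]{MR1207584} to the Poisson random measure $\Xi$ with mean measure $\Leb\otimes\Leb\otimes\sigma$ yields~\eqref{eq:Lq-J_comonotonic} in the case $\ve<\infty$. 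When $\ve=\infty$ (and $\bm{X},\bm{Y}$ are of finite variation, so both $\bm{L^X}$ and $\bm{L^Y}$ are well-defined as sums of jumps), I would apply the bound on $(0,\ve_k)$ with $\ve_k\uparrow\infty$ and then invoke monotone convergence on both sides, exactly as in the truncation argument used at the end of the proof of Proposition~\ref{prop:Lq-J-thin}.

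For the second inequality~\eqref{eq:Lq-X_comonotonic}, I would use the decomposition of Proposition~\ref{prop:small_jump_bound_comonotonic}, namely $\bm{X}_t\eqd\bm{\varpi_X}t+\bm{\Sigma_X}\bm{B^X}_t+\bm{M^X}_t+\bm{L^X}_t$ and its analogue for $\bm{Y}$, combined with the triangle-type bound~\eqref{eq:Wq-XY} (applied componentwise). The drift and Gaussian terms produce $|\bm{\varpi_X}-\bm{\varpi_Y}|^q$ and $2^q d^{q/2}|\bm{\Sigma_X}-\bm{\Sigma_Y}|^q$. For the small-jump component, I would use~\eqref{eq:small_jump_comono_coup} together with the Lyapunov-type inequality $\E[Z^q]\le \E[Z^2]^{q/2}$ (valid for $q\in(0,1]$ by Jensen) applied to $Z=\sup_{t\in[0,1]}|\bm{M^X}_t-\bm{M^Y}_t|$, yielding the term $(4\int(\rho_{\bm{X}}^{\la}-\rho_{\bm{Y}}^{\la})^2)^{q/2}$; for the big-jump component I plug in~\eqref{eq:Lq-J_comonotonic} directly.

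The only mildly subtle step is the finite-variation case $\ve=\infty$: one must confirm that the sum defining $\bm{L^X}$ and $\bm{L^Y}$ converges absolutely so that the rearrangement leading to the pathwise total-variation bound is legitimate. This is exactly where the finite-variation hypothesis is used, and it follows because $\int_0^\infty(\rho_{\bm{X}}^{\la}(x,\bm{v})\wedge 1)\D x<\infty$ for $\sigma$-a.e.\ $\bm{v}$ (and likewise for $\bm{Y}$) when the corresponding L\'evy processes have finite variation; no other step requires delicate estimates.
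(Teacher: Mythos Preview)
Your proposal is correct and follows essentially the same route as the paper: a pathwise total-variation bound on $\bm{L^X}-\bm{L^Y}$, subadditivity of $x\mapsto x^q$, Campbell's formula, and then a truncation/limit step; the second inequality is assembled exactly as you describe. The only cosmetic differences are that the paper truncates from below, working on $(\kappa,\ve)$ and sending $\kappa\da 0$ via Fatou's lemma (which also covers $\ve=\infty$ in one stroke), whereas you skip truncation when $\ve<\infty$ (legitimate, since $\Xi([0,1]\times(0,\ve)\times\Sp^{d-1})$ is Poisson$(\ve)$ and hence a.s.\ finite) and truncate from above when $\ve=\infty$; note that ``monotone convergence on both sides'' should read Fatou on the left-hand side, exactly as in the proof of Proposition~\ref{prop:Lq-J-thin} that you cite.
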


As was the case for the thinning coupling, we can again note that~\eqref{eq:Lq-J_comonotonic} \&~\eqref{eq:Lq-X_comonotonic} hold even without assuming that  $\1_{\R^d \setminus B_{\bm{0}}(1)}(\bm{w})\nu_{\bm{X}}(\D\bm{w})$ and $\1_{\R^d \setminus B_{\bm{0}}(1)}(\bm{w})\nu_{\bm{Y}}(\D\bm{w})$ have a finite $q$-moment. However, under such an assumption, the upper bounds are finite since the integral on the right of~\eqref{eq:Lq-J_comonotonic} is bounded by $\int_{\R^d_{\bm{0}}}\1_{\R^d \setminus B_{\bm{0}}(\kappa)}(\bm{w})
    |\bm{w}|^q\nu_{\bm{X}}(\D\bm{w})
    +\int_{\R^d_{\bm{0}}}\1_{\R^d \setminus B_{\bm{0}}(\kappa)}(\bm{w})
    |\bm{w}|^q\nu_{\bm{Y}}(\D\bm{w})<\infty$ for some $\kappa>0$.

\begin{proof}
For $\kappa \in (0,\ve)$, we denote by $\bm{L^X}_{t,\kappa}$ and $\bm{L^Y}_{t,\kappa}$ the truncated large jumps of $\bm{X}$ and $\bm{Y}$, given by 
\begin{equation*}
    \bm{L^X}_{t,\kappa}\coloneqq \int_{[0,t]\times (\kappa,\ve)\times \Sp^{d-1}}\bm{v}\rho_{\bm{X}}^{\la}(x,\bm{v}) \Xi(\D s, \D x, \D \bm{v}), \,\text{ and }\, \bm{L^Y}_{t,\kappa}\coloneqq \int_{[0,t]\times (\kappa,\ve)\times \Sp^{d-1}}\bm{v}\rho_{\bm{Y}}^{\la}(x,\bm{v}) \Xi(\D s, \D x, \D \bm{v}).
\end{equation*} Note that $\bm{L^X}_{t,\kappa}-\bm{L^Y}_{t,\kappa}=\int_{[0,t]\times (\kappa,\ve)\times \Sp^{d-1}}\bm{v}(\rho_{\bm{X}}^{\la}(x,\bm{v}) -\rho_{\bm{Y}}^{\la}(x,\bm{v})) \Xi(\D s, \D x, \D \bm{v})$, and thus, from the concavity of $x\mapsto x^q$ for $x>0$, it follows that
\[
\sup_{t\in[0,1]}\big|\bm{L^X}_{t,\kappa}-\bm{L^Y}_{t,\kappa}\big|^q
\le \int_{[0,1]\times (\kappa,\ve)\times \Sp^{d-1}}|\bm{v}|^q|\rho_{\bm{X}}^{\la}(x,\bm{v})-\rho_{\bm{Y}}^{\la}(x,\bm{v})|^q  \Xi(\D s, \D x, \D \bm{v}).
\] Since $\bm{v}\in \Sp^{d-1}$ we have that $|\bm{v}|^q=1$, and Campbell's theorem~\cite[p.~28]{MR1207584} then implies that
\begin{equation*}
    \E\left[\int_{[0,1]\times (\kappa,\ve)\times \Sp^{d-1}}|\rho_{\bm{X}}^{\la}(x,\bm{v})-\rho_{\bm{Y}}^{\la}(x,\bm{v})|^q  \Xi(\D s, \D x, \D \bm{v})\right] 
    =\int_{(\kappa,\ve)\times \Sp^{d-1}} |\rho_{\bm{X}}^{\la}(x,\bm{v})-\rho_{\bm{Y}}^{\la}(x,\bm{v})|^q \D x \otimes \sigma(\D \bm{v}).
\end{equation*}
Thus, altogether, this implies that
\begin{align*}
\E\bigg[\sup_{t \in [0,1]}
    \big|\bm{L^X}_{t,\kappa}-\bm{L^Y}_{t,\kappa}\big|^q\bigg]
\le \int_{(\kappa,\ve)\times \Sp^{d-1}} |\rho_{\bm{X}}^{\la}(x,\bm{v})-\rho_{\bm{Y}}^{\la}(x,\bm{v})|^q \D x \otimes \sigma(\D \bm{v}).
\end{align*}
Due to the monotone convergence theorem, it follows, as $\kappa \da 0$, that
\begin{equation*}
    \int_{(\kappa,\ve)\times \Sp^{d-1}} |\rho_{\bm{X}}^{\la}(x,\bm{v})-\rho_{\bm{Y}}^{\la}(x,\bm{v})|^q \D x \otimes \sigma(\D \bm{v}) \to \int_{(0,\ve)\times \Sp^{d-1}} |\rho_{\bm{X}}^{\la}(x,\bm{v})-\rho_{\bm{Y}}^{\la}(x,\bm{v})|^q \D x \otimes \sigma(\D \bm{v}).
\end{equation*}
Furthermore, Fatou's lemma together with the above observations imply that 
\begin{align*}
    \E\bigg[\liminf_{\kappa \da 0}\sup_{t \in [0,1]}
    \big|\bm{L^X}_{t,\kappa}-\bm{L^Y}_{t,\kappa}\big|^q\bigg] &\le \liminf_{\kappa \da 0}\E\bigg[\sup_{t \in [0,1]}
    \big|\bm{L^X}_{t,\kappa}-\bm{L^Y}_{t,\kappa}\big|^q\bigg]\\
    &\le \int_{(0,\ve)\times \Sp^{d-1}} |\rho_{\bm{X}}^{\la}(x,\bm{v})-\rho_{\bm{Y}}^{\la}(x,\bm{v})|^q \D x \otimes \sigma(\D \bm{v}).
\end{align*}
We can now conclude \eqref{eq:Lq-J_comonotonic}, as $\liminf_{\kappa \da 0}\sup_{t \in [0,1]}
    \big|\bm{L^X}_{t,\kappa}-\bm{L^Y}_{t,\kappa}\big|^q =\sup_{t \in [0,1]}
    \big|\bm{L^X}_{t}-\bm{L^Y}_{t}\big|^q$ a.s., since the largest jumps of $\bm{L^X}_{t,\kappa}$ and $\bm{L^Y}_{t,\kappa}$ are finite on the time interval $[0,1]$. 

Inequality~\eqref{eq:Lq-X_comonotonic} then follows from~\eqref{eq:small_jump_comono_coup},~\eqref{eq:Lq-J_comonotonic} and the elementary arguments in Appendix~\ref{app:A}.
\end{proof}

\section{Upper bounds on the Wasserstein distance in the domain of attraction}
\label{sec:stable_limits_upper}

The main aim of this section is to prove the upper bounds in Theorems~\ref{thm:upper_lower_simple},~\ref{thm:upper_lower_general} \&~\ref{thm:conv_BM_limit} above. In Section~\ref{subsec:domain_of_attraction} we give the characterisation, in terms of their generating triplets, of the L\'evy processes in $\R^d$ that are in the stable domain-of-attraction. The proof of the upper bounds in Theorem~\ref{thm:upper_lower_simple}, based on the thinning coupling, is given in Section~\ref{sec:heavyTail_Thinning}. The upper bounds in  Theorem~\ref{thm:upper_lower_general}  are established in Section~\ref{sec:conv_stab_comonot} using the comonotonic coupling. In Section~\ref{subsec:Brownian_limits}, we prove the upper bounds of Theorem~\ref{thm:conv_BM_limit} for the Brownian limit. In the proofs, we will rely on the following consequence of Jensen's inequality
\begin{equation}
    \label{eq:Wasserstein_relationship}
\mW_q(\bm{\mathcal{X}},\bm{\mathcal{Y}})\le \mW_{q'}(\bm{\mathcal{X}},\bm{\mathcal{Y}})^{\frac{q\wedge 1}{q'\wedge1}}\qquad\text{for any $0<q<q'$.}
\end{equation}

\subsection{Small-time domain of attraction for L\'evy processes}
\label{subsec:domain_of_attraction}

We start by defining the attractor. 

\begin{defin}[Stable process]
\label{def:stable}
For any $\alpha\in(0,2]$, the law of 
an $\alpha$-stable L\'evy process $\bm{Z}$
 is given by a generating triplet $(\bm{\gamma_Z},\bm{\Sigma_Z}\bm{\Sigma_Z}^\tra,\nu_{\bm{Z}})$
(for the cutoff function $\bm{w}\mapsto\1_{B_{\bm{0}}(1)}(\bm{w})$)
as follows: the L\'evy measure is given by~\eqref{eq:Levy-measure-stable}, i.e.,
\begin{equation*}
    \nu_{\bm{Z}}(A)
    \coloneqq c_\alpha\int_0^\infty\int_{\Sp^{d-1}}
        \1_{A}(r\bm{v})\sigma(\D\bm{v})r^{-\alpha-1}\D r,
    \quad A\in\mathcal{B}(\R^d_{\bm{0}}),
\end{equation*}
where $\sigma$ is a probability measure  on $\mathcal{B}(\Sp^{d-1})$  
and  $c_\alpha\in[0,\infty)$ an ``intensity'' parameter, satisfying
\begin{itemize}[leftmargin=1em, nosep]
    \item $\alpha=2$ [Brownian motion with zero drift]: $\bm{\Sigma_Z}\ne \bm{0}$, $\bm{\gamma_Z}=\bm{0}$ and $c_\alpha=0$ (i.e. $\nu_{\bm{Z}}\equiv 0$);
    \item $\alpha\in(1,2)$ [infinite variation,  zero-mean process]: $c_\alpha>0$, $\bm{\gamma_Z}=-\int_{\R^d\setminus B_{\bm{0}}(1)}\bm{x}\nu_{\bm{Z}}(\D\bm{x})$ and $\bm{\Sigma_Z}=\bm{0}$;
    \item $\alpha=1$ [Cauchy process]: either $c_\alpha>0$, with symmetric angular component $\int_{\Sp^{d-1}}\bm{v}\sigma(\D\bm{v})=\bm{0}$, or $c_\alpha=0$ and the process $\bm{Z}$ is  a deterministic nonzero linear drift, i.e. $\bm{Z}_t=\bm{\gamma_Z}t$ for all times $t$;
    \item $\alpha\in(0,1)$ [finite variation and zero natural drift]:  $c_\alpha>0$ and  
    $\bm{\gamma_Z}=\int_{B_{\bm{0}}(1)\setminus\{\bm0\}}\bm{x}\nu_{\bm{Z}}(\D\bm{x})$.
\end{itemize}
\end{defin}

It follows from the definition that an  $\alpha$-stable process  $\bm{Z}$  satisfies the scaling property $(\bm{Z}_{st})_{s\in[0,1]}\eqd(t^{1/\alpha}\bm{Z}_s)_{s\in[0,1]}$ for $t>0$. For $\alpha\in[1,2)$ (resp. $\alpha\in(0,1)$), a non-deterministic $\alpha$-stable process $\bm{Z}$ is of infinite (resp. finite) variation by~\cite[Thm~21.9]{MR3185174}, since~\eqref{eq:Levy-measure-stable} implies  $\int_{B_{\bm{0}}(1)\setminus\{\bm0\}}|\bm{x}|\nu_{\bm{Z}}(\D\bm{x})=\infty$ (resp. $\int_{B_{\bm{0}}(1)\setminus\{\bm0\}}|\bm{x}|\nu_{\bm{Z}}(\D\bm{x})<\infty$). Moreover, in the case of Cauchy process (stability index $\alpha=1$), $\gamma_{\bm{Z}}$ can be arbitrary if $c_\alpha>0$ and satisfies $\gamma_{\bm{Z}}\in\R^d_{\bm{0}}$ if $c_\alpha=0$.

For any $\bm{a}\in \Sp^{d-1}$, define $\scrL_{\bm{a}}(r)\coloneqq\{\bm{x}\in\R^d:\langle\bm{a},\bm{x}\rangle\ge r\}$ for any $r>0$. The following known result characterises the L\'evy processes in the domain of attraction of an $\alpha$-stable process defined above. Theorem~(\nameref{thm:small_time_domain_stable}) is a consequence of~\cite[Thm~15.14]{MR1876169} and~\cite[Thm~2]{MR3784492} (for completeness, we provide the proof in  Appendix~\ref{app:domain_of_attraction} below).

\begin{theorem}[S-DoA]
\label{thm:small_time_domain_stable}
Let $\bm{X}=(\bm{X}_t)_{t \in [0,1]}$ and $\bm{Z}=(\bm{Z}_t)_{t\in [0,1]}$ be L\'evy processes in $\R^d$. The following are equivalent:
\begin{itemize}[leftmargin=.5em, nosep]
\item $(\bm{X}_{st}/g(t))_{s \in [0,1]}\cid (\bm{Z}_s)_{s\in [0,1]}$ as $t \da 0$ in the Skorokhod space for some normalising function $g:(0,1]\to(0,\infty)$,
\item $\bm{X}_t/g(t)\cid \bm{Z}_1$  as $t \da 0$ for some normalising function $g:(0,1]\to(0,\infty)$,
\item $\bm{Z}$ is $\alpha$-stable for some $\alpha\in(0,2]$, the function $G:t\mapsto g(t)t^{-1/\alpha}$ is slowly varying at $0$, and the generating triplets $(\bm\gamma_{\bm{X}},\bm{\Sigma_X}\bm{\Sigma}_{\bm{X}}^\tra,\nu_{\bm{X}})$ and $(\bm\gamma_{\bm{Z}},\bm{\Sigma_Z}\bm{\Sigma}_{\bm{Z}}^\tra,\nu_{\bm{Z}})$ (for the cutoff function $\bm{w} \mapsto \1_{B_{\bm 0}(1)}(\bm{w})$) of $\bm{X}$ and $\bm{Z}$, respectively, are related as follows:
\begin{itemize}[leftmargin=1em, nosep]
\item if $\alpha=2$ (attraction to Brownian motion), then 
\begin{equation}
\label{eq:Brownian-limit}
G(t)^{-2} \bigg(\bm{\Sigma_X}\bm{\Sigma}_{\bm{X}}^\tra
    +\int_{B_{\bm{0}}(g(t))\setminus\{\bm{0}\}}\bm{x}\bm{x}^\tra\nu_{\bm{X}}(\D\bm{x})\bigg)
\to
\bm{\Sigma_Z}\bm{\Sigma}_{\bm{Z}}^\tra,\quad\text{as }t\da 0;
\end{equation}
\item if $\alpha\in(1,2)$, we have $\bm{\Sigma_X}=\bm{0}$ and
\begin{equation}
\label{eq:jump-stable-limit}
t\nu_{\bm{X}}(\scrL_{\bm{v}}(g(t)))\to \nu_{\bm{Z}}(\scrL_{\bm{v}}(1)),
\quad\text{as }t\da 0,\quad\text{for any }\bm{v}\in\Sp^{d-1};
\end{equation}
    \item if $\alpha=1$ (attraction to Cauchy process), then~\eqref{eq:jump-stable-limit} holds,
\begin{equation}
\label{eq:Cauchy-limit}
G(t)^{-1}\bigg(\bm{\gamma_X}-\int_{B_{\bm0}(1)\setminus B_{\bm0}(g(t))}\bm{x}\nu(\D\bm{x})\bigg)\to \bm{\gamma_Z},
\quad\text{as }t\da 0,
\end{equation}
and for any $\bm{v}\in\Sp^{d-1}$ for which $\langle\bm{v},\bm{X}\rangle$ has finite variation (i.e. $\int_{B_{\bm 0}(1)\setminus\{\bm 0\}}\!|\langle\bm{v},\bm{x}\rangle|\nu_{\bm{X}}(\D\bm{x})<\infty$) and $\nu_{\bm{Z}}(\scrL_{\bm{v}}(1))>0$, the process $\langle\bm{v},\bm{X}\rangle$ has zero natural drift: $\langle\bm{v},\bm{\gamma_X}\rangle=\int_{B_{\bm 0}(1)\setminus\{\bm 0\}}\langle\bm{v},\bm{x}\rangle\nu_{\bm{X}}(\D\bm{x})$.
\item if $\alpha\in(0,1)$, then~\eqref{eq:jump-stable-limit} holds, $\bm{X}$ has finite variation (i.e. $\int_{B_{\bm0}(1)\setminus\{\bm0\}}|\bm{x}|\nu_{\bm{X}}(\D\bm{x})<\infty$) and zero natural drift (i.e. $\bm{\gamma_X}=\int_{B_{\bm 0}(1)\setminus\{\bm 0\}}\bm{x}\nu_{\bm{X}}(\D\bm{x})$).
\end{itemize}
\end{itemize}
Moreover,  the function $g$ 
satisfying the weak limit above
is asymptotically unique at $0$: a positive  function $\wt g$ 
satisfies $(\bm{X}_{st}/\wt g(t))_{s \in [0,1]}\cid (\bm{Z}_s)_{s\in [0,1]}$ as $t \da 0$
if and only if $\wt g(t)/g(t)\to 1$ as $t\da 0$.
\end{theorem}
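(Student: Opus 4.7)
The plan is to establish the chain (1) $\Rightarrow$ (2) $\Rightarrow$ (3) $\Rightarrow$ (1) together with the asymptotic uniqueness of $g$. The main tools will be the continuity theorem for infinitely divisible distributions (Kallenberg~\cite[Thm~15.14]{MR1876169}), the convergence-of-types theorem (to extract the stability index and force the regular variation of $g$), and Ivanovs'~\cite[Thm~2]{MR3784492} to translate weak convergence of the marginals into the directional half-space conditions~\eqref{eq:jump-stable-limit} on $\nu_{\bm{X}}$.

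First I would handle (1) $\Leftrightarrow$ (2). The direction (1) $\Rightarrow$ (2) is immediate by projecting the functional limit to $s=1$. For the converse, observe that $\bm{X}^t=(\bm{X}_{st}/g(t))_{s\in[0,1]}$ is itself a L\'evy process for every $t\in(0,1]$, so weak convergence of the time-$1$ marginals to the infinitely divisible $\bm{Z}_1$ promotes to convergence of all finite-dimensional distributions via the characteristic-exponent identity $\lambda t\,\psi_{\bm{X}}(\bm{u}/g(t))\to\lambda\,\psi_{\bm{Z}}(\bm{u})$ (valid for every fixed $\lambda>0$, $\bm{u}\in\R^d$), and then to Skorokhod $J_1$-convergence by standard tightness for L\'evy processes with convergent one-dimensional marginals.

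For (2) $\Rightarrow$ (3), the law of $\bm{X}_t/g(t)$ is infinitely divisible and converges weakly to $\bm{Z}_1$, so the continuity theorem yields convergence of the generating triplets in the appropriate weak sense. Applying the characteristic-exponent identity to $\bm{X}_{\lambda t}/g(t)$ gives $\bm{X}_{\lambda t}/g(t)\cid\bm{Z}_\lambda$, while substituting $t'=\lambda t$ in the original convergence gives $\bm{X}_{\lambda t}/g(\lambda t)\cid\bm{Z}_1$. The convergence-of-types theorem then forces the limit $c(\lambda)\coloneqq\lim_{t\da 0}g(\lambda t)/g(t)$ to exist, be positive and finite, and satisfy $\bm{Z}_\lambda\eqd c(\lambda)\bm{Z}_1$. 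Measurability together with the functional equation $c(\lambda\mu)=c(\lambda)c(\mu)$ forces $c(\lambda)=\lambda^{1/\alpha}$ for some $\alpha\in(0,2]$, identifying $\bm{Z}$ as $\alpha$-stable in the sense of Definition~\ref{def:stable} and giving $G\in\SV_0$. Reading the triplet convergence off in each of the four stability regimes produces~\eqref{eq:Brownian-limit}--\eqref{eq:Cauchy-limit} and the structural constraints on $\bm{\Sigma_X}$ and $\bm{\gamma_X}$; the passage from convergence of $\nu_{\bm{X}}$ on sets bounded away from $\bm{0}$ (supplied by the continuity theorem) to the half-space form~\eqref{eq:jump-stable-limit} compatible with the radial decomposition~\eqref{eq:Levy-measure-stable} of $\nu_{\bm{Z}}$ is where~\cite[Thm~2]{MR3784492} enters.

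The implication (3) $\Rightarrow$ (1) follows from the converse direction of the continuity theorem: the enumerated triplet conditions force convergence of the characteristic exponents of $\bm{X}_t/g(t)$ to that of $\bm{Z}_1$ at every fixed frequency, hence weak convergence of marginals, and then (1) by the argument in the second paragraph. For the asymptotic uniqueness of $g$, if $\bm{X}_t/\wt g(t)\cid\bm{Z}_1$ also holds, convergence of types applied to the two normalisations shows that any subsequential limit $c$ of $g(t)/\wt g(t)$ satisfies $c\bm{Z}_1\eqd\bm{Z}_1$, forcing $c=1$ by non-degeneracy of $\bm{Z}_1$, and hence $\wt g(t)/g(t)\to 1$ as $t\da 0$. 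I expect the main technical obstacle to be the case-by-case bookkeeping across the four stability regimes --- in particular the $\alpha=1$ Cauchy case, where one must simultaneously handle the logarithmic truncation drift in~\eqref{eq:Cauchy-limit}, the symmetric-angular-measure requirement on $\sigma$ in Definition~\ref{def:stable}, and the zero-natural-drift condition on the coordinates of $\bm{X}$ that have finite variation.
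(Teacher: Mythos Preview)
Your proposal is correct and uses the same two ingredients the paper cites, \cite[Thm~15.14]{MR1876169} and \cite[Thm~2]{MR3784492}, but organises the argument differently. The paper's proof is shorter because it reduces immediately to one dimension: by the Cram\'er--Wold device for L\'evy processes (\cite[Cor.~15.7]{MR1876169}), both the functional convergence (1) and the marginal convergence (2) are equivalent to $\langle\bm{\lambda},\bm{X}_t\rangle/g(t)\cid\langle\bm{\lambda},\bm{Z}_1\rangle$ for every $\bm{\lambda}\in\R^d$, and then the one-dimensional characterisation \cite[Thm~2]{MR3784492} is applied wholesale to each direction. That theorem already packages the convergence-of-types step (extracting $\alpha$ and the slow variation of $G$) together with the directional triplet conditions, so the only remaining work is to reassemble the directional statements into the $\R^d$ conditions~\eqref{eq:Brownian-limit}--\eqref{eq:Cauchy-limit}, which is done case by case exactly as you anticipate. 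Your route---running convergence of types and the continuity theorem directly in $\R^d$ and invoking \cite[Thm~2]{MR3784492} only for the half-space form---works too, but duplicates machinery already contained in the one-dimensional result; in particular your use of \cite[Thm~2]{MR3784492} would in any case require projecting onto each $\bm{v}\in\Sp^{d-1}$, so the Cram\'er--Wold reduction is implicit in your argument as well. The asymptotic uniqueness of $g$ is not spelled out in the paper's appendix but is standard and exactly as you describe.
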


Note that in the case $\alpha=2$ in Theorem~(\nameref{thm:small_time_domain_stable}),
we may have $\bm{\Sigma_X}=\bm{0}$ (see Example~\ref{ex:arbitrarily_slow_BM_conv} below), but in this case the function $G$ cannot be asymptotically equal to a positive constant. Moreover, in the case $\alpha\in(1,2)$, the process $\bm{X}$
does not require centering since its mean is linear in time and thus disappears in the scaling limit. However, in the finite variation case (i.e. when $\alpha\in(0,1)$), the process $\bm{X}$ must have zero natural drift for the scaling limit to exist.

\subsection{Domain of normal attraction: the thinning coupling}\label{sec:heavyTail_Thinning}
Let
$(\bm{\gamma_X},\bm{\Sigma_X}\bm{\Sigma_X}^\tra,\nu_{\bm{X}})$
denote the generating triplet~\cite[Def.~8.2]{MR3185174} of $\bm{X}$
 with respect to the cutoff function $\bm{w}\mapsto\1_{B_{\bm{0}}(1)}(\bm{w})$
 on $\bm{w}\in\R^d$.
 Define the Blumenthal--Getoor (BG) index $\beta$ of $\bm{X}$ by
\begin{equation}
\label{eq:BG}
\beta:=\inf\{p>0:I_p<\infty\}\in[0,2],
\qquad
I_p:=\int_{B_{\bm{0}}(1)\setminus\{\bm{0}\}}|\bm{w}|^p\nu_{\bm{X}}(\D\bm{w}).
\end{equation} 
Fix $\beta_+\in[\beta,2]$ as follows:  $\beta_+\coloneqq\beta$ if $I_{\beta}<\infty$;
if $I_{\beta}=\infty$ and $\beta<1$, then pick $\beta_+\in(\beta,1)$; if $I_{\beta}=\infty$ and $\beta\ge 1$, then $\beta<2$ and hence 
choose $\beta_+\in(\beta,2)$.
In particular, note that $I_{\beta_+}<\infty$ and $\beta_+>0$. Furthermore, if $\int_{B_{\bm{0}}(1)\setminus\{\bm{0}\}} |\bm{w}|\nu_{\bm{X}}(\D \bm{w})<\infty$ (or, equivalently, if the pure-jump component of the L\'evy--It\^o decomposition~\eqref{eq:levy_ito_decomp} of $\bm{X}$ is finite variation), we say that $\bm{X}$ has zero natural drift if  $\bm{\gamma_X}=\int_{B_{\bm{0}}(1)\setminus\{\bm{0}\}} \bm{w}\nu_{\bm{X}}(\D \bm{w})$ and nonzero natural drift otherwise. Moreover, if $\nu_{\bm{X}}(B_{\bm{0}}(1)\setminus\{\bm 0\})<\infty$ (or, equivalently, the pure-jump component of $\bm{X}$ is of finite activity, i.e. a compound Poisson process), then $\beta=\beta_+=0$.
If 
$\beta_+=0$, throughout the paper we use the convention $1/\beta_+\coloneqq\infty$.

The following lemma gives an upper bound on the moments of the supremum of the norm of a general L\'evy process. Lemma~\ref{lem:moment_bound} plays an important role in the proofs of Section~\ref{sec:stable_limits_upper}.

\begin{lemma}
\label{lem:moment_bound}
Let $\bm{X}$ be a L\'evy process with 
generating triplet $(\bm{\gamma_X},\bm{\Sigma_X}\bm{\Sigma_X}^\tra,\nu_{\bm{X}})$.
 Recall the Blumenthal--Getoor index $\beta$ from~\eqref{eq:BG} and the associated quantity $\beta_+\in [\beta,2]$. Assume that, for some $p>0$, we have
$\int_{\R^d\setminus B_{\bm{0}}(1)}|\bm{w}|^p\nu_{\bm{X}}(\D\bm{w})<\infty$.
 Then there exist constants $C_i\in[0,\infty)$, $i=1,\ldots,4$, such that
\[
\E\bigg[\sup_{s\in[0,t]}|\bm{X}_s|^p\bigg]
\le \1_{\{\bm{\Sigma_X}\neq\bm{0}\}}C_1 t^{p/2} 
+C_2 t^p + C_3 t^{\min\{1,p/\beta_+\}},
\quad \text{ for } t\in[0,1].
\]
If $\int_{B_{\bm{0}}(1)\setminus\{\bm{0}\}}|\bm{w}|\nu_{\bm{X}}(\D\bm{w})< \infty$ and $\bm{X}$ has zero natural drift, i.e. $\bm{\gamma_X}=\int_{B_{\bm{0}}(1)\setminus\{\bm{0}\}}\bm{w}\nu_{\bm{X}}(\D\bm{w})$, then $C_2=0$ in the inequality above.
\end{lemma}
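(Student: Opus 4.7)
My plan is to apply the L\'evy--It\^o decomposition~\eqref{eq:levy_ito_decomp} with cutoff $\kappa=1$ (so that $\bm{\gamma}_{\bm{X},1}=\bm{\gamma_X}$, the correction integral vanishing), writing $\bm{X}_t=\bm{\gamma_X}\,t+\bm{\Sigma_X}\bm{B^X}_t+\bm{D}^{\bm{X},1}_t+\bm{J}^{\bm{X},1}_t$, and then to bound the $L^p$-moment of the supremum of each of the four independent components separately, combining the estimates via $\big|\sum_{i=1}^4 a_i\big|^p\le 4^{(p-1)\vee 0}\sum_{i=1}^4|a_i|^p$. The drift contributes $|\bm{\gamma_X}|^p t^p$, which feeds into $C_2 t^p$; in the finite-variation zero-natural-drift case the identity $\bm{\gamma_X}=\int_{B_{\bm{0}}(1)\setminus\{\bm{0}\}}\bm{w}\,\nu_{\bm{X}}(\D\bm{w})$ lets us combine $\bm{\gamma_X}t$ with $\bm{D}^{\bm{X},1}_t$ to form the uncompensated sum of small jumps, removing the linear term and justifying $C_2=0$. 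Doob's inequality and Gaussian moments yield $\E[\sup_{s\le t}|\bm{\Sigma_X}\bm{B^X}_s|^p]\le C_1 t^{p/2}$, with $C_1=0$ if $\bm{\Sigma_X}=\bm 0$.

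For the compound Poisson component $\bm{J}^{\bm{X},1}$, the pathwise bound $\sup_{s\le t}|\bm{J}^{\bm{X},1}_s|\le\sum_{s\le t}|\Delta\bm{X}_s|\1_{|\Delta\bm{X}_s|>1}$ combined with subadditivity of $x\mapsto x^p$ (for $p\le 1$) or Jensen together with Poisson moment bounds on the jump count (for $p>1$), and Campbell's formula, gives an estimate of order $t+t^p$ on $t\in[0,1]$; since $t\le t^{\min\{1,p/\beta_+\}}$ on $[0,1]$, the linear part absorbs into $C_3 t^{\min\{1,p/\beta_+\}}$ and the $t^p$ part into $C_2 t^p$.

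The crux is the small-jump martingale $\bm{D}^{\bm{X},1}$. For $p\ge\beta_+$ with $p\in[1,2]$, Doob combined with the Bahr--Esseen/BDG inequality $\E[|\bm{D}^{\bm{X},1}_t|^p]\le C_p\E[[\bm{D}^{\bm{X},1}]_t^{p/2}]$ reduces matters to the quadratic variation $[\bm{D}^{\bm{X},1}]_t=\sum_{s\le t}|\Delta\bm{X}_s|^2\1_{|\Delta\bm{X}_s|\le 1}$; the subadditivity $(\sum a_i)^{p/2}\le\sum a_i^{p/2}$ (valid since $p/2\le 1$), Campbell's formula and the finiteness of $\int_{|w|\le 1}|\bm{w}|^p\nu_{\bm{X}}(\D\bm{w})$ guaranteed by $p\ge\beta_+$ yield $\E[\sup_s|\bm{D}^{\bm{X},1}_s|^p]\le Ct$. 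For $p\ge 2$, Kunita's inequality decomposes the estimate into a quadratic-variation term of order $t^{p/2}\le t$ and a $p$-jump-sum term of order $t$. For $p\in[1,\beta_+]$ (requiring $\beta_+\ge 1$), Jensen gives $\E[|\bm{D}^{\bm{X},1}_t|^p]\le\E[|\bm{D}^{\bm{X},1}_t|^{\beta_+}]^{p/\beta_+}\le (Ct)^{p/\beta_+}$, and Doob transfers this to the supremum.

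The main obstacle will be the range $p<1$, where Doob's maximal inequality is unavailable. When $\beta_+\ge 1$, I plan to extrapolate via Jensen from $p'=1$: $\E[\sup_s|\bm{D}^{\bm{X},1}_s|^p]\le(\E[\sup_s|\bm{D}^{\bm{X},1}_s|])^p\le(Ct^{1/\beta_+})^p=Ct^{p/\beta_+}$. When $\beta_+<1$, the Blumenthal--Getoor condition forces finite-variation small jumps, and for $p\ge\beta_+$ the pathwise bound $\sup_{s\le t}|\bm{D}^{\bm{X},1}_s|\le\sum_{s\le t}|\Delta\bm{X}_s|\1_{|\Delta\bm{X}_s|\le 1}+t\big|\int_{|w|\le 1}\bm{w}\,\nu_{\bm{X}}(\D\bm{w})\big|$ together with subadditivity gives $Ct+Ct^p$, splitting cleanly into $C_3 t$ and $C_2 t^p$. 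The genuinely delicate sub-case $p<\beta_+<1$ calls for a truncation-and-concentration argument: splitting $\bm{D}^{\bm{X},1}$ at an intermediate jump level $\delta=\delta(t)$, controlling the small-jump part in $L^2$ via Doob and the variance $tK_2(\delta)$, controlling the large-jump part in $L^p$ by subadditivity and Campbell's formula, and optimising $\delta$ to balance the two contributions and recover the target bound $Ct^{p/\beta_+}$.
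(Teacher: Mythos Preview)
Your approach is correct but takes a genuinely different route from the paper. You work with the fixed cutoff $\kappa=1$ and control the compensated small-jump martingale $\bm{D}^{\bm{X},1}$ through a case analysis built on BDG/Kunita inequalities and Jensen extrapolation, invoking a secondary truncation at level $\delta(t)$ only in the hardest sub-case $p<\beta_+<1$. The paper instead takes the time-dependent cutoff $\kappa=t^{1/\beta_+}$ from the outset; this makes the big-jump intensity $\nu_{\bm{X}}(\R^d\setminus B_{\bm 0}(\kappa))$ scale like $t^{-1}$, so the compound-Poisson bound automatically delivers the $t^{\min\{1,p/\beta_+\}}$ rate, and the small-jump martingale is handled (after reducing to $p>1$ by Jensen) via a uniform exponential-moment bound $\E[\exp(\kappa^{-1}|\bm{D}^{(\kappa)}_t|)]\le C$, yielding $\E[\sup_s|\bm{D}^{(\kappa)}_s|^p]\le C\kappa^p=Ct^{p/\beta_+}$ in one stroke. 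Your secondary truncation level, once optimised, is precisely $\delta(t)=t^{1/\beta_+}$, so you are rediscovering the paper's key device locally rather than globally. The paper's route is more unified and avoids the case explosion; yours stays with more elementary martingale inequalities and never needs the Chernoff-type argument.

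One small wrinkle: in your case $p\in[1,\beta_+]$ with $\beta_+>1$ you write ``Jensen gives $\E[|\bm{D}_t|^p]\le(\E[|\bm{D}_t|^{\beta_+}])^{p/\beta_+}$ and Doob transfers this to the supremum'', but Doob's $L^p$-maximal inequality fails at $p=1$. The fix is to reverse the order (apply Doob at exponent $\beta_+>1$ first, then Jensen), or equivalently use BDG at $p=1$ directly and then apply Jensen to the quadratic variation: $\E[[{\bm D}]_t^{1/2}]\le(\E[[{\bm D}]_t^{\beta_+/2}])^{1/\beta_+}\le (tI_{\beta_+})^{1/\beta_+}$. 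This is cosmetic and does not affect the validity of your plan.
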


Note that, by the definition of $\beta_+$ above, the pure-jump component of $\bm{X}$ is a compound Poisson process if and only if $\beta_+=0$. In particular, if in addition in this case we have zero natural drift, then the pure-jump component of $\bm{X}$ is a compound Poisson process. The term $t^{p/2}$ in the bound of Lemma~\ref{lem:moment_bound} is present only if $\bm{X}$ has a non-trivial Gaussian component.

Lemma~\ref{lem:moment_bound}
is a multidimensional generalisation of~\cite[Lem.~2]{doi:10.1287/moor.2021.1163}. The proof of Lemma~\ref{lem:moment_bound}, given in Appendix~\ref{app:Proof_lemma_bound} below, is likewise a multidimensional generalisation of the arguments in the proof of~\cite[Lem.~2]{doi:10.1287/moor.2021.1163}. As in~\cite[Lem.~2]{doi:10.1287/moor.2021.1163}, the constants $C_i$, $i=1,\ldots,4$, can be given explicitly in terms of the characteristic triplet of $\bm{X}$. 

Consider a L\'evy process $\bm{X}$ in $\R^d$ in the domain of normal attraction of the $\alpha$-stable process $\bm{Z}$. Thus we may assume that $\bm{X}^t=(\bm{X}_{st}/t^{1/\alpha})_{s \in [0,1]}$ converges weakly to $\bm{Z}$
 as $t\da 0$.
We will now apply the thinning coupling, described in~\eqref{eq:comp_Poisson_measures} and~\eqref{eq:thining_coupling_defn} of Section~\ref{sec:thinning} above,
to quantify this convergence in terms of the Wasserstein distance under Assumption~(\nameref{asm:T}).

\begin{theorem}
\label{thm:d_thin_dom_attract}
Let $\alpha \in (0,2)\setminus\{1\}$ and Assumption~(\nameref{asm:T}) hold for some $p>0$. Then, for any $q\in(0,1]$ satisfying the condition $\int_{\R^d\setminus B_{\bm{0}}(1)}|\bm{w}|^q\nu_{\bm{X}}^{\mathrm{d}}(\D \bm{w})<\infty$, we have $\E\big[\sup_{s\in[0,1]} |\bm{R}_{st}/t^{1/\alpha}|^q\big] 
    = \Oh \big(t^{1-q/\alpha}\big)$ as $t \da 0$. Moreover, for any real number $q\in(0,\alpha)\cap(0,1]$, we let $\kappa(t)\coloneqq t^r$ for $t\in(0,1]$ and some $r\ge -1/\alpha$. Then, as $t \da 0$, we have
\begin{align}
\mW_q\big(\bm{J}^{\bm{S}^t,\kappa(t)},\bm{J}^{\bm{Z},\kappa(t)}\big)
    &=\begin{dcases}
    \Oh\big(t^{1-q/\alpha}\big(1+\log(1/t)\1_{\{p+q=\alpha,\,r\ne -1/\alpha\}}\big)\big), & \text{for }p+q\ge\alpha,\\
    \Oh\big(t^{p/\alpha + r(p+q-\alpha)}\big), & \text{for }p+q< \alpha. 
    \end{dcases} 
\label{eq:Up_bound_thin_J}\\
\mW_2\big(\bm{D}^{\bm{S}^t,\kappa(t)},\bm{D}^{\bm{Z},\kappa(t)}\big)^2
    &=\Oh\big(t^{p/\alpha+r(p-\alpha+2)}\big), \label{eq:Up_bound_thin_D}\\
\label{eq:Up_bound_thin_gamma}
    |\bm{\gamma}_{\bm{S}^t,\kappa(t)}-\bm{\gamma}_{\bm{Z},\kappa(t)}| 
&=\begin{dcases}
    \Oh\big(t^{1-1/\alpha}(1+\1_{\{p+1=\alpha,\,r\ne-1/\alpha\}}\log(1/t))\big),
    &p\ge \alpha-1>0,\\
    \Oh(t^{p/\alpha+ r(p-\alpha+1)}),&p<\alpha-1\text{ or }\alpha\in(0,1).
\end{dcases}
\end{align}
\end{theorem}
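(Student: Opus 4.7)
The plan is to handle the auxiliary compound Poisson process $\bm{R}$ (associated with the finite-mass part $\nu_{\bm{X}}^{\mathrm{d}}$) via a direct moment estimate, and then to couple $\bm{S}^t$ and $\bm{Z}$ via the thinning coupling of Section~\ref{sec:thinning} and apply Propositions~\ref{prop:L2-thin} and~\ref{prop:Lq-J-thin} to the three remaining displays. Since $\nu_{\bm{X}}^{\mathrm{d}}$ has finite mass by Assumption~(\nameref{asm:T}), taking $\bm{R}$ to be the driftless compound Poisson process with L\'evy measure $\nu_{\bm{X}}^{\mathrm{d}}$ gives Blumenthal--Getoor index $0$; Lemma~\ref{lem:moment_bound} applied on $[0,t]$ (with the lemma's exponent taken equal to $q$) then yields $\E[\sup_{s\in[0,t]}|\bm{R}_s|^q]=\Oh(t)$, and rescaling by $t^{1/\alpha}$ gives the first claim $\Oh(t^{1-q/\alpha})$.

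To set up the thinning coupling, I use the scaling $\nu_{\bm{Z}}(t^{1/\alpha}A)=t^{-1}\nu_{\bm{Z}}(A)$ of~\eqref{eq:Levy-measure-stable} to write
\[
\nu_{\bm{S}^t}(\D\bm{w})=t\nu_{\bm{S}}(t^{1/\alpha}\D\bm{w})=c^{-1}f_{\bm{S}}(t^{1/\alpha}\bm{w})\nu_{\bm{Z}}(\D\bm{w}).
\]
With the dominating measure $\mu:=c^{-1}\nu_{\bm{Z}}$ (which bounds $\nu_{\bm{S}^t}$ because $f_{\bm{S}}\le 1$), the densities become $f_{\bm{S}^t}(\bm{w})=f_{\bm{S}}(t^{1/\alpha}\bm{w})$ and $f_{\bm{Z}}(\bm{w})=c$, so Assumption~(\nameref{asm:T}) gives the key pointwise bound
$|f_{\bm{S}^t}(\bm{w})-f_{\bm{Z}}(\bm{w})|\le K_T(1\wedge t^{p/\alpha}|\bm{w}|^p)$ on $\R^d_{\bm{0}}$.

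The big-jump bound~\eqref{eq:Up_bound_thin_J} now follows from Proposition~\ref{prop:Lq-J-thin}: polar integration of $|\bm{w}|^q|f_{\bm{S}^t}-f_{\bm{Z}}|\mu(\D\bm{w})$ over $\{|\bm{w}|>\kappa(t)\}$ reduces to $\int_{t^r}^\infty x^{q-\alpha-1}\min\{1,t^{p/\alpha}x^p\}\D x$. Splitting at the crossover $x=t^{-1/\alpha}$ (nontrivial only when $r>-1/\alpha$) and evaluating each piece by elementary calculus recovers the three stated cases: the tail $x>t^{-1/\alpha}$ always contributes $\Oh(t^{1-q/\alpha})$; the inner piece contributes $\Oh(t^{p/\alpha+r(p+q-\alpha)})$ when $p+q<\alpha$, which dominates the tail rate precisely when $r>-1/\alpha$ (since $r(p+q-\alpha)<(\alpha-p-q)/\alpha$ strictly in this case); and the borderline $p+q=\alpha$ introduces the logarithmic factor $\log(t^{-1/\alpha-r})$. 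Analogously, the small-jump bound~\eqref{eq:Up_bound_thin_D} follows from Proposition~\ref{prop:L2-thin}: the constraint $r\ge -1/\alpha$ forces $t^{p/\alpha}|\bm{w}|^p\le 1$ on $\{|\bm{w}|\le\kappa(t)\}$, and polar integration yields $\Oh\bigl(t^{p/\alpha}\int_0^{t^r}x^{p-\alpha+1}\D x\bigr)=\Oh(t^{p/\alpha+r(p-\alpha+2)})$ using $p-\alpha+2>0$.

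The drift bound~\eqref{eq:Up_bound_thin_gamma} requires more care and is the main obstacle. Using the L\'evy--It\^o decomposition of $\bm{S}^t=\bm{S}_{st}/t^{1/\alpha}$ together with the change of variables $\bm{w}\mapsto t^{1/\alpha}\bm{w}$, and exploiting the explicit form of $\bm{\gamma}_{\bm{Z}}$ dictated by Definition~\ref{def:stable} (zero mean for $\alpha>1$, zero natural drift for $\alpha<1$), the difference $\bm{\gamma}_{\bm{S}^t,\kappa(t)}-\bm{\gamma}_{\bm{Z},\kappa(t)}$ collapses to a polar integral of $|\bm{w}|\cdot|f_{\bm{S}^t}(\bm{w})-f_{\bm{Z}}(\bm{w})|\mu(\D\bm{w})$ over $\{|\bm{w}|>\kappa(t)\}$ in the infinite-variation regime ($\alpha>1$) and over $\{|\bm{w}|\le\kappa(t)\}$ in the finite-variation regime ($\alpha<1$). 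Repeating the same calculus with exponent $q=1$ yields the stated bound, with the logarithm appearing at the borderline $p+1=\alpha$ and disappearing at the edge $r=-1/\alpha$ where the crossover disappears. The dichotomy between $\alpha\in(0,1)$ and $\alpha\in(1,2)$ forces distinct manipulations of the compensation cutoffs, and all borderline cases must be extracted by hand; this bookkeeping is the principal technical difficulty of the proof.
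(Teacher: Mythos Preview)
Your proposal is correct and follows essentially the same route as the paper: Lemma~\ref{lem:moment_bound} with $\beta_+=0$ for $\bm{R}$, the dominating measure $\mu=c^{-1}\nu_{\bm{Z}}$ giving $f_{\bm{S}^t}(\bm{w})=f_{\bm{S}}(t^{1/\alpha}\bm{w})$, and then Propositions~\ref{prop:L2-thin} and~\ref{prop:Lq-J-thin} combined with the split of the radial integral at $x=t^{-1/\alpha}$. One small imprecision: in the infinite-variation case $\alpha\in(1,2)$ the difference $\bm{\gamma}_{\bm{S}^t,\kappa(t)}-\bm{\gamma}_{\bm{Z},\kappa(t)}$ does not quite ``collapse'' to the polar integral alone---an additive term $t^{1-1/\alpha}\E[\bm{S}_1]$ survives (since $\bm{S}$, unlike $\bm{Z}$, need not have zero mean)---but this term is already $\Oh(t^{1-1/\alpha})$ and is absorbed into the stated bound, so the argument goes through unchanged.
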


\begin{remark}\label{rem:construct_s_R_thin}
Under Assumption~(\nameref{asm:T}), we may decompose the process $\bm{X}$ as the sum $\bm{S}+\bm{R}$ of independent L\'evy processes $\bm{S}$ and $\bm{R}$ with generating triplets $(\bm{\gamma_S},\bm{0},\nu_{\bm{X}}^{\co})$ and $(\bm{\gamma_R},\bm{0},\nu_{\bm{X}}^{\mathrm{d}})$, respectively, such that, when $\alpha\in(0,1)$, both processes have zero natural drift (note that
for $\alpha\in(0,1)$, Assumption~(\nameref{asm:T}) and Theorem~(\nameref{thm:small_time_domain_stable})
imply that $\bm{X}$ has zero natural drift), and when $\alpha \in (1,2)$ then $\bm{R}$ has zero natural drift. For $t\in(0,1]$, let  $\bm{S}^t=(\bm{S}_{st}/t^{1/\alpha})_{s\in[0,1]}$ and $\bm{R}^t=(\bm{R}_{st}/t^{1/\alpha})_{s\in[0,1]}$ and note that 
$\bm{X}^t$ has the same law as $\bm{S}^t+\bm{R}^t$.
We couple $\bm{S}^t$ and $\bm{Z}$ via the coupling 
$(\bm{D}^{\bm{S}^t,\kappa},\bm{D}^{\bm{Z},\kappa},\bm{J}^{\bm{S}^t,\kappa},\bm{J}^{\bm{Z},\kappa})$ given in~\eqref{eq:comp_Poisson_measures} and~\eqref{eq:thining_coupling_defn} of
Section~\ref{sec:thinning}.
\end{remark}

\begin{remark}
\label{rem:best_r_in_general_thinning}
By~\eqref{eq:Wq-XY} and~\eqref{eq:Wasserstein_relationship}, we have 
\[
\mW_q(\bm{X}^t,\bm{Z})
\le \E\bigg[\sup_{s\in[0,1]} |\bm{R}_{s}^t|^q\bigg]
+\mW_q\big(\bm{J}^{\bm{S}^t,\kappa(t)},\bm{J}^{\bm{Z},\kappa(t)}\big)
+\mW_2\big(\bm{D}^{\bm{S}^t,\kappa(t)},\bm{D}^{\bm{Z},\kappa(t)}\big)^q
+|\bm{\gamma}_{\bm{S}^t,\kappa(t)}-\bm{\gamma}_{\bm{Z},\kappa(t)}|^q.
\]
A careful case-by-case analysis reveals that the upper bound implied by Theorem~\ref{thm:d_thin_dom_attract} on the distance above (which decreases as fast as the slowest of the terms on the right) decreases the fastest when $r$ is chosen as follows (recall $\alpha\in(0,2)\setminus\{1\}$):
\[
r=\begin{dcases}
    0,& \alpha>1,\\
    \frac{p}{\alpha(\alpha-p)},
        & \alpha<1,\, \alpha>p+q,\\
    \frac{\alpha-q(p+1)}{\alpha q(p+1-\alpha)},
        &\alpha<1,\, \alpha\le p+q.\\
\end{dcases}
\]
Moreover, in that case, we have
\[
\mW_q(\bm{X}^t,\bm{Z})
=\begin{dcases}
    \Oh\big(t^{\min\{
        p/2,\,
        \alpha-1\}q/\alpha}
        \big(1+|\log t|\1_{\{p+q=\alpha\}}+|\log t|^q\1_{\{p+1=\alpha\}}\big)\big),
        & \alpha>1,\\
    \Oh\big(t^{\min\{1-q/\alpha,
        pq/(\alpha(\alpha-p))\}}\big),
        & \alpha<1,\, \alpha>p+q,\\
    \Oh\big(t^{1-q/\alpha}\big(1+|\log t|\1_{\{p+q=\alpha\}}\big)\big),
        & \alpha<1,\, \alpha\le p+q.
\end{dcases}
\]
Since the above bounds are not easily interpretable because of the multiple cases depending on the parameters $(\alpha,p,q)$, we decided to only present the case $p\ge 1\vee 2(\alpha-1)$ in Theorem~\ref{thm:upper_lower_simple} above. In particular, this removes the possibility of a logarithmic term appearing in the upper bound.
\end{remark}

\begin{proof}[Proof of Theorem~\ref{thm:d_thin_dom_attract}]
The bound on $\bm{R}^t$ follows directly from Lemma~\ref{lem:moment_bound} with $\beta_+=0$ and the construction of $\bm{R}^t$. 

We now consider the process $\bm{S}^t$. Define the measure
\[
\mu(A)
\coloneqq c^{-1}\nu_{\bm{Z}}(A)=\frac{c_\alpha}{c}\int_{\Sp^{d-1}}\int_0^\infty\1_{A}(x\bm{v})\frac{\D x}{x^{\alpha+1}}\sigma(\D\bm{v}), \qquad \text{ for }A\in\mathcal{B}(\R^d_{\bm{0}}),
\]  where $c$ (resp. $c_\alpha$) is in Assumption~(\nameref{asm:T}) (resp. in~\eqref{eq:Levy-measure-stable} of the definition of $\bm{Z}$). The Radon--Nikodym derivative $f_{\bm{S}^t}(\bm{w}):=\nu_{\bm{S}^t}(\D\bm{w})/\mu(\D\bm{w})$ equals $f_{\bm{S}}(t^{1/\alpha}\bm{w})$ on the support of $\mu$, since L\'evy--Khintchine exponent satisfies $t\psi_{\bm{S}}(\bm{u}/t^{1/\alpha})=\psi_{\bm{S}^t}(\bm{u})$ and hence
\[
\nu_{\bm{S}^t}(\D\bm{w})
=t^{1+d/\alpha}\nu_{\bm{X}}^{\co}(\D(t^{1/\alpha}\bm{w}))
=t^{1+d/\alpha}f_{\bm{S}}(t^{1/\alpha}\bm{w})\mu(\D(t^{1/\alpha}\bm{w}))
=f_{\bm{S}}(t^{1/\alpha}\bm{w})\mu(\D\bm{w}).
\]
First we bound the large-jump component $\bm{J}^{\bm{S}^t,\kappa(t)}-\bm{J}^{\bm{Z},\kappa(t)}$: inequality~\eqref{eq:Lq-J} of Proposition~\ref{prop:Lq-J-thin} yields
\[
\E\bigg[\sup_{s\in[0,1]}|\bm{J}^{\bm{S}^t,\kappa(t)}_s-\bm{J}^{\bm{Z},\kappa(t)}_s|^q\bigg]
\le \frac{c_\alpha}{c}\int_{\Sp^{d-1}}\int_{\kappa(t)}^\infty
    |f_{\bm{S}}(t^{1/\alpha}x\bm{v})-c| x^{q-\alpha-1}\D x\sigma(\D\bm{v}).
\]
Recall that $f(x) \lesssim g(x)$ as $x\da 0$ means that there exists some $c_0,x_0>0$ such that $f(x) \le c_0 g(x)$ for all $x \le x_0$. 
Using Assumption~(\nameref{asm:T}), as $t \da 0$, we obtain 
\begin{align*}
\E\bigg[\sup_{s\in[0,1]}
    \big|\bm{J}^{\bm{S}^t,\kappa(t)}_s-\bm{J}^{\bm{Z},\kappa(t)}_s\big|^q\bigg]
&\lesssim \int_{t^r}^{t^{-1/\alpha}} 
        t^{p/\alpha} x^{p+q-\alpha-1}\D x
    +\int_{t^{-1/\alpha}}^\infty 
        x^{q-\alpha-1}\D x,
\end{align*}  where $\int_{t^{-1/\alpha}}^\infty 
        x^{q-\alpha-1}\D x =\Oh(t^{1-q/\alpha})$. Next, as $t \da 0$, we note that
\begin{align*}
    \int_{t^r}^{t^{-1/\alpha}}t^{p/\alpha} x^{p+q-\alpha-1}\D x 
    =\begin{dcases}
    \Oh\big(t^{1-q/\alpha}\big), & \text{for }p+q>\alpha,\\
    \Oh\big(t^{1-q/\alpha}\big(1+\log(1/t)\1_{\{r\ne -1/\alpha\}}\big)\big), & \text{for }p+q=\alpha,\\
    \Oh\big(t^{p/\alpha + r(p+q-\alpha)}\big), & \text{for }p+q< \alpha. 
    \end{dcases} 
\end{align*} 
Thus, since $r\ge -1/\alpha$, altogether we have, as $t \da 0$,  
\begin{equation*}
    \E\bigg[\sup_{s\in[0,1]}
    \big|\bm{J}^{\bm{S}^t,\kappa(t)}_s-\bm{J}^{\bm{Z},\kappa(t)}_s\big|^q\bigg] = \begin{dcases}
    \Oh\big(t^{1-q/\alpha}\big(1+\log(1/t)\1_{\{p+q=\alpha,\,r\ne -1/\alpha\}}\big)\big), & \text{for }p+q\ge\alpha,\\
    \Oh\big(t^{p/\alpha + r(p+q-\alpha)}\big), & \text{for }p+q< \alpha. 
    \end{dcases}  
\end{equation*}

Next, we find the rate for the small-jump component $\bm{D}^{\bm{S}^t,\kappa(t)}-\bm{D}^{\bm{Z},\kappa(t)}$. Assumption~(\nameref{asm:T}) and~\eqref{eq:L2-D-thin} of Proposition~\ref{prop:L2-thin} imply that, as $t \da 0$, 
\begin{align*}
\E\bigg[\sup_{s\in[0,1]}\big|\bm{D}^{\bm{S}^t,\kappa(t)}_s-\bm{D}^{\bm{Z},\kappa(t)}_s\big|^2\bigg]
&\le 4\frac{c_\alpha}{c}\int_{\Sp^{d-1}}\int_0^{\kappa(t)}
|f_{\bm{S}}(t^{1/\alpha}x\bm{v})-c|
x^{1-\alpha}\D x \sigma(\D\bm{v})\\
&\le 4K_T\frac{c_\alpha}{c}\int_0^{\kappa(t)}
t^{p/\alpha}
x^{p-\alpha+1}\D x
=\Oh\big(t^{p/\alpha+r(p-\alpha+2)}\big).
\end{align*}

Next, we control the difference $|\bm{\gamma}_{\bm{S}^t,\kappa(t)}-\bm{\gamma}_{\bm{Z},\kappa(t)}|$ of the drift terms. 
First, consider the infinite variation case $\alpha \in (1,2)$. Since $\bm{Z}$ has zero mean, representation~\eqref{eq:levy_ito_decomp} implies 
\begin{align*}
\bm{\gamma}_{\bm{S}^t,\kappa(t)}
&= t^{1-1/\alpha}\E[\bm{S}_1]
-\int_{\R^d}\bm{w}\1_{\R^d\setminus B_{\bm{0}}(\kappa(t))}(\bm{w})f_{\bm{S}}(t^{1/\alpha}\bm{w})\mu(\D\bm{w}), \quad \text{ and }\\
\bm{\gamma}_{\bm{Z},\kappa(t)}
&=
-\int_{\R^d}\bm{w}\1_{\R^d\setminus B_{\bm{0}}(\kappa(t))}(\bm{w})c\mu(\D\bm{w}).
\end{align*} 
Thus, we obtain
\begin{align}
\label{eq:difference_gammas}
\bm{\gamma}_{\bm{S}^t,\kappa(t)}-\bm{\gamma}_{\bm{Z},\kappa(t)}
&= t^{1-1/\alpha}\E[\bm{S}_1]
-\int_{\R^d}\bm{w}\1_{\R^d\setminus B_{\bm{0}}(\kappa(t))}(\bm{w})(f_{\bm{S}}(t^{1/\alpha}\bm{w})-c)\mu(\D\bm{w}).
\end{align} 
By Assumption~(\nameref{asm:T}), the integral in the display
satisfies
\begin{align*}
&\bigg|\int_{\R^d}\bm{w}\1_{\R^d\setminus B_{\bm{0}}(\kappa(t))}(\bm{w})(f_{\bm{S}}(t^{1/\alpha}\bm{w})-c)\mu(\D\bm{w})\bigg|\\
    &\le \int_{\R^d}|\bm{w}|\1_{\R^d\setminus B_{\bm{0}}(\kappa(t))}(\bm{w})|f_{\bm{S}}(t^{1/\alpha}\bm{w})-c|\mu(\D\bm{w})
    =\frac{c_\alpha}{c} \int_{\Sp^{d-1}} \int_{\kappa(t)}^\infty x^{-\alpha}|f_{\bm{S}}(t^{1/\alpha}x\bm{v})-c|\D x\,\sigma(\D\bm{v})\\
    &\le K_T\frac{c_\alpha}{c} \bigg(t^{p/\alpha}\int_{\kappa(t)}^{t^{-1/\alpha}}  x^{p-\alpha}\D x+\int_{t^{-1/\alpha}}^\infty  x^{-\alpha}\D x\bigg)
    =\begin{dcases}
        \Oh(t^{1-1/\alpha}),&p>\alpha-1,\\
        \Oh(t^{1-1/\alpha}(1+\log(1/t)\1_{\{r\ne -1/\alpha\}})),&p=\alpha-1,\\
        \Oh(t^{p/\alpha+r(p-\alpha+1)}),&p<\alpha-1,
    \end{dcases}
\end{align*}
where we used the fact that $r\ge -1/\alpha$. By~\eqref{eq:difference_gammas}, we obtain
\begin{equation*}
|\bm{\gamma}_{\bm{S}^t,\kappa(t)}-\bm{\gamma}_{\bm{Z},\kappa(t)}|
=\begin{dcases}
    \Oh\big(t^{1-1/\alpha}(1+\1_{\{p+1=\alpha,\,r\ne-1/\alpha\}}\log(1/t))\big),
    &p\ge \alpha-1,\\
    \Oh(t^{p/\alpha+ r(p-\alpha+1)}),&p<\alpha-1.
\end{dcases}
\end{equation*}

In the finite variation case $\alpha \in (0,1)$, recall that $\bm{S}$ and $\bm{Z}$ have zero natural drift, so that
\begin{align*}
\bm{\gamma}_{\bm{S}^t,\kappa(t)}&= \int_{\R^d}\bm{w}\1_{B_{\bm{0}}(\kappa(t))}(\bm{w})f_{\bm{S}}(t^{1/\alpha}\bm{w})\mu(\D\bm{w}), \quad \bm{\gamma}_{\bm{Z},\kappa(t)}=
\int_{\R^d}\bm{w}\1_{B_{\bm{0}}(\kappa(t))}(\bm{w})c\mu(\D\bm{w}).
\end{align*} 
Thus, we have, by Assumption~(\nameref{asm:T}),
\begin{align*}
|\bm{\gamma}_{\bm{S}^t,\kappa(t)}-\bm{\gamma}_{\bm{Z},\kappa(t)}|&\le \int_{\R^d}|\bm{w}|\1_{B_{\bm{0}}(\kappa(t))}(\bm{w})|f_{\bm{S}}(t^{1/\alpha}\bm{w})-c|\mu(\D\bm{w}) \\ 
    &\le K_T\frac{c_\alpha}{c} \int_0^{\kappa(t)} t^{p/\alpha}x^{p-\alpha}\D x 
    = \Oh(t^{p/\alpha+r(p-\alpha+1)}),
\end{align*}
completing the proof.
\end{proof}

\subsection{Domain of non-normal  attraction: the comonotonic coupling}
\label{sec:conv_stab_comonot}
Let $\bm{Z}$ be an $\alpha$-stable process on $\R^d$ for some $\alpha\in(0,2)$, defined as in Section~\ref{subsec:domain_of_attraction}, with ``intensity'' parameter $c_\alpha$, probability measure $\sigma$ on
 $\mathcal{B}(\Sp^{d-1})$ and L\'evy measure $\nu_{\bm{Z}}$ in~\eqref{eq:Levy-measure-stable}.

\begin{remark}\label{rem:comono_coup_S_R}
Under Assumption~(\nameref{asm:C}), we decompose the process $\bm{X}$ as the sum $\bm{S}+\bm{R}$ of the independent L\'evy processes $\bm{S}$ and $\bm{R}$ with generating triplets $(\bm{\gamma_S},\bm{0},\nu_{\bm{X}}^{\co})$ and $(\bm{\gamma_R},\bm{0},\nu_{\bm{X}}^{\mathrm{d}})$, respectively, such that, when $\alpha\in(0,1)$, both processes have zero natural drift, and when $\alpha \in (1,2)$ then $\bm{R}$ has zero natural drift.
Let the processes $(\bm{M}^{\bm{S}^t},\bm{M^Z},\bm{L}^{\bm{S}^t},\bm{L^Z})$ be coupled as in~\eqref{eq:comono_coupling_1} and~\eqref{eq:comono_coupling_2} from Section~\ref{sec:comonotonic_coupling}, where $\bm{S}^t=(\bm{S}_{st}/g(t))_{s\in[0,1]}$ and $\bm{R}^t=(\bm{R}_{st}/g(t))_{s\in[0,1]}$ for $t\in(0,1]$. Note that $\bm{X}^t$ has the same law as $\bm{S}^t+\bm{R}^t$ for $t\in(0,1]$ and that, under Assumption~(\nameref{asm:C}), $\bm{S}$ has a finite $q$-moment for every $q\in(0,\alpha)$ by~\eqref{eq:radial_tail_decomp}.
\end{remark}

\begin{theorem}
\label{thm:upper_bound_comono_tech}
Let $\alpha \in (0,2)\setminus\{1\}$, $\bm{X}$ and $\bm{Z}$ be as above and Assumptions~(\nameref{asm:S}) \& (\nameref{asm:C}) hold. Then, for every $q\in(0,1]$ with $\int_{\R^d\setminus B_{\bm{0}}(1)}|\bm{w}|^q\nu_{\bm{X}}^{\mathrm{d}}(\D\bm{w})<\infty$ 
we have $\E\big[\sup_{s\in[0,1]} |\bm{R}^t_s|^q\big] = \Oh \big(t^{1-q/\alpha}G(t)^{-q}\big)$ as $t\da 0$ and, if $p\ne\alpha-1$ and $q\in (0,\alpha)\cap(0,1]$ satisfy $q\notin\{\alpha/(p+1),\alpha/(\alpha\delta+1)\}$, we have, as $t \da 0$,
\begin{align}
    \mW_2\big(\bm{M}^{\bm{S}^t},\bm{M^Z}\big)
&=\Oh\big(G_2(t)+(1+G(t)^{p})t^{\min\{p/\alpha,\delta\}}\big),\label{eq:Up_bound_como_M}\\
\mW_q\big(\bm{L}^{\bm{S}^t}, \bm{L}^{\bm{Z}}\big) 
&= \Oh \big( G_2(t)^q
    + (1+G(t)^{pq})(1+G_1(x))^{q(1+p)}
        t^{\min\{pq/\alpha,q\delta,1-q/\alpha\}}\big), \qquad \text{ and }\label{eq:Up_bound_como_L}\\
|\bm{\varpi}_{\bm{S}^t}-\bm{\varpi}_{\bm{Z}}|
        &=\begin{dcases}
            \Oh\big(G_2(t)+t^{1-1/\alpha}G(t)^{-1} 
    +(1+G_1(t))t^{\min\{1-1/\alpha,\delta\}}\\
    \qquad +G(t)^{p}(1+G_1(t))^{1+p}t^{\min\{1-1/\alpha,p/\alpha\}}\big), & \alpha \in (1,2),\\
    \Oh\big(G_2(t)+t^{p/\alpha}G(t)^{p} +t^{\delta}\big), & \alpha \in (0,1).
        \end{dcases}\label{eq:Up_bound_como_varpi}
\end{align}
\end{theorem}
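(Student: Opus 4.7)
The plan is to decompose $\bm{X}=\bm{S}+\bm{R}$ as in Remark~\ref{rem:comono_coup_S_R}, couple $\bm{S}^t$ to $\bm{Z}$ using the comonotonic coupling from Proposition~\ref{prop:small_jump_bound_comonotonic}, and analyse each L\'evy--It\^o component separately. The residual process $\bm{R}$ is compound Poisson with zero natural drift in both cases $\alpha\in(0,1)$ and $\alpha\in(1,2)$, so Lemma~\ref{lem:moment_bound} (with Blumenthal--Getoor index $\beta_+=0$ and $C_2=0$) will give $\E[\sup_{s\in[0,t]}|\bm{R}_s|^q]=\Oh(t)$; rescaling by $g(t)=t^{1/\alpha}G(t)$ then yields the claimed bound on $\bm{R}^t$.

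The heart of the proof will be a pointwise estimate of $\Delta(x,\bm v)\coloneqq\rho^\la_{\bm{S}^t}(x,\bm v)-\rho^\la_{\bm Z}(x,\bm v)$. From the scaling $\bm{S}^t_s=\bm{S}_{st}/g(t)$ I would write $\rho^\la_{\bm{S}^t}(x,\bm v)=\rho^{\co\la}_{\bm X}(x/t,\bm v)/g(t)$ and invert the implicit equation $u=(c_\alpha/\alpha)(1+h(y,\bm v))H(y)^\alpha y^{-\alpha}$ coming from Assumption~(\nameref{asm:C}) at $u=x/t$, $y=\rho^{\co\la}_{\bm X}(x/t,\bm v)$, to obtain
\begin{equation*}
\Delta(x,\bm v)=\Big(\tfrac{c_\alpha}{\alpha x}\Big)^{1/\alpha}\Big[(1+h(y,\bm v))^{1/\alpha}\cdot\tfrac{H(y)}{G(t/x)}\cdot\tfrac{G(t/x)}{G(t)}-1\Big].
\end{equation*}
Combining $|h(y,\bm v)|\le K_h(1\wedge y^p)$ and $|H(y)/G(t/x)-1|\le K_Q(1\wedge (t/x)^\delta)$ from~\eqref{eq:old_assump_(H)} with Assumption~(\nameref{asm:S}) and the Karamata estimate $y\lesssim(t/x)^{1/\alpha}G(t/x)$ should produce
\begin{equation*}
|\Delta(x,\bm v)|\lesssim x^{-1/\alpha}\bigl[G(t)^p(1+G_1(x)G_2(t))^p(t/x)^{p/\alpha}+(1\wedge (t/x)^\delta)+G_1(x)G_2(t)\bigr].
\end{equation*}

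With this estimate in hand, bounds~\eqref{eq:Up_bound_como_M} and~\eqref{eq:Up_bound_como_L} will follow by substituting into Propositions~\ref{prop:small_jump_bound_comonotonic} and~\ref{prop:Lq-J-comonotonic} with cutoff $\ve=1$. For $\mW_2(\bm{M}^{\bm{S}^t},\bm{M^Z})$ I would integrate $|\Delta|^2$ over $[1,\infty)\times\Sp^{d-1}$; convergence at infinity is guaranteed termwise by the $x^{-2/\alpha}$ tail and by $G_1^2\in\SV_\infty$, and the three terms on the right of the estimate reproduce exactly the three summands in~\eqref{eq:Up_bound_como_M}. For $\mW_q(\bm L^{\bm{S}^t},\bm L^{\bm Z})$ I would integrate $|\Delta|^q$ over $(0,1)\times\Sp^{d-1}$, splitting at $x=t$ and using the trivial majorant $|\Delta|\le\rho^\la_{\bm{S}^t}+\rho^\la_{\bm Z}\lesssim x^{-1/\alpha}$ on $(0,t]$, which produces the $t^{1-q/\alpha}$ contribution of~\eqref{eq:Up_bound_como_L}. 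The excluded exponents $q\in\{\alpha/(p+1),\alpha/(\alpha\delta+1)\}$ (and $p=\alpha-1$) are precisely those at which these integrals develop logarithmic singularities.

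For the drift bound~\eqref{eq:Up_bound_como_varpi}, I would use the LePage representation of $\bm\varpi_{\bm{S}^t}$ and $\bm\varpi_{\bm Z}$. When $\alpha\in(1,2)$, both $\bm S$ and $\bm Z$ are centred, so $\bm\varpi_{\bm{S}^t}-\bm\varpi_{\bm Z}=\E[\bm L^{\bm{S}^t}_1-\bm L^{\bm Z}_1]+t^{1-1/\alpha}G(t)^{-1}\E[\bm S_1]$, and the first summand is an integral of $\bm v\,\Delta$ over $(0,1)\times\Sp^{d-1}$ handled by the same pointwise estimate above, yielding the four competing polynomial terms of the first line of~\eqref{eq:Up_bound_como_varpi}; when $\alpha\in(0,1)$ both processes have zero natural drift, so the difference is directly $\int_{(0,1)\times\Sp^{d-1}}\bm v\,\Delta(x,\bm v)\,\D x\,\sigma(\D\bm v)$ and is bounded analogously to give the second line. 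The main technical obstacle is the step described in the second paragraph: the implicit inversion of $\rho^{\co\la}_{\bm X}$ forces one to compose three independent second-order conditions (on $h$, on $H/G$, and on $G(t/x)/G(t)$) under the change of variables $x\mapsto x/t$ without degrading the rates, and it is precisely the interaction of these estimates inside the $(0,1)$ and $[1,\infty)$ integrals that creates the exceptional exponents which must be excluded from the statement.
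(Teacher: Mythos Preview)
Your approach is essentially the paper's: the paper packages your pointwise estimate on $\Delta$ into Lemma~\ref{lem:bounds_h_2_multidim} (defining $\wt h$ by $\rho^{\co\la}_{\bm X}(x,\bm v)=(c_\alpha/\alpha)^{1/\alpha}x^{-1/\alpha}G(1/x)(1+\wt h(x,\bm v))$ with $|\wt h(x,\bm v)|\le K_{\wt h}\big(1\wedge(x^{-p/\alpha}G(1/x)^p+x^{-\delta})\big)$), and then bounds the large-jump, small-jump and drift components in Lemmas~\ref{lem:large_jump_comono_coup}, \ref{lem:small_jump_comono_coup} and~\ref{lem_drift_como_conv} exactly along the lines you describe.

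There is, however, one concrete error in the drift step for $\alpha\in(0,1)$. Since $\bm{S}^t$ and $\bm{Z}$ have zero natural drift, each equals the sum of its jumps; in the decomposition $\bm\varpi t+\bm M_t+\bm L_t$ (with $\bm L$ uncompensated over $(0,\ve)$ and $\bm M$ compensated over $[\ve,\infty)$) the constant $\bm\varpi$ must therefore be the \emph{compensator of the small jumps}, namely $\int_{[\ve,\infty)\times\Sp^{d-1}}\bm v\,\rho^\la(x,\bm v)\,\D x\,\sigma(\D\bm v)$. Hence $\bm\varpi_{\bm{S}^t}-\bm\varpi_{\bm Z}$ is an integral of $\bm v\,\Delta$ over $[1,\infty)\times\Sp^{d-1}$, not over $(0,1)\times\Sp^{d-1}$; your proposed integral would in fact diverge, since $x^{-1/\alpha}$ is not integrable near $0$ when $\alpha<1$. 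Integrating over the correct domain immediately yields the second line of~\eqref{eq:Up_bound_como_varpi}, which is why that line has the same shape as~\eqref{eq:Up_bound_como_M}. Two minor slips: for $\alpha\in(1,2)$ only $\bm Z$ is centred, not $\bm S$ (though your formula still handles $\E[\bm S_1]\ne 0$ correctly, up to a harmless sign); and your ``trivial majorant $\lesssim x^{-1/\alpha}$'' on $(0,t]$ should carry an extra $(1+G_1(x))$ factor coming from $G(t/x)/G(t)\le 1+G_1(x)G_2(t)$.
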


\begin{remark}\label{rem:thm_upper_lower_comno_tech} 
If $H\equiv 1$, then $G$ is constant and hence Theorem~\ref{thm:upper_bound_comono_tech} is also applicable to the domain of normal attraction: set $G_1\equiv 1$, $G_2\equiv0$ and $\delta$ arbitrarily large, then for $p\ne\alpha-1$ and $q\in (0,\alpha)\cap(0,1]$ with $\int_{\R^d\setminus B_{\bm{0}}(1)}|\bm{w}|^q\nu_{\bm{X}}^{\mathrm{d}}(\D\bm{w})<\infty$ and $q\ne\alpha/(p+1)$, we have, as $t \da 0$, 
\begin{align*}
\mW_2\big(\bm{M}^{\bm{S}^t},\bm{M^Z}\big) 
    &=\Oh\big(t^{p/\alpha}\big),\quad
    \mW_q\big(\bm{L}^{\bm{S}^t},\bm{L^Z}\big)
    =\Oh\big(t^{\min\{ pq/\alpha,1-q/\alpha\}}\big), \quad \text{and}\\
        |\bm{\varpi}_{\bm{S}^t}-\bm{\varpi}_{\bm{Z}}|&= \begin{dcases}
            \Oh\big(t^{\min\{1-1/\alpha, p/\alpha\}}\big), & \alpha \in (1,2),\\
            \Oh(t^{p/\alpha}),& \alpha\in(0,1).
        \end{dcases}
\end{align*}
Note that, when $q(p+1)>\alpha$, these rates match the ones in Theorem~\ref{thm:d_thin_dom_attract}, established under more general conditions using the thinning coupling.
\end{remark}

\begin{lemma}
\label{lem:bounds_h_2_multidim}
Under Assumption~(\nameref{asm:C}), there exists a function $\wt h:(0,\infty)\times \Sp^{d-1}\to(-1,\infty)$ and a constant $K_{\wt h} \ge 0$, such that, for all $x>0$ and $\bm{v} \in \Sp^{d-1}$, 
\begin{equation}
\label{eq:rho_X^la}
\rho_{\bm{X}}^{\co\la}(x,\bm{v})=\left( c_\alpha/c\right)^{1/\alpha}x^{-1/\alpha}G(1/x)(1+\wt h(x,\bm{v})) \quad \text{and}\quad 
|\wt h(x,\bm{v})|\le K_{\wt h}\big(1\wedge (x^{-p/\alpha}G(1/x)^p+x^{-\delta})\big).
\end{equation}
\end{lemma}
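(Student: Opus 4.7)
The plan is to invert the radial tail function in Assumption~(\nameref{asm:C}) and then feed the two second-order bounds in~\eqref{eq:old_assump_(H)} into the inverted expression. Writing $y = \rho_{\bm{X}}^{\co\la}(x,\bm{v})$ and solving $\rho_{\bm{X}}^{\co}([y,\infty),\bm{v})=x$ from~\eqref{eq:radial_tail_decomp} yields
\[
y = (c_\alpha/\alpha)^{1/\alpha}\, x^{-1/\alpha} (1+h(y,\bm{v}))^{1/\alpha} H(y).
\]
Next, the second bound in~\eqref{eq:old_assump_(H)} applied at argument $1/x$ produces a decomposition $H(y) = G(1/x)(1+R(x,\bm{v}))$ with $|R(x,\bm{v})| \le K_Q(1\wedge x^{-\delta})$. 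Substituting this into the previous display gives the formula claimed in~\eqref{eq:rho_X^la} with
\[
1+\wt{h}(x,\bm{v}) \coloneqq (1+h(y,\bm{v}))^{1/\alpha}(1+R(x,\bm{v})),
\]
which is strictly positive since $h\ge -1$ and $H,G>0$ pointwise, so that $\wt{h}(x,\bm{v})>-1$ as required.

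It remains to establish the quantitative bound on $\wt{h}$. The hypothesis $|h(y,\bm{v})|\le K_h(1\wedge y^p)$ is phrased in terms of $y$, whereas the conclusion is in terms of $x$; the bridge is a uniform \emph{a priori} upper estimate on $y=\rho_{\bm{X}}^{\co\la}(x,\bm{v})$. Reading the displayed equation for $y$ and using the trivial bounds $(1+h(y,\bm{v}))^{1/\alpha}\le(1+K_h)^{1/\alpha}$ and $H(y)\le G(1/x)(1+K_Q)$, we obtain a constant $C\in(0,\infty)$ with $y \le C x^{-1/\alpha}G(1/x)$ for every $x>0$ and $\bm{v}\in\Sp^{d-1}$. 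Consequently,
\[
|h(y,\bm{v})| \le K_h(1\wedge y^p) \le K_h'\bigl(1\wedge x^{-p/\alpha}G(1/x)^p\bigr).
\]
Since both $(1+h(y,\bm{v}))^{1/\alpha}\in[0,(1+K_h)^{1/\alpha}]$ and $(1+R(x,\bm{v}))\in[1-K_Q,1+K_Q]$ lie in a compact subset of $(-1,\infty)$, the elementary inequality $|(1+a)^{1/\alpha}(1+b)-1|\le C'(|a|+|b|)$ valid on such a range gives
\[
|\wt{h}(x,\bm{v})| \le C'\bigl(|h(y,\bm{v})| + |R(x,\bm{v})|\bigr) \le K_{\wt{h}}'\bigl(x^{-p/\alpha}G(1/x)^p + x^{-\delta}\bigr).
\]
Combining this with the trivial global bound $|\wt{h}(x,\bm{v})|\le (1+K_h)^{1/\alpha}(1+K_Q)-1<\infty$ inherited from the boundedness of both factors, and enlarging the constant if necessary, produces the $1\wedge(\cdot)$ form stated in~\eqref{eq:rho_X^la}.

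The main obstacle is the \emph{a priori} uniform envelope $y\le C x^{-1/\alpha}G(1/x)$: it is precisely this estimate that allows one to convert the small-argument hypothesis $|h(y,\bm{v})|\le K_h y^p$ into a statement phrased in the variables of the lemma, namely $x$ and $G(1/x)$. Uniformity in $\bm{v}\in\Sp^{d-1}$ is free here, as the bounds on $h$, $H/G$ and $H$ in Assumption~(\nameref{asm:C}) and~\eqref{eq:old_assump_(H)} are all uniform in the angular variable.
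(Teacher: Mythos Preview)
The proposal is correct and follows essentially the same route as the paper: invert the tail to get $y=(c_\alpha/\alpha)^{1/\alpha}x^{-1/\alpha}(1+h(y,\bm v))^{1/\alpha}H(y)$, write $H(y)=G(1/x)(1+R)$ via the second inequality in~\eqref{eq:old_assump_(H)}, and bootstrap the formula for $y$ to turn $|h(y,\bm v)|\le K_h y^p$ into a bound in $x$ and $G(1/x)$. Your explicit a~priori envelope $y\le C\,x^{-1/\alpha}G(1/x)$ is precisely the step the paper performs by substituting the expression for $\rho_{\bm X}^{\co\la}$ back into $K_h(\rho_{\bm X}^{\co\la})^p$; the two arguments are the same in substance. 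One cosmetic remark: the claim that $1+R\in[1-K_Q,1+K_Q]$ lies in a compact subset of $(-1,\infty)$ is not literally what you need (nothing prevents $K_Q\ge 2$); what actually makes your Lipschitz inequality $|(1+a)^{1/\alpha}(1+b)-1|\le C'(|a|+|b|)$ work is the decomposition $|(1+a)^{1/\alpha}-1|\cdot|1+b|+|b|$ together with $|1+b|\le 1+K_Q$ and a bound $|(1+a)^{1/\alpha}-1|\le C|a|$ on $a\in[-1,K_h]$, which holds for every $\alpha>0$.
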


\begin{proof}
Note that $\rho_{\bm{X}}^{\co}([\rho_{\bm{X}}^{\co\la}(x,\bm{v}),\infty),\bm{v})
=x$ for all $\bm{v}\in \Sp^{d-1}$ and $x>0$. Hence, for all $\bm{v}\in \Sp^{d-1}$ and $x>0$,
\begin{align*}
x
&=\frac{c_\alpha}{\alpha}(1+h(\rho_{\bm{X}}^{\co\la}(x,\bm{v}),\bm{v}))H(\rho_{\bm{X}}^{\co\la}(x,\bm{v}))^\alpha\rho_{\bm{X}}^{\co\la}(x,\bm{v})^{-\alpha},
\quad\text{implying}\\
\rho_{\bm{X}}^{\co\la}(x,\bm{v})
&=\Big(\frac{c_\alpha}{\alpha}\Big)^{1/\alpha}x^{-1/\alpha}H(\rho_{\bm{X}}^{\co\la}(x,\bm{v}))(1+h(\rho_{\bm{X}}^{\co\la}(x,\bm{v}),\bm{v}))^{1/\alpha}\\
&=\Big(\frac{c_\alpha}{\alpha}\Big)^{1/\alpha}x^{-1/\alpha}G(1/x)\frac{H(\rho_{\bm{X}}^{\co\la}(x,\bm{v}))}{G(1/x)}\big(1+h(\rho_{\bm{X}}^{\co\la}(x,\bm{v}),\bm{v})\big)^{1/\alpha}.
\end{align*}
Thus, the first part of~\eqref{eq:rho_X^la} holds if 
$\wt h(x,\bm{v})\coloneqq (H(\rho_{\bm{X}}^{\co\la}(x,\bm{v}))/G(1/x)) (1+h(\rho_{\bm{X}}^{\co\la}(x,\bm{v}),\bm{v}))^{1/\alpha}-1 \in (-1,\infty)$.

Suppose now that~\eqref{eq:old_assump_(H)} in Assumption~(\nameref{asm:C}) holds for some $p,\delta>0$. Since $h$ is bounded by $K_h$ and by~\eqref{eq:old_assump_(H)}, we obtain $\wt h(x,\bm{v})\le (K_Q+1) (1+K_h)^{1/\alpha}-1$ for all $x>0$ and $\bm{v}\in \Sp^{d-1}$. Moreover, the elementary inequality $|(1+x)^r-1|\le |x|$ for any $r\in[0,1]$ and $x\ge -1$ and the triangle inequality yield $|(1+y)(1+x)^r-1|\le|x|+|y|(1+x)^r$, implies for all $x>0$ and $\bm{v} \in \Sp^{d-1}$, that
\begin{align*}
|\wt h(x,\bm{v})|
&\le |h(\rho_{\bm{X}}^{\co\la}(x,\bm{v}),\bm{v})|+\bigg|\frac{H(\rho_{\bm{X}}^{\co\la}(x,\bm{v}))}{G(1/x)}-1\bigg|(1+K_h)^{1/\alpha}\\
&\le K_h \rho_{\bm{X}}^{\co\la}(x,\bm{v})^p+K_Q(1+K_h)^{1/\alpha}x^{-\delta} \\
&\le K_h(c_\alpha/\alpha)^{p/\alpha}x^{-p/\alpha}G(1/x)^p \left( \frac{H(\rho_{\bm{X}}^{\co\la}(x,\bm{v}))}{G(1/x)}\right)^p (1+h(\rho_{\bm{X}}^{\co\la}(x,\bm{v}),\bm{v}))^{p/\alpha}+K_Q(1+K_h)^{1/\alpha}x^{-\delta}\\
& \le K_h(1+K_h)^{p/\alpha}(1+K_Q)^p(c_\alpha/\alpha)^{p/\alpha}x^{-p/\alpha}G(1/x)^p+K_Q(1+K_h)^{1/\alpha}x^{-\delta}.
\end{align*}
Choosing $K_{\wt h}\coloneqq \max\{(c_\alpha/\alpha)^{p/\alpha}K_h(1+K_h)^{p/\alpha}(1+K_Q)^p,(K_Q+1)(1+K_h)^{1/\alpha},1\}$, gives the last part of~\eqref{eq:rho_X^la}.
\end{proof}

\begin{lemma}\label{lem:large_jump_comono_coup}
Let $\alpha \in (0,2)\setminus\{1\}$ and $q \in (0,\alpha)\cap(0,1]$ satisfy $\int_{\R^d\setminus B_{\bm{0}}(1)}|\bm{w}|^q\nu_{\bm{X}}(\D\bm{w})<\infty$, where $q\ne\alpha/(p+1)$ and $q\ne\alpha/(\alpha\delta+1)$. Then, under Assumptions~(\nameref{asm:S}) \& (\nameref{asm:C}) we have, as $t \da 0$,
\begin{align*}
\mW_q\big(\bm{L}^{\bm{S}^t}, \bm{L}^{\bm{Z}}\big) 
&= \Oh\big(G_2(t)^q 
    +(1+G_1(t))^qt^{\min\{1-q/\alpha,q\delta\}}     
    +G(t)^{pq}(1+G_1(t))^{q(1+p)}t^{\min\{1-q/\alpha,pq/\alpha\}}\big).
\end{align*}
\end{lemma}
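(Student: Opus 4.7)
The plan is to apply the comonotonic coupling bound of Proposition~\ref{prop:Lq-J-comonotonic} and then control the resulting integral using Assumptions~(\nameref{asm:S}) and~(\nameref{asm:C}). Inequality~\eqref{eq:Lq-J_comonotonic} applied to the coupling $(\bm{L}^{\bm{S}^t}, \bm{L}^{\bm{Z}})$ from~\eqref{eq:comono_coupling_2} with any fixed $\ve\in(0,\infty)$ yields, since $q\vee 1 = 1$,
\[
\mW_q(\bm{L}^{\bm{S}^t}, \bm{L}^{\bm{Z}}) \le \int_0^\ve \int_{\Sp^{d-1}} |\rho_{\bm{S}^t}^\la(x, \bm v) - \rho_{\bm{Z}}^\la(x, \bm v)|^q \sigma(\D\bm v)\D x.
\]
I would first derive a tractable expression for $\rho_{\bm{S}^t}^\la(x,\bm v)$: the scaling identity $\bm{S}^t=(\bm{S}_{st}/g(t))_{s\in[0,1]}$ gives $\rho_{\bm{S}^t}([y,\infty),\bm v) = t\rho_{\bm{X}}^\co([g(t)y,\infty),\bm v)$, hence $\rho_{\bm{S}^t}^\la(x,\bm v) = g(t)^{-1}\rho_{\bm{X}}^{\co\la}(x/t,\bm v)$. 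Substituting the formula from Lemma~\ref{lem:bounds_h_2_multidim} and using $g(t)=t^{1/\alpha}G(t)$ yields $\rho_{\bm{S}^t}^\la(x,\bm v)=(c_\alpha/\alpha)^{1/\alpha}x^{-1/\alpha}(G(t/x)/G(t))(1+\wt h(x/t,\bm v))$, while $\rho_{\bm{Z}}^\la(x,\bm v)=(c_\alpha/\alpha)^{1/\alpha}x^{-1/\alpha}$ by Definition~\ref{def:stable}.

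Setting $A(x,t) \coloneqq G(t/x)/G(t) - 1$ and $B(x,t,\bm v) \coloneqq \wt h(x/t, \bm v)$, the difference factors as $(c_\alpha/\alpha)^{1/\alpha}x^{-1/\alpha}(A + B + AB)$, and the subadditivity $(u+v)^q\le u^q+v^q$ for $u,v\ge 0$ and $q\in(0,1]$ gives $|A+B+AB|^q\le|A|^q+|B|^q+|A|^q|B|^q$. The key estimates are $|A(x,t)|\le G_1(x)G_2(t)$ from Assumption~(\nameref{asm:S}) and the Lemma~\ref{lem:bounds_h_2_multidim} bound
\[
|B(x,t,\bm v)|^q \le K_{\wt h}^q\bigl[(1 \wedge G(t/x)^{pq}(x/t)^{-pq/\alpha}) + (1 \wedge (x/t)^{-q\delta})\bigr].
\]
The $|A|^q$-integral contributes $G_2(t)^q \int_0^\ve G_1(x)^q x^{-q/\alpha}\D x = \Oh(G_2(t)^q)$, the integral being a finite constant because $q < \alpha$ and $G_1\in\SV_0\cap\SV_\infty$.

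For the $(x/t)^{-q\delta}$ part of $|B|^q$, I would split the integration at $x=t$: the condition $q\ne\alpha/(\alpha\delta+1)$ ensures $1-q/\alpha-q\delta\ne 0$ (ruling out a logarithmic factor) and produces $\Oh(t^{\min\{q\delta,1-q/\alpha\}})$. For the other piece, Assumption~(\nameref{asm:S}) yields $G(t/x)\le G(t)(1+G_1(x)G_2(t))$, so splitting at the threshold $x_1\coloneqq tG(t)^\alpha$ (where $G(t)^{pq}(x/t)^{-pq/\alpha}=1$) and using $q\ne\alpha/(p+1)$ to avoid another boundary exponent gives an integral of order $\Oh(G(t)^{pq}t^{\min\{pq/\alpha,1-q/\alpha\}})$. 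The cross terms $|A|^q|B|^q$ insert an extra $G_1(x)^q$ into these integrands; by Karamata's theorem, integrals such as $\int_0^t G_1(x)^q x^{-q/\alpha}\D x$ are asymptotically $G_1(t)^q t^{1-q/\alpha}/(1-q/\alpha)$, yielding the $(1+G_1(t))^q$ and $(1+G_1(t))^{q(1+p)}$ factors in the claimed bound, with the extra factor of $G_1$ in the last term coming from $G(t/x)^{pq}\le G(t)^{pq}(1+G_1(x))^{pq}$. Summing the three contributions gives the claim.

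The hardest step will be the careful domain splitting and Karamata-based bookkeeping required to absorb the residual $G(t)^{\alpha-q}t^{1-q/\alpha}$ term emerging from the $|B|^q$-integral near $x_1$ into the stated $G(t)^{pq}(1+G_1(t))^{q(1+p)}t^{\min\{1-q/\alpha,pq/\alpha\}}$ expression; this requires exploiting the relation between $G$, $G_1$ and $G_2$ provided by Assumption~(\nameref{asm:S}) (e.g. as formalised in Lemma~\ref{lem:1-a_SV}) so that $G_1$ grows fast enough to dominate powers of $G$ whenever $G(t)\to0$ or $\infty$.
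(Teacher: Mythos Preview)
Your strategy matches the paper's: apply Proposition~\ref{prop:Lq-J-comonotonic}, express $\rho_{\bm{S}^t}^\la$ via Lemma~\ref{lem:bounds_h_2_multidim}, and control the resulting integral by splitting. Your $A+B+AB$ decomposition is algebraically the same as the paper's grouping $A+(1+A)B$, and your handling of the $|A|^q$ term and of the $(x/t)^{-q\delta}$ piece (split at $x=t$) is correct.

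The only real divergence is the split point for the $G$-piece of $|\wt h|$: you propose $x_1=tG(t)^\alpha$, whereas the paper splits at $x=t$ for that piece as well, and this avoids your residual entirely. For $x\le t$ one has $(x/t)^{-\delta}\ge 1$, so the $1\wedge(\cdots)$ in~\eqref{eq:rho_X^la} already equals $1$; hence $|\wt h(x/t,\bm v)|\le K_{\wt h}$ on $(0,t)$ and the contribution there is $\Oh((1+G_1(t))^q t^{1-q/\alpha})$ under the paper's grouping, with no factor $G(t)^{\alpha-q}$. On $(t,\ve)$ one drops the $1\wedge$, bounds $G(t/x)^{pq}\le G(t)^{pq}(1+G_1(x))^{pq}$ via Assumption~(\nameref{asm:S}), and the integral $\int_t^\ve(1+G_1(x))^{q(1+p)}x^{-q(1+p)/\alpha}\D x$ yields the stated power of $t$ by the case analysis you describe (this is precisely where $q\ne\alpha/(p+1)$ enters).

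Your proposed mechanism for absorbing the residual $t^{1-q/\alpha}G(t)^{\alpha-q}$ is off-target: Lemma~\ref{lem:1-a_SV} relates $G_2$ to $tG'/G$, not $G_1$ to powers of $G$, and nothing in Assumption~(\nameref{asm:S}) forces $G_1$ to dominate $G$. The residual \emph{can} in fact be absorbed directly --- a short case split on the sign of $q(1+p)-\alpha$, using only that $G\in\SV_0$, shows it is $\Oh\big(G(t)^{pq}t^{\min\{1-q/\alpha,pq/\alpha\}}\big)$ --- but the cleaner route is simply to split at $x=t$ and never generate it.
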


\begin{proof}
By Lemma~\ref{lem:bounds_h_2_multidim}, for all $x>0$, $t\in(0,1]$ and $\bm{v}\in \Sp^{d-1}$, it holds that 
\begin{equation}\label{eq:time_changed_right_inverse}
\rho_{\bm{S}^t}^{\la}(x,\bm{v})
=\rho_{\bm{X}}^{\co\la}(x/t,\bm{v})/g(t)
=\Big(\frac{c_\alpha}{\alpha}\Big)^{1/\alpha}x^{-1/\alpha} \frac{G(t/x)}{G(t)}\big(1+\wt h(x/t,\bm{v})\big).
\end{equation}
Hence, Proposition~\ref{prop:Lq-J-comonotonic} now implies that
\begin{align*}
\mW_q\big(\bm{L}^{\bm{S}^t},\bm{L}^{\bm{Z}}\big) 
&\le \int_{ \Sp^{d-1}}\int_0^\ve |\rho_{\bm{S}^t}^{\la}(x,\bm{v})-\rho_{\bm{Z}}^{\la}(x,\bm{v})|^q \D x \,\sigma(\D \bm{v})\\
&=\Big(\frac{c_\alpha}{\alpha}\Big)^{q/\alpha}\int_{\Sp^{d-1}}\int_0^\ve \bigg|\frac{G(t/x)}{G(t)}
    \big(1+\wt h(x/t,\bm{v})\big)-1\bigg|^q
    x^{-q/\alpha} 
        \D x \,\sigma(\D \bm{v})
\eqqcolon I(t). 
\end{align*}
To bound $I(t)$, we use the triangle inequality and the fact that $x \mapsto x^q$ is concave, to obtain
\begin{align*}
   \bigg(\frac{\alpha}{c_\alpha}\bigg)^{q/\alpha}I(t) 
   &\le \int_0^\ve x^{-q/\alpha} \left|\frac{G(t/x)}{G(t)}-1 \right|^q\D x
   + \int_{\Sp^{d-1}}\int_0^\ve x^{-q/\alpha}\left| \frac{G(t/x)}{G(t)}
   \wt h(x/t,\bm{v})\right|^q\D x\,\sigma(\D \bm{v}).
\end{align*} 
We consider each integral on its own. Assumption~(\nameref{asm:S}) implies that the first integral $I_1(t)$ in the display above is bounded by $G_2(t)^q\int_0^\ve x^{-q/\alpha}G_1(x)^q\D x <\infty$. Next, we bound the second integral $I_2(t)$ in the display above. Assumption~(\nameref{asm:S}) and~\eqref{eq:old_assump_(H)} yield, as $t\da 0$,
\begin{align*}
I_2(t)
&\lesssim \int_{\Sp^{d-1}}\int_0^\ve 
    (1+G_1(x)G_2(t))^q x^{-q/\alpha}  
    |\wt h(x/t,\bm{v})|^q\D x\,\sigma(\D \bm{v}) \\
&\lesssim \int_0^{t} (1+G_1(x))^q x^{-q/\alpha} \D x 
    + t^{q\delta}\int_t^\ve (1+G_1(x))^qx^{-q/\alpha-q\delta} \D x\\
&\qquad
    + t^{pq/\alpha}G(t)^{pq}\int_t^\ve (1+G_1(x))^q x^{-q/\alpha-pq/\alpha}\frac{G(t/x)^{pq}}{G(t)^{pq}} \D x\\
&\lesssim (1+G_1(t))^q t^{\min\{1-q/\alpha,q\delta\}}
    +  t^{pq/\alpha}G(t)^{pq}\int_t^\ve (1+G_1(x))^{q(1+p)}x^{-q/\alpha-pq/\alpha} \D x\\
&= \Oh\big((1+G_1(t))^qt^{\min\{1-q/\alpha,q\delta\}}     
    +G(t)^{pq}(1+G_1(t))^{q(1+p)}t^{\min\{1-q/\alpha,pq/\alpha\}}
    \big),
\end{align*}
giving the claim.
\end{proof}

\begin{lemma}\label{lem:small_jump_comono_coup}
Let $\alpha \in (0,2)\setminus\{1\}$. Under Assumptions~(\nameref{asm:S}) \& (\nameref{asm:C}) we have 
\begin{equation}
\label{eq:L2-M}
\mW_2\big(\bm{M}^{\bm{S}^t},\bm{M^Z}\big)
=\Oh\big(G_2(t)+t^{p/\alpha}G(t)^{p}+t^{\delta}\big), \qquad \text{as } t \da 0.
\end{equation}
\end{lemma}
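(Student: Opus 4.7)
The plan is to apply the $L^2$-bound from Proposition~\ref{prop:small_jump_bound_comonotonic} to the comonotonic coupling of $\bm{S}^t$ and $\bm{Z}$ and then evaluate the resulting integral using the representation~\eqref{eq:time_changed_right_inverse} together with the second-order estimates in Assumption~(\nameref{asm:S}) and Lemma~\ref{lem:bounds_h_2_multidim}. More precisely, inequality~\eqref{eq:small_jump_comono_coup} yields
\[
\mW_2(\bm{M}^{\bm{S}^t},\bm{M}^{\bm{Z}})^2
\le 4\int_{\Sp^{d-1}}\int_\ve^\infty \bigl(\rho^\la_{\bm{S}^t}(x,\bm{v})-\rho^\la_{\bm{Z}}(x,\bm{v})\bigr)^2 \D x\,\sigma(\D\bm{v}),
\]
and substituting~\eqref{eq:time_changed_right_inverse} for $\rho^\la_{\bm{S}^t}$ (and using $\rho^\la_{\bm{Z}}(x,\bm{v})=(c_\alpha/\alpha)^{1/\alpha}x^{-1/\alpha}$) transforms the integrand into a multiple of $x^{-2/\alpha}[(G(t/x)/G(t))(1+\wt{h}(x/t,\bm{v}))-1]^2$.

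The next step is to split this integrand via the triangle inequality and the elementary bound $(a+b)^2\le 2a^2+2b^2$ applied to the decomposition
\[
\frac{G(t/x)}{G(t)}\bigl(1+\wt{h}(x/t,\bm{v})\bigr)-1
= \left(\frac{G(t/x)}{G(t)}-1\right)+\frac{G(t/x)}{G(t)}\wt{h}(x/t,\bm{v}).
\]
The first term is controlled by Assumption~(\nameref{asm:S}): $|G(t/x)/G(t)-1|\le G_1(x)G_2(t)$, contributing (after integration against $x^{-2/\alpha}$ on $[\ve,\infty)$) a bound $\Oh(G_2(t)^2)$, where the integral $\int_\ve^\infty x^{-2/\alpha}G_1(x)^2\D x$ is finite since $2/\alpha>1$ and $G_1\in\SV_\infty$ grows subpolynomially. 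For the second term, Lemma~\ref{lem:bounds_h_2_multidim} provides the estimate $|\wt{h}(x/t,\bm{v})|^2\le K^2_{\wt{h}}(t^{2p/\alpha}x^{-2p/\alpha}G(t/x)^{2p}+t^{2\delta}x^{-2\delta})$, and the factor $(G(t/x)/G(t))^2$ can be controlled by $(1+G_1(x)G_2(t))^2\le(1+G_1(x))^2$ for small $t$.

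Carrying out the integration, the $t^{2\delta}$ piece yields $\Oh(t^{2\delta})$ after using that $x^{-2/\alpha-2\delta}(1+G_1(x))^2$ is integrable on $[\ve,\infty)$; the $t^{2p/\alpha}$ piece, after factoring $G(t/x)^{2p}=G(t)^{2p}(G(t/x)/G(t))^{2p}$, yields $t^{2p/\alpha}G(t)^{2p}$ times a finite integral of $x^{-2(1+p)/\alpha}(1+G_1(x))^{2+2p}$ on $[\ve,\infty)$, which is again finite since $2(1+p)/\alpha>1$ and the slowly varying factor is subpolynomial. Combining the three contributions and taking square roots gives
\[
\mW_2(\bm{M}^{\bm{S}^t},\bm{M}^{\bm{Z}})
=\Oh\bigl(G_2(t)+t^{p/\alpha}G(t)^p+t^\delta\bigr),
\]
as required. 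The main bookkeeping obstacle is to ensure uniform-in-$x$ integrability of the bounds over $[\ve,\infty)$ in the presence of the slowly varying factors $G_1$ and $G(t/x)/G(t)$; this is handled by exploiting $\alpha<2$ together with the subpolynomial growth of slowly varying functions at infinity.
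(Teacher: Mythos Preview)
Your proposal is correct and follows essentially the same approach as the paper's proof: apply the $L^2$ bound of Proposition~\ref{prop:small_jump_bound_comonotonic}, substitute~\eqref{eq:time_changed_right_inverse}, split via $(a+b)^2\le 2(a^2+b^2)$, and then control the two pieces using Assumption~(\nameref{asm:S}) and the bound on $\wt h$ from Lemma~\ref{lem:bounds_h_2_multidim}, exploiting $2/\alpha>1$ for the integrability over $[\ve,\infty)$. The paper's argument is essentially identical line by line.
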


\begin{proof}
Proposition~\ref{prop:small_jump_bound_comonotonic} together with~\eqref{eq:time_changed_right_inverse} shows that 
\begin{align*}
\mW_2\big(\bm{M}^{\bm{S}^t},\bm{M^Z}\big)^2 
&\le 4\int_{[\ve,\infty)\times \Sp^{d-1}}(\rho_{\bm{S}^t}^{\la}(x,\bm{v})-\rho_{\bm{Z}}^{\la}(x,\bm{v}))^2 \D x \otimes \sigma(\D \bm{v})\\
&=4\bigg(\frac{c_\alpha}{\alpha}\bigg)^{2/\alpha}\int_{\Sp^{d-1}}\int_\ve^\infty \left(\frac{G(t/x)}{G(t)}
    \big(1+\wt h(x/t,\bm{v})\big)-1\right)^2
    x^{-2/\alpha} \D x \sigma(\D \bm{v}) \eqqcolon I(t).
\end{align*}
To bound $I(t)$, we use the elementary inequality $(x+y)^2 \le 2(x^2+y^2)$, which implies,
\begin{align*}
   \frac{1}{8}\bigg(\frac{\alpha}{c_\alpha}\bigg)^{2/\alpha} I(t) &\le\int_\ve^\infty x^{-2/\alpha} \left(\frac{G(t/x)}{G(t)}-1 \right)^2\D x
   + \int_{\Sp^{d-1}}\int_\ve^\infty x^{-2/\alpha}\left( \frac{G(t/x)}{G(t)}
   \wt h(x/t,\bm{v})\right)^2\D x\,\sigma(\D \bm{v}).
\end{align*} 
By Assumption~(\nameref{asm:S}), the first integral $I_1(t)$ above is bounded by $G_2(t)^2\int_\ve^\infty x^{-2/\alpha}G_1(x)^2\D x<\infty$. Assumption~(\nameref{asm:S}) and~\eqref{eq:rho_X^la} imply, as $t \da 0$,
\begin{align*}
    I_2(t) &\lesssim t^{2p/\alpha}G(t)^{2p}\int_\ve^\infty (1+G_1(x))^2 \frac{G(t/x)^{2p}}{G(t)^{2p}}x^{-2/\alpha-2p/\alpha}\D x +t^{2\delta}\int_\ve^\infty (1+G_1(x))^2x^{-2/\alpha-2\delta}\D x\\
    &\lesssim t^{2p/\alpha}G(t)^{2p}\int_\ve^\infty (1+G_1(x))^{2(1+p)}x^{-2/\alpha-2p/\alpha}\D x +t^{2\delta}=\Oh(t^{2p/\alpha}G(t)^{2p} +t^{2\delta}).
\end{align*}
This completes the proof.
\end{proof}

In the following lemma, we find at what rate the drifts converge. 
\begin{lemma}\label{lem_drift_como_conv}
\noindent\nf{(a)} Let $\alpha \in (1,2)$, and $p,\delta>0$ where $p\ne\alpha-1$ and $\delta\ne(\alpha-1)/\alpha$. Then, under Assumptions~(\nameref{asm:S}) \& (\nameref{asm:C}), we have, as $t \da 0$,
    \begin{align*}
        |\bm{\varpi}_{\bm{S}^t}-\bm{\varpi}_{\bm{Z}}|&=\Oh\big(G_2(t)+t^{1-1/\alpha}G(t)^{-1} 
    +(1+G_1(t))t^{\min\{1-1/\alpha,\delta\}}
    \\
    &\qquad + G(t)^{p}(1+G_1(t))^{1+p}t^{\min\{1-1/\alpha,p/\alpha\}}\big).
    \end{align*}
\noindent\nf{(b)} Let $\alpha \in (0,1)$. Then, under Assumptions~(\nameref{asm:S}) \& (\nameref{asm:C}) we have, as $t \da 0$, 
\begin{equation*}
|\bm{\varpi}_{\bm{S}^t}-\bm{\varpi}_{\bm{Z}}|
=\Oh\big(G_2(t)+t^{p/\alpha}G(t)^{p} +t^{\delta}\big).
\end{equation*}
\end{lemma}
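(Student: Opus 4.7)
The plan is to read off $\bm{\varpi}_{\bm{S}^t}$ and $\bm{\varpi}_{\bm{Z}}$ explicitly from the comonotonic representation of Proposition~\ref{prop:small_jump_bound_comonotonic}, reduce the difference to an integral of the form $\int \bm{v}(\rho^\la_{\bm{S}^t}-\rho^\la_{\bm{Z}})\,\D x\,\sigma(\D\bm{v})$, and then control it using the same toolbox of Assumption~(\nameref{asm:S}), Lemma~\ref{lem:bounds_h_2_multidim} and Karamata's theorem that drove Lemmas~\ref{lem:large_jump_comono_coup} and~\ref{lem:small_jump_comono_coup}. Since $\bm{M}^{\bm{S}^t}$ and $\bm{M}^{\bm{Z}}$ are mean-zero $L^2$-martingales, taking expectations at $t=1$ in the identity $\bm{X}_t\eqd\bm{\varpi}_{\bm{X}}t+\bm{\Sigma_X}\bm{B^X}_t+\bm{M^X}_t+\bm{L^X}_t$ of Proposition~\ref{prop:small_jump_bound_comonotonic} gives $\bm{\varpi}_{\bm{X}}=\E[\bm{X}_1]-\int_{(0,\ve)\times\Sp^{d-1}}\bm{v}\rho^\la_{\bm{X}}(x,\bm{v})\,\D x\,\sigma(\D\bm{v})$ whenever $\bm{X}_1$ is integrable (satisfied for both $\bm{S}^t$ and $\bm{Z}$ when $\alpha\in(1,2)$ by Assumption~(\nameref{asm:C}) and Definition~\ref{def:stable}). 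In the finite-variation driftless case $\alpha\in(0,1)$, expanding $\bm{S}^t_s\eqd\int\bm{v}\rho^\la_{\bm{S}^t}(x,\bm{v})\,\Xi(\D u,\D x,\D\bm{v})$ as in the proof of Proposition~\ref{prop:small_jump_bound_comonotonic} yields instead $\bm{\varpi}_{\bm{S}^t}=\int_{[\ve,\infty)\times\Sp^{d-1}}\bm{v}\rho^\la_{\bm{S}^t}(x,\bm{v})\,\D x\,\sigma(\D\bm{v})$, and analogously for $\bm{\varpi}_{\bm{Z}}$. Subtracting and using $\E[\bm{Z}_1]=0$ (in the case $\alpha>1$) together with $\E[\bm{S}^t_1]=t^{1-1/\alpha}\E[\bm{S}_1]/G(t)$ reduces the problem to bounding a single integral of $\bm{v}(\rho^\la_{\bm{S}^t}-\rho^\la_{\bm{Z}})$, plus an isolated summand $\E[\bm{S}^t_1]=\Oh(t^{1-1/\alpha}G(t)^{-1})$ when $\alpha\in(1,2)$.

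Inserting~\eqref{eq:time_changed_right_inverse} into the integrand, using $|\bm{v}|=1$, the triangle inequality, Assumption~(\nameref{asm:S}) in the form $G(t/x)/G(t)\le 1+G_1(x)G_2(t)\le 1+G_1(x)$ (for small $t$), the consequence $G(t/x)^p\le G(t)^p(1+G_1(x))^p$, and Lemma~\ref{lem:bounds_h_2_multidim}, the integrand is dominated by
\[
x^{-1/\alpha}\Big(G_1(x)G_2(t)+K_{\wt h}G(t)^p(1+G_1(x))^{p+1}t^{p/\alpha}x^{-p/\alpha}+K_{\wt h}(1+G_1(x))t^\delta x^{-\delta}\Big).
\]
For Part (b), the condition $\alpha<1$ forces $1/\alpha>1$, so each of the three integrals $\int_\ve^\infty x^{-1/\alpha-a}(1+G_1(x))^b\,\D x$ with $a\in\{0,p/\alpha,\delta\}$ converges by slow variation of $G_1$ at $\infty$; this immediately gives $|\bm{\varpi}_{\bm{S}^t}-\bm{\varpi}_{\bm{Z}}|=\Oh(G_2(t)+G(t)^pt^{p/\alpha}+t^\delta)$.

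For Part (a) the exponent $1/\alpha<1$ makes these same integrals delicate near zero, and the plan is to split $\int_0^\ve=\int_0^t+\int_t^\ve$. On $(0,t]$ only the blanket bound $|\wt h|\le K_{\wt h}$ is available, and Karamata's theorem at zero (using $-1/\alpha>-1$ and slow variation of $G_1$ at $0$) produces a $\Oh(t^{1-1/\alpha})$ contribution that is absorbed by later terms. On $(t,\ve]$ the finer estimate from Lemma~\ref{lem:bounds_h_2_multidim} leads to integrals $\int_t^\ve x^{-1/\alpha-a}(1+G_1(x))^b\,\D x$ for $a\in\{p/\alpha,\delta\}$, whose asymptotics depend on the sign of $1-1/\alpha-a$; the hypotheses $p\ne\alpha-1$ and $\delta\ne(\alpha-1)/\alpha$ precisely exclude the two borderline values at which Karamata would produce logarithmic factors. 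Karamata's theorem then yields $\Oh((1+G_1(t))t^{\min\{1-1/\alpha,\delta\}}+G(t)^p(1+G_1(t))^{p+1}t^{\min\{1-1/\alpha,p/\alpha\}})$, while the $G_1(x)G_2(t)$ term integrated on the full $(0,\ve)$ contributes $\Oh(G_2(t))$ since $\int_0^\ve x^{-1/\alpha}G_1(x)\,\D x<\infty$ for $\alpha>1$. Combining with the $\Oh(t^{1-1/\alpha}G(t)^{-1})$ term isolated in the first paragraph produces the bound of Part (a); the main technical obstacle is the Karamata case analysis on $(t,\ve]$ and the consolidation of the resulting rates into the four-summand form of the statement.
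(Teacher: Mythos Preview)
Your proposal is correct and follows essentially the same approach as the paper: both identify $\bm{\varpi}_{\bm{S}^t}-\bm{\varpi}_{\bm{Z}}$ via the comonotonic representation as (up to the $t^{1-1/\alpha}G(t)^{-1}$ mean term when $\alpha>1$) an integral of $\bm{v}(\rho^\la_{\bm{S}^t}-\rho^\la_{\bm{Z}})$, split it into the $|G(t/x)/G(t)-1|$ piece and the $\wt h$ piece, and handle these exactly as in Lemmas~\ref{lem:large_jump_comono_coup} (with $q=1$) and~\ref{lem:small_jump_comono_coup}. Your write-up is in fact slightly more explicit than the paper's, which simply points back to Lemma~\ref{lem:large_jump_comono_coup} for the $I_2$ estimate; one small imprecision is that the $\int_0^t$ contribution in Part~(a) is $\Oh\big((1+G_1(t))t^{1-1/\alpha}\big)$ rather than $\Oh(t^{1-1/\alpha})$, but as you note this is absorbed by the $(1+G_1(t))t^{\min\{1-1/\alpha,\delta\}}$ term.
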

\begin{proof}
First, assume that $\alpha \in (1,2)$. The proof in this setting follows the steps of the proof of Lemma~\ref{lem:large_jump_comono_coup}. Note that
    \begin{align*}
        \bm{\varpi}_{\bm{S}^t}&= t^{1-1/\alpha}G(t)^{-1}(\bm{\varpi}_{\bm{S}}-\E[\bm{S}_1])-\int_{\Sp^{d-1}}\int_0^\ve \rho_{\bm{S}^t}^\la(x,\bm{v})\D x\sigma(\D\bm{v}),\\ \bm{\varpi}_{\bm{Z}}&=
-\int_{\Sp^{d-1}}\int_0^\ve \rho_{\bm{Z}}^\la(x,\bm{v})\D x\sigma(\D\bm{v}),
    \end{align*}
    and since $\bm{S}$ has a finite first moment, it follows that
\begin{equation*}
\bm{\varpi}_{\bm{S}^t}-\bm{\varpi}_{\bm{Z}}
= t^{1-1/\alpha}G(t)^{-1}(\bm{\varpi}_{\bm{S}}+\E[\bm{S}_1])
-\int_{\Sp^{d-1}}\int_0^\ve \rho_{\bm{S}^t}^\la(x,\bm{v})- \rho_{\bm{Z}}^\la(x,\bm{v})\D x\sigma(\D\bm{v}).
\end{equation*} Recall from \eqref{eq:time_changed_right_inverse}, that $\rho_{\bm{S}^t}^{\la}(x,\bm{v})
=(c_\alpha/\alpha)^{1/\alpha}x^{-1/\alpha}
\big(1+\wt h(x/t,\bm{v})\big)G(t/x)/G(t)$, which implies that
\begin{align*}
  & \bigg|\int_{\Sp^{d-1}}\int_0^\ve \rho_{\bm{S}^t}^\la(x,\bm{v})- \rho_{\bm{Z}}^\la(x,\bm{v})\D x\sigma(\D\bm{v})\bigg| 
   \le \int_{ \Sp^{d-1}}\int_0^\ve |\rho_{\bm{S}^t}^{\la}(x,\bm{v})-\rho_{\bm{Z}}^{\la}(x,\bm{v})| \D x \sigma(\D \bm{v})\\
   &\qquad = \bigg(\frac{c_\alpha}{\alpha}\bigg)^{1/\alpha}\int_{\Sp^{d-1}}\int_0^\ve \bigg|\frac{G(t/x)}{G(t)}
    \big(1+\wt h(x/t,\bm{v})\big)-1\bigg|
    x^{-1/\alpha} 
        \D x \sigma(\D \bm{v}) \eqqcolon I(t).
\end{align*}
The triangle inequality now implies, that
\begin{align*}
   \bigg(\frac{\alpha}{c_\alpha}\bigg)^{1/\alpha}I(t) 
   &\le \int_0^\ve x^{-1/\alpha} \left|\frac{G(t/x)}{G(t)}-1 \right|\D x
   + \int_{\Sp^{d-1}}\int_0^\ve x^{-1/\alpha}\left| \frac{G(t/x)}{G(t)}
   \wt h(x/t,\bm{v})\right|\D x\,\sigma(\D \bm{v}).
\end{align*} 
The two terms in the upper bound are denoted by $I_1(t)$ and $I_2(t)$. Following the calculations in the proof of Lemma~\ref{lem:large_jump_comono_coup}, we see by Assumption~(\nameref{asm:S}) and~\eqref{eq:rho_X^la}, that $I_1(t)$ in the display above is bounded by $G_2(t)\int_0^\ve x^{-1/\alpha}G_1(x)\D x <\infty$, and
\begin{equation*}
I_2(t)= \Oh\big((1+G_1(t))t^{\min\{1-1/\alpha,\delta\}}     
    +G(t)^{p}(1+G_1(t))^{1+p}t^{\min\{1-1/\alpha,p/\alpha\}}
    \big)\qquad\text{as $t\da 0$.}
\end{equation*}

Assume $\alpha \in (0,1)$. Recall $\bm{\varpi}_{\bm{S}^t}= \int_{\Sp^{d-1}}\int_\ve^\infty \rho_{\bm{S}^t}^\la(x,\bm{v})\D x\,\sigma(\D\bm{v})$ and $\bm{\varpi}_{\bm{Z}}=\int_{\Sp^{d-1}}\int_\ve^\infty \rho_{\bm{Z}}^\la(x,\bm{v})\D x\,\sigma(\D\bm{v})$ and hence, by Lemma~\ref{lem:bounds_h_2_multidim} and~\eqref{eq:time_changed_right_inverse},
\begin{align*}
|\bm{\varpi}_{\bm{S}^t}-\bm{\varpi}_{\bm{Z}}|
&\le \int_{\Sp^{d-1}}\int_\ve^\infty|\rho_{\bm{S}^t}^{\la}(x,\bm{v})-\rho_{\bm{Z}}^{\la}(x,\bm{v})| \D x  \,\sigma(\D \bm{v})\\
&=\bigg(\frac{c_\alpha}{\alpha}\bigg)^{1/\alpha}\int_{\Sp^{d-1}}\int_\ve^\infty \left|\frac{G(t/x)}{G(t)}
    \big(1+\wt h(x/t,\bm{v})\big)-1\right|
    x^{-1/\alpha} \D x \,\sigma(\D \bm{v}) \eqqcolon I(t).
\end{align*}
Bounding $I(t)$ using the triangle inequality, yields
\begin{align*}
   \bigg(\frac{\alpha}{c_\alpha}\bigg)^{1/\alpha} I(t) &\le\int_\ve^\infty x^{-1/\alpha} \left|\frac{G(t/x)}{G(t)}-1 \right|\D x
   + \int_{\Sp^{d-1}}\int_\ve^\infty x^{-1/\alpha}\left| \frac{G(t/x)}{G(t)}
   \wt h(x/t,\bm{v})\right|\D x\,\sigma(\D \bm{v}),
\end{align*} where the upper bound is denoted $I_1(t)+I_2(t)$.
Assumption~(\nameref{asm:S}) and~\eqref{eq:old_assump_(H)} with Lemma~\ref{lem:bounds_h_2_multidim}, imply the inequalities $I_1(t)\le G_2(t)\int_\ve^\infty x^{-1/\alpha}G_1(x)\D x<\infty$ and 
\begin{align*}
    I_2(t)& \lesssim t^{p/\alpha}G(t)^{p}\int_\ve^\infty (1+G_1(x)) \frac{G(t/x)^{p}}{G(t)^{p}}x^{-1/\alpha-p/\alpha}\D x +t^{\delta}\int_\ve^\infty (1+G_1(x))x^{-1/\alpha-\delta}\D x\\
    &=\Oh\big(t^{p/\alpha}G(t)^{p} +t^{\delta}\big), \qquad \text{ as } t \da 0.
\end{align*}
Assembling all the inequalities completes the proof.
\end{proof}

\begin{proof}[Proof of Theorem~\ref{thm:upper_bound_comono_tech}]
The bound on $\bm{R}^t$ follows directly from its definition and Lemma~\ref{lem:moment_bound} with $\beta_+=0$. The bounds on the big-jump components, the small-jump components and the drifts, follow directly from Lemmas~\ref{lem:large_jump_comono_coup},~\ref{lem:small_jump_comono_coup} \&~\ref{lem_drift_como_conv}, respectively.
\end{proof}

\subsection{Brownian limits: upper bounds}
\label{subsec:Brownian_limits}

In this subsection, we construct upper bounds on the distance between a L\'evy process with nonzero Gaussian component and its attracting Brownian motion. 
Recall that 
$(\bm{\gamma_X},\bm{\Sigma_X}\bm{\Sigma_X}^\tra,\nu_{\bm{X}})$
denotes the characteristic triplet~\cite[Def.~8.2]{MR3185174} of $\bm{X}$
 with respect to the cutoff function $\bm{w}\mapsto\1_{B_{\bm{0}}(1)}(\bm{w})$
 on $\bm{w}\in\R^d$ and $\beta_+$ is given in terms of the BG index defined in~\eqref{eq:BG}.

\begin{proposition}\label{prop:conv_BM_limit}
Let $\bm{X}$ be a L\'evy process on $\R^d$ with 
the characteristic triplet $(\bm{\gamma_X},\bm{\Sigma_X}\bm{\Sigma_X}^\tra,\nu_{\bm{X}})$.
Let $\bm{X}^t=(\bm{X}_{st}/\sqrt{t})_{s\in[0,1]}$ for $t\in (0,1]$
and assume 
 $\int_{\R^d\setminus B_{\bm{0}}(1)}|\bm{w}|^q\nu_{\bm{X}}(\D\bm{w})<\infty$
 for some $q\in (0,2]$. 
 Let $\bm{\Sigma_X}\bm{B}$ be the
Gaussian component of $\bm{X}$ in its L\'evy--It\^o decomposition~\eqref{eq:levy_ito_decomp} and define 
  $\bm{S}\coloneqq \bm{X}-\bm{\Sigma_X}\bm{B}$. 

\noindent{\normalfont(a)} If 
$\bm{S}$ is 
of infinite variation or has finite variation with infinite activity and zero natural drift, then
\begin{align*}
\mW_q\big(\bm{X}^t,\bm{\Sigma_X}\bm{B}\big)
=\Oh\big(t^{(q\wedge 1)
    (\min\{1/q,1/\beta_+\}-1/2)}\big), 
    \qquad \text{ as }t \da 0.
\end{align*}

\noindent{\normalfont(b)} If $\bm{S}$ has finite variation and nonzero natural drift, then 
\begin{align*}
\mW_q\big(\bm{X}^t,\bm{\Sigma_X}\bm{B}\big)
=\Oh\big(t^{(q\wedge 1)
    \min\{1/q-1/2,1/2\}}\big), 
    \qquad \text{ as }t \da 0.
\end{align*}
\end{proposition}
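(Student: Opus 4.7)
The plan is to exploit Brownian scaling to set up a synchronous coupling that reduces both bounds to a single moment estimate for the pure-jump part $\bm{S}$. First I would note that by the scaling invariance of Brownian motion, if $\bm{B}^{\bm{X}}$ is the Brownian component in the L\'evy--It\^o decomposition~\eqref{eq:levy_ito_decomp} of $\bm{X}$, then $(\bm{\Sigma_X}\bm{B}^{\bm{X}}_{st}/\sqrt{t})_{s\in[0,1]} \eqd (\bm{\Sigma_X}\bm{B}_s)_{s\in[0,1]}$ for any fixed $t>0$. Defining the coupling $\bm{\Sigma_X}\bm{B}_s := \bm{\Sigma_X}\bm{B}^{\bm{X}}_{st}/\sqrt{t}$ gives $\bm{X}^t_s - \bm{\Sigma_X}\bm{B}_s = \bm{S}_{st}/\sqrt{t}$, so by the definition of the Wasserstein distance in~\eqref{eq:def_wasserstein},
\begin{equation*}
\mW_q(\bm{X}^t,\bm{\Sigma_X}\bm{B})^{q\vee 1}
\le \E\Big[\sup_{s\in[0,1]}|\bm{S}_{st}|^q\Big]
=t^{-q/2}\E\Big[\sup_{s\in[0,t]}|\bm{S}_s|^q\Big].
\end{equation*}

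Next I would apply Lemma~\ref{lem:moment_bound} to $\bm{S}$. Since $\bm{S}$ has no Gaussian component the $C_1 t^{q/2}$ term is absent; moreover $\bm{S}$ and $\bm{X}$ share the same L\'evy measure (and hence the same BG index $\beta$ and associated $\beta_+$) and the same integrability condition on $\nu_{\bm{X}}$ outside the unit ball. The lemma therefore yields
\begin{equation*}
\E\Big[\sup_{s\in[0,t]}|\bm{S}_s|^q\Big]
\le C_2 t^q + C_3 t^{\min\{1,q/\beta_+\}}.
\end{equation*}
In Case~(a), $\bm{S}$ is either of infinite variation (forcing $\beta_+\ge 1$) or of finite variation with infinite activity and zero natural drift (which is exactly the hypothesis under which Lemma~\ref{lem:moment_bound} gives $C_2=0$). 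In Case~(b), $\bm{S}$ has finite variation and so $\beta_+\le 1$, while $C_2$ is in general nonzero.

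The remaining step is routine bookkeeping. Dividing the moment bound by $t^{q/2}$ and then taking the $1/(q\vee 1)$ power (using the subadditivity $(a+b)^{1/q}\le a^{1/q}+b^{1/q}$ valid for $q\ge 1$, and the simple inequality $a+b\le 2\max\{a,b\}$ for $q\le 1$), the $C_2$-contribution has exponent $(q\wedge 1)/2$ and the $C_3$-contribution has exponent $(q\wedge 1)(\min\{1/q,1/\beta_+\}-1/2)$. In Case~(a) either $C_2=0$ outright or $\beta_+\ge 1$ forces $\min\{1/q,1/\beta_+\}-1/2\le 1/2$, so the $C_3$-term dominates and delivers the rate claimed in (a). In Case~(b), $\beta_+\le 1$ gives $1/\beta_+\ge 1$, hence $\min\{1/q,1/\beta_+\}-1/2\ge \min\{1/q-1/2,1/2\}$; the $C_2$-term therefore controls the rate and yields $t^{(q\wedge 1)\min\{1/q-1/2,1/2\}}$, matching (b).

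The only mildly delicate point is the case analysis that identifies which of $C_2t^q$ and $C_3 t^{\min\{1,q/\beta_+\}}$ dominates after the $t^{-q/2}$ rescaling; once $\beta_+\ge 1$ (infinite variation) versus $\beta_+\le 1$ (finite variation) is used to pin down the sign of $1/\beta_+-1$, everything falls out of Lemma~\ref{lem:moment_bound} without any further calculation. No fundamentally new idea beyond the synchronous Brownian coupling is needed.
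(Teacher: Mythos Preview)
Your proof is correct and follows essentially the same route as the paper: the synchronous Brownian coupling reduces the problem to $t^{-q/2}\E[\sup_{s\in[0,t]}|\bm{S}_s|^q]$, Lemma~\ref{lem:moment_bound} is applied to $\bm{S}$, and the same case distinction ($\beta_+\ge 1$ versus $\beta_+\le 1$, and $C_2=0$ under zero natural drift) pins down which term dominates. One minor slip: in your first display the middle expression should read $\E\big[\sup_{s\in[0,1]}|\bm{S}_{st}/\sqrt{t}|^q\big]$ rather than $\E\big[\sup_{s\in[0,1]}|\bm{S}_{st}|^q\big]$, but your final expression $t^{-q/2}\E[\sup_{s\in[0,t]}|\bm{S}_s|^q]$ is correct.
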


Note that, if $\bm{S}$ has infinite activity we have $\beta_+>0$ and if the BG index $\beta<2$,  then $1/\beta_+>1/2$. Hence, Proposition~\ref{prop:conv_BM_limit} provides bounds on the rate of convergence in the appropriate $L^q$-Wasserstein distance for the weak limit in Theorem~((\nameref{thm:small_time_domain_stable})) (case $\alpha=2$ and $G$ asymptotically constant). 
In the case $\beta=2$, it is well known that $\bm{X}^t_1$ converges weakly to the Gaussian law of
$\bm{\Sigma_X}\bm{B}_1$ (see e.g.~\cite[Prop.~I.2(i)]{MR1406564}), but the convergence 
of
$\mW_q\big(\bm{X}^t,\bm{\Sigma_X}\bm{B}\big)$
could be arbitrarily slow, see Example~\ref{ex:arbitrarily_slow_BM_conv} below. 
It is thus not surprising that Proposition~\ref{prop:conv_BM_limit} gives no information about the rate of convergence. 
Note also that
Proposition~\ref{prop:conv_BM_limit} covers the case $\bm{\Sigma_X}=\bm{0}$.
Moreover, the bound on the $L^q$-Wasserstein distance when the BG index is less than one is sharper if the natural drift is zero, than if it is not.

\begin{proof}[Proof of Proposition~\ref{prop:conv_BM_limit}]
Fix $t\in(0,1]$, let $\bm{B}$ be the Brownian motion in the L\'evy--It\^o decomposition~\eqref{eq:levy_ito_decomp} of the L\'evy process $\bm{X}$ and recall $\bm{X}^t=(\bm{X}_{st}/\sqrt{t})_{s\in[0,1]}$. Since the Brownian motion $\bm{B}$ satisfies the identity in law $(t^{-1/2}\bm{B}_{st})_{s\in[0,1]} \eqd (\bm{B}_{s})_{s\in[0,1]}$ by self-similarity (L\'evy's characterisation theorem), there exists a coupling $(\bm{X}^t,\bm{B}')$, such that $\bm{B}'=(t^{-1/2}\bm{B}_{st})_{s\in[0,1]}$ and $\bm{B}'\eqd \bm{B}$. Recalling $\bm{S}=\bm{X}-\bm{\Sigma}_{\bm{X}}\bm{B}$, we obtain
\begin{align}
\label{eq:trivial_coupling_bound}
\mW_q(\bm{X}^t,\bm{\Sigma}_{\bm{X}}\bm{B})^{q \vee 1} &\le \E\bigg[\sup_{s \in [0,1]}|\bm{\Sigma}_{\bm{X}}\bm{B}_{st}/\sqrt{t}+\bm{S}_{st}/\sqrt{t}-\bm{\Sigma}_{\bm{X}}\bm{B}'_s|^q\bigg]
=t^{-q/2}\E\bigg[\sup_{s \in [0,t]}|\bm{S}_s|^q\bigg].
\end{align} 
Note that the characteristic triplet 
$(\bm{\gamma_Y},\bm{0},\nu_{\bm{S}})$
of $\bm{S}$
is given by 
$\bm{\gamma_Y}=\bm{\gamma_X}$ and $\nu_{\bm{S}}=\nu_{\bm{X}}$. In particular,  the BG index of $\bm{S}$ equals that of $\bm{X}$. 

\underline{Part \nf{(a)}.} Assume $\bm{S}$ is of infinite variation. Since, $\bm{S}$ has no Gaussian component, by~\cite[Thm~21.9]{MR3185174} we have $\int_{\R^d_{\bm{0}}}|\bm{w}| \1_{B_{\bm{0}}(1)}(\bm{w})\nu_{\bm{X}}(\D \bm{w})=\infty$, implying that the BG index of $\bm{S}$ satisfies  $\beta\ge1$. Hence $\beta_+\in [\beta,2]$ satisfies: $\min\{1/q,1/\beta_+\}-1/2\le 1/\beta_+-1/2\le 1/2$. Thus, 
$t^{-q/2}\E[\sup_{s \in [0,t]}|\bm{S}_s|^q] \le C_2t^{q/2}+C_3t^{q(\min\{1/q,1/\beta_+\}-1/2)}$ by Lemma~\ref{lem:moment_bound},
implying that $$\text{$\mW_q(\bm{X}^t,\bm{\Sigma}_{\bm{X}}\bm{B})^{q \vee 1} \le 2\max\{C_2,C_3\}t^{q(\min\{1/\beta_+,1/q\}-1/2)}$ for $t\in(0,1]$.}$$

If $\bm{S}$ has finite variation and zero natural drift, then the bound in Lemma~\ref{lem:moment_bound} with $C_3=0$ yields  $\mW_q(\bm{X}^t,\bm{\Sigma}_{\bm{X}}\bm{B})^{q \vee 1} \le C_3 t^{q(\min\{1/q,1/\beta_+\}-1/2)}$. 
Noting that
$q/(q \vee 1)=q \wedge 1$ implies Part~(a).

\underline{Part (b).} 
Since 
$\bm{S}$ has finite variation, by definition~\eqref{eq:BG} and~\cite[Thm~21.9]{MR3185174}, we have $\beta\in[0,1]$ and $I_1<\infty$, thus implying $\beta_+\in[0,1]$.
By Lemma~\ref{lem:moment_bound}
applied to $\bm{S}$, we find
$t^{-q/2}\E[\sup_{s\in[0,t]}|\bm{S}_s|^q]
\le 
C_2t^{q/2}+C_3t^{q(\min\{1/q,1/\beta_+\}-1/2)}$ for $t\in(0,1]$.
Since $1\le 1/\beta_+$, we have $1/2\le 1/\beta_+-1/2$ 
and 
$ \min\{1/q-1/2,1/2\}\le \min\{1/q,1/\beta_+\}-1/2$.
Thus, for any $\beta_+\in[0,1]$,
by~\eqref{eq:trivial_coupling_bound} we get
$\mW_q(\bm{X}^t,\bm{\Sigma}_{\bm{X}}\bm{B})^{q \vee 1} \le 2\max\{C_2,C_3\} t^{q\min\{1/q-1/2,1/2\}}$. 
As in Part~(a), note 
that $q/(q \vee 1)=q \wedge 1$,
implying the claim in Part~(b).
\end{proof}

\section{Lower bounds on the Wasserstein distance in the domain of attraction}
\label{sec:lower_bounds}
In this section we prove the lower bounds from Theorems~\ref{thm:upper_lower_simple}, \ref{thm:upper_lower_general} \&~\ref{thm:conv_BM_limit}. We first cover the domain of non-normal attraction and then turn to the domain of normal attraction.

\subsection{Domain of non-normal attraction}
\label{subsec:prokhorov_lower_bound}

The lower bound on the rate 
of decay of $\mW_q(\bm{X}^t,\bm{Z})$ as $t\downarrow0$
is much greater than polynomial when the scaling function $g(t)=t^{1/\alpha}G(t)$ is such that $G$,  which is slowly varying at $0$, does not convergent to a positive constant (i.e. the process $\bm{X}$ is in the domain of non-normal attraction). To show this, we start with the following result, which can be viewed as an extension of~\cite[Thm~1]{MR3806899} from random walks to multidimensional L\'evy processes, stated for the $L^q$-Wasserstein distance. (We remark here that an extension for the Prokhorov distance, used in~\cite{MR3806899}, is also possible in this context.) Our proof below was inspired by that of~\cite[Thm~1]{MR3806899}. 
Our main tool in the proof of the lower bound in Theorem~\ref{thm:upper_lower_general} is the following.

\begin{proposition}
\label{prop:W_lower} 
Let $\bm{X}$ be in the domain of non-normal attraction of an $\alpha$-stable process $\bm{Z}$ with $\alpha\in(0,2]$ and define $a(t)\coloneqq G(2t)/G(t)$ for $t>0$. Then, for all $t>0$ and $q\in(0,1]\cap(0,\alpha)$, 
\begin{align*}
2^{1-q/\alpha}\mW_q(\bm{X}^t_1,\bm{Z}_1)
+a(t)^q\mW_q(\bm{X}^{2t}_1, \bm{Z}_1) 
\ge 
|1-a(t)^q|\E\big[|\bm Z_1|^q\big].
\end{align*} 
\end{proposition}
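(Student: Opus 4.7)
The key observation is a self-similar decomposition that compares $\bm{X}^{2t}_1$ and $\bm{Z}_1$ via two independent copies of $(\bm{X}^t_1,\bm{Z}_1)$. First I would write $\bm{X}_{2t}$ as the sum of two iid copies of $\bm{X}_t$ and, using $g(t)/g(2t)=2^{-1/\alpha}a(t)^{-1}$, obtain the identity in law
$$a(t)\cdot\bm{X}^{2t}_1 \eqd 2^{-1/\alpha}(\xi_1+\xi_2),$$
where $\xi_1,\xi_2$ are iid with the law of $\bm{X}^t_1$. Simultaneously, $\alpha$-stability of $\bm{Z}$ gives $\bm{Z}_1\eqd 2^{-1/\alpha}(\zeta_1+\zeta_2)$ for iid copies $\zeta_1,\zeta_2$ of $\bm{Z}_1$. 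Taking $(\xi_i,\zeta_i)_{i=1,2}$ to be two independent copies of an optimal $\mW_q$-coupling between $\bm{X}^t_1$ and $\bm{Z}_1$ produces a coupling of $(a(t)\bm{X}^{2t}_1,\bm{Z}_1)$.

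Next I would apply $|x+y|^q\le|x|^q+|y|^q$ (valid for $q\in(0,1]$) to this coupling. Recalling that for $q\in(0,1]$ the distance is $\mW_q(\cdot,\cdot)=\inf\E[|\cdot-\cdot|^q]$ (no outer root), this yields
$$\mW_q\bigl(a(t)\bm{X}^{2t}_1,\bm{Z}_1\bigr)\le 2^{-q/\alpha}\bigl(\E|\xi_1-\zeta_1|^q+\E|\xi_2-\zeta_2|^q\bigr)=2^{1-q/\alpha}\mW_q(\bm{X}^t_1,\bm{Z}_1).$$

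For the lower bound on the left-hand side of this inequality and on $\mW_q(\bm{X}^{2t}_1,\bm{Z}_1)$, I would use the standard consequence of $||a|^q-|b|^q|\le|a-b|^q$ (for $q\in(0,1]$): for any $\R^d$-valued $Y_1,Y_2$ and any coupling, $\E|Y_1-Y_2|^q\ge|\E|Y_1|^q-\E|Y_2|^q|$, so that $\mW_q(Y_1,Y_2)\ge|\E|Y_1|^q-\E|Y_2|^q|$. Applying this to $(a(t)\bm{X}^{2t}_1,\bm{Z}_1)$ and to $(\bm{X}^{2t}_1,\bm{Z}_1)$ gives
$$\mW_q\bigl(a(t)\bm{X}^{2t}_1,\bm{Z}_1\bigr)\ge\bigl|a(t)^q\E|\bm{X}^{2t}_1|^q-\E|\bm{Z}_1|^q\bigr|,\qquad \mW_q(\bm{X}^{2t}_1,\bm{Z}_1)\ge\bigl|\E|\bm{X}^{2t}_1|^q-\E|\bm{Z}_1|^q\bigr|.$$

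Finally I would close the argument by the algebraic identity
$$(a(t)^q-1)\E|\bm{Z}_1|^q=\bigl(a(t)^q\E|\bm{X}^{2t}_1|^q-\E|\bm{Z}_1|^q\bigr)-a(t)^q\bigl(\E|\bm{X}^{2t}_1|^q-\E|\bm{Z}_1|^q\bigr),$$
the triangle inequality in $\R$, and the two displays above (together with the first bound $\mW_q(a(t)\bm{X}^{2t}_1,\bm{Z}_1)\le 2^{1-q/\alpha}\mW_q(\bm{X}^t_1,\bm{Z}_1)$). The restriction $q<\alpha$ is used only to ensure $\E|\bm{Z}_1|^q<\infty$. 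There is no substantive obstacle here; the only moving part requiring care is tracking the two places where the reverse triangle bound is applied so that the two Wasserstein terms on the right-hand side of the desired inequality appear with the correct constants $2^{1-q/\alpha}$ and $a(t)^q$.
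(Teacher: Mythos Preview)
Your proof is correct and essentially matches the paper's: both use the decomposition of $\bm{X}_{2t}$ into two iid copies of $\bm{X}_t$ together with $\alpha$-stability of $\bm{Z}$ to obtain $\mW_q(a(t)\bm{X}^{2t}_1,\bm{Z}_1)\le 2^{1-q/\alpha}\mW_q(\bm{X}^t_1,\bm{Z}_1)$. The paper then closes via the metric triangle inequality for $\mW_q$ with pivot $a(t)\bm{X}^{2t}_1$, reaching $\mW_q(\bm{Z}_1,a(t)\bm{Z}_1)\ge|1-a(t)^q|\E|\bm{Z}_1|^q$; this is equivalent to your two reverse-triangle applications and algebraic identity, but has the minor cosmetic advantage of never introducing the quantity $\E|\bm{X}^{2t}_1|^q$, which is not assumed finite (though when it is infinite your inequality holds trivially).
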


 In Lemma~\ref{lem:d_P_bound} we state some well-known facts used in the proof of Proposition~\ref{prop:W_lower}.

\begin{lemma}\label{lem:d_P_bound}
\noindent{\nf(a)} Let $\bm\xi$ be a random vector in $L^q$, i.e. $\E[|\bm\xi|^q]<\infty$, for some $q\in(0,1]$. Then, 
$$\mW_q(\bm{\xi},a\bm{\xi})\ge |1-a^q|\E\big[|\bm\xi|^q\big] \quad \text{ for any constant }a \in(0,\infty).$$

\noindent{\nf(b)} Assume that the random vectors
$\bm\xi_1, \bm\xi_2, \bm\zeta_1, \bm\zeta_2$ are in $L^q$, for some $q\in(0,1]$, and that 
$(\bm{\xi}_1,\bm{\zeta}_1)$ and $(\bm{\xi}_2, \bm{\zeta}_2)$ are independent. Then the following inequality holds:
\begin{equation*}
\mW_q(\bm{\xi}_1+\bm{\xi}_2,\bm{\zeta}_1+\bm{\zeta}_2)
\le \mW_q(\bm{\xi}_1,\bm{\zeta}_1) + \mW_q(\bm{\xi}_2,\bm{\zeta}_2).
\end{equation*}

\end{lemma}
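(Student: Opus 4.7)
The plan is to exploit the subadditivity $(x+y)^q \le x^q + y^q$ for $x,y \ge 0$ and $q\in(0,1]$ in two different ways, combined with the fact that for $q\in(0,1]$ the definition~\eqref{eq:def_wasserstein} has $q\vee1 = 1$, so that $\mW_q$ is simply the infimum of $\E[|\bm\xi'-\bm\zeta'|^q]$ over couplings (no outer power).

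For part (a), I would first establish the pointwise reverse-triangle-type inequality $||\bm\xi'|^q - |\bm\zeta'|^q| \le |\bm\xi'-\bm\zeta'|^q$, which follows by applying subadditivity to $|\bm\xi'|^q = |(\bm\xi'-\bm\zeta')+\bm\zeta'|^q \le |\bm\xi'-\bm\zeta'|^q + |\bm\zeta'|^q$ (and symmetrically). Then for any coupling $(\bm\xi',\bm\zeta')$ with $\bm\xi'\eqd\bm\xi$ and $\bm\zeta'\eqd a\bm\xi$, Jensen's inequality gives
\[
\E\big[|\bm\xi'-\bm\zeta'|^q\big] \ge \E\big[\big||\bm\xi'|^q-|\bm\zeta'|^q\big|\big] \ge \big|\E[|\bm\xi'|^q]-\E[|\bm\zeta'|^q]\big| = |1-a^q|\E\big[|\bm\xi|^q\big],
\]
using $\bm\zeta'\eqd a\bm\xi \Rightarrow \E[|\bm\zeta'|^q]=a^q\E[|\bm\xi|^q]$. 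Taking the infimum over couplings yields the claim.

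For part (b), the strategy is to construct an explicit coupling by stitching together near-optimal couplings of the two pairs on a product space. Fix $\ve>0$ and pick couplings $(\bm\xi_i', \bm\zeta_i')$ of $(\bm\xi_i,\bm\zeta_i)$ for $i\in\{1,2\}$ with $\E[|\bm\xi_i'-\bm\zeta_i'|^q] \le \mW_q(\bm\xi_i,\bm\zeta_i) + \ve$. Realise them on a single probability space so that $(\bm\xi_1',\bm\zeta_1')$ is independent of $(\bm\xi_2',\bm\zeta_2')$; since $(\bm\xi_1,\bm\zeta_1)$ and $(\bm\xi_2,\bm\zeta_2)$ are independent by hypothesis, $(\bm\xi_1'+\bm\xi_2', \bm\zeta_1'+\bm\zeta_2')$ is a valid coupling of $(\bm\xi_1+\bm\xi_2, \bm\zeta_1+\bm\zeta_2)$. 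Applying subadditivity pointwise gives
\[
|\bm\xi_1'+\bm\xi_2'-\bm\zeta_1'-\bm\zeta_2'|^q \le |\bm\xi_1'-\bm\zeta_1'|^q + |\bm\xi_2'-\bm\zeta_2'|^q,
\]
and taking expectations yields $\mW_q(\bm\xi_1+\bm\xi_2,\bm\zeta_1+\bm\zeta_2) \le \mW_q(\bm\xi_1,\bm\zeta_1)+\mW_q(\bm\xi_2,\bm\zeta_2)+2\ve$. Sending $\ve\da 0$ concludes.

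Both parts are essentially elementary and there is no real obstacle; the only delicate point is to remember that, because $q\in(0,1]$, the $\mW_q$ appearing in the statement is the expected-$q$-th-power itself (not its $q$-th root), so the inequalities combine additively. The independence hypothesis in (b) is used only to guarantee that the product coupling has the correct marginals on $\bm\xi_1+\bm\xi_2$ and $\bm\zeta_1+\bm\zeta_2$.
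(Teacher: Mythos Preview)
Your proof is correct and follows essentially the same approach as the paper. The only minor difference is in part~(b): the paper invokes the existence of optimal couplings (citing Villani's theorem and a result on metrics with $q<1$) to avoid the $\ve$-approximation, whereas your $\ve$-argument is slightly more elementary and sidesteps that citation entirely.
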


\begin{proof}
\noindent (a) By the subadditivity of $t\mapsto t^q$ on $\R_+$, we have $|\bm x|^q\le (|\bm y|+|\bm x-\bm y|)^q\le |\bm y|^q+|\bm x-\bm y|^q$ for any $\bm x,\bm y\in\R^d$. A similar inequality holds by reversing the roles of $\bm x$ and $\bm y$, implying $|\bm x-\bm y|^q\ge ||\bm x|^q-|\bm y|^q|$. Hence, we have 
\[
\mW_q(\bm\xi,a\bm\xi)
=\inf_{(\bm\xi,\bm\zeta),
    \,\bm\zeta\eqd a\bm\xi}
    \E\big[|\bm\xi-\bm\zeta|^q\big]
\ge \inf_{(\bm\xi,\bm\zeta),
    \,\bm\zeta\eqd a\bm\xi}
    \big|\E\big[|\bm\xi|^q\big]-\E\big[|\bm\zeta|^q\big]\big|
=|1-a^q|\E\big[|\bm\xi|^q\big].
\]

\noindent (b) By~\cite[Thm~4.1]{villani2008optimal} and~\cite[Main Thm]{MR3393269} there exist minimal couplings $(\bm\xi_1, \bm\zeta_1)$ and $(\bm\xi_1, \bm\zeta_2)$, satisfying $\E[|\bm\xi_1-\bm\zeta_1|^q]=\mW_q(\bm\xi_1,\bm\zeta_1)$ and $\E[|\bm\xi_2-\bm\zeta_2|^q]=\mW_q(\bm\xi_2,\bm\zeta_2)$. The product of these two probability spaces yields a coupling of all four vectors $\bm\xi_1, \bm\xi_2, \bm\zeta_1, \bm\zeta_2$, such that $(\bm{\xi}_1,\bm{\zeta}_1)$ and $(\bm{\xi}_2, \bm{\zeta}_2)$ are independent. Thus,
\[
\mW_q(\bm{\xi}_1+\bm{\xi}_2,
    \bm{\zeta}_1+\bm{\zeta}_2)  
\le \E[|\bm\xi_1+\bm\xi_2-\bm\zeta_1-\bm\zeta_2|^q] 
 \le \E[|\bm\xi_1-\bm\zeta_1|^q] + \E[|\bm\xi_2-\bm\zeta_2|^q] =  \mW_q(\bm{\xi}_1,\bm{\zeta}_1) + \mW_q(\bm{\xi}_2,\bm{\zeta}_2),
\]
as claimed.
\end{proof}

\begin{proof}[Proof of Proposition~\ref{prop:W_lower}]
Recall $\bm{X}^t_1 = \bm{X}_t/g(t)$, and note that $\bm{X}_{2t}=\bm{X}_{2t}-\bm{X}_t+\bm{X}_t$, where $\bm{X}_{2t}-\bm{X}_t$ and $\bm{X}_t$ are independent and equal in distribution. Furthermore let $\bm{Z}^{(1)},\bm{Z}^{(2)}$ be independent copies of $\bm{Z}$. Recall that $g(t)=t^{1/\alpha}G(t)$ and note that $\bm{Z}_1\eqd 2^{-1/\alpha}\bm{Z}_1^{(1)}+2^{-1/\alpha}\bm{Z}^{(2)}_1$. This together with Lemma~\ref{lem:d_P_bound}(b) implies that
\begin{align*}
\mW_q\left( \frac{G(2t)}{G(t)} \bm{X}^{2t}_1,\bm{Z}_1\right)
&=\mW_q\left( \frac{\bm{X}_{2t}-\bm{X}_t}{(2t)^{1/\alpha}G(t)}+\frac{\bm{X}_{t}}{(2t)^{1/\alpha}G(t)},\frac{\bm{Z}^{(1)}_1}{2^{1/\alpha}}+\frac{\bm{Z}^{(2)}_1}{2^{1/\alpha}}\right) \\
&\le 2\mW_q\left(\frac{\bm{X}_{t}}{(2t)^{1/\alpha}G(t)},\frac{\bm{Z}^{(1)}_1}{2^{1/\alpha}}\right)
=2^{1-q/\alpha}\mW_q\left(\bm{X}^t_1,\bm{Z}_1\right).
\end{align*} 
The scaling property for the $\mW_q$-distance implies that $\mW_q(a(t) \bm{X}^{2t}_1 ,a(t) \bm{Z}_1 )=  a(t)^q\mW_q( \bm{X}^{2t}_1, \bm{Z}_1 )$. Putting everything together and applying the triangle inequality with Lemma~\ref{lem:d_P_bound}(a), yields
\begin{align*}
2^{1-q/\alpha}\mW_q(\bm{X}^t_1,\bm{Z}_1)
+ a(t)^q\mW_q(\bm{X}^{2t}_1, \bm{Z}_1) 
&\ge \mW_q\left( a(t) \bm{X}^{2t}_1,\bm{Z}_1\right)
+\mW_q\left( a(t)\bm{X}^{2t}_1,  a(t)\bm{Z}_1 \right)\\
&\ge \mW_q \left( \bm{Z}_1,a(t)\bm{Z}_1\right)
\ge |1-a(t)^q|\E\big[|\bm Z_1|^q\big],
\end{align*}
completing the proof.
\end{proof}

\subsection{Domain of normal attraction and the Toscani-Fourier lower bounds}
\label{subsec:lower_bounds_toscani_fourier}

We begin with the following technical result, used in the proofs of Theorems~\ref{thm:upper_lower_simple} \&~\ref{thm:conv_BM_limit}. Given two $d$-dimensional random vectors $\bm{\xi}$ and $\bm{\zeta}$ with characteristic functions $\varphi_{\bm{\xi}}(\bm{u})
\coloneqq\E[\exp(i\langle\bm{u},\bm{\xi}\rangle)]$ and $\varphi_{\bm{\zeta}}(\bm{u})\coloneqq\E[\exp(i\langle\bm{u},\bm{\zeta}\rangle)]$, respectively, and define the Toscani--Fourier distance (see~\cite[Eq.~(1)]{MR4170640}) as 
\[
T_s(\bm{\xi},\bm{\zeta})\coloneqq\sup_{\bm{u} \in \R^d_{\bm{0}}} \frac{|\varphi_{\bm{\xi}}(\bm{u})-\varphi_{\bm{\zeta}}(\bm{u})|}{|\bm{u}|^s},
\quad\text{ for } s>0.
\]
The following lemma is an extension of \cite[Prop.~2]{MR3833470} to the multivariate case and to $L^q$-Wasserstein distances for $q\in(0,1]$, and the proof is inspired by the proof in the one-dimensional case. For completeness, we give a simple proof below. 

\begin{lemma}\label{lem:ddim_lowbound_wass}
For any random vectors $\bm{\xi},\bm{\zeta}$ and $q\in(0,1]$, we have $\mW_q(\bm{\xi},\bm{\zeta}) \ge 2^{q-1}T_q(\bm{\xi},\bm{\zeta})$.
\end{lemma}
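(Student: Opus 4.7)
The plan is to bound $|\varphi_{\bm{\xi}}(\bm{u})-\varphi_{\bm{\zeta}}(\bm{u})|$ directly through an arbitrary coupling $(\bm{\xi}',\bm{\zeta}')$ with $\bm{\xi}'\eqd\bm{\xi}$ and $\bm{\zeta}'\eqd\bm{\zeta}$, then optimise. Fix $\bm{u}\in\R^d_{\bm{0}}$. For any such coupling, by linearity of expectation and Jensen's inequality,
\[
|\varphi_{\bm{\xi}}(\bm{u})-\varphi_{\bm{\zeta}}(\bm{u})|
=\big|\E\big[e^{i\langle\bm{u},\bm{\xi}'\rangle}-e^{i\langle\bm{u},\bm{\zeta}'\rangle}\big]\big|
\le \E\big[\big|e^{i\langle\bm{u},\bm{\xi}'\rangle}-e^{i\langle\bm{u},\bm{\zeta}'\rangle}\big|\big].
\]

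The main (and only) technical step is a pointwise estimate of the integrand. For any $x,y\in\R$ one has $|e^{ix}-e^{iy}|=2|\sin((x-y)/2)|\le\min\{2,|x-y|\}$. The elementary inequality $\min\{2,r\}\le 2^{1-q}r^q$ for all $r\ge 0$ and $q\in(0,1]$ (verified by splitting into $r\le 2$, where it reduces to $r^{1-q}\le 2^{1-q}$, and $r\ge 2$, where both sides dominate $2$) together with the Cauchy--Schwarz inequality $|\langle\bm{u},\bm{\xi}'-\bm{\zeta}'\rangle|\le|\bm{u}|\,|\bm{\xi}'-\bm{\zeta}'|$ yields
\[
\big|e^{i\langle\bm{u},\bm{\xi}'\rangle}-e^{i\langle\bm{u},\bm{\zeta}'\rangle}\big|
\le 2^{1-q}|\langle\bm{u},\bm{\xi}'-\bm{\zeta}'\rangle|^q
\le 2^{1-q}|\bm{u}|^q|\bm{\xi}'-\bm{\zeta}'|^q.
\]

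Taking expectations and rearranging gives
\[
\frac{|\varphi_{\bm{\xi}}(\bm{u})-\varphi_{\bm{\zeta}}(\bm{u})|}{|\bm{u}|^q}
\le 2^{1-q}\E\big[|\bm{\xi}'-\bm{\zeta}'|^q\big].
\]
Since the left-hand side is independent of the coupling and the right-hand side is independent of $\bm{u}\in\R^d_{\bm{0}}$, taking the supremum over $\bm{u}$ on the left and the infimum over all couplings on the right yields $T_q(\bm{\xi},\bm{\zeta})\le 2^{1-q}\mW_q(\bm{\xi},\bm{\zeta})$, which is the claimed bound. No real obstacle is anticipated; the entire proof hinges on the elementary inequality $\min\{2,r\}\le 2^{1-q}r^q$, which interpolates between the trivial bound $|e^{ix}-e^{iy}|\le 2$ (tight in the $q\to 0$ limit) and the Lipschitz bound $|e^{ix}-e^{iy}|\le|x-y|$ (the $q=1$ case, recovering the classical Kantorovich--Rubinstein-type estimate).
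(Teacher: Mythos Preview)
The proposal is correct and follows essentially the same approach as the paper: both exploit the elementary bound $|e^{ix}-e^{iy}|\le 2^{1-q}|x-y|^q$ (the paper writes it as $|1-e^{ix}|\le 2\min\{|x/2|,1\}\le 2|x/2|^q$), combine it with Cauchy--Schwarz to pass from the inner product to the Euclidean norm, take expectations, and then optimise over $\bm{u}$ and over couplings.
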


\begin{proof}
Fix $q\in(0,1]$. Since the map $\psi:x\mapsto e^{ix}$, $x\in\R$, satisfies $|1-\psi(x)|\le 2\min\{|x/2|,1\}\le 2|x/2|^q$ for $x\in\R$, we have $|\psi(x)-\psi(y)|\le 2^{1-q}|x-y|^q$ for any $x,y\in\R$. Hence, for any $\bm{u}\in\R^d\setminus\{\bm 0\}$, 
\[
\E\big[2^{1-q}|\bm\xi-\bm\zeta|^q\big]
\ge\frac{\E\big[2^{1-q}|\langle\bm{u},\bm\xi\rangle
    -\langle\bm{u},\bm\zeta\rangle|^q\big]}{|\bm{u}|^q}
\ge\frac{\E\big[|\psi(\langle\bm{u},\bm\zeta\rangle)-\psi(\langle\bm{u},\bm\zeta\rangle)|\big]}{|\bm{u}|^q}
\ge\frac{|\varphi_{\bm\xi}(\bm u)-\varphi_{\bm\zeta}(\bm u)|}{|\bm{u}|^q}.
\]
Since $\bm{u}\in\R^d\setminus\{\bm 0\}$ is arbitrary, the result follows.
\end{proof}

\subsubsection{Heavy-tailed domain of normal attraction}
Let $(\bm{X}_t)_{t \ge 0}$ be a L\'evy process on $\R^d$ in the small-time domain of attraction of an $\alpha$-stable process $\bm{Z}$, such that $\bm{X}^t_1=\bm{X}_t/t^{1/\alpha} \cid \bm{Z}_1$ as $t \da 0$.

\begin{lemma}
\label{lem:lower_bound_alpha_stab}
Let $\bm{X}$ be a L\'evy process that differs in law from the $\alpha$-stable process~$\bm{Z}$, $\alpha\in(0,2]$. Let $\psi_{\bm{X}}$ and $\psi_{\bm{Z}}$ denote their L\'evy--Khintchine exponents. Pick any $q\in(0,1]\cap(0,\alpha)$ and $\bm{u}_*\in \R^d\setminus\{\bm 0\}$ for which $C_*\coloneqq 2^{q-1}|\bm{u}_*|^{-q}|\psi_{\bm{X}_1}(\bm{u}_*)-\psi_{\bm{Z}_1}(\bm{u}_*)|> 0$. Then, we have
\begin{align*}
\mW_q(\bm{X}^t_1,\bm{Z}_1)
\ge C_*t^{1-q/\alpha}
+ \Oh(t^{2-q/\alpha}),
\qquad\text{as }t \da 0.
\end{align*}
\end{lemma}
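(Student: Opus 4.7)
The plan is to combine the Toscani--Fourier lower bound of Lemma~\ref{lem:ddim_lowbound_wass} with the scaling behaviour of the characteristic functions $\varphi_{\bm{X}^t_1}$ and $\varphi_{\bm{Z}_1}$. First, Lemma~\ref{lem:ddim_lowbound_wass} gives, for every $\bm{u}\in\R^d_{\bm{0}}$,
\[
\mW_q(\bm{X}^t_1,\bm{Z}_1)\ge 2^{q-1}T_q(\bm{X}^t_1,\bm{Z}_1)\ge 2^{q-1}|\bm{u}|^{-q}\bigl|\varphi_{\bm{X}^t_1}(\bm{u})-\varphi_{\bm{Z}_1}(\bm{u})\bigr|.
\]
The key move is to evaluate this bound at the test point $\bm{u}=t^{1/\alpha}\bm{u}_*$. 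Using $\bm{X}^t_1=\bm{X}_t/t^{1/\alpha}$, one computes $\varphi_{\bm{X}^t_1}(t^{1/\alpha}\bm{u}_*)=\varphi_{\bm{X}_t}(\bm{u}_*)=\exp(t\psi_{\bm{X}_1}(\bm{u}_*))$, while the $\alpha$-stability (i.e. self-similarity) of $\bm{Z}$ yields $\varphi_{\bm{Z}_1}(t^{1/\alpha}\bm{u}_*)=\exp(\psi_{\bm{Z}_1}(t^{1/\alpha}\bm{u}_*))=\exp(t\psi_{\bm{Z}_1}(\bm{u}_*))$.

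Next I would Taylor-expand these exponentials in $t$. Denote $A\coloneqq\psi_{\bm{X}_1}(\bm{u}_*)$ and $B\coloneqq\psi_{\bm{Z}_1}(\bm{u}_*)$; since L\'evy--Khintchine exponents have non-positive real parts, Taylor's theorem with integral remainder gives the elementary inequality $|\e^z-1-z|\le |z|^2/2$ whenever $\mathrm{Re}\,z\le 0$, so
\[
\bigl|\e^{tA}-\e^{tB}-t(A-B)\bigr|\le \tfrac{t^2}{2}\bigl(|A|^2+|B|^2\bigr),
\]
and hence, by the reverse triangle inequality,
\[
\bigl|\varphi_{\bm{X}^t_1}(t^{1/\alpha}\bm{u}_*)-\varphi_{\bm{Z}_1}(t^{1/\alpha}\bm{u}_*)\bigr|\ge t\,|A-B|+\Oh(t^2),\qquad t\da 0,
\]
with implicit constant depending only on the fixed quantities $|A|,|B|$ and $\bm{u}_*$.

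Dividing by $|t^{1/\alpha}\bm{u}_*|^q=t^{q/\alpha}|\bm{u}_*|^q$ and multiplying by $2^{q-1}$ yields
\[
\mW_q(\bm{X}^t_1,\bm{Z}_1)\ge \frac{2^{q-1}|\psi_{\bm{X}_1}(\bm{u}_*)-\psi_{\bm{Z}_1}(\bm{u}_*)|}{|\bm{u}_*|^q}\,t^{1-q/\alpha}+\Oh(t^{2-q/\alpha})=C_*\,t^{1-q/\alpha}+\Oh(t^{2-q/\alpha}),
\]
as required. There is no real obstacle: existence of a point $\bm{u}_*$ with $C_*>0$ is guaranteed by the hypothesis that $\bm{X}$ and $\bm{Z}$ differ in law, and once the scaling $\bm{u}=t^{1/\alpha}\bm{u}_*$ is identified the estimate reduces to the trivial second-order Taylor bound for $\e^{tA}-\e^{tB}$. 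The only mild subtlety is ensuring that the remainder term in the Taylor expansion is of order $t^2$ uniformly, which follows from the fixedness of $\bm{u}_*$ (so both $|A|$ and $|B|$ are bounded constants) together with $\mathrm{Re}\,(tA),\mathrm{Re}\,(tB)\le 0$ for $t\ge 0$.
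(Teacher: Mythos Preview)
Your proof is correct and follows essentially the same approach as the paper: apply Lemma~\ref{lem:ddim_lowbound_wass}, evaluate the Toscani--Fourier bound at $\bm{u}=t^{1/\alpha}\bm{u}_*$, use the scaling of $\bm{X}^t_1$ and the self-similarity of $\bm{Z}$ to reduce both characteristic functions to $\e^{t\psi_{\bm{X}}(\bm{u}_*)}$ and $\e^{t\psi_{\bm{Z}}(\bm{u}_*)}$, and Taylor-expand. Your explicit remainder bound $|\e^z-1-z|\le |z|^2/2$ for $\mathrm{Re}\,z\le 0$ is a small refinement over the paper's $\e^z=1+z+\Oh(z^2)$, but the argument is otherwise identical.
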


\begin{proof}
First, Lemma~\ref{lem:ddim_lowbound_wass} implies that
\begin{align*}
\mW_q(\bm{X}^t_1,\bm{Z}_1) 
\ge 2^{q-1} T_q(\bm{X}^t_1,\bm{Z}_1)
\ge\frac{2^{q-1}}{|\bm{u}|^q}|\varphi_{\bm{X}^t_1}(\bm{u})-\varphi_{\bm{Z}_1}(\bm{u})|,
\qquad \text{for any }\bm{u}\in\R^d_{\bm{0}},
\end{align*} 
where $\varphi_{\bm\xi}$ denotes the characteristic function of the random vector $\bm{\xi}$. Second, set $\bm{u}=t^{1/\alpha}\bm{u}_*$ with $\bm{u}_*\in \R^d\setminus \{\bm{0}\}$ as in the statement of the lemma and note that 
\[
\varphi_{\bm{X}^t_1}(\bm{u})
=\E\big[\exp(i\langle \bm{X}^t_1,t^{1/\alpha}\bm{u}_*\rangle)\big]
=\E\big[\exp(i\langle \bm{X}_t/t^{1/\alpha},t^{1/\alpha}\bm{u}_*\rangle)\big]
=e^{t\psi_{\bm{X}}(\bm{u}_*)}.
\]
Similarly, since $\bm{Z}_1\eqd\bm{Z}_t/t^{1/\alpha}$, we have $\varphi_{\bm{Z}_1}(\bm{u})=\exp(t\psi_{\bm{Z}}(\bm{u}_*))$ and hence
\begin{align*}
\mW_q(\bm{X}^t_1,\bm{Z}_1)\ge \frac{2^{q-1}}{|\bm{u}_*|^q t^{q/\alpha}}\big| e^{t\psi_{\bm{X}}(\bm{u}_*)}-e^{t\psi_{\bm{Z}}(\bm{u}_*)}\big|.
\end{align*} 
Since for any $z \in \C$ we have $e^z=1+z+\Oh(z^2)$ as $|z|\to 0$, it follows that $|e^{at}-e^{bt}|=|at-bt+\Oh(t^2)|= |a-b|t + \Oh(t^2)$ for $a=\psi_{\bm{X}}(\bm{u}_*)$ and $b=\psi_{\bm{Z}}(\bm{u}_*)$. The result then follows.
\end{proof}

\subsubsection{Brownian domain of normal attraction}

The domain of normal attraction to a Brownian motion consists of the class of L\'evy processes with a nontrivial Brownian component (see e.g.~\cite{MR3784492} and~\cite[Prop.~I.2(i)]{MR1406564}). To construct a lower bound on the distance between the L\'evy process and its Brownian limit require the following lower estimates.
\begin{lemma}
\label{lem:tech_extent_lowerbound}
Let $\bm{Y}$ be a nonzero pure-jump L\'evy process on $\R^d$, let $\psi_{\bm{Y}}(\bm{u})$ denote its L\'evy-Khintchine exponent and $\nu_{\bm{Y}}$ its L\'evy measure.\\
{\normalfont(a)} If $\bm{Y}$ has finite variation and nonzero drift with direction $\bm{u}_*\in \Sp^{d-1}$. Then $|\psi_{\bm{Y}}(x\bm{u}_*)|\ge cx$ for some $c>0$ and all sufficiently large $x>0$.\\
{\normalfont(b)} Suppose there exist a locally finite measure $\rho$ on $(0,\infty)$ and a probability measure $\sigma$ on $\Sp^{d-1}$ with
\[
\nu_{\bm{Y}}(A)
\ge \int_{\Sp^{d-1}}\int_{(0,\infty)}\1_A(x\bm{v})\rho(\D x)\sigma(\D\bm{v}),
\qquad A\in\mathcal{B}(\R^d_{\bm{0}}).
\]
Define $\Upsilon(x):=\int_{(0,x)}r^2\rho(\D r)$ for $x>0$, and, given any $c\in (0,1)$, suppose that $C_{c,\bm{u}_*}\coloneqq\{\bm{v}\in\Sp^{d-1}:|\langle \bm{u}_*,\bm{v}\rangle|\ge c\}$ has positive $\sigma$-measure $m:=\sigma(C_{c,\bm{u}_*})>0$ for some $\bm{u}_*\in \Sp^{d-1}$. Then we have
\[
|\psi_{\bm{Y}}(x\bm{u}_*)|\ge \frac{c^2m}{3}r^2\Upsilon(x^{-1})
\qquad\text{for all }x>0.
\]
In particular, if $c_\delta:=\inf_{x\in(0,1)}x^{\delta-2}\Upsilon(x)>0$ for some $\delta\in(0,2)$, then $|\psi_{\bm{Y}}(x\bm{u}_*)|\ge (c_\delta c^2m/3)x^{\delta}$, $x>1$. 
\end{lemma}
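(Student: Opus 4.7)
The common strategy is to bound $|\psi_{\bm{Y}}(x\bm{u}_*)|$ from below by the absolute value of either its imaginary part (Part (a), where the drift dominates) or its real part (Part (b), where the jump activity dominates). Writing the L\'evy--Khintchine exponent of the pure-jump process $\bm{Y}$ and inserting $\bm{u}=x\bm{u}_*$ reduces each claim to an asymptotic estimate on a single Fourier integral against $\nu_{\bm{Y}}$.

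For Part~(a), since $\bm{Y}$ has finite variation, \cite[Thm~21.9]{MR3185174} allows us to write
\[
\psi_{\bm{Y}}(\bm{u})=i\langle\bm{u},\bm{b}\rangle+\int_{\R^d_{\bm{0}}}(e^{i\langle\bm{u},\bm{w}\rangle}-1)\nu_{\bm{Y}}(\D\bm{w}),
\]
where $\bm{b}=b_*\bm{u}_*$ is the natural drift, $b_*\neq 0$. Taking imaginary parts at $\bm{u}=x\bm{u}_*$ yields $\mathrm{Im}\,\psi_{\bm{Y}}(x\bm{u}_*)=b_*x+f(x)$, with $f(x)\coloneqq\int_{\R^d_{\bm{0}}}\sin(x\langle\bm{u}_*,\bm{w}\rangle)\nu_{\bm{Y}}(\D\bm{w})$. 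The key step is to show $f(x)=\oh(x)$ as $x\to\infty$: using $|\sin(x\langle\bm{u}_*,\bm{w}\rangle)|\le\min(1,x|\bm{w}|)$,
\[
\frac{|f(x)|}{x}\le\int_{\R^d_{\bm{0}}}\min(1/x,|\bm{w}|)\nu_{\bm{Y}}(\D\bm{w})\xrightarrow{x\to\infty}0
\]
by dominated convergence, the dominating function being $(1\wedge|\bm{w}|)\in L^1(\nu_{\bm{Y}})$ (finite variation). Hence $|\mathrm{Im}\,\psi_{\bm{Y}}(x\bm{u}_*)|\ge |b_*|x/2$ for all large $x$, and the conclusion follows with $c=|b_*|/2$. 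The only delicate point here is remembering that under finite variation the L\'evy--Khintchine formula simplifies to one without jump-compensation, so $b_*$ really is the natural drift.

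For Part~(b), using $|\psi_{\bm{Y}}(x\bm{u}_*)|\ge-\mathrm{Re}\,\psi_{\bm{Y}}(x\bm{u}_*)=\int(1-\cos(x\langle\bm{u}_*,\bm{w}\rangle))\nu_{\bm{Y}}(\D\bm{w})$ (note that the compensation term is real-free), the lower bound on $\nu_{\bm{Y}}$ and restriction to $C_{c,\bm{u}_*}$ give
\[
|\psi_{\bm{Y}}(x\bm{u}_*)|\ge\int_{C_{c,\bm{u}_*}}\int_{(0,\infty)}(1-\cos(xr\langle\bm{u}_*,\bm{v}\rangle))\rho(\D r)\sigma(\D\bm{v}).
\]
The elementary inequality $1-\cos y\ge y^2/3$ for $|y|\le 1$ (immediate from $1-\cos y\ge y^2/2-y^4/24$) and the fact that $|xr\langle\bm{u}_*,\bm{v}\rangle|\le xr\le 1$ on $r<1/x$ yield
\[
\int_{(0,\infty)}(1-\cos(xr\langle\bm{u}_*,\bm{v}\rangle))\rho(\D r)\ge \frac{c^2x^2}{3}\int_{(0,1/x)}r^2\rho(\D r)=\frac{c^2x^2}{3}\Upsilon(1/x)
\]
for $\bm{v}\in C_{c,\bm{u}_*}$, and integrating against $\sigma$ on $C_{c,\bm{u}_*}$ (of measure $m$) produces the announced bound $(c^2m/3)x^2\Upsilon(x^{-1})$ (the ``$r^2$'' in the statement is a typographical slip for ``$x^2$''). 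The ``in particular'' clause is then immediate: for $x>1$ we have $x^{-1}\in(0,1)$, hence $\Upsilon(x^{-1})\ge c_\delta x^{-(2-\delta)}$ by hypothesis, and multiplying through by $(c^2m/3)x^2$ gives $|\psi_{\bm{Y}}(x\bm{u}_*)|\ge(c_\delta c^2m/3)x^{\delta}$.

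The main obstacles are minor and bookkeeping in nature: (i) for Part~(a), one must use the finite-variation form of the L\'evy--Khintchine exponent so that the natural drift appears as a standalone linear term; and (ii) for Part~(b), one must verify that $|xr\langle\bm{u}_*,\bm{v}\rangle|\le 1$ on the chosen integration region in order to apply $1-\cos y\ge y^2/3$, which is why the integration is over $r<1/x$ and only $\Upsilon(1/x)$ (rather than $\Upsilon(\infty)$) appears.
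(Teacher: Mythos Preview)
Your proof is correct and follows essentially the same approach as the paper: Part~(a) via the imaginary part in the finite-variation form of the L\'evy--Khintchine exponent, and Part~(b) via $-\Re\psi_{\bm{Y}}$, the inequality $1-\cos y\ge y^2/3$ on $|y|\le 1$, and restriction to $r<1/x$ and $\bm{v}\in C_{c,\bm{u}_*}$. The only difference is that for Part~(a) the paper simply cites \cite[Prop.~2(ii)]{MR1406564} applied to the one-dimensional process $\langle\bm{u}_*,\bm{Y}\rangle$, whereas you supply the underlying dominated-convergence argument showing $f(x)=\oh(x)$ directly; your observation that the ``$r^2$'' in the displayed bound is a typo for ``$x^2$'' is also correct.
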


\begin{proof}
Let $\Re z$ and $\Im z$ denote the real and imaginary parts of $z\in\C$, respectively.\\
(a) Since $\bm{Y}$ has finite variation, it is clear from the L\'evy-Khintchine formula without compensator that $|\psi_{\bm{Y}}(x\bm{u}_*)|\ge|\Im\psi_{\bm{Y}}(x\bm{u}_*)|\ge cx$ for some $c>0$ and all sufficiently large $x>0$. Indeed, this follows from~\cite[Prop.~2(ii)]{MR1406564} applied to the finite variation L\'evy process $\langle\bm{u_*},\bm{Y}\rangle$.\\
(b) Note at first, that $1-\Re e^{ix}=1-\cos(x)\ge \tfrac{1}{3}x^2\1_{\{|x|<1\}}$ for all $x \in\R$. Thus, the L\'evy-Khintchine formula applied to $\Re\psi$ yields 
\begin{align*}
3|\psi_{\bm{Y}}(x\bm{u}_*)|
&\ge 3|\Re \psi(x\bm{u}_*)|
\ge \int_{\R^d_{\bm{0}}}
    |\langle x\bm{u}_*,\bm{w}\rangle|^2
    \1_{\{|\langle x\bm{u}_*,\bm{w}\rangle|<1\}}\nu_{\bm{Y}}(\D\bm{w})\\
&\ge \int_{\R^d_{\bm{0}}}
    |\langle x\bm{u}_*,\bm{w}\rangle|^2
    \1_{\{x|\bm{w}|<1\}}\nu_{\bm{Y}}(\D\bm{w})
\ge \int_0^{1/x}\int_{C_{c,\bm{u}_*}}
    |\langle x\bm{u}_*,r\bm{v}\rangle|^2
    \sigma(\D\bm{v})\rho(\D r)\\
&\ge \int_0^{1/x}\int_{C_{c,\bm{u}_*}}
    c^2x^2r^2
    \sigma(\D\bm{v})\rho(\D r)
\ge c^2m\int_0^{1/x} x^2r^2\rho(\D r)
=c^2m x^2\Upsilon(x^{-1}).
\end{align*}
This proves the first claim in Part~(b). The second claim follows from the additional assumption.
\end{proof}

\begin{lemma}
\label{lem:lower_bound_gaussian_lim}
Let $\bm{X}$ be a L\'evy process on $\R^d$ with 
the characteristic triplet $(\bm{\gamma_X},\bm{\Sigma_X}\bm{\Sigma_X}^\tra,\nu_{\bm{X}})$.
For $t\in (0,1]$, denote
$\bm{X}^t=(\bm{X}_{st}/\sqrt{t})_{s\in[0,1]}$. Moreover, let $\bm{\Sigma_X}\bm{B}$ be the
Gaussian component of $\bm{X}$ in its L\'evy--It\^o decomposition~\eqref{eq:levy_ito_decomp} and define 
  $\bm{S}\coloneqq \bm{X}-\bm{\Sigma_X}\bm{B}$ with L\'evy--Khintchine exponent $\psi_{\bm{S}}$.\\
{\normalfont(a)} Pick any $\bm{u}_*\in\R^d_{\bm{0}}$ and define $C_*\coloneqq |\bm{u}_*|^{-1}|\psi_{\bm{S}_1}(\bm{u}_*)|>0$. Then, we have 
\begin{align*}
\mW_1(\bm{X}^t_1,\bm{\Sigma}_{\bm{X}}\bm{B}_1)
    \ge C_*\sqrt{t}+ \Oh(t^{3/2}),
\quad\text{as }t \da 0.
\end{align*}
{\normalfont(b)} Let $\lambda$ be the largest eigenvalue of $\bm{\Sigma}_{\bm{X}}\bm{\Sigma}^\tra_{\bm{X}}$. Suppose there exist $\delta\in[1,2)$ and vectors  $(\bm{u}_r)_{r\in(0,\infty)}$ on $\R^d_{\bm{0}}$ satisfying $|\bm{u}_r|=r$ and $c:=\inf_{r>1}r^{-\delta}|\psi_{\bm{S}}(\bm{u}_r)|>0$. Then for any $C_*\in(0,ce^{-\lambda/2})$ we have
\[
\mW_1(\bm{X}^t_1,\bm{\Sigma}_{\bm{X}}\bm{B}_1) \ge C_* t^{1-\delta/2}
\qquad\text{for all sufficiently small }t>0.
\]
\end{lemma}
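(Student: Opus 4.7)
Both parts rely on the Toscani--Fourier lower bound (Lemma~\ref{lem:ddim_lowbound_wass}). Using the L\'evy--Khintchine decomposition $\psi_{\bm{X}}(\bm{u})=-\tfrac{1}{2}\bm{u}^\tra\bm{\Sigma_X}\bm{\Sigma_X}^\tra\bm{u}+\psi_{\bm{S}}(\bm{u})$, the difference of the characteristic functions factors neatly as
\[
\varphi_{\bm{X}^t_1}(\bm{u})-\varphi_{\bm{\Sigma}_{\bm{X}}\bm{B}_1}(\bm{u})
=e^{-\bm{u}^\tra\bm{\Sigma_X}\bm{\Sigma_X}^\tra\bm{u}/2}\bigl[e^{t\psi_{\bm{S}}(\bm{u}/\sqrt{t})}-1\bigr],
\]
so Lemma~\ref{lem:ddim_lowbound_wass} yields
\[
\mW_1(\bm{X}^t_1,\bm{\Sigma}_{\bm{X}}\bm{B}_1)\ge |\bm{u}|^{-1}e^{-\bm{u}^\tra\bm{\Sigma_X}\bm{\Sigma_X}^\tra\bm{u}/2}\bigl|e^{t\psi_{\bm{S}}(\bm{u}/\sqrt{t})}-1\bigr|,\quad \bm{u}\in\R^d_{\bm{0}}.
\]
The proof reduces to choosing $\bm{u}$ cleverly to extract the stated rates.

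For Part~(a), I would mimic the proof of Lemma~\ref{lem:lower_bound_alpha_stab} and set $\bm{u}=\sqrt{t}\bm{u}_*$, so that $\bm{u}/\sqrt{t}=\bm{u}_*$ is independent of $t$. The Taylor expansions $e^{t\psi_{\bm{S}}(\bm{u}_*)}-1=t\psi_{\bm{S}}(\bm{u}_*)+\Oh(t^2)$ and $e^{-t\bm{u}_*^\tra\bm{\Sigma_X}\bm{\Sigma_X}^\tra\bm{u}_*/2}=1+\Oh(t)$, combined with $|\bm{u}|=\sqrt{t}|\bm{u}_*|$, then yield $\mW_1(\bm{X}^t_1,\bm{\Sigma}_{\bm{X}}\bm{B}_1)\ge(|\psi_{\bm{S}}(\bm{u}_*)|/|\bm{u}_*|)\sqrt{t}+\Oh(t^{3/2})=C_*\sqrt{t}+\Oh(t^{3/2})$.

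For Part~(b), the natural choice is $\bm{u}=\sqrt{t}\bm{u}_r$ with $r=1/\sqrt{t}$: this fixes $|\bm{u}|=1$, and the inequality $\bm{u}^\tra\bm{\Sigma_X}\bm{\Sigma_X}^\tra\bm{u}\le t\lambda r^2=\lambda$ bounds the Gaussian factor below by $e^{-\lambda/2}$, while the hypothesis gives $|z(t)|\coloneqq|t\psi_{\bm{S}}(\bm{u}_r)|\ge ctr^\delta=ct^{1-\delta/2}\da 0$. Provided one can show $|e^{z(t)}-1|\ge(1-\varepsilon)|z(t)|$ for any $\varepsilon>0$ and all sufficiently small $t$, combining yields $\mW_1\ge(1-\varepsilon)ce^{-\lambda/2}t^{1-\delta/2}$, which implies the claim for any $C_*<ce^{-\lambda/2}$ by choice of $\varepsilon$ small enough.

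The main obstacle is establishing the pointwise bound $|e^{z(t)}-1|\ge(1-\varepsilon)|z(t)|$. Although $|z(t)|\ge ct^{1-\delta/2}\da 0$, the generic \emph{a priori} estimate $|\psi_{\bm{S}}(\bm{u})|\le \wt C(1+|\bm{u}|^2)$ only gives $|z(t)|\le 2\wt C$ for $r=1/\sqrt{t}$, so $|z(t)|$ need not vanish and the naive Taylor expansion $|e^z-1|\sim|z|$ cannot be applied directly. I would handle this via the identity $|e^z-1|^2=(1-e^{\Re z})^2+4e^{\Re z}\sin^2(\Im z/2)$ and a case split according to which of $|\Re\psi_{\bm{S}}(\bm{u}_r)|$ and $|\Im\psi_{\bm{S}}(\bm{u}_r)|$ attains the lower bound $\ge cr^\delta/\sqrt{2}$: the real-part case is treated via $1-e^{-x}\ge x/(1+x)$ for $x\ge 0$, and the imaginary-part case via $|\sin x|\ge 2|x|/\pi$ on $(-\pi/2,\pi/2)$, mildly shrinking $r$ to $R/\sqrt{t}$ with $R<1$ close to $1$ if necessary to keep $|\Im z(t)|$ within $(-\pi,\pi)$ while incurring only an arbitrarily small multiplicative loss in the constant.
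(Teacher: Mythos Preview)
Your setup and Part~(a) are correct and match the paper's argument exactly.

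For Part~(b), your choice $\bm{u}=\sqrt{t}\,\bm{u}_{1/\sqrt{t}}$ and the bound $e^{-\bm{u}^\tra\bm{\Sigma_X}\bm{\Sigma_X}^\tra\bm{u}/2}\ge e^{-\lambda/2}$ are the same as the paper's. However, the ``main obstacle'' you identify is not real: you overlook that $\bm{S}=\bm{X}-\bm{\Sigma_X}\bm{B}$ has \emph{no Gaussian component} by construction, and for any L\'evy process without Gaussian part one has $|\psi_{\bm{S}}(\bm{u})|=o(|\bm{u}|^2)$ as $|\bm{u}|\to\infty$ (see e.g.\ Sato, Lem.~43.11). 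Hence $|z(t)|=t|\psi_{\bm{S}}(\bm{u}_{1/\sqrt t})|=|\bm{u}_{1/\sqrt t}|^{-2}|\psi_{\bm{S}}(\bm{u}_{1/\sqrt t})|\to 0$, and the Taylor bound $|e^{z}-1|\ge(1-\varepsilon)|z|$ holds directly for all small $t$. This is precisely what the paper does, in one line.

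Your proposed workaround is both unnecessary and, as written, does not deliver the stated constant. The shrinking $r\mapsto R/\sqrt{t}$ controls $|z(t)|$ only via the crude bound $|z(t)|\le \wt C(t+R^2)$, so forcing $|z(t)|$ small requires $R$ small (of order $\wt C^{-1/2}$), not ``close to $1$''. The resulting lower bound then carries a factor $R^{\delta}/R=R^{\delta-1}$ times constants from your inequalities $1-e^{-x}\ge x/(1+x)$ and $|\sin x|\ge 2|x|/\pi$, and you do not recover every $C_*\in(0,ce^{-\lambda/2})$. Once you use $|\psi_{\bm{S}}(\bm{u})|=o(|\bm{u}|^2)$, all of this machinery can be discarded.
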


\begin{proof}
Since $\bm{B}$ and $\bm{S}$ are independent, we have $\varphi_{\bm{X}^t_1}(\bm{u})=\varphi_{\bm{\Sigma}_{\bm{X}}\bm{B}_1}(\bm{u})\varphi_{\bm{S}_t}(\bm{u}/\sqrt{t})$. Hence, Lemma~\ref{lem:ddim_lowbound_wass} gives
\begin{equation}
\label{eq:lower_Gauss_lim_1}
\mW_1(\bm{X}^t_1,\bm{\Sigma}_{\bm{X}}\bm{B}_1) 
\ge T_1(\bm{X}^t_1,\bm{\Sigma}_{\bm{X}}\bm{B}_1)
\ge\frac{1}{|\bm{u}|}|\varphi_{\bm{\Sigma}_{\bm{X}}\bm{B}_1}(\bm{u})|\cdot
    |\varphi_{\bm{S}_t}(\bm{u}/\sqrt{t})-1|,
\quad\text{for any }
\bm{u}\in \R^d_{\bm{0}}.
\end{equation} 

(a) Let $\bm{u}_*\in \R^d$ be as in the statement. The result then follows from Lemma~\ref{lem:lower_bound_alpha_stab}.

(b) The proof is similar to that of Lemma~\ref{lem:lower_bound_alpha_stab}. The main idea is that, if $a_t\to 0$ as $t \da 0$ and $\sup_{t>0}|b_t|<\infty$, then 
\[
|e^{a_t+b_t}-e^{b_t}|
=  |e^{a_t}-1|\cdot |e^{b_t}|
\ge |a_t|\cdot\inf_{s>0}|e^{b_s}| + \Oh(|a_t|^2),
\quad\text{as }t\da 0.
\]
Set $b_t=-(t/2)\bm{u}_{t^{-1/2}}^\tra \bm{\Sigma}^2_{\bm{X}}\bm{u}_{t^{-1/2}}$ and $a_t=t\psi_{\bm{S}}(\bm{u}_{t^{-1/2}})$ for $t>0$. Note that $b_t=\Oh(1)$ as $t\da 0$ and $|e^{b_t}|\ge e^{-\lambda/2}$. Since $\bm{S}$ does not have a Brownian component, we have  $\lim_{|\bm{u}|\to\infty}|\bm{u}|^{-2}\cdot|\psi_{\bm{S}}(\bm{u})|=0$ (see, e.g.~\cite[Lem.~43.11]{MR3185174}) and thus $a_t\to0$ as $t\da0$. Moreover, $|a_t|\ge ct^{1-\delta/2}$ for $t<1$ by assumption. Thus, applying~\eqref{eq:lower_Gauss_lim_1} with $\bm{u}=\sqrt{t}\bm{u}_{t^{-1/2}}$ yields 
Part~(b).
\end{proof}

\begin{example}
\label{ex:arbitrarily_slow_BM_conv}
Consider an example inspired by~\cite[Ex.~4.2.1]{MR3784492}. Let $S$ be a real-valued martingale L\'evy process 
with L\'evy measure $\nu(\D y)=py^{-3}\log^{-1-p}(y)\1_{(0,1)}(y)\D y$ for some $p>0$ and all $y\in\R$. 
Let $B=(B_s)_{s \in [0,1]}$ be a standard Brownian motion independent of $S$ and let $\sigma^2>0$. Define $X^t\coloneqq (S_{st}/\sqrt{t}+\sigma B_{st}/\sqrt{t})_{s \in [0,1]}$ for $t>0$. Choose $g$ to satisfy $g(t)^2\log(1/g(t))^p \sim t$ as $t \da 0$. The function $g$ is regularly varying at $0$ with index $1/2$, and therefore $g(t) \sim \sqrt{t/\log(1/t)^p}$ as $t \da 0$ by~\cite[Thm~1.5.12]{MR1015093}. 
Note that $S_t/g(t)$ converges in law to a standard normal distribution by~\cite[Thm~2(i)]{MR3784492}. Hence, the L\'evy--Khintchine exponent $\psi_S$ of $S$ satisfies $t\psi_S(u/g(t))\to -u^2/2$ as $t\da 0$ for any $u\in\R$. In particular, by taking $t=g^{-1}(\sqrt{s})\sim s\log(1/s)^p$, which tends to $0$ as $s \da 0$, we obtain $s\log(1/s)^p\psi_S(1/\sqrt{s})\to -1/2$ as $s\da 0$. Then, a slight modification of the argument in the proof of Lemma~\ref{lem:lower_bound_gaussian_lim}(b) shows that 
$\liminf_{t\da 0}\log(1/t)^p\mW_1(X^t_1,\sigma B_1)>0$.
\end{example}

\section{Proofs of Section~\ref{sec:main_results}}
\label{sec:proofs}

In this section we give the proofs of the results stated in Section~\ref{sec:main_results}.

\begin{proof}[Proof of Theorem~\ref{thm:upper_lower_simple}]
\underline{Part (a).}
Recall from Assumption~(\nameref{asm:T}) and Remark~\ref{rem:construct_s_R_thin}, that we may decompose $\bm{X}$ as the sum $\bm{S}+\bm{R}$ of independent L\'evy processes $\bm{S}$ and $\bm{R}$ with generating triplets $(\bm{\gamma_S},\bm{0},\nu_{\bm{X}}^{\co})$ and $(\bm{\gamma_R},\bm{0},\nu_{\bm{X}}^{\mathrm{d}})$, respectively. For $t\in(0,1]$, denote $\bm{S}^t=(\bm{S}_{st}/t^{1/\alpha})_{s\in[0,1]}$ and $\bm{R}^t=(\bm{R}_{st}/t^{1/\alpha})_{s\in[0,1]}$. Let $\kappa(t)\coloneqq t^r$ for $t\in(0,1]$ and some $r\ge -1/\alpha$. Assume the processes $(\bm{D}^{\bm{S}^t,\kappa(t)},\bm{D}^{\bm{Z},\kappa(t)},\bm{J}^{\bm{S}^t,\kappa(t)},\bm{J}^{\bm{Z},\kappa(t)})$ are coupled as in Section~\ref{sec:thinning} (i.e. \eqref{eq:comp_Poisson_measures} and~\eqref{eq:thining_coupling_defn}). 

Note that $\mW_q(\bm{X}^t,\bm{Z}) \le \mW_q(\bm{S}^t,\bm{Z})+\E[\sup_{t \in [0,1]}|\bm{R}_{s}^t|^q]$ by the triangle inequality and the definition of $\mW_q$.
Theorem~\ref{thm:d_thin_dom_attract} (with $p=1$ and $q\in (0,1]\cap(0,\alpha)$) yields a 
bound on $\mW_q(\bm{X}^t,\bm{Z})$ via~\eqref{eq:Wq-XY} as follows:
\begin{gather*}
\E\bigg[\sup_{t \in [0,1]}\big|\bm{R}_{st}/t^{1/\alpha}\big|^q\bigg]
=\Oh\big(t^{1-q/\alpha}\big),\\
\mW_q\big(\bm{J}^{\bm{S}^t,\kappa(t)},\bm{J}^{\bm{Z},\kappa(t)}\big)
=\begin{dcases}
\Oh\big(t^{1/\alpha+r(q+1-\alpha)}\big),
& q <\alpha-1,\\
\Oh\big(t^{1-q/\alpha}(1+\log(1/t)\1_{\{q=\alpha-1,\,r\ne-1/\alpha\}})\big),
&q\ge\alpha-1,
\end{dcases}
\quad\text{by~\eqref{eq:Up_bound_thin_J},}\\
\mW_q\big(\bm{D}^{\bm{S}^t,\kappa(t)},\bm{D}^{\bm{Z},\kappa(t)}\big)\le \mW_2\big(\bm{D}^{\bm{S}^t,\kappa(t)},\bm{D}^{\bm{Z},\kappa(t)}\big)^q
=\Oh\big(t^{q/\alpha + rq(3-\alpha)}\big),
\quad\text{by~\eqref{eq:Wasserstein_relationship} and~\eqref{eq:Up_bound_thin_D},}\\
|\bm{\gamma}_{\bm{S}^t,\kappa(t)}-\bm{\gamma}_{\bm{Z},\kappa(t)}|^q 
=\begin{cases}
    \Oh\big(t^{q/\alpha + rq(2-\alpha)}\big), & \alpha\in(0,1),\\
    \Oh\big(t^{q(1-1/\alpha)}\big),& \alpha\in(1,2),
    \end{cases}
\quad\text{by~\eqref{eq:Up_bound_thin_gamma}.}
\end{gather*}
Part (a) can now be deduced by optimising $r$ as follows. If $\alpha<1$ then $q>0>\alpha-1$ and taking $r$ sufficiently large makes all terms become $\Oh(t^{1-q/\alpha})$. If $\alpha>1$ and $q<\alpha-1$, then the bounds are $\Oh(t^{1-q/\alpha})$, $\Oh(t^{1/\alpha+r(q+1-\alpha)})$, $\Oh(t^{q/\alpha+rq(3-\alpha)})$ and $\Oh(t^{q(1-1/\alpha)})$. Since $q\le 1$ and $1-1/\alpha\le 1/\alpha$, all these bounds can be made $\Oh(t^{q(1-1/\alpha)})$ by picking $r=0$. If $\alpha>1$ and $q\ge \alpha-1$, then the bounds are $\Oh(t^{1-q/\alpha})$, $\Oh(t^{1-q/\alpha}(1+\log(1/t)\1_{\{q=\alpha-1,\,r\ne-1/\alpha\}}))$, $\Oh(t^{q/\alpha+rq(3-\alpha)})$ and $\Oh(t^{q(1-1/\alpha)})$ and, as before, these can all be made $\Oh(t^{q(1-1/\alpha)})$ by picking $r=0$. Note here that the logarithmic term never arises in the dominant term, as it would require $1=q=\alpha-1$, but $\alpha<2$.

\underline{Part (b).} The claim follows from a direct application of Lemma~\ref{lem:lower_bound_alpha_stab}. 
\end{proof}

\begin{proof}[Proof of Theorem~\ref{thm:conv_BM_limit}]
Proposition~\ref{prop:conv_BM_limit} gives Part~(a). Parts~(b) and (c) follow from Lemma~\ref{lem:lower_bound_gaussian_lim}.
\end{proof}

In preparation for the proofs of Theorem~\ref{thm:upper_lower_general} and Corollary~\ref{cor:upper_lower_general}, we establish Lemmas~\ref{lem:1-a_SV},~\ref{lem:1-a_subpoly} and~\ref{lem:iterated-log} about slowly varying functions.

\begin{lemma}
\label{lem:1-a_SV}
Let $\ell\in\SV_\infty$ be $C^1$ with derivative $t\mapsto\wt\ell(t)/t$, where $|\wt\ell|\in\SV_\infty$. Then, for each $x>0$, we have $(\ell(t)-\ell(xt))/\wt\ell(t)\to-\log x$ as $t\to\infty$. Defining $L_x(t) \coloneqq 1-\ell(x t)/\ell(t)$ for $t,x>0$, the function $|L_x|$  is  asymptotically equivalent to the quotient $|\wt\ell(t)\log x|/\ell(t)\sim |L_x(t)|$ as $t\to\infty$ and, if $x\neq 1$, slowly varying at infinity. Moreover,  for $x>0$, the function $\Sigma(x)\coloneqq \sup_{t>0, y\in [x\wedge 1,x\vee1]}\wt\ell(yt)/\wt\ell(t)$  satisfies
\[
\big|L_x(t)\big|
=\bigg|\frac{\ell(xt)}{\ell(t)}-1\bigg|
\le \frac{|\wt\ell(t)|}{\ell(t)}
    \cdot\Sigma(x)|\log x|
\qquad \text{ for all }t,x>0.
\]
\end{lemma}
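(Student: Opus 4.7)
The unifying tool is the integral identity
\[
\ell(xt) - \ell(t) = \int_t^{xt} \frac{\wt\ell(s)}{s}\,\D s = \sgn(\log x) \int_{x\wedge 1}^{x\vee 1} \frac{\wt\ell(yt)}{y}\,\D y,
\]
valid for all $t,x>0$ by the fundamental theorem of calculus (since $\ell\in C^1$ with derivative $\wt\ell(s)/s$) and the substitution $s=yt$. The plan is to extract all three conclusions of the lemma from this single identity.

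For the pointwise limit, I would divide through by $\wt\ell(t)$ and invoke the Uniform Convergence Theorem for slowly varying functions (\cite[Thm~1.2.1]{MR1015093}) applied to $|\wt\ell|\in\SV_\infty$: this gives $|\wt\ell(yt)|/|\wt\ell(t)|\to 1$ uniformly in $y$ on compact subsets of $(0,\infty)$. Since $|\wt\ell|$ is eventually positive, $\wt\ell$ itself has eventually constant sign, so the ratio $\wt\ell(yt)/\wt\ell(t)$ also tends to $1$ uniformly on $[x\wedge 1,x\vee 1]$. Interchanging limit and integral then yields $(\ell(xt)-\ell(t))/\wt\ell(t)\to\sgn(\log x)\cdot|\log x|=\log x$, which is equivalent to the first claim after reversing signs.

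For the asymptotic equivalence I would rewrite
\[
L_x(t)\;=\;-\,\frac{\ell(xt)-\ell(t)}{\wt\ell(t)}\cdot\frac{\wt\ell(t)}{\ell(t)}\;\sim\;-\log x\cdot\frac{\wt\ell(t)}{\ell(t)},
\]
from which $|L_x(t)|\sim|\wt\ell(t)\log x|/\ell(t)$ as $t\to\infty$ follows directly. When $x\ne 1$, this shows that $|L_x|$ is eventually strictly positive and asymptotically equivalent to $|\log x|\cdot|\wt\ell|/\ell$, a quotient of two functions in $\SV_\infty$ and hence itself slowly varying at infinity. Since slow variation is preserved under asymptotic equivalence, $|L_x|\in\SV_\infty$.

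The uniform bound is then immediate: taking absolute values in the integral identity and using $|\wt\ell(yt)|/|\wt\ell(t)|\le\Sigma(x)$ for all $y\in[x\wedge 1,x\vee 1]$ and $t>0$ yields $|\ell(xt)-\ell(t)|\le|\wt\ell(t)|\,\Sigma(x)|\log x|$, and dividing by $\ell(t)>0$ completes the argument. The only subtlety I foresee is reconciling the stated definition of $\Sigma(x)$ (formulated without absolute values on $\wt\ell$) with the absolute-value ratio needed in the bound; since $\wt\ell$ has constant sign for all sufficiently large $t$ and the supremum in $\Sigma$ is taken over all $t>0$, this is a matter of interpreting $\Sigma(x)$ as the natural positive bound implicit in the statement rather than a genuine analytic obstacle.
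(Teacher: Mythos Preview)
Your proof is correct and follows essentially the same route as the paper: both start from the integral identity $\ell(xt)-\ell(t)=\int_x^1\wt\ell(ty)\,y^{-1}\D y$ (after the substitution $s=ty$), invoke the Uniform Convergence Theorem for $|\wt\ell|\in\SV_\infty$ to pass the limit under the integral, and then deduce the non-asymptotic bound by the same absolute-value estimate. The only stylistic difference is in the slow-variation step: you argue that $|L_x|$ is asymptotically equivalent to the quotient $|\wt\ell|/\ell$ of two slowly varying functions and invoke closure of $\SV_\infty$ under quotients and asymptotic equivalence, whereas the paper verifies $L_x(\lambda t)/L_x(t)\to 1$ directly by factoring the ratio into three pieces that each tend to $1$; your route is slightly cleaner. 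Your observation about the absolute values in the definition of $\Sigma(x)$ is also apt---the paper's proof implicitly relies on $\wt\ell$ having constant sign so that $\wt\ell(yt)/\wt\ell(t)=|\wt\ell(yt)|/|\wt\ell(t)|$, and this is indeed a matter of interpretation rather than a genuine gap.
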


\begin{proof}
Since $|\wt\ell|$ is positive and $\wt\ell$ is continuous, $\wt\ell$ is either eventually positive or negative and either $\wt\ell$ or $-\wt\ell$ is slowly varying at infinity, respectively. By~\cite[Thm~1.2.1]{MR1015093}, $\sup_{y\in[a,b]}|\wt\ell(yt)/\wt\ell(t)-1|\to 0$ as $t\to\infty$ for any $0<a<b<\infty$. Thus, for all sufficiently large $t>0$, we have 
\[
\frac{\wt\ell(ty)}{\wt\ell(t)}\cdot\frac{1}{y}\le 
\bigg(1+\sup_{z\in [x\wedge 1,x\vee1]}\big|\wt\ell(zt)/\wt\ell(t)-1\big|\bigg) \frac{1}{y}\le 
\frac{2}{y}\qquad\text{for all $y\in [x\wedge 1,x\vee1]$.}
\]
The dominated convergence theorem now yields
\[
\frac{\ell(t)-\ell(xt)}{\wt\ell(t)}
=\frac{1}{\wt\ell(t)}\int_{ xt}^{t}\wt\ell(y)\frac{\D y}{y}
=\int_{x}^{1}\frac{\wt\ell(ty)}{\wt\ell(t)}\frac{\D y}{y}
\to \int_{x}^{1}\frac{\D y}{y}
=-\log x,\quad \text{as }t\to\infty,
\]
which establishes the first claim. 
Since $L_x(t)=\big((\ell(t)-\ell(xt))/\wt\ell(t)\big)\big(\wt\ell(t)/\ell(t)\big)$, the function $|L_x|$ is positive on a neighbourhood of infinity and asymptotically equivalent to  $|\wt\ell(t)\log x|/\ell(t)$  by the limit in the previous display.
Moreover, since $x>0$ in the limit was arbitrary,  for any $\lambda>0$ we have
\[
\frac{L_x(\lambda t)}{L_x(t)}
=\frac{\ell(t)}{\ell(\lambda t)}\cdot
    \frac{\wt\ell(\lambda t)^{-1}(\ell(\lambda t)-\ell(\lambda tx))}{\wt\ell(t)^{-1}(\ell(t)-\ell(x t))}\cdot \frac{\wt \ell(\lambda t)}{\wt \ell(t)}
\to 1,
\quad\text{as }t\to\infty,
\]
implying that $|L_x|$ is slowly varying at infinity. 

To establish the non-asymptotic inequality in the lemma, fix $x>0$ and note that
\[
|\ell(xt)-\ell(t)|
\le\bigg|\int_t^{xt}\wt\ell(y)\frac{\D y}{y}\bigg|
\le\int_{[x\wedge 1,x\vee1]}\big|\wt\ell(yt)\big|\frac{\D y}{y}
\le |\wt\ell(t)|\cdot \Sigma(x)|\log x|,
\]
giving the claim.
\end{proof}

Note that the assumption in Lemma~\ref{lem:1-a_SV} requires $\ell$ to be eventually strictly monotone. Moreover, if $\ell$ satisfies the conditions of Lemma~\ref{lem:1-a_SV}, then so does $\ell^q$ for any $q>0$. 

\begin{lemma}
\label{lem:1-a_subpoly} 
{\normalfont(a)} Let $\ell$ be slowly varying at infinity. Suppose that, for some $\lambda\in(0,\infty)\setminus\{1\}$ and non-increasing function $\phi_\lambda$, we have $\phi_\lambda(t)\ge|1-\ell(\lambda t)/\ell(t)|$ for all $t\ge 1$ and $\int_1^\infty \phi_\lambda(t)t^{-1}\D t<\infty$. Then $\ell$ has a positive finite limit at infinity.\\
{\normalfont(b)} Let $\phi$ be slowly varying at infinity with $\phi(t)\to 0$ as $t\to\infty$ and $\int_1^\infty\phi(t)t^{-1}\D t=\infty$. Then the functions $\ell_\pm(t)\coloneqq\exp(\pm\int_1^t\phi(s)s^{-1}\D s)$ are slowly varying at infinity, $\ell_+(t)\to\infty$, $\ell_-(t)\to 0$ and $|1-\ell_\pm(\lambda t)/\ell_\pm(t)|\sim|\log \lambda|\phi(t)$ as $t\to\infty$ for any $\lambda>0$.
\end{lemma}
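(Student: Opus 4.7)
My plan is to handle the two parts separately, with Part~(a) being a telescoping-series argument based on the geometric sequence $(\lambda^n)_{n\ge 0}$, and Part~(b) being a direct computation using Karamata's representation theorem and the uniform convergence theorem for slowly varying functions.

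For Part~(a), I would first replace $\lambda$ by $1/\lambda$ if necessary to assume $\lambda>1$. A key preliminary observation is that, since $\phi_\lambda$ is non-increasing and $\int_1^\infty \phi_\lambda(t)t^{-1}\D t<\infty$, we have $\phi_\lambda(T)\log T \le \int_1^T \phi_\lambda(t)t^{-1}\D t$, so $\phi_\lambda(t)\to 0$ as $t\to\infty$. The plan is then to set $\epsilon_n\coloneqq \ell(\lambda^n)/\ell(\lambda^{n-1})-1$ and use the hypothesis to get $|\epsilon_n|\le \phi_\lambda(\lambda^{n-1})$. The monotonicity of $\phi_\lambda$ yields
\[
\phi_\lambda(\lambda^{n-1})\log\lambda
=\phi_\lambda(\lambda^{n-1})\int_{\lambda^{n-1}}^{\lambda^n}\frac{\D t}{t}
\le\int_{\lambda^{n-1}}^{\lambda^n}\frac{\phi_\lambda(t)}{t}\D t,
\]
so summing over $n\ge N$ the right-hand side is bounded by the integrable tail $\int_{\lambda^{N-1}}^\infty \phi_\lambda(t)t^{-1}\D t<\infty$. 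Consequently $\sum_n|\epsilon_n|<\infty$, the telescoping sum $\log\ell(\lambda^n)=\log\ell(1)+\sum_{k=1}^n\log(1+\epsilon_k)$ converges to a finite limit $L\in\R$, and $\ell(\lambda^n)\to\e^L>0$. Finally, to extend the limit from the discrete sequence $\lambda^n$ to all $t\to\infty$, I would write $t=c_t \lambda^{n_t}$ with $c_t\in[1,\lambda]$ and apply the uniform convergence theorem~\cite[Thm~1.2.1]{MR1015093} for slowly varying functions, which gives $\ell(c_t\lambda^{n_t})/\ell(\lambda^{n_t})\to 1$ uniformly in $c_t\in[1,\lambda]$.

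For Part~(b), the slow variation of $\ell_\pm$ is immediate from Karamata's representation~\cite[Thm~1.3.1]{MR1015093}, since $\ell_\pm(t)=\exp(\pm\int_1^t \phi(s)s^{-1}\D s)$ with $\pm\phi(t)\to 0$ as $t\to\infty$. The limits $\ell_+(t)\to\infty$ and $\ell_-(t)\to 0$ follow from $\int_1^\infty\phi(s)s^{-1}\D s=\infty$. For the asymptotic equivalence, I would fix $\lambda>0$ and write $I_\lambda(t)\coloneqq\int_t^{\lambda t}\phi(s)s^{-1}\D s$, then change variables $s=tu$ to get $I_\lambda(t)=\int_1^\lambda \phi(tu)u^{-1}\D u$ (with the convention $\int_1^\lambda=-\int_\lambda^1$ when $\lambda<1$). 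By the uniform convergence theorem, $\phi(tu)/\phi(t)\to 1$ uniformly for $u$ in compact subsets of $(0,\infty)$, and dominated convergence then gives $I_\lambda(t)/\phi(t)\to\int_1^\lambda u^{-1}\D u=\log\lambda$ as $t\to\infty$. Since $\phi(t)\to 0$ forces $I_\lambda(t)\to 0$, expanding $1-\e^{\pm I_\lambda(t)}=\mp I_\lambda(t)+\Oh(I_\lambda(t)^2)$ yields $|1-\ell_\pm(\lambda t)/\ell_\pm(t)|\sim |\log\lambda|\phi(t)$.

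The main technical obstacle is the telescoping argument in Part~(a): one must combine the hypothesis $|1-\ell(\lambda t)/\ell(t)|\le \phi_\lambda(t)$ (available only for this fixed $\lambda$) with the integrability of $\phi_\lambda/t$ and the monotonicity of $\phi_\lambda$ to convert a pointwise bound at the geometric grid into a summable series; the extension from the grid to continuous $t$ then rests on the slow variation hypothesis. Everything else is bookkeeping with Karamata representations and the standard uniform convergence theorem for slowly varying functions.
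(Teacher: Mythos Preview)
Your approach matches the paper's: Part~(a) telescopes $\log\ell$ along the geometric grid $\lambda^n$ and compares the sum $\sum\phi_\lambda(\lambda^{n-1})$ with the integral $\int\phi_\lambda(t)t^{-1}\D t$, while Part~(b) is a direct computation (the paper simply invokes its Lemma~\ref{lem:1-a_SV}, whose proof is essentially your argument specialised to $\ell_\pm$). One slip needs fixing: your displayed inequality
\[
\phi_\lambda(\lambda^{n-1})\log\lambda
\le\int_{\lambda^{n-1}}^{\lambda^n}\frac{\phi_\lambda(t)}{t}\,\D t
\]
is in the wrong direction, since $\phi_\lambda$ is non-increasing and hence $\phi_\lambda(t)\le\phi_\lambda(\lambda^{n-1})$ on $[\lambda^{n-1},\lambda^n]$. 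The remedy is to integrate over the interval one step lower, $[\lambda^{n-2},\lambda^{n-1}]$, where $\phi_\lambda(t)\ge\phi_\lambda(\lambda^{n-1})$; this is exactly what the paper does, and the tail bound becomes $\int_{\lambda^{N-2}}^\infty\phi_\lambda(t)t^{-1}\D t$, still finite. With that correction your argument goes through.
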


The smallest non-increasing function $\varphi_\lambda$ with $\phi_\lambda(t)\ge|1-\ell(\lambda t)/\ell(t)|$ for all $t\ge 1$ is $t\mapsto \sup_{s\ge t}|1-\ell(\lambda s)/\ell(s)|$.

\begin{proof}[Proof of Lemma~\ref{lem:1-a_subpoly}]
Part (a). First assume $\lambda>1$. Define $U_\lambda(t)\coloneqq \sup_{x\in[1,\lambda]}|1-\ell(xt)/\ell(t)|$ and note that $U_\lambda(t)\to 0$ as $t\to\infty$ by the uniform convergence theorem~\cite[Thm~1.2.1]{MR1015093}.
As $\int_1^\infty \phi_\lambda(t)t^{-1}\D t<\infty$, we also have $\phi_\lambda(t)\to0$ as $t\to0$, making 
$\eta\coloneqq\inf\{t\ge 1:\max\{\phi_\lambda(s),U_\lambda(s)\}<1/2\text{ for all }s\ge t\}$ finite. 
Since 
$1+U_\lambda(t)\ge \ell(xt)/\ell(t)\ge 1-U_\lambda(t)$  and $U_\lambda(t)\le 1/2$ for all $t>\eta$ and $x\in[1,\lambda]$, we obtain
\[
U_\lambda(t)
\ge \log(1+U_\lambda(t))
\ge \log\bigg(\frac{\ell(x t)}{\ell(t)}\bigg)
\ge \log(1-U_\lambda(t))
\ge -2U_\lambda(t),\text{ implying $\bigg|\log\bigg(\frac{\ell(x t)}{\ell(t)}\bigg) \bigg|\le 2U_\lambda(t)$ }
\]
for all $t>\eta$ and $x\in[1,\lambda]$.
Similarly, for $t>\eta$ we have $|\log(\ell(\lambda t)/\ell(t))|\le 2 \phi_\lambda(t)$. For any $T>t\ge \eta$ set $n\coloneqq\lfloor \log(T/t)/\log\lambda\rfloor$, implying $T/(\lambda^n t)\in[1,\lambda)$.  By the monotonicity of $\phi_\lambda$ we obtain
\begin{align}
\nonumber
|\log\ell(T)-\log\ell(t)|
&\le\bigg|\log\bigg(\frac{\ell(T)}{\ell(\lambda^n t)}\bigg)\bigg|
+\sum_{k=1}^n\bigg|\log\bigg(\frac{\ell(\lambda^k t)}{\ell(\lambda^{k-1}t)}\bigg)\bigg|\\
\nonumber& 
\le 2U_\lambda(\lambda^n t)
+\sum_{k=1}^{n}2\phi_\lambda\big(\lambda^{k-1}t\big)
\le 2U_\lambda(\lambda^n t)
+\sum_{k=1}^{n}\frac{2}{\log\lambda}\int_{\lambda^{k-2}t}^{\lambda^{k-1}t}\phi_\lambda\big(s\big)\frac{\D s}{s}\\
\label{eq:uniform_lim_ell}
&\le 2U_\lambda(\lambda^n t)
+\frac{2}{\log\lambda}\int_{\lambda^{-1}t}^\infty \phi_\lambda\big(s\big)\frac{\D s}{s}\xrightarrow[]{t\to\infty} 0
\qquad \text{(uniformly in $T\in[t,\infty)$).}
\end{align}

If we had $\limsup_{t\to\infty}\log\ell(t)>\liminf_{t\to\infty}\log\ell(t)$, there would exist an increasing sequence $(t_k)_{k\in\N}$ and $\epsilon>0$ such that $t_k\to\infty$ and 
$|\log\ell(t_{k+1})-\log\ell(t_k)|>\epsilon$ for all $k\in\N$, contradicting~\eqref{eq:uniform_lim_ell}. Hence the limit $\lim_{t\to\infty}\log \ell(t)$ exists. By taking the limit as $T\to\infty$ on the left-hand side of~\eqref{eq:uniform_lim_ell} for any fixed $t$, it follows that $\lim_{t\to\infty}|\log \ell(t)|\neq \infty$. Thus $\ell$ has a finite and positive limit.
The case $\lambda<1$ can be established in a similar way.

Part (b). The statement follows from a direct application of Lemma~\ref{lem:1-a_SV}.
\end{proof}

\begin{lemma}
\label{lem:iterated-log}
Define iteratively the functions $\ell_1(t)=\log(e+t)$ and $\ell_{n+1}(t)=\log(e+\ell_n(t))$ for $t\ge 0$ and $n\in\N$. Then, the following statements hold.\\
{\normalfont(a)} For any $x>0$ and $c\ge 0$, we have $(c+\ell_n(xt))/(c+\ell_n(t)) \le 1+\1_{\{x>1\}}\log x$.\\
{\normalfont(b)} We have $(e+t)\ell_n'(t)=\prod_{i=1}^{n-1}(e+\ell_k(t))^{-1}$.\\
{\normalfont(c)} Suppose $\ell(t)=\ell_n(t)^{q_n}\cdots\ell_m(t)^{q_m}$ for some $1\le n\le m$ in $\N$ and either $q_n,\ldots,q_m\ge 0$ with $q_n,q_m>0$ or $q_n,\ldots,q_m\le 0$ with $q_n,q_m<0$. Set $\wt\ell(t)\coloneqq (e+t)\ell'(t)$, then we have
\[
\Sigma(x)
\coloneqq
\sup_{t>0,\,y\in [x\wedge 1,x\vee1]}\frac{\wt\ell(yt)}{\wt\ell(t)}
\le\begin{cases}
(1+\log x)^{\sum_{j=n}^m q_j^+},
& x\ge 1,\\
(1+|\log x|)^{m+\sum_{j=n}^mq_j^-},
& x<1.
\end{cases}
\]
\end{lemma}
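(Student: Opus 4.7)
\textbf{Proof proposal for Lemma~\ref{lem:iterated-log}.}

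For part (a), I would proceed by induction on $n$. For $n=1$ and $x\ge 1$, the key observation is the elementary bound $e+xt\le x(e+t)$, which on taking logarithms gives $\ell_1(xt)\le\log x+\ell_1(t)$; dividing by $c+\ell_1(t)\ge \ell_1(t)\ge 1$ yields the claim. For the inductive step, apply the inductive hypothesis with the particular constant $c=e$ to obtain $e+\ell_n(xt)\le(1+\log x)(e+\ell_n(t))$ for $x\ge 1$; taking logarithms gives $\ell_{n+1}(xt)\le\log(1+\log x)+\ell_{n+1}(t)\le\log x+\ell_{n+1}(t)$, where the last step uses $\log(1+u)\le u$ for $u\ge 0$. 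Dividing by $c+\ell_{n+1}(t)\ge 1$ completes the step. The case $x<1$ is trivial since every $\ell_n$ is increasing.

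Part (b) follows by a direct induction using the chain rule: $\ell_1'(t)=1/(e+t)$ gives the empty product, and $\ell_{n+1}'(t)=\ell_n'(t)/(e+\ell_n(t))$ propagates the formula.

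For part (c), set $P_j(t)\coloneqq\ell_j(t)\prod_{k=1}^{j-1}(e+\ell_k(t))$. Logarithmic differentiation together with part (b) yields
\[
\wt\ell(t)=(e+t)\ell'(t)=\ell(t)\sum_{j=n}^m \frac{q_j}{P_j(t)}.
\]
Thus $\wt\ell(yt)/\wt\ell(t)$ factors as $\big(\ell(yt)/\ell(t)\big)\cdot R(y,t)$ with $R(y,t)\coloneqq\big(\sum_j q_j/P_j(yt)\big)/\big(\sum_j q_j/P_j(t)\big)$. By the hypothesis on the signs, $\sum q_j/P_j$ does not change sign, and for any $y>0$ the ratio $R(y,t)$ is bounded above by $\max_{j}P_j(t)/P_j(yt)$. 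Applying part (a) (with $c=0$ to each $\ell_j$ and with $c=e$ to each factor $e+\ell_k$), and using that $\ell_j$ is increasing, one obtains $P_j(t)/P_j(yt)\le 1$ when $y\ge 1$ and $P_j(t)/P_j(yt)\le(1+|\log y|)^j\le(1+|\log y|)^m$ when $y\le 1$. For $\ell(yt)/\ell(t)=\prod_j(\ell_j(yt)/\ell_j(t))^{q_j}$, part (a) with $c=0$ gives $\ell_j(yt)/\ell_j(t)\le 1+\log y$ when $y\ge 1$ and $\ell_j(t)/\ell_j(yt)\le 1+|\log y|$ when $y\le 1$.

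Combining these inequalities in the four cases (sign of $q_j$, and $y\ge 1$ versus $y\le 1$) yields the result: when all $q_j\ge 0$ and $y\ge 1$, both $R(y,t)\le 1$ and $\ell(yt)/\ell(t)\le(1+\log y)^{\sum q_j^+}$; when all $q_j\ge 0$ and $y\le 1$, one has $\ell(yt)/\ell(t)\le 1$ while $R(y,t)\le(1+|\log y|)^m$; when all $q_j\le 0$ and $y\ge 1$, both factors are $\le 1$; and when all $q_j\le 0$ and $y\le 1$, $\ell(yt)/\ell(t)\le(1+|\log y|)^{\sum q_j^-}$ while $R(y,t)\le(1+|\log y|)^m$. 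In each case, monotonicity in $y\in[x\wedge 1,x\vee 1]$ converts the bound in $|\log y|$ into one in $|\log x|$.

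The main obstacle is the bookkeeping in part (c): identifying the right decomposition $\wt\ell(t)=\ell(t)\sum q_j/P_j(t)$, and then juggling the four sign/magnitude cases to reach exactly the exponents $\sum q_j^+$ and $m+\sum q_j^-$ advertised in the statement. Parts (a) and (b) are routine inductions; the subtlety in (c) is that the sign hypothesis on the $q_j$ is essential for bounding the ratio of sums $R(y,t)$ by $\max_j P_j(t)/P_j(yt)$.
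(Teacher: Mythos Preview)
Your proposal is correct and follows essentially the same route as the paper. For part~(c) the paper also writes $\wt\ell(t)=\ell(t)\sum_j |q_j|\wt\ell_j(t)/\ell_j(t)$ (your $\ell(t)\sum_j q_j/P_j(t)$) and applies the same elementary inequality $\min_j a_j/b_j\le(\sum a_j)/(\sum b_j)\le\max_j a_j/b_j$ to bound the ratio of sums, then uses part~(a) factor by factor exactly as you do; the only cosmetic difference is that the paper keeps the factor $\ell(yt)/\ell(t)$ inside the max rather than splitting it off as your $R(y,t)$. For part~(a) there is a small but genuine difference: the paper proves it via the integral identity $\ell_n(xt)-\ell_n(t)=\int_t^{xt}\ell_n'(s)\,\D s$ together with the derivative formula of part~(b), whereas your self-contained induction (using $e+xt\le x(e+t)$ and $\log(1+u)\le u$) avoids invoking~(b) and is arguably cleaner.
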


\begin{proof}
For $x<1$ we have $\ell_n(xt)\le \ell_n(t)$. For $x>1$, we have
\[
\ell_{n}(xt)-\ell_{n}(t)
=\int_t^{xt} \frac{1}{\prod_{k=1}^{n-1}(e+\ell_k(s))}\frac{\D s}{e+s}
\le \frac{1}{\prod_{k=1}^{n-1}(e+\ell_k(t))}\int_t^{xt} \frac{\D s}{s}
=\frac{\log x}{\prod_{k=1}^{n-1}(e+\ell_k(t))}.
\]
In particular, we may add $c\ge 0$ to $\ell_{n}(xt)$ and $\ell_{n}(t)$ and divide by $c+\ell_{n}(t)$ to obtain
\[
\frac{c+\ell_{n}(xt)}{c+\ell_{n}(t)}
\le 1+\frac{\log x}{(c+\ell_n(t))\prod_{k=1}^{n-1}(e+\ell_k(t))}
\le 1+\log x,
\quad n\in\N,\, x>1,\,t>0,
\]
implying Part (a). Part (b) is obvious, so we need only establish Part (c).

It is simple to show that for $a_1,a_2,b_1,b_2>0$, the fraction $(a_1+a_2)/(b_1+b_2)$ lies between $a_1/b_1$ and $a_2/b_2$. An inductive argument implies that for any $a_1,\ldots,a_k,b_1,\ldots,b_k>0$, we have 
\[
\min_{j\in\{1,\ldots,k\}}\frac{a_j}{b_j}
\le\frac{\sum_{j=1}^k a_j}{\sum_{j=1}^k b_j}
\le\max_{j\in\{1,\ldots,k\}}\frac{a_j}{b_j}.
\]
Thus, by virtue of Parts (a) and (b) and denoting $\wt\ell_j(t)\coloneqq (e+t)\ell_j'(t)$, we have
\begin{align*}
\Sigma(x)
&=\sup_{t>0,\,y\in [x\wedge 1,x\vee1]}\frac{\ell(yt)\sum_{j=n}^m |q_j|\wt\ell_{j}(yt)/\ell_{j}(yt)}{\ell(t)\sum_{j=n}^m |q_j|\wt\ell_{j}(t)/\ell_{j}(t)}
\le\sup_{t>0,\,y\in [x\wedge 1,x\vee1]}
\max_{j\in\{n,\ldots,m\},\,q_j\ne 0}
\frac{\ell(yt)\wt\ell_{j}(yt)/\ell_{j}(yt)}{\ell(t)\wt\ell_{j}(t)/\ell_{j}(t)}\\
&=\sup_{t>0,\,y\in [x\wedge 1,x\vee1]}\max_{j\in\{n,\ldots,m\},\,q_j\ne 0}
\prod_{i=1}^{j-1}\frac{e+\ell_i(t)}{e+\ell_i(yt)}
\prod_{i=n}^m
\bigg(\frac{\ell_i(yt)}{\ell_i(t)}\bigg)^{q_i-\1_{\{i=j\}}}\\
& \le\begin{cases}
(1+\log x)^{\sum_{j=n}^m q_j^+},
&\!\! x\ge 1,\\
(1+|\log x|)^{m+\sum_{j=n}^mq_j^-},
&\!\! x<1.
\end{cases}
\end{align*}
This completes the proof.
\end{proof}

\begin{proof}[Proof of Theorem~\ref{thm:upper_lower_general}]
\underline{Part (a).} Recall from Remark~\ref{rem:comono_coup_S_R}, that we can decompose $\bm{X}$ as the sum $\bm{S}+\bm{R}$ of the independent processes $\bm{S}$ and $\bm{R}$ with generating triplets $(\bm{\gamma_S},\bm{0},\nu_{\bm{X}}^{\co})$ and $(\bm{\gamma_R},\bm{0},\nu_{\bm{X}}^{\mathrm{d}})$, respectively. For $t \in [0,1]$, let $\bm{S}^t=(\bm{S}_{st}/g(t))_{s\in[0,1]}$ and $\bm{R}^t=(\bm{R}_{st}/g(t))_{s\in[0,1]}$. Assume that $(\bm{M}^{\bm{S}^t},\bm{M^Z},\bm{L}^{\bm{S}^t},\bm{L^Z})$ is coupled as in~\eqref{eq:comono_coupling_1} and~\eqref{eq:comono_coupling_2}. 

Note that $\mW_q(\bm{X}^t,\bm{Z}) \le \mW_q(\bm{S}^t,\bm{Z})+\E[\sup_{t \in [0,1]}|\bm{R}_{s}^t|^q]$ by the triangle inequality. Next, we apply~\eqref{eq:Wq-XY} with $p=1$ to $\mW_q(\bm{S}^t,\bm{Z})$ and use Theorem~\ref{thm:upper_bound_comono_tech} and Potter's bounds~\cite[Thm~1.5.6]{MR1015093} (applied to the slowly varying function $G_2$) to show that each resulting term is $\Oh(G_2(t)^q)$:
\begin{itemize}
    \item $\mW_q\big(\bm{M}^{\bm{S}^t},\bm{M^Z}\big) \le \mW_2\big(\bm{M}^{\bm{S}^t},\bm{M^Z}\big)^q $ by~\eqref{eq:Wasserstein_relationship}, and~\eqref{eq:Up_bound_como_M} then yields the bound; 
    \item $\mW_q\big(\bm{L}^{\bm{S}^t}, \bm{L}^{\bm{Z}}\big) $ is bounded by~\eqref{eq:Up_bound_como_L};
    \item $|\bm{\varpi}_{\bm{S}^t}-\bm{\varpi}_{\bm{Z}}|^q$ is bounded by~\eqref{eq:Up_bound_como_varpi}. 
\end{itemize}
Similarly, by Potter's bounds and Theorem~\ref{thm:upper_bound_comono_tech}, $\E[\sup_{t \in [0,1]}|\bm{R}_{s}^t|^q]=\Oh(t^{1-q/\alpha}G(t)^{-q})=\Oh(G_2(t)^{q})$.

\noindent \underline{Part (b).} Recall $a(t)= G(2t)/G(t)\to 1$ as $t\to 0$. Directly from Proposition~\ref{prop:W_lower}, we see that 
$$
3\max\{\mW_q(\bm{X}^t_1,\bm{Z}_1)
,\mW_q(\bm{X}^{2t}_1, \bm{Z}_1)\} 
\ge 
|1-a(t)^q|\E[|\bm{Z}_1|^q] 
\quad\text{for all sufficiently small $t >0$,}
$$ 
yielding the first claim of part (b). 
The second claim follows from Lemma~\ref{lem:1-a_subpoly} since $G$ is assumed not to have a positive finite limit.
\end{proof}

\begin{proof}[Proof of Corollary~\ref{cor:upper_lower_general}]
Part (b) follows from Lemma~\ref{lem:iterated-log} above. Given Theorem~\ref{thm:upper_lower_general}, it suffices to show that the assumptions in Corollary~\ref{cor:upper_lower_general}(a) imply those of Theorem~\ref{thm:upper_lower_general} and that the upper and lower bounds have the desired form. These facts follow from Lemmas~\ref{lem:1-a_SV} \&~\ref{lem:iterated-log}. Indeed, for instance, the function $G_1$ in Assumption~(\nameref{asm:S}) is given by the upper bound on $x\mapsto\Sigma(x)|\log x|$ given in Lemma~\ref{lem:iterated-log}, where $\Sigma$ is as in Lemma~\ref{lem:1-a_SV}. 
\end{proof}

\section{Concluding remarks}
\label{sec:conclusion}

Over small time horizons, a L\'evy process may be attracted to an $\alpha$-stable process with heavy tails (i.e. $\alpha\in(0,2)$) or Brownian motion (i.e. $\alpha=2$). In this paper, we established upper and lower bounds on the rate of convergence in $L^q$-Wasserstein distance in both regimes, as listed below.
\begin{itemize}[leftmargin=2em, nosep]
\item For $\alpha\in(0,2]\setminus\{1\}$ and processes in the domain of non-normal attraction, the Wasserstein distance is bounded above and below by slowly varying functions (see Theorem~\ref{thm:upper_lower_general}), both of which are slower than any power of logarithm greater than $1$. 
\item For $\alpha\in(0,2)\setminus\{1\}$ and processes in the domain of normal attraction, we establish upper and lower bounds that are polynomial in $t$ (see Theorem~\ref{thm:upper_lower_simple}). The established bounds are often rate-optimal in $L^q$-Wasserstein distance for $q<\alpha<1$ or $q=1<\alpha$ and proportional to $t^{1-q/\alpha}$. 
\item For $\alpha=2$ (i.e. Brownian limit) and processes in the domain of normal attraction, the established upper and lower bounds are also polynomial in $t$ (see Theorem~\ref{thm:conv_BM_limit}). In this case, the bounds are rate-optimal when the Blumenthal--Getoor index $\beta\le 1$ and otherwise there is a polynomial gap. This suggests at least one of the bounds is not sharp. Establishing sharper bounds in this special case is non-trivial (as classical tools such as the Berry--Esseen theorem fail to provide converging bounds) and is therefore left for future work.
\end{itemize}

The process $\bm{R}$ in Assumption~(\nameref{asm:T}) (resp.~(\nameref{asm:C})) in the thinning (resp. comonotonic) coupling is assumed to have finitely many jumps on compact time intervals. Our results can be extended to the case where this process has infinitely many jumps on compact time intervals and a Blumenthal--Getoor index $\beta<\alpha$. For such an extension, the moments of $\sup_{s\in[0,1]}|\bm{R}_s^t|$, as a function of $t$, can be controlled via Lemma~\ref{lem:moment_bound} and would result in worse convergence rates as $\beta\uparrow\alpha$. We chose not to include this simple extension as the convergence rates would be much harder to express in terms of all the model parameters, resulting in a less concise presentation of our results.

The tools developed in Section~\ref{sec:couplings_general} could be used for the omitted case $\alpha=1$ to establish upper and lower bounds on the Wasserstein distance in the domains of normal and non-normal attraction. However, as multiple cases would arise, requiring careful treatment of the emerging slowly varying functions, we leave such extension for future work. 

The upper bounds in the heavy-tailed case $\alpha\in(0,2)\setminus\{1\}$ are based on two distinct couplings introduced in Section~\ref{sec:couplings_general}: the comonotonic and thinning couplings. We mention briefly that it is possible to combine both couplings. Consider two L\'evy processes $\bm{X}$ and $\bm{Y}$ with L\'evy measures $f_{\bm{X}}\D\mu$ and $f_{\bm{Y}}\D\mu$ where $0\le f_{\bm{X}}\le 1$ and $0\le f_{\bm{Y}}\le 1$ are measurable and $\mu$ is a L\'evy measure. It is then possible to first apply the thinning coupling to synchronise the jumps arising from the L\'evy measure $g\D\mu$, where $g\coloneqq \min\{f_{\bm{X}},f_{\bm{Y}}\}$, and then apply the comonotonic coupling to the remaining jumps of $\bm{X}$ and $\bm{Y}$ with corresponding L\'evy measures $(f_{\bm{X}}-g)\D\mu$ and $(f_{\bm{Y}}-g)\D\mu$. It appears, however, that this combined coupling will not yield improved rates of convergence to heavy-tailed stable limits in all cases and would instead result in a more flexible set of assumptions but with many more cases to consider in its analysis. 

When $\alpha\in(0,2)$, the $\alpha$-stable limit has heavy tails and its $q$-moment is finite if and only if $q<\alpha$, making it impossible to obtain general converging bounds in the $L^2$-Wasserstein distance, which play a key role in various applications including Multilevel Monte Carlo. However,
substituting the standard Euclidean metric on $\R^d$ with an equivalent \textit{bounded} metric would remove this obstruction.  
We expect our couplings to perform well and have a fast converging $L^2$-Wasserstein distance under the bounded metric on $\R^d$. Such an extension of our results is also left for future work. 

The present work focused on the small-time stable domain of attraction where the small jumps of the process dominate the activity.
Finally we remark that it is natural to expect that the comonotonic coupling developed in this paper could also typically achieve asymptotically optimal  convergence rate in the Wasserstein distance in the scaling limits of the long-time stable domain of attraction.
This is because, in the long-time horizon regime, the activity in the limit is dominated by the large jumps of the L\'evy process, which are  efficiently coupled under the comonotonic coupling of Section~\ref{sec:comonotonic_coupling} above.

\section*{Acknowledgements}
JGC and AM were supported by EPSRC grant EP/V009478/1 and The Alan Turing Institute under the EPSRC grant EP/N510129/1; JGC was also supported by DGAPA-PAPIIT grant 36-IA104425; AM was also supported by EPSRC grant EP/W006227/1; DKB was funded by the CDT in Mathematics and Statistics at The University of Warwick and is supported by AUFF NOVA grant AUFF-E-2022-9-39.
The authors would like to thank the Isaac Newton Institute for Mathematical Sciences, Cambridge, for support  during the INI satellite programme \textit{Heavy tails in machine learning}, hosted by The Alan Turing Institute, London, and the INI programme \textit{Stochastic systems for anomalous diffusion} hosted at INI in Cambridge, where work on this paper was undertaken. This work was supported by EPSRC grant EP/R014604/1.

\bibliographystyle{abbrv}
\bibliography{Referencer}

\begin{appendix}

\section{Proof of Lemma~\ref{lem:moment_bound}}
\label{app:Proof_lemma_bound} 
Given any $\kappa\in(0,1]$ consider the L\'evy--It\^o decomposition $\bm{X}_t = \bm{\gamma}_{\bm{X}}^\ka t + \bm{\Sigma_X}\bm{B}_t + \bm{D}^\ka_t + \bm{J}^\ka_t$ given in~\eqref{eq:levy_ito_decomp}, where $\bm{B}$ is a standard Brownian motion, $\bm{D}^{(\kappa)}$ is the pure-jump martingale containing all the jumps of $\bm{X}$ of magnitude less than $\kappa$ and $\bm{J}^{(\kappa)}$ is the driftless compound Poisson process containing all the jumps of $\bm{X}$ of magnitude at least $\kappa$. Since 
$|\bm{\Sigma_X}\bm{B}_s|\le |\bm{\Sigma_X}|\cdot|\bm{B}_s|$, we have 
\begin{equation}\label{eq:supremum_bound_norm}
    \sup_{s\in[0,t]}|\bm{X}_s| 
\le |\bm{\gamma}_{\bm{X}}^\ka|t + |\bm{\Sigma_X}|\sup_{s\in[0,t]}|\bm{B}_s| + \sup_{s\in[0,t]}|\bm{D}^\ka_s| + \sup_{s\in[0,t]}|\bm{J}^\ka_s|, \quad \text{ for } t \in [0,1].
\end{equation}
By the elementary bound $(\sum_{i=1}^n x_i)^p\le n^{(p-1)^+}\sum_{i=1}^n x_i^p$, $p>0$, $(p-1)^+=\max\{p-1,0\}$ and $x_i\ge 0$, $i\in\{1,\ldots,n\}$, we only need to bound the $p$-th moment of each summand on the right-hand side of the display above. 
Recall that 
$\beta_+$ is the quantity associated to the BG index of $\bm{X}$ defined in~\eqref{eq:BG}.

\underline{Case $\beta_+>0$.} Define $\kappa\coloneqq t^{1/\beta_+}$. 
To bound the drift term $|\bm{\gamma}_{\bm{X}}^\ka|t$, first assume $\beta_+\ge 1$ and note that 
\begin{align*}
|\bm{\gamma}_{\bm{X}}^\ka|
=\bigg|\bm{\gamma}_{\bm{X}} - \int_{B_{\bm{0}}(1)\setminus B_{\bm{0}}(\kappa)}\bm{w}\nu_{\bm{X}}(\D\bm{w})\bigg|
&\le|\bm{\gamma}_{\bm{X}}| + \int_{B_{\bm{0}}(1)\setminus B_{\bm{0}}(\kappa)}|\bm{w}|\nu_{\bm{X}}(\D\bm{w})\\
&\le|\bm{\gamma}_{\bm{X}}| + \int_{B_{\bm{0}}(1)\setminus B_{\bm{0}}(\kappa)}\kappa^{1-\beta_+}|\bm{w}|^{\beta_+}\nu_{\bm{X}}(\D\bm{w})
\le|\bm{\gamma}_{\bm{X}}| + \kappa^{1-\beta_+}I_{\beta_+}.
\end{align*}
Thus, $(|\bm{\gamma_{\bm{X}}}^\ka|t)^p$ is bounded by a constant multiple of $t^p+t^{p/\beta_+}$. If $\beta_+\in(0,1)$ and the natural drift of $\bm{X}$ is zero (i.e. $\bm{\gamma_X}=\int_{B_{\bm{0}}(1)\setminus\{\bm{0}\}}\bm{w}\nu_{\bm{X}}(\D\bm{w})$),
then $|\bm{\gamma}_{\bm{X}}^\ka|$ is bounded (and convergent) as $t\da 0$,
\begin{equation*}
|\bm{\gamma}_{\bm{X}}^\ka|
=\bigg|\int_{B_{\bm{0}}(\kappa)\setminus\{\bm{0}\}}\bm{w}\nu_{\bm{X}}(\D\bm{w})\bigg|
\le\int_{B_{\bm{0}}(\kappa)\setminus\{\bm{0}\}}|\bm{w}|\nu_{\bm{X}}(\D\bm{w})
\le\int_{B_{\bm{0}}(\kappa)\setminus\{\bm{0}\}}\kappa^{1-\beta_+}|\bm{w}|^{\beta_+}\nu_{\bm{X}}(\D\bm{w})
\le\kappa^{1-\beta_+}I_{\beta_+},
\end{equation*}
making $(|\bm{\gamma}_{\bm{X}}^\ka|t)^p$ bounded by a multiple of $t^{p/\beta_+}$. Hence, in this case, we may take $C_2=0$ in Lemma~\ref{lem:moment_bound} (it will become clear from the remainder of the proof that none of the other summands on the right-hand side of the inequality  in~\eqref{eq:supremum_bound_norm} will produce a term of order $t^p$).

The $p$-th moment of the Brownian term is easily bounded by a constant multiple of $t^{p/2}$ since we have $\sup_{s\in[0,t]}|\bm{B}_s|\eqd t^{1/2}\sup_{s\in[0,1]}|\bm{B}_s|$. If $\bm{\Sigma_X}$ is a zero matrix, then $|\bm{\Sigma_X}|=0$ and hence $C_1=0$. 

Next, we bound the big-jump term $\bm{J}^\ka$. Let $A_\kappa:=\R^d\setminus B_{\bm{0}}(\kappa)$ and recall that $\bm{J}_t=\sum_{k=1}^{N_t}\bm{R}_k$ for some Poisson random variable $N_t$ with mean $t\nu_{\bm{X}}(A_\kappa)$ and iid random vectors $(\bm{R}_n)_{n\in\N}$ independent of $N_t$ with law $\nu_{\bm{X}}(\cdot\cap A_\kappa)/\nu_{\bm{X}}(A_\kappa)$. Recall, from the formula for the moments of a Poisson random variable, that $\E[N_t^k]=\sum_{j=1}^k\stirII{k}{j}(t\nu_{\bm{X}}(A_\kappa))^j$, where $\stirII{k}{j}$ denotes the Stirling number of the second kind.  
Note that the triangle inequality of the Euclidean norm $|\cdot|$ implies that 
$$
|\bm{J}^\ka_s|^p\le \left(\sum_{k=1}^{N_s}|\bm{R}_k|\right)^p
\le
\left(\sum_{k=1}^{N_t}|\bm{R}_k|\right)^p
\le
N_t^{(p-1)^+}\sum_{k=1}^{N_t}|\bm{R}_k|^p, \quad \text{ for every }s\in[0,t].
$$
Denote $\ceil{p}:=\inf\{n\in\N\,:\,n\ge p\}$, and note that since $\bm{R}_k$, $k\in\N$, are iid and independent of $N_t$ and $1\le (p-1)^++1\le \ceil{p}$, we find
\begin{equation}\label{eq:bound_large_jumps_moment_bound}
\begin{aligned}
    \E\bigg[\sup_{s\in[0,t]}|\bm{J}^\ka_s|^p\bigg]
&\le 
\E\big[|\bm{R}_1|^p\big]\E\Big[N_t^{\ceil{p}}\Big]
=\int_{A_\kappa}|\bm{w}|^p\frac{\nu_{\bm{X}}(\D\bm{w})}{\nu_{\bm{X}}(A_\kappa)}    \cdot\sum_{k=1}^{\ceil{p}}\stirII{\ceil{p}}{k}(t\nu_{\bm{X}}(A_\kappa))^k\\
& =
t\int_{A_\kappa}|\bm{w}|^p\nu_{\bm{X}}(\D\bm{w})    \cdot\sum_{k=1}^{\ceil{p}}\stirII{\ceil{p}}{k}(t\nu_{\bm{X}}(A_\kappa))^{k-1}.
\end{aligned}
\end{equation}
Note that $\nu_{\bm{X}}(A_\kappa)\le\nu_{\bm{X}}(A_1)+\int_{B_{\bm{0}}(1)\setminus B_{\bm{0}}(\kappa)}\kappa^{-\beta_+}|\bm{w}|^{\beta_+}\nu_{\bm{X}}(\D\bm{w})\le \nu_{\bm{X}}(A_1)+\kappa^{-\beta_+}I_{\beta_+}$ and hence $t\nu_{\bm{X}}(A_\kappa)$ is bounded in $t\in[0,1]$ (recall that $\kappa=t^{1/\beta_+}$), making the sum in the display also bounded. Denote $I_p'=\int_{A_1}|\bm{w}|^p\nu_{\bm{X}}(\D\bm{w})$, which we assumed finite, and hence 
\begin{align*}
\int_{A_\kappa}|\bm{w}|^p\nu_{\bm{X}}(\D\bm{w})
&\le I_p'+\int_{B_{\bm{0}}(1)\setminus B_{\bm{0}}(\kappa)}|\bm{w}|^p\nu_{\bm{X}}(\D\bm{w})\\
&\le I_p'+\int_{B_{\bm{0}}(1)\setminus B_{\bm{0}}(\kappa)}\kappa^{-(\beta_+-p)^+}|\bm{w}|^{\max\{\beta_+,p\}}\nu_{\bm{X}}(\D\bm{w})
\le I_p'+\kappa^{-(\beta_+-p)^+}I_{\max\{\beta_+,p\}}.
\end{align*}
Thus, there is a finite constant $C>0$ such that
\[
\E\bigg[\sup_{s\in[0,t]}|\bm{J}^\ka_s|^p\bigg]
\le Ct(I_p'+\kappa^{-(\beta_+-p)^+}I_{\max\{\beta_+,p\}})
= C(I_p't+I_{\max\{\beta_+,p\}}t^{\min\{1,p/\beta_+\}}).
\]

In the case where $\beta_+>0$, it remains to bound the small-jump term $\bm{D}^\ka$. In this case, we show that the $p$-th moment is bounded by a multiple of $t^{p/\beta_+}$. We may assume without loss of generality that $p>1$, since the other cases would follow by Jensen's inequality since $\E[|\bm{\xi}|^q]\le\E[|\bm{\xi}|^p]^{q/p}$ for any $q\le p$. Since $|\bm{D}^\ka_t|$ is a submartingale, Doob's maximal inequality and the elementary inequality $|x|^p\le (p/e)^p e^{|x|}$ imply 
\[
\E\bigg[\sup_{s\in[0,t]}|\bm{D}^\ka_s|^p\bigg]
\le \Big(\frac{p}{p-1}\Big)^p\E\big[|\bm{D}^\ka_t|^p\big]
= \Big(\frac{\kappa p}{p-1}\Big)^p\E\big[|\kappa ^{-1}\bm{D}^\ka_t|^p\big]
\le \Big(\frac{\kappa p^2/e}{p-1}\Big)^p\E\Big[e^{\kappa^{-1}|\bm{D}^\ka_t|}\Big],
\] for $t \in [0,1]$. Thus, to complete the proof it suffices to show that the expectation on the right is bounded as $t\da 0$. Let $\{\bm{e}_i\}_{i=1}^{2^d}$ be the vertices of the hypercube centered at the origin with sides parallel to the axes and side length $2$ (e.g., the vectors $(1,1,\ldots,1)$ and $(-1,-1,\ldots,-1)$ are opposite vertices of this hypercube). Note that $e^{|\bm{w}|}\le e^{|\bm{w}|_1}\le\sum_{i=1}^{2^d}e^{\langle\bm{e}_i,\bm{w}\rangle}$ where $|(s_1,\ldots,s_d)|_1\coloneqq\sum_{i=1}^d |s_i|$ denotes the $\ell^1$-norm in $\R^d$. Hence, it suffices to show that $\E[\exp(\langle\kappa^{-1}\bm{e}_i,\bm{D}^\ka_t\rangle)]$ is bounded as $t\da 0$ for each $i\in\{1,\ldots,2^d\}$. The L\'evy--Khintchine formula, the elementary inequality 
$e^x-1-x\le e^{c}x^2$ for $x\in[-c,c]$, $c>0$, 
and the Cauchy-Schwarz inequality $|\langle\kappa^{-1}\bm{e}_i,\bm{w}\rangle|\le |\bm{e}_i|=\sqrt{d}$ for all $\bm{w}\in B_{\bm{0}}(\kappa)$ and $i\in\{1,\ldots,2^d\}$
yield
\begin{align*}
\log\E\big[e^{\langle\kappa^{-1}\bm{e}_i,\bm{D}^\ka_t\rangle}\big]
&=t\int_{B_{\bm{0}}(\kappa)\setminus\{\bm{0}\}}\big(e^{\langle\kappa^{-1}\bm{e}_i,\bm{w}\rangle}-1-\langle\kappa^{-1}\bm{e}_i,\bm{w}\rangle\big)\nu_{\bm{X}}(\D\bm{w})\\
&\le t\int_{B_{\bm{0}}(\kappa)\setminus\{\bm{0}\}}e^{\sqrt{d}}\kappa^{-2}\langle\bm{e}_i,\bm{w}\rangle^2\nu_{\bm{X}}(\D\bm{w})
\le t\int_{B_{\bm{0}}(\kappa)\setminus\{\bm{0}\}}de^{\sqrt{d}}\kappa^{-2}|\bm{w}|^2\nu_{\bm{X}}(\D\bm{w})\\
&\le t\int_{B_{\bm{0}}(\kappa)\setminus\{\bm{0}\}}de^{\sqrt{d}}\kappa^{-\beta_+}|\bm{w}|^{\beta_+}\nu_{\bm{X}}(\D\bm{w})
\le de^{\sqrt{d}}I_{\beta_+},
\end{align*}
completing the proof in the case $\beta_+>0$.

\underline{Case  $\beta_+=0$.} 
Note in this case, that the pure-jump component of $\bm{X}$ is compound Poisson. Thus, as in~\eqref{eq:supremum_bound_norm}, we have that $\sup_{s \in [0,t]}|\bm{X}_s| \le |\bm{\gamma}_{\bm{X}}|t + |\bm{\Sigma_X}|\sup_{s\in[0,t]}|\bm{B}_s|+ \sup_{s\in[0,t]}|\wt{\bm{J}}_s|$ for all $t \in [0,1]$, where $\wt{\bm{J}}_t=\bm{X}_t-\bm{\gamma}_{\bm{X}}t - \bm{\Sigma_X}\bm{B}_t$ for all $t \in [0,1]$. The bound on the $p$-moment of the Brownian term follows exactly as in the case of $\beta_+>0$ and is a constant multiple of $t^{p/2}$. From the term $(|\bm{\gamma}_{\bm{X}}|t)^p$, we get a multiple of $t^p$. Note that $\wt{\bm{J}}_s$ is a compound Poisson process with finitely many jumps on $\R^d \setminus\{\bm 0\}$, with $\beta=0$, and hence $\wt{\bm{J}}_t=\sum_{n=1}^{N_t}\wt{\bm{R}}_k$ for some Poisson random variable $N_t$ with mean $t\nu_{\bm{X}}(\R^d \setminus\{\bm 0\})$ and iid random vectors $(\wt{\bm{R}}_n)_{n\in\N}$ independent of $N_t$ with law $\nu_{\bm{X}}(\cdot\cap (\R^d \setminus\{\bm 0\}))/\nu_{\bm{X}}(\R^d \setminus\{\bm 0\})$. For the term $\E[\sup_{s\in[0,t]}|\wt{\bm{J}}_s|^p]$, we now use the same proof as in the case of $\beta_+>0$, until~\eqref{eq:bound_large_jumps_moment_bound}. Hence, we see that
\begin{equation*}
    \E\bigg[\sup_{s\in[0,t]}|\wt{\bm{J}}_s|^p\bigg] \le d^{p/2}t\int_{\R^d \setminus\{\bm 0\}}|\bm{w}|^p\nu_{\bm{X}}(\D\bm{w})    \cdot\sum_{k=1}^{\ceil{p}}\stirII{\ceil{p}}{k}(t\nu_{\bm{X}}(\R^d \setminus\{\bm 0\}))^{k-1}.
\end{equation*} Since $\wt{\bm{J}}$ has finite activity, it follows that $\int_{\R^d \setminus\{\bm 0\}}|\bm{w}|^p\nu_{\bm{X}}(\D\bm{w})<\infty$. Moreover, since the sum in the display above is bounded in $t \in [0,1]$, we get that $\E[\sup_{s\in[0,t]}|\wt{\bm{J}}_s|^p]$ is bounded by a multiple of $t$, concluding the proof of Lemma~\ref{lem:moment_bound}.

\section{Small-time domains of attraction - proof of Theorem~(\nameref{thm:small_time_domain_stable})}
\label{app:domain_of_attraction}

The proof is essentially a consequence of~\cite[Thm~15.14]{MR1876169} and~\cite[Thm~2]{MR3784492}. Recall that $B_{\bm{a}}(r)=\{\bm{x}\in\R^d:|\bm{x}-\bm{a}|<r\}$ denotes the open ball in $\R^d$ with centre $\bm{a}\in\R^d$ and radius $r>0$, by $\Sp^{d-1}$ the unit sphere in $\R^d$ and define $\scrL_{\bm{a}}(r)\coloneqq\{\bm{x}\in\R^d:\langle\bm{a},\bm{x}\rangle\ge r\}$.

Since $\bm{X}$ and $\bm{Z}$ are L\'evy processes, the stated weak convergences are equivalent to $\langle\bm\lambda,\bm{X}_t\rangle/g(t)\cid \langle\bm\lambda,\bm{Z}_1\rangle$ as $t\da 0$ for any $\bm\lambda\in\R^d$ by~\cite[Cor.~15.7]{MR1876169}. By~\cite[Thm~2]{MR3784492}, it follows that, for some $\alpha\in(0,2]$, $g(t)=t^{1/\alpha}G(t)$ for $t>0$ where $G\in\SV_0$ and, moreover, $\langle\bm\lambda,\bm{Z}_1\rangle$ is $\alpha$-stable for all $\bm\lambda\in\R^d$. Thus, $\bm{Z}$ is itself $\alpha$-stable. We then have the following cases. 

If $\alpha=2$, then, by~\cite[Thm~2(i)]{MR3784492}, the weak convergence in the direction $\bm{v}\in\Sp^{d-1}$ is equivalent to
\[
G(t)^{-2} \bigg(|\bm{v}^\tra\bm{\Sigma}_{\bm{X}}|^2
    +\int_{B_{\bm{0}}(g(t))\setminus\{\bm{0}\}}|\langle\bm{v},\bm{x}\rangle|^2\nu_{\bm{X}}(\D\bm{x})\bigg)
\to|\bm{v}^\tra\bm{\Sigma}_{\bm{Z}}|^2,\quad\text{as }t\da 0,
\]
so the weak convergence in $\R^d$ is equivalent to~\eqref{eq:Brownian-limit}, completing the proof in this case.

If $\alpha\in(1,2)$, the weak convergence in the direction $\bm{v}\in\Sp^{d-1}$ is equivalent to~\eqref{eq:jump-stable-limit} by~\cite[Thm~2(iii)]{MR3784492}, completing the proof in this case.

If $\alpha\in(0,1)$, the weak convergence in the direction $\bm{v}\in\Sp^{d-1}$ is equivalent to~\eqref{eq:jump-stable-limit} and $\langle\bm{v},\bm{X}\rangle$ having zero natural drift by~\cite[Thm~2(iii)]{MR3784492}. Since the latter condition is required for all $\bm{v}\in\R^d$, it is equivalent to $\bm{X}$ having zero natural drift $\bm{\gamma_X}=\int_{B_{\bm 0}(1)\setminus\{\bm 0\}}\bm{x}\nu_{\bm{X}}(\D\bm{x})$, completing the proof in this case.

If $\alpha=1$, the weak convergence in the direction $\bm{v}\in\Sp^{d-1}$ may be different depending on the behaviour of the limiting process in this direction. If $\nu_{\bm{Z}}(\scrL_{\bm{v}}(1))=0$ then $\langle\bm{v},\bm{Z}\rangle$ is a linear drift and the weak convergence, by~\cite[Thm~2(ii)]{MR3784492}, is equivalent to the following two limits as $t\da 0$:
\begin{gather*}
\frac{t}{g(t)}\bigg(\langle\bm{v},\bm{\gamma_X}\rangle-\int_{B_{\bm0}(1)\setminus B_{\bm0}(g(t))}\langle\bm{v},\bm{x}\rangle\nu(\D\bm{x})\bigg)
\to\langle\bm{v},\bm{\gamma_Z}\rangle,
\quad\text{and}\\
g(t)\nu_{\bm{X}}(\scrL_{\bm{v}}(g(t)))\bigg(\langle\bm{v},\bm{\gamma_X}\rangle-\int_{B_{\bm0}(1)\setminus B_{\bm0}(g(t))}\langle\bm{v},\bm{x}\rangle\nu(\D\bm{x})\bigg)^{-1}\to 0,\quad\text{whenever }\langle\bm{v},\bm{\gamma_Z}\rangle\ne 0,
\end{gather*}
(where we recall that $t/g(t)=1/G(t)$). By the first limit, the second limit is equivalent to the following limit: $t\nu_{\bm{X}}(\scrL_{\bm{v}}(g(t)))\to 0=\nu_{\bm{Z}}(\scrL_{\bm{v}}(1))$. If instead $\nu_{\bm{Z}}(\scrL_{\bm{v}}(1))>0$, then, by~\cite[Thm~2(ii)]{MR3784492}, the weak limit is equivalent to the following. The process $\langle\bm{v},\bm{X}\rangle$ has zero natural drift whenever it and $\langle\bm{v},\bm{Z}\rangle$ have finite variation and the following limits hold as $t\da 0$:
\begin{gather*}
\frac{\nu_{\bm{Z}}(\scrL_{\bm{v}}(1))}{g(t)\nu_{\bm{X}}(\scrL_{\bm{v}}(g(t)))}\bigg(\langle\bm{v},\bm{\gamma_X}\rangle-\int_{B_{\bm0}(1)\setminus B_{\bm0}(g(t))}\langle\bm{v},\bm{x}\rangle\nu(\D\bm{x})\bigg)\to \langle\bm{v},\bm{\gamma_Z}\rangle,
\quad\text{and}\\
t\nu_{\bm{X}}(\scrL_{\bm{v}}(g(t)))\to \nu_{\bm{Z}}(\scrL_{\bm{v}}(1)).
\end{gather*}
By the second limit, the first limit can be rewritten as the first limit in the display above. Thus, in either case, the conditions are equivalent to those stated in Theorem~(\nameref{thm:small_time_domain_stable}) in the direction $\bm{v}$. Since the directional limits are equivalent to the corresponding limits in $\R^d$, the result follows.\qed

\section{Proof of the inequality in Equation~(\ref{eq:Wq-XY})}\label{app:A}

Recall that the two L\'evy processes $\bm{X}$ and $\bm{Y}$ in $\R^d$ have the L\'evy--It\^o decompositions $\bm{X}_t=\bm{\gamma}_{\bm{X},\kappa} t+\bm{\Sigma_X}\bm{B^X}_t+\bm{D}^{\bm{X},\kappa}_t+\bm{J}^{\bm{X},\kappa}_t$ and $\bm{Y}_t=\bm{\gamma}_{\bm{Y},\kappa} t+\bm{\Sigma_Y}\bm{B^Y}_t+\bm{D}^{\bm{Y},\kappa}_t+\bm{J}^{\bm{Y},\kappa}_t$, see Section~\ref{sec:couplings_general}. Recall that we chose coupling $\bm{B^X}=\bm{B^Y}$,
implying $|(\bm{\Sigma_X}
    -\bm{\Sigma_Y})\bm{B^X}_t|\le |\bm{\Sigma_X}\bm{B^X}_t
    -\bm{\Sigma_Y}\bm{B^Y}_t|\le
    |\bm{\Sigma_X}
    -\bm{\Sigma_Y}|\cdot |\bm{B^X}_t|$
    (where $|\bm{\Sigma_X}
    -\bm{\Sigma_Y}|$ is the Frobenius norm of the matrix $\bm{\Sigma_X}
    -\bm{\Sigma_Y}$). Applying 
Doob's maximal inequality, we obtain  
\begin{equation}\label{eq:Wasserstein-2-brownian_compon}
\begin{split}
\mW_q\big(\bm{\Sigma_X}\bm{B^X},
        \bm{\Sigma_Y}\bm{B^Y}\big)
&\le
\mW_2\big(\bm{\Sigma_X}\bm{B^X},
        \bm{\Sigma_Y}\bm{B^Y}\big)^{q\wedge 1}\\
&\le\E\bigg[\sup_{t\in[0,1]}|\bm{\Sigma_X}\bm{B^X}_t
    -\bm{\Sigma_Y}\bm{B^Y}_t|^2\bigg]^{(q\wedge 1)/2}
\le (2 \sqrt{d} |\bm{\Sigma_X}-\bm{\Sigma_Y}|)^{q\wedge 1}.
\end{split}
\end{equation}
Applying the triangle inequality, we obtain 
\begin{multline*}
\sup_{t \in [0,1]}|\bm{\gamma}_{\bm{X},\kappa} t+\bm{\Sigma_X}\bm{B}^{\bm{X}}_t+\bm{D}^{\bm{X},\kappa}_t+\bm{J}^{\bm{X},\kappa}_t-\bm{\gamma}_{\bm{Y},\kappa} t-\bm{\Sigma_Y}\bm{B}^{\bm{Y}}_t-\bm{D}^{\bm{Y},\kappa}_t-\bm{J}^{\bm{Y},\kappa}_t|\\
\le |\bm{\gamma}_{\bm{X},\kappa} -\bm{\gamma}_{\bm{Y},\kappa} |
+\sup_{t \in [0,1]} |\bm{\Sigma_X}\bm{B}^{\bm{X}}_t-\bm{\Sigma_Y}\bm{B}^{\bm{Y}}_t|
+\sup_{t \in [0,1]} |\bm{D}^{\bm{X},\kappa}_t-\bm{D}^{\bm{Y},\kappa}_t|
+\sup_{t \in [0,1]} |\bm{J}^{\bm{X},\kappa}_t-\bm{J}^{\bm{Y},\kappa}_t|.
\end{multline*} 
For $q \in (0,1]$ (resp. $q\in(1,2]$)
inequality~\eqref{eq:Wq-XY} 
follows by subadditivity $(a+b)^q \le a^q +b^q$ for $a,b\ge0$ (resp. Minkowski's inequality).
\end{appendix}

\end{document}